\DeclareMathOperator{\Hom}{\mathscr{H}\text{\kern -3pt {\calligra\large om}}\,}
\DeclareMathOperator{\Ext}{\mathscr{E}\text{\kern -3pt {\calligra\large xt}}\,}
\def\Spec{{\rm Spec}}
\numberwithin{equation}{section}
\newtheorem{theorem}{Theorem}[subsection]
\newtheorem{definition}[theorem]{Definition}
\newtheorem{proposition}[theorem]{Proposition}
\newtheorem{corollary}[theorem]{Corollary}
\newtheorem{lemma}[theorem]{Lemma}
\newtheorem{theoremalpha}{Theorem}
\newtheorem{propalpha}[theoremalpha]{Proposition}
\theoremstyle{remark}
\newtheorem{remark}[theorem]{Remark}
\def\F{\mathbb{F}}
\def\Ome{\Omega}
\def\Omek{\Omega_{X/k}}
\def\Q{\mathbb{Q}}
\def\Z{\mathbb{Z}}
\def\Zpn{\Z/p^n\Z}
\def\Zp1{\Z/p\Z}
\def\Fp{\mathbb{F}_q}
\def\Wno{W_n\Ome}
\def\m{\mathfrak{m}}
\def\Hom{\mathrm{Hom}}
\def\sHom{{\mathcal H}om}
\begin{document}

\title[Duality for relative logarithmic de Rham-Witt sheaves]
{Duality for relative logarithmic de Rham-Witt sheaves on semistable schemes over $\mathbb{F}_q[[t]]$}
\author{ Yigeng Zhao}
\address{Fakult\"at f\"ur Mathematik\\ Universit\"at Regensburg \\ 93040 Regensburg\\ Germany}
\email{yigeng.zhao@mathematik.uni-regensburg.de}
\thanks{The author is supported by the DFG through CRC 1085 \emph{Higher Invariants} (Universit\"at Regensburg).}

\begin{abstract}
We study duality theorems for the relative logarithmic de Rham-Witt sheaves on semi-stable schemes $X$ over a local ring $\mathbb{F}_q[[t]]$, where $\mathbb{F}_q$ is a finite field. As an application, we obtain a new filtration on the maximal abelian quotient $\pi^{\text{ab}}_1(U)$ of the \'etale fundamental groups $\pi_1(U)$ of an open subscheme $U \subseteq X$,  which gives a measure of ramification along a 
 divisor $D$ with normal crossing and $\text{Supp}(D) \subseteq X-U$. This filtration coincides with the Brylinski-Kato-Matsuda filtration in the relative dimension zero case.

\end{abstract}
\keywords{logarithmic de Rham-Witt sheaf, purity, \'etale duality, \'etale fundamental group, semistable scheme, ramification, filtration, class field theory}
\subjclass[2010]{14F20 (primary), 14F35, 11R37, 14G17 (secondary)}

\maketitle
\tableofcontents

\section*{Introduction}

The motivation of this paper is to study ramification theory for higher-dimensional schemes of characteristic $p>0$. In the light of class field theory, we want to define a filtration on the abelianized \'etale fundamental group of an open subscheme $U$ of a regular scheme $X$, which measures the ramification of a finite \'etale covering of $U$ along the complement $D=X-U$.  More precisely, let $D=\bigcup_{i=1}^sD_i$ be a reduced effective Cartier divisor on $X$ such that Supp($D$) has simple normal crossing, where $D_1,\cdots, D_s$ are the irreducible components of $D$, and let $U$ be its complement in $X$. We want to define a quotient group $\pi^{\text{ab}}_1(X,mD)/p^n$ of $\pi^{\text{ab}}_1(U)/p^n$, for a divisor $mD=\sum_{i=1}^sm_iD_i$ with each $m_i\geq 1$, which classifies the finite \'etale coverings of degree $p^n$ over $U$ with ramification bounded by $mD$ along the divisor $D$. 

We define the quotient group $\pi^{\text{ab}}_1(X,mD)/p^n$ by using the relationship between $\pi^{\text{ab}}_1(U)/p^n$ and $H^1(U,\Zpn)$, and by then applying a duality theorem for certain cohomology groups.  For this we assume some finiteness conditions on the scheme $X$.  The first is to assume that $X$ is smooth and proper of dimension $d$ over the finite field $\Fp$.  For a finite \'etale covering of $U$ of degree  $\ell^n$, where $\ell$ is a prime different from $p$, this was already done by using duality theory  in $\ell$-adic cohomology [SGA4 \cite{SGA4}]. The Poincar\'e-Pontryagin duality theorem \cite{saito1989} gives isomorphisms 
\[\pi^{\text{ab}}_1(U)/\ell^n \cong \text{Hom}(H^1(U, \Z/{\ell^n}),\Q/\Z) \cong H^{d}_{\text{c}}(U,\Z/{\ell^n}(d)).\]
The case of degree  $p^n$ coverings is more subtle, as we deal with wild ramification and there is no obvious analogue of cohomology with compact support for logarithmic de Rham-Witt sheaves. In \cite{jszduality}, we proposed a new approach. That duality theorem, based on Serre's coherent duality and Milne's duality theorems, together with Pontryagin duality give isomorphisms 
\[ \pi^{ab}_{1}(U)/p^n\cong \text{Hom}(H^1(U, \Zpn),\Q/\Z) \cong \varprojlim_mH^{d}(X, \Wno_{X|mD,\log}^d) , \]
where $\Wno_{X|mD,\log}^d$ (see Definition \ref{twistedlog}) is the relative logarithmic de Rham-Witt sheaf with respect to the divisor $mD$. Using these isomorphisms, we define a quotient $\pi^{\text{ab}}_1(X,mD)/p^n$ of $\pi^{ab}_1(U)/p^n$,  ramified of order $mD$ where $m$ is the smallest value such that the above isomorphism factors through $H^{d}(X, \Wno_{X|mD,\log}^d)$. We may think of $\pi^{\text{ab}}_1(X,mD)/p^n$ as the quotient of  $\pi^{ab}_{1}(U)$ classifying abelian \'etale coverings of $U$ whose degree divides $p^n$ with ramification bounded by $mD$. In  \cite{kerzsaito}\cite{kerzsaitochowgroup}, Kerz and Saito also defined a similar quotient group by using curves on $X$. 

The second finiteness condition is to assume that $X$ is proper (or projective) over a discrete valuation ring $R$. More precisely, we may assume that $X$ is a  proper semi-stable scheme over $\text{Spec}(R)$.  Then there are two cases: mixed and equi-characteristic. In the mixed characteristic case, instead of logarithmic de Rham-Witt sheaves, Sato \cite{satotatetwist} defined the $p$-adic Tate twists, and proved an arithmetic duality theorem for $X$. In  \cite{uzun}, Uzun proved that over the $p$-adic field $\pi^{ab}_1(U)/n$ is isomorphic to some motivic homology groups, for all $n>0$.

In this paper, we treat the equi-characteristic case, where the wildly ramified case has not been considered before. We follow the approach suggested by Jannsen and Saito in \cite{jszduality}. 
The main result of this paper is the following theorem.
\begin{theoremalpha}[see Theorem \ref{main theorem}] \label{theorema}
Let $X \to \text{Spec}(\Fp[[t]])$ be a projective strictly semistable scheme of relative dimension $d$, and let $X_s$ be its special fiber. Let $D$ be an effective Cartier divisor on $X$ such that Supp$(D)$ has simple normal crossing, and let $U$ be its open complement. Then there is a perfect pairing of topological $\Zpn$-modules
\[	H^i(U, \Wno_{U,\log}^r) \times \varprojlim\limits_{m} H^{d+2-i}_{X_s}(X, \Wno_{X|mD,\log}^{d+1-r}) \to H^{d+2}_{X_s}(X, \Wno_{X,\log}^{d+1}) \xrightarrow{\text{Tr}} \Zpn,  \]
where the $H^{d+2-i}_{X_s}(X, \Wno_{X|mD,\log}^{d+1-r})$ is endows with the discrete topology, and $H^i(U, \Wno_{U,\log}^r)$ is endowed with the direct limit topology of compact-open groups.
\end{theoremalpha}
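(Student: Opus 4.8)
The plan is to construct the pairing by cup product followed by a trace map, and then to prove perfectness by dévissage in $n$ together with a local-to-global argument via the coniveau spectral sequence, in which the finite field $\mathbb{F}_q$ and the trait $\mathrm{Spec}(\mathbb{F}_q[[t]])$ supply the arithmetic input and relative purity supplies the geometric input. First I would set up the pairing itself. The wedge product of logarithmic forms induces a pairing of étale sheaves $W_n\Omega^r_{U,\log} \times W_n\Omega^{d+1-r}_{X|mD,\log} \to W_n\Omega^{d+1}_{X,\log}$; the bounded-pole condition defining the relative sheaf (Definition \ref{twistedlog}) is precisely what guarantees that a form with poles along $D$ bounded by $mD$ wedges with a log form on $U$ into a sheaf with \emph{no} poles on $X$, which is where a local symbol computation enters. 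Cup product with support on $X_s$ and passage to the limit over $m$ then produce the displayed map into $H^{d+2}_{X_s}(X, W_n\Omega^{d+1}_{X,\log})$. I would give $H^i(U,-)$ the discrete topology and $\varprojlim_m$ the profinite topology; since $H^i(U, W_n\Omega^r_{U,\log})$ is the union over $m$ of its bounded-ramification subgroups and each $H^{d+2-i}_{X_s}(X, W_n\Omega^{d+1-r}_{X|mD,\log})$ is finite (properness over $\mathbb{F}_q[[t]]$), ``perfect pairing of topological modules'' means exactly that each side is the continuous $\mathbb{Z}/p^n\mathbb{Z}$-dual of the other.

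Next I would construct and analyze the trace. I expect $\mathrm{Tr}$ to factor as a Gysin/residue isomorphism along the special fiber followed by the finite-field trace: purity for the principal divisor $X_s = \mathrm{div}(t) \hookrightarrow X$ of codimension one gives $H^{d+2}_{X_s}(X, W_n\Omega^{d+1}_{X,\log}) \cong H^{d+1}(X_s, W_n\Omega^{d}_{X_s,\log})$ in the semistable (logarithmic) sense, lowering cohomological and form degrees each by one, and then, since $X_s$ is proper of dimension $d$ over $\mathbb{F}_q$, a Milne-type duality identifies this with $H^1(\mathbb{F}_q, \mathbb{Z}/p^n\mathbb{Z}) \cong \mathbb{Z}/p^n\mathbb{Z}$. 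The delicate point is that $X_s$ is only a reduced simple normal crossing divisor, so both the Gysin isomorphism and the target duality must be taken logarithmically; I would assemble them from the smooth strata of $X_s$ via the associated spectral sequence, invoking smooth Milne duality on each closed stratum. This generalizes the relative dimension zero computation $H^2_s(\mathrm{Spec}(\mathbb{F}_q[[t]]), W_n\Omega^1_{\log}) \cong \mathbb{Z}/p^n\mathbb{Z}$ for the trait.

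For perfectness I would first reduce to $n=1$ using the short exact sequences relating $W_n$, $W_{n-1}$ and $W_1 = \Omega_{\log}$ and their relative versions, so that the five lemma propagates perfectness up the tower once the pairing is checked compatible with those sequences, which is formal given the symbol description. For fixed $n$ I would then run the coniveau spectral sequences for both $H^\bullet(U,-)$ and $H^\bullet_{X_s}(X,-)$, which are compatible with the pairing, reducing global perfectness to local duality at each point $x \in X$: at points of the generic fiber the statement becomes relative Poincaré duality for log de Rham–Witt, reduced to the smooth case by purity, while at closed points it becomes arithmetic duality over a finite field. The relative dimension zero case — local class field theory for $\mathbb{F}_q((t))$ together with the Brylinski–Kato–Matsuda filtration — serves as the base case that controls the interaction of the pole filtration $mD$ with the residue pairing.

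The main obstacle I anticipate is the semistable, hence non-smooth, local geometry. Unlike the classical smooth setting of Milne and Jannsen–Saito, here the purity isomorphisms, the local residue/symbol computations, and the identification of the local cohomology groups must all be carried out near the crossing locus of $X_s$ and, above all, near the points where $\mathrm{Supp}(D)$ meets $X_s$, where two normal-crossing divisors interact and the pole-bounding condition defining $W_n\Omega^{d+1-r}_{X|mD,\log}$ must be reconciled with the support condition along $X_s$. Establishing these logarithmic purity and local duality statements, and their compatibility with $\varprojlim_m$ and with the discrete/profinite topology, is where the real work lies; the global assembly by spectral sequences and dévissage is comparatively formal once the local building blocks are in place.
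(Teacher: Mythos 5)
Your construction of the pairing (wedge product with the pole-bounding condition, cup product with supports, limit over $m$), your factorization of the trace through a Gysin isomorphism along $X_s$ followed by Sato's duality for the normal crossing special fiber, and your reduction to $n=1$ by d\'evissage through the sequences $0 \to W_{n-1}\Omega^{\bullet}_{\log} \to W_n\Omega^{\bullet}_{\log} \to \Omega^{\bullet}_{\log} \to 0$ all match the paper. The gap is in the step that carries the whole proof: perfectness for $n=1$. You propose to run coniveau spectral sequences on both sides and reduce to pointwise local duality, but this cannot work for the sheaves at hand. Logarithmic de Rham--Witt sheaves in degrees below the top only have \emph{semi}-purity: $R^{c+1}i^!W_n\Omega^m_{X,\log}$ is nonzero in general for $m<\dim X$ (Remark \ref{semipurity}), so the coniveau $E_1$-terms for $H^{\bullet}_{X_s}(X,W_n\Omega^{d+1-r}_{X|mD,\log})$ with $0<r$ are not computable by purity, and there is no pointwise duality for these sheaves at points with imperfect, non-finite residue fields to serve as your local building block. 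The purity theorem of the paper (Theorem \ref{newpurity}) is proved only for the top-degree sheaf $W_n\Omega^{d+1}_{X,\log}$ and is used solely to construct the trace, not to localize the pairing.

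What the paper does instead, following Milne's original method, is to convert the $p$-torsion duality into \emph{coherent} duality. Via the Artin--Schreier-type sequences $0 \to \Omega^j_{\log} \to Z\Omega^j \xrightarrow{C-1} \Omega^j \to 0$ and their twisted versions $0 \to \Omega^{d+1-r}_{X|mD,\log} \to \Omega^{d+1-r}_{X|mD} \xrightarrow{C^{-1}-1} \Omega^{d+1-r}_{X|mD}/B\Omega^{d+1-r}_{X|mD} \to 0$, both sides of the pairing are realized as hypercohomology of two-term complexes of locally free $\mathcal{O}_X$-modules, and perfectness is deduced from a perfect pairing $\Omega^r_X(\log D)(mD)\otimes\Omega^{d+1-r}_X(\log D)(-(m+1)D)\to\Omega^{d+1}_X$ together with an absolute coherent duality on $X$ obtained by composing relative Grothendieck--Serre duality for the projective morphism $f:X\to B$ (Theorem \ref{fc}) with Grothendieck local duality on the complete local ring $B=\mathrm{Spec}(\F_q[[t]])$. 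This coherent input --- which your proposal never invokes for the main step --- is the actual engine of the proof; the identification $\varinjlim_m\mathbb{H}^i(X,\mathscr{F}^{\bullet}_m)\cong H^i(U,\Omega^r_{U,\log})$ (using that $\jmath$ is affine) then closes the argument. Your local class field theory base case appears in the paper only afterwards, as a \emph{comparison} of the resulting filtration with the Brylinski--Kato--Matsuda filtration, not as an ingredient of the duality proof.
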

Therefore, we can define a filtration $\text{Fil}_{\bullet}$ on $H^i(U, \Wno_{U,\log}^r) $ via the inverse limit (see Definition \ref{newfiltration}).
This theorem and Pontryagin duality give isomorphisms
\[ \pi^{ab}_{1}(U)/p^n \cong \text{Hom}(H^1(U, \Zpn), \Q/\Z) \cong \varprojlim_mH_{X_s}^{d+1}(X, \Wno_{X|mD,\log}^{d+1}), \]
and so we may define  $\pi^{\text{ab}}_1(X, mD)/p^n$ as the dual of $\text{Fil}_mH^1(U, \Zpn)$ (see Definition \ref{newfiltration}). 

This paper is organized as follows.

In the first section,  we will prove a new purity theorem on certain regular schemes.  Its cohomological version will be used later for the trace map in the above duality theorem. 
\begin{theoremalpha}[see Theorem \ref{newpurity} ]
Assume $X$ is as before, and $i: X_s \hookrightarrow X$ is the special fiber, which is a reduced divisor  and has simple normal crossing. Then there is a canonical isomorphism
\[ Gys_{i,n}^{\log}:  \nu_{n,X_s}^d[-1] \xrightarrow{\quad \cong \quad} Ri^!\Wno_{X,\log}^{d+1}  \]
in $D^{+}(X_s, \Zpn)$, where $\nu_{n, X_s}^d=\ker(\bigoplus_{x\in X_s^0}i_{x*}\Wno_{x,\log}^d\to \bigoplus_{x\in X_s^1}i_{x*}\Wno_{x,\log}^{d-1} )$ and $X_s^i$ is the set of codimension $i$ points on $X_s$, for $i=0,1$.
\end{theoremalpha}

Our goal in the second section is to develop an absolute coherent duality on $X$. This can be achieved by combining an absolute coherent duality on the local ring $B=\text{Spec}(\Fp[[t]])$ and a relative duality for $f$. For the former, we use the Grothendieck local duality, and the latter is following theorem. 

\begin{theoremalpha}[see Theorem \ref{fc}]
Let $f: X \to B=\text{Spec}(\Fp[[t]])$ be a projective strictly semistable scheme. Then there is a canonical trace isomorphism
$$\text{Tr}_f : \Ome_X^{d+1}[d]  \xrightarrow{\cong}   f^!\Ome_B^1.$$
\end{theoremalpha}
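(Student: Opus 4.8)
The plan is to prove the statement by factoring $f$ as a closed immersion followed by a smooth projection and composing the associated Grothendieck-duality isomorphisms. Since $f$ is projective, I would choose a factorization $f = p \circ \iota$, where $\iota \colon X \hookrightarrow P := \P^N_B$ is a closed immersion and $p \colon P \to B$ is the structure map, smooth of relative dimension $N$. By the functoriality of the upper-shriek functor one has $f^! \cong \iota^! \circ p^!$, so it suffices to compute $p^! \Ome_B^1$ and then apply $\iota^!$, keeping track of the canonical identifications at each stage.

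For the smooth morphism $p$, relative duality gives the canonical isomorphism $p^! \mathcal{G} \cong p^* \mathcal{G} \otimes \Ome_{P/B}^N[N]$ for any line bundle $\mathcal{G}$ on $B$. Applying this to $\mathcal{G} = \Ome_B^1$ and using that the first fundamental sequence $0 \to p^* \Ome_B^1 \to \Ome_P^1 \to \Ome_{P/B}^1 \to 0$ is locally split exact (as $p$ is smooth), passage to top exterior powers yields $\Ome_{P/B}^N \otimes p^* \Ome_B^1 \cong \Ome_P^{N+1}$. Hence $p^! \Ome_B^1 \cong \Ome_P^{N+1}[N]$, which is exactly the asserted form of the statement for the \emph{smooth} ambient space $P$.

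It remains to apply $\iota^!$. Since $X$ and $P$ are regular, $\iota$ is a regular closed immersion, say of codimension $c = N - d$, and the Gysin isomorphism gives $\iota^! \mathcal{F} \cong \iota^* \mathcal{F} \otimes \det \mathcal{N}_{X/P}[-c]$ for a line bundle $\mathcal{F}$, where $\mathcal{N}_{X/P} = (\mathcal{I}/\mathcal{I}^2)^\vee$ is the normal bundle and $\mathcal{I}$ is the ideal sheaf of $X$. Taking $\mathcal{F} = \Ome_P^{N+1}$ and recalling $N - c = d$, we obtain
\[ f^! \Ome_B^1 \cong \iota^!\bigl(\Ome_P^{N+1}[N]\bigr) \cong \iota^* \Ome_P^{N+1} \otimes \det \mathcal{N}_{X/P}\,[d]. \]
The crux is therefore to identify $\iota^* \Ome_P^{N+1} \otimes \det \mathcal{N}_{X/P}$ with $\Ome_X^{d+1}$. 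For this I would use the conormal sequence $0 \to \mathcal{I}/\mathcal{I}^2 \to \iota^* \Ome_P^1 \to \Ome_X^1 \to 0$: if this sequence is locally split exact, then taking determinants gives $\iota^* \Ome_P^{N+1} \cong \Ome_X^{d+1} \otimes \det(\mathcal{I}/\mathcal{I}^2)$, and tensoring by $\det \mathcal{N}_{X/P} = \det(\mathcal{I}/\mathcal{I}^2)^\vee$ yields the desired $\Ome_X^{d+1}$.

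The main obstacle is precisely the exactness and local splitting of this conormal sequence, which is \emph{not} a formal consequence of the semistability of $f$: indeed $\Ome_{X/B}^1$ fails to be locally free along the singular locus of the special fiber. What saves the argument is that the \emph{total space} $X$ is regular and, in the relevant sense, smooth over $\Fp$, so that $\Ome_X^1$ is locally free of rank $d+1 = \dim X$. I would verify this on the strictly semistable local model $X \cong \text{Spec}\bigl(\Fp[[t]][x_0,\dots,x_d]/(x_0\cdots x_r - t)\bigr)$: the defining relation contributes $dt = \sum_{i \le r}\bigl(\prod_{j \le r,\, j \ne i} x_j\bigr)\,dx_i$, which expresses $dt$ through the $dx_i$ and shows that $\Ome_X^1$ is freely generated by $dx_0,\dots,dx_d$, even along $x_0 = \cdots = x_r = 0$. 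Hence $\iota \colon X \hookrightarrow P$ is an immersion of regular $\Fp$-schemes and its conormal sequence is locally split exact, completing the identification. Some care is needed with the non-finite-type, complete base $B$ and the corresponding convention for $\Ome_B^1$ (the rank-one module of continuous differentials), but this affects neither the local computation nor the duality formalism. Finally, the canonicity of $\text{Tr}_f$ and its independence of the chosen factorization follow from the standard compatibilities of the trace maps for $p$ and $\iota$ in Grothendieck duality, assembling the above into the asserted canonical isomorphism $\Ome_X^{d+1}[d] \xrightarrow{\cong} f^! \Ome_B^1$.
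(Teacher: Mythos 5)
Your proposal is correct and follows essentially the same route as the paper: factor $f=p\circ\iota$ through $\mathbb{P}^N_B$, compute $p^!\Ome_B^1\cong\Ome_{\mathbb{P}^N_B}^{N+1}[N]$ via the smooth case, and then use the Koszul/Gysin formula for $\iota^!$ together with the left-exactness of the conormal sequence (the paper's Lemma \ref{regularimmersion}) to identify the result with $\Ome_X^{d+1}[d]$. The only cosmetic difference is that you verify the local freeness of $\Ome_X^1$ (the key input for the conormal sequence) by a direct computation on the strictly semistable local model, whereas the paper deduces it abstractly from regularity and finiteness of the absolute Frobenius via Kunz's theorem and the existence of $p$-bases; both are valid.
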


In the third section, we study the duality theorems of logarithmic de Rham-Witt sheaves on our projective semistable scheme. In fact, we will prove two duality theorems. The first one is for $H^i(X, \Wno^j_{X,\log})$, which we call unramified duality.
\begin{theoremalpha}[see Theorem \ref{unramifiedduality}] The natural pairing
\[ H^i(X, \Wno^j_{X,\log})\times H^{d+2-i}_{X_s}(X, \Wno^{d+1-j}_{X,\log}) \rightarrow H^{d+2}_{X_s}(X, \Wno^{d+1}_{X,\log})\xrightarrow{Tr} \Zpn\]
induces an isomorphism
\[  H^i(X, \Wno^j_{X,\log}) \xrightarrow{\cong} Hom_{\Zpn}(H^{d+2-i}_{X_s}(X, \Wno^{d+1-j}_{X,\log}),\Zpn). \]
of $\Zpn$-modules.
\end{theoremalpha}
The second is the above main Theorem \ref{theorema} for $H^i(U, \Wno^j_{U,\log})$. We call it ramified duality. To define the pairing, we do further studies on the sheaves $\Wno_{X|mD,\log}^r$ in  the middle two subsections.

In the last section, we will compare our new filtration with previously known filtrations in some special cases. The first interesting case would be the filtration  in local ramification theory. We can show that for the local field $K=\Fp((t))$ our filtration agree with the non-log version of Brylinski-Kato filtration $\text{fil}_{\bullet}H^1(K, \Z/p^n\Z)$ \cite{brylinski} \cite{katoswanconductor} defined by Matsuda \cite{matsuda}:
\begin{propalpha}[Proposition \ref{filagree}]
For any integer $m \geq 1$, we have $\text{Fil}_mH^1(K, \Z/p^n\Z)=\text{fil}_mH^1(K, \Z/p^n\Z)$.
\end{propalpha}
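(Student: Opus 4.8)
The plan is to specialise the duality of Theorem~\ref{theorema} to relative dimension $d=0$, where both filtrations live on the same group $H^1(K,\Zpn)$, and to present them as the two annihilators attached to one and the same perfect pairing. In this case the local model is $X=\text{Spec}(\Fp[[t]])$, $U=\text{Spec}(K)$ with $K=\Fp((t))$, the special fibre $X_s=\text{Spec}(\Fp)$ is the closed point, and the only divisor available is $D=[X_s]$, so that $mD=m\cdot[X_s]$. Unwinding Definition~\ref{newfiltration}, the subgroup $\text{Fil}_mH^1(K,\Zpn)$ is exactly the annihilator, under the perfect pairing
\[ H^1(K,\Zpn)\times\varprojlim_{m'}H^1_{X_s}(X,\Wno^1_{X|m'D,\log})\longrightarrow\Zpn \]
of Theorem~\ref{theorema}, of the kernel of the projection onto the $m$-th level $H^1_{X_s}(X,\Wno^1_{X|mD,\log})$; equivalently, it is the group of characters ``detected at pole level $m$''. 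It therefore suffices to make this pairing explicit and to match its level-$m$ piece with the one underlying $\text{fil}_\bullet$.

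On the Brylinski--Kato--Matsuda side I would recall the Artin--Schreier--Witt isomorphism $H^1(K,\Zpn)\cong W_n(K)/(F-1)W_n(K)$ together with the description, due to Kato and refined by Matsuda, of $\text{fil}_mH^1(K,\Zpn)$ as the image of the pole-bounded submodule $\text{fil}_mW_n(K)\subseteq W_n(K)$. The decisive point is that $\text{fil}_\bullet$ is governed by Kato's refined Swan conductor, i.e. by a residue pairing whose prototype in length one ($n=1$, Artin--Schreier) is $(\bar a,\omega)\mapsto\text{Tr}_{\Fp/\F_p}\,\text{res}_K(a\,\omega)$ for $a\in K$ and $\omega\in\Ome^1_K$, the higher lengths being handled through the $d\log$ symbol on Witt vectors. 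Under this pairing $\text{fil}_m$ is precisely the annihilator of the forms (resp. their Witt analogues) with pole order at most $m$ along the closed point.

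To identify the new dual side I would invoke the purity isomorphism of Theorem~\ref{newpurity} in the case $d=0$, namely $Gys^{\log}_{i,n}\colon\nu^0_{n,X_s}[-1]\xrightarrow{\ \cong\ }Ri^!\Wno^1_{X,\log}$. Since $\nu^0_{n,X_s}=\Zpn$ on $X_s=\text{Spec}(\Fp)$, this gives $H^2_{X_s}(X,\Wno^1_{X,\log})\cong H^1(\Fp,\Zpn)\cong\Zpn$ and shows that the trace map of Theorem~\ref{theorema} is carried by the Gysin isomorphism onto the sum-of-residues map $\Ome^1_K\to\Fp$. I would then compute the local cohomology groups $H^1_{X_s}(X,\Wno^1_{X|mD,\log})$ from the explicit local presentation of the relative sheaf in Definition~\ref{twistedlog}, and identify the transition maps in the inverse system with the inclusions of pole-bounded modules. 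The outcome I expect is that, under the residue pairing, the kernel of the projection $\varprojlim_{m'}H^1_{X_s}(X,\Wno^1_{X|m'D,\log})\twoheadrightarrow H^1_{X_s}(X,\Wno^1_{X|mD,\log})$ is exactly the annihilator of $\text{fil}_mH^1(K,\Zpn)$, so that the level-$m$ quotient becomes the Pontrjagin dual of $\text{fil}_mH^1(K,\Zpn)$.

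Granting the two identifications above, the proposition is formal: by the first paragraph $\text{Fil}_mH^1(K,\Zpn)$ is the annihilator of that same kernel, hence it coincides with $\text{fil}_mH^1(K,\Zpn)$. The main obstacle is exactly this comparison of pairings, which I expect to split into two parts: (a) checking that the abstract trace of Theorem~\ref{theorema}, transported through the Gysin isomorphism of Theorem~\ref{newpurity}, is genuinely the residue map, which requires tracing the coherent-duality and Artin--Schreier--Witt constructions through degree $d=0$; and (b) matching the pole filtration on $\varprojlim_{m'}H^1_{X_s}(X,\Wno^1_{X|m'D,\log})$ with Matsuda's Witt-vector filtration $\text{fil}_mW_n(K)$, which demands careful bookkeeping of the $p$-power weights and of the non-logarithmic normalisation built into Definition~\ref{twistedlog}. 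Once this indexing is pinned down, the equality $\text{Fil}_m=\text{fil}_m$ drops out.
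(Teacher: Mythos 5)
Your overall skeleton agrees with the paper's: both proofs reduce the statement to identifying the Pontrjagin dual of the level-$m$ piece of the inverse system, so that $\text{Fil}_m$ becomes the annihilator of the kernel of $\varprojlim_{m'} H^1_s(B, W_n\Omega^1_{B|m'D,\log}) \twoheadrightarrow H^1_s(B, W_n\Omega^1_{B|mD,\log})$. Where you diverge is in how that level-$m$ piece and the resulting filtration comparison are handled. The paper does not pass through a residue pairing with pole-bounded differential forms: it computes $H^1_s(B, W_n\Omega^1_{B|mD,\log}) \cong K^\times/(K^\times)^{p^n}\cdot U_K^m$ directly from the localization sequence and the Bloch--Gabber--Kato isomorphism (Lemma \ref{comparelimit}, by induction on $n$ via the exact sequence of Theorem \ref{twistedexact}), so the dual side is a quotient of units rather than of forms; it then quotes Brylinski's theorem, which says that under the Artin--Schreier--Witt symbol the orthogonal complement of $\text{fil}^{\log}_{m-1}H^1(K,\Zpn)$ is $U_K^m\cdot(K^\times)^{p^n}/(K^\times)^{p^n}$, combined with the Kato--Matsuda comparison $\text{fil}_m=\text{fil}^{\log}_{m-1}$ for $(m,p)=1$ and the observation that the jumps of $U_K^\bullet$ avoid multiples of $p$ because the residue field is perfect (Corollary \ref{localcompatibletheorem}). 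Your route --- identifying the trace with the residue via the Gysin isomorphism and matching $\text{fil}_\bullet$ against Kato's refined Swan conductor --- is viable and more self-contained in spirit, but it relocates rather than removes the hard input: the statement you call decisive, that $\text{fil}_m$ is cut out by a residue pairing against bounded forms, is essentially Brylinski's theorem restated, and proving it from scratch for all $n$ is the bulk of the work that the paper instead cites.

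Two concrete cautions. First, your indexing is off as stated: $\text{fil}_m$ is increasing while the annihilator of the forms with pole order at most $m$ is decreasing in $m$, and the relative sheaf $W_n\Omega^1_{X|mD,\log}$ of Definition \ref{twistedlog} is generated by $d\log[x]_n$ with $x\equiv 1 \bmod \m^m$ (high vanishing, not bounded poles); the correct statement requires the shift $\text{fil}_m=\text{fil}^{\log}_{m-1}$ and the reduction to $(m,p)=1$, without which the ``formal'' conclusion does not drop out. Second, both your argument and the paper's ultimately rest on the compatibility of the abstract duality pairing of Theorem \ref{main theorem} with a classical symbol (Artin--Schreier--Witt in the paper, Schmid--Witt residue in your version); make sure that comparison is actually carried out rather than assumed.
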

\bigskip
\textbf{Acknowledgments}
The author would like to thank heartily Uwe Jannsen for his advice and support over
the past few years, and thank Shuji Saito for his insightful discussions and encouragement. The author also want to thank Moritz Kerz for numerous constructive comments, to Georg Tamme, Patrick Forr\'e,  Florian Strunk and Yitao Wu for helpful conversations, and to the anonymous Referee for her/his valuable comments.
\section{Purity}
\subsection{Logarithmic de Rham-Witt sheaves}
Let $X$ be a scheme of dimension $d$ over  a perfect field $k$ of characteristic $p>0$, and let $W_n(k)$ be the ring of  Witt vectors of length $n$. 

Based on ideas of Lubkin, Bloch and Deligne, Illusie  defined the de Rham-Witt complex \cite{illusiederham}. Recall the de Rham-Witt complex $W\Omek^{\bullet}$ is the inverse limit of an  inverse system $(\Wno_{X/k}^{\bullet})_{n\geq 1}$ of complexes
\[ \Wno_{X/k}^{\bullet}:=(\Wno_{X/k}^{0} \xrightarrow{d} \Wno_{X/k}^{1} \rightarrow \cdots  \xrightarrow{d} \Wno_{X/k}^{i} \xrightarrow{d} \cdots ) \]
of sheaves of $W_n\mathcal{O}_X$-modules on the Zariski site of $X$. The complex $\Wno_{X/k}^{\bullet}$ is called the de Rham-Witt complex of level $n$.

This complex $\Wno_{X/k}^{\bullet}$ is a strictly anti-commutative differential graded $W_n(k)$-algebra. In the rest of this section, we will omit the subscript $/k$ to simplify the notation.

 We have the following operators on the de Rham-Witt complex (\cite[I]{illusiederham}):
\begin{itemize}
\item[(i)] The projection $ R: \Wno_X^{\bullet} \to W_{n-1}\Omega_X^{\bullet}$, which is a surjective homomorphism of differential graded algebras.
 \item[(ii)] The Verschiebung  $V: \Wno_X^{\bullet} \to W_{n+1}\Omega_X^{\bullet}$ , which is an additive homomorphism.
 \item[(iii)] The Frobenius  $F: \Wno_X^{\bullet} \to W_{n-1}\Omega_X^{\bullet} $, which is a homomorphism of differential graded algebras.
\end{itemize}

\begin{proposition}[{\cite[I 1.13,1.14]{illusiederham}}]
\item[(i)] For each $n\geq 1$, and each $i$, $\Wno_X^i$ is a quasi-coherent $W_n\mathcal{O}_X$-module.
\item[(ii)] For any \'etale morphism $f: X \to Y$, $f^*\Wno_Y^i \to \Wno_X^i$ is an isomorphism of  $W_n\mathcal{O}_X$-modules. 
\end{proposition}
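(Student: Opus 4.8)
The plan is to reduce both parts to affine, local assertions about the de Rham-Witt complex of a ring and then to the behavior of the Witt vector functor under localization and étale extension. The basic model is that over an affine $\mathrm{Spec}(A)$ the sheaf $\Wno_X^i$ is the one associated to the $W_n(A)$-module $W_n\Omega^i_A$; since $W_n\mathcal{O}_X$ is the structure sheaf of the Witt scheme, whose underlying space agrees with that of $X$ because $W_n(A)\to A$ is surjective with nilpotent kernel, both claims become statements about how $A \mapsto W_n\Omega^\bullet_A$ interacts with passage to a Zariski or étale neighborhood.

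For (i), I would first prove the localization formula $W_n(A_f)=W_n(A)_{[f]}$, where $[f]$ is the Teichm\"uller lift: localizing $W_n(A)$ at $[f]$ inverts $f$ in each ghost component, and since $W_n$ commutes with the relevant filtered colimits this identifies the two rings. I would then upgrade this to the de Rham-Witt complex, showing $W_n\Omega^i_{A_f}=(W_n\Omega^i_A)_{[f]}=W_n\Omega^i_A\otimes_{W_n(A)}W_n(A_f)$; this holds because the generators together with the operators $d,F,V,R$ all commute with localization. Granting these formulas, the value of $\Wno_X^i$ on a basic open $D(f)$ is exactly the localization of the $W_n(A)$-module $W_n\Omega^i_A$, which is precisely the compatibility defining the quasi-coherent sheaf attached to that module. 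Hence $\Wno_X^i$ is quasi-coherent.

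For (ii), I would treat $i=0$ first. The surjection $W_n(A)\to A$ has nilpotent kernel $VW_{n-1}(A)$, so by topological invariance of the étale site an étale $A$-algebra $B$ lifts uniquely to an étale $W_n(A)$-algebra, and functoriality together with uniqueness identify this lift with $W_n(B)$; in particular $W_n(A)\to W_n(B)$ is étale, hence flat, and $f^*W_n\mathcal{O}_Y\xrightarrow{\cong}W_n\mathcal{O}_X$. For general $i$ and $n$ I would proceed by dévissage, using the fundamental exact sequences of Illusie that express $W_n\Omega^i$ in terms of $W_{n-1}\Omega^i$ and the plain Kähler data $\Omega^i,\Omega^{i-1}$ through $R$, $V^{n-1}$ and $dV^{n-1}$. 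The base case $n=1$ is the classical étale invariance $\Omega^i_{B}=\Omega^i_A\otimes_A B$, coming from $\Omega^1_{B/A}=0$ and $\Omega^i=\wedge^i\Omega^1$. Since $W_n(A)\to W_n(B)$ is flat, tensoring with $W_n(B)$ preserves these sequences, the natural comparison maps form a morphism between the two resulting sequences, and the five lemma propagates the isomorphism from the known pieces to $W_n\Omega^i$, completing the induction on $n$.

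The main obstacle is the input on Witt vectors rather than the formal dévissage: establishing the Teichm\"uller localization $W_n(A_f)=W_n(A)_{[f]}$ for (i), and the étale-lifting statement making $W_n(A)\to W_n(B)$ étale via topological invariance for (ii). These are the only places where the special nature of $W_n$ enters; once they are available, the flatness they provide makes the relevant tensor products exact, and the remaining work is the bookkeeping needed to check that the comparison maps are compatible with $d,F,V,R$ and with the fundamental exact sequences.
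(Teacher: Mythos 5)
The paper offers no proof of this proposition---it is quoted verbatim from Illusie \cite[I 1.13, 1.14]{illusiederham}---and your outline is essentially Illusie's own argument: quasi-coherence via the identification $W_n(A_f)=W_n(A)_{[f]}$ and the compatibility of $W_n\Omega^\bullet$ with localization, and \'etale base change via the fact that $W_n(B)$ is the unique \'etale lift of $B$ along the nilpotent thickening $W_n(A)\to A$, followed by induction on $n$ using the canonical filtration. The only point to watch in the d\'evissage is that the graded pieces $V^{n-1}\Omega^i+dV^{n-1}\Omega^{i-1}$ are not simply $\Omega^i\oplus\Omega^{i-1}$ but quotients involving the higher cycles and boundaries $Z_j\Omega$, $B_j\Omega$ carrying a Frobenius-twisted module structure, so one also needs the compatibility of the Cartier operator and of $F$ with \'etale base change (Illusie's $W_{n-1}(B)\cong W_n(B)\otimes_{W_n(A),F}W_{n-1}(A)$), which your closing remark about checking compatibility with $d,F,V,R$ correctly anticipates.
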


\begin{remark}
Let $\mathscr{F}$ be a quasi-coherent on $X$, we denote its associated sheaf on $X_{\text{\'et}}$ by $\mathscr{F}_{\text{\'et}}$, then we have $H^i(X_{\text{Zar}}, \mathscr{F})=H^i(X_{\text{\'et}},\mathscr{F}_{\text{\'et}})$, for all $i\geq 0$\cite[III 3.7]{milneetale}. By the above proposition, we may also denote $\Wno_X^i$ as sheaf on $X_{\text{\'et}}$,
and its \'etale and Zariski cohomology groups are agree.
\end{remark}

Cartier operators are another type of operators on the de Rham-Witt complex.  Before stating the theorem, we set
\[ Z\Wno_{X}^i:=\text{Ker}(d: \Wno_X^i \to \Wno_X^{i+1});\]
\[ B\Wno_{X}^i:=\text{Im}(d: \Wno_X^{i-1} \to \Wno_X^{i});\]
\[ \mathscr{H}^i(\Wno_X^{\bullet}):=Z\Wno_{X}^i/B\Wno_{X}^i ;\]
\begin{equation*}
\begin{array}{rl}
Z_1W_{n}\Ome_X^i:&=\text{Im}(F: W_{n+1}\Ome_X^i \to \Wno_X^i)\\
&=\text{Ker}(F^{n-1}d: \Wno_X^i \xrightarrow{d} \Wno_X^{i+1} \xrightarrow{F^{n-1}} \Ome_X^{i+1}).
\end{array}
\end{equation*}
Since $W_1\Omega_X^i \cong \Omega_X^i$, $ZW_1\Omega_X^i$(resp. $BW_1\Omega_X^i$) is also denoted by $Z\Omega_X^i $(resp. $ B\Omega_X^i$). Note that $Z\Omega_X^i$, $B\Omega_X^i$, and $\mathscr{H}^i(\Omega_X^{\bullet})$ can be given $\mathcal{O}_X$-module structures via the absolute Frobenius morphism $F$ on $\mathcal{O}_X$. 
\begin{theorem}[{Cartier, \cite[Thm. 7.2]{katz}, \cite[Thm. 3.5]{illusiefrobenius}}] \label{catieroperator}
Suppose X is of finite type over $k$. Then there exists a unique $p$-linear homomorphism of graded $\mathcal{O}_X$-algebras
\[ C^{-1}:  \bigoplus \Omega_X^i\longrightarrow    \bigoplus\mathscr{H}^i(\Omega_X^{\bullet}) \]
satisfying the following two conditions:
\begin{itemize}
\item[(i)] For $a \in \mathcal{O}_X$, $C^{-1}(a)=a^p$;
\item[(ii)] For $dx\in \Omega_X^1$, $C^{-1}(dx)=x^{p-1}dx$.
\end{itemize}
If $X$ is moreover smooth over $k$, then $C^{-1}$ is an isomorphism. It is called inverse Cartier isomorphism. The inverse of $C^{-1}$ is called Cartier operator, and is denoted by $C$. 
\end{theorem}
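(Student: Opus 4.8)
The plan is to treat the two assertions separately: first the existence and uniqueness of the $p$-linear graded-algebra homomorphism $C^{-1}$ for $X$ of finite type over $k$, and then its bijectivity under the smoothness hypothesis. For uniqueness I would note that $\bigoplus_i\Omega_X^i$ is generated as a graded $\mathcal{O}_X$-algebra by $\Omega_X^0=\mathcal{O}_X$ together with the exact forms $dx$, $x\in\mathcal{O}_X$; hence a $p$-linear algebra homomorphism is forced to send $a\,dx_1\wedge\cdots\wedge dx_i$ to $a^p\,x_1^{p-1}dx_1\wedge\cdots\wedge x_i^{p-1}dx_i$, so at most one such $C^{-1}$ can exist. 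The real content is existence, i.e. checking this formula is compatible with the relations of the de Rham algebra. Working on an affine cover (the assignment is visibly natural and will sheafify), the relations to verify on generators are $k$-linearity and the Leibniz rule for $d$, together with $dx\wedge dx=0$. The last two are exact identities already in $Z\Omega_X^{\bullet}$: one computes $(xy)^{p-1}d(xy)=x^{p}y^{p-1}dy+y^{p}x^{p-1}dx$, which agrees on the nose with $C^{-1}(x\,dy)+C^{-1}(y\,dx)$, while $(x^{p-1}dx)\wedge(x^{p-1}dx)=0$.

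The one genuinely cohomological point — and the step I expect to be the main obstacle — is additivity, namely that $(x+y)^{p-1}d(x+y)\equiv x^{p-1}dx+y^{p-1}dy\pmod{B\Omega_X^1}$, so that the value is well defined only after passing to $\mathscr{H}^1$. Here I would lift to $\mathbb{Z}[x,y]$ and use that $P:=\tfrac{1}{p}\big((x+y)^p-x^p-y^p\big)$ has integral coefficients (divisibility of the intermediate binomial coefficients by $p$). Differentiating the integral identity $(x+y)^p-x^p-y^p=pP$ and dividing by $p$ in the torsion-free module of differentials gives
\[ dP=(x+y)^{p-1}d(x+y)-x^{p-1}dx-y^{p-1}dy, \]
and reduction mod $p$ exhibits the discrepancy as the exact form $d\bar P\in B\Omega_X^1$. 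Once additivity modulo $B\Omega_X^1$ is established, the formula descends to a well-defined $p$-linear map $\Omega_X^1\to\mathscr{H}^1(\Omega_X^{\bullet})$, and the universal property of the exterior algebra — the target $\bigoplus_i\mathscr{H}^i(\Omega_X^{\bullet})$ being a graded-commutative algebra — extends it uniquely to the desired $C^{-1}$, with conditions (i) and (ii) holding by construction.

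For the second assertion I would argue locally, since both source and target are quasi-coherent and bijectivity may be checked on a Zariski/\'etale-local model. Using that $\Omega^1_{X/k}=f^*\Omega^1_{\mathbb{A}^d_k/k}$ for an \'etale $f:X\to\mathbb{A}^d_k$, and that the formation of $\mathscr{H}^i(\Omega_X^{\bullet})$ and of $C^{-1}$ commutes with \'etale base change (the relative Frobenius being an isomorphism in the \'etale case), I reduce to $X=\mathbb{A}^d_k$. The de Rham complex of $k[t_1,\dots,t_d]$ is the $k$-linear tensor product of $d$ copies of that of $\mathbb{A}^1_k$, so by the Künneth formula together with the multiplicativity of $C^{-1}$ it suffices to treat $d=1$.

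Finally, for $X=\mathbb{A}^1_k=\operatorname{Spec}k[t]$ the computation is explicit. In degree $0$ one has $\mathscr{H}^0=\ker d=k[t^p]$, and $C^{-1}(f)=f^p$ identifies $\mathcal{O}_X$ with this free rank-one $\mathcal{O}_X$-module (via Frobenius) on the generator $1$, bijectively because $k$ is perfect. In degree $1$, the exact forms are spanned by $\{t^j\,dt: j\not\equiv p-1 \bmod p\}$, so $\mathscr{H}^1$ is free of rank one over $\mathcal{O}_X$ (acting via Frobenius) on the class of $t^{p-1}dt=C^{-1}(dt)$; since $C^{-1}(t^m\,dt)=t^{mp+p-1}dt$ hits precisely this generator $p$-linearly, $C^{-1}$ is again bijective. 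Thus $C^{-1}$ is an isomorphism in each degree, completing the proof. The whole difficulty is concentrated in the additivity computation of the second paragraph; once $X$ is localized and reduced to $d=1$, the isomorphism statement is a direct calculation.
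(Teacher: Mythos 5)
The paper does not prove this statement: it is quoted as a classical theorem of Cartier with references to \cite[Thm.~7.2]{katz} and \cite[Thm.~3.5]{illusiefrobenius}, so there is no in-paper proof to compare against. Your argument is correct and is essentially the standard proof from those references: uniqueness from generation of $\bigoplus_i\Omega_X^i$ by $\mathcal{O}_X$ and the exact $1$-forms, existence by checking Leibniz and alternation on the nose and additivity via the integral polynomial $P=\tfrac1p\bigl((x+y)^p-x^p-y^p\bigr)$ whose differential exhibits the discrepancy as an exact form, and bijectivity in the smooth case by \'etale reduction to $\mathbb{A}^d_k$, K\"unneth, and the explicit rank-one computation on $\mathbb{A}^1_k$ (where perfectness of $k$ is used exactly where you invoke it).
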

Higher Cartier operators can be defined as follows, which comes back to the above theorem in the case $n=1$.
\begin{proposition}[{\cite[III]{illusieraynaud},  \cite[$\S$ 4]{katoI}}]\label{highercartier}
If $X$ is smooth over $k$, then there is a unique higher Cartier morphism $C: Z_1W_{n+1}\Ome_X^i \to \Wno_X^i$ such that the diagram
\[ \xymatrix{ Z_1W_{n}\Ome_X^i \ar[dr]_C \ar[rr]^V && W_{n+1}\Ome_X^q\\
&\Wno_X^i \ar[ur]_p&
 }\]
 is commutative. We have an isomorphism \[ \Wno_X^i \xrightarrow[\cong]{F} Z_1W_{n+1}\Ome_X^i/dV^{n-1}\Ome_X^{i-1},\]
 and an exact sequence
 \[   0 \to dV^{n-1}\Ome_X^{i-1} \to  Z_1W_{n+1}\Ome_{X}^i \xrightarrow{C} \Wno_X^i.\]

\end{proposition}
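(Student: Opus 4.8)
The plan is to prove all three assertions simultaneously by induction on $n$, the case $n=1$ being precisely the classical Cartier isomorphism of Theorem~\ref{catieroperator}: there $Z_1W_{2}\Ome_X^i$ plays the role of the closed forms $Z\Ome_X^i$, the subsheaf $dV^{0}\Ome_X^{i-1}=d\Ome_X^{i-1}=B\Ome_X^i$ is the exact forms, and the map labelled $F$ reduces to the inverse Cartier operator $C^{-1}\colon \Ome_X^i\xrightarrow{\cong}\mathscr{H}^i(\Ome_X^\bullet)$, which explains the name (it is the $p$-linear Frobenius $a\mapsto a^p$). The three statements are really one: the diagram characterizes $C$ uniquely, the exact sequence computes $\ker C=dV^{n-1}\Ome_X^{i-1}$, and the isomorphism says $C$ is onto with inverse the map $F$. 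So the substance is to construct $C$ and to identify its kernel and cokernel, and throughout smoothness of $X/k$ is indispensable, both to invoke Theorem~\ref{catieroperator} and to have \'etale-local coordinates in which $W_n\Ome_X^\bullet$ is explicitly described.

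First I would construct $C$ from the defining relation $V=p\,C$, reading it as ``divide $V$ by $p$''. For $\omega\in Z_1W_{n+1}\Ome_X^i=\operatorname{Im}(F)$, write $\omega=F\eta$; then $V\omega=VF\eta=p\,\eta$ is divisible by $p$, and one defines $C\omega$ to be the class of $\eta$, suitably restricted, in $\Wno_X^i$. Well-definedness amounts to the relations $FV=VF=p$ together with the description of $\ker R$, and uniqueness is immediate from the injectivity of $V$ (equivalently of the structural map $p\colon \Wno_X^i\to W_{n+1}\Ome_X^i$): if $p\,C=p\,C'$ then $C=C'$. At this stage I would also record that $C$ commutes with the restriction $R$ and the Verschiebung $V$ in the evident sense, since this compatibility is exactly what feeds the induction.

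It then remains to prove the exact sequence and the isomorphism, which I would do by d\'evissage along the fundamental short exact sequence
\[ 0\to V^{n}\Ome_X^i+dV^{n}\Ome_X^{i-1}\to W_{n+1}\Ome_X^i\xrightarrow{R} \Wno_X^i\to 0 \]
together with its analogue at one level lower. Applying $C$ and the snake lemma, the computation of $\ker C$ and the surjectivity of $C$ at level $n+1$ reduce to the same statements at level $n$ --- handled by the inductive hypothesis --- plus the contribution of the bottom graded piece, where the classical Cartier isomorphism and the relation $FdV=d$ pin the kernel down to exactly $dV^{n-1}\Ome_X^{i-1}$ and show that $F$ furnishes the inverse of $C$ modulo this subsheaf. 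The main obstacle, and where essentially all the work lies, is precisely this kernel identification: one must verify that $C$ interacts correctly with the differential --- through $FdV=d$ and the consequent $Vd=p\,dV$ --- so that no spurious closed classes survive, and that the subsheaf $dV^{n-1}\Ome_X^{i-1}$ is neither too big nor too small. I expect to carry this out \'etale-locally, using Illusie's explicit basis of $W_n\Ome_X^\bullet$ in a system of local coordinates (again available by smoothness), reducing the identity to a direct, if delicate, computation with Witt differentials tracked consistently at every level of the induction.
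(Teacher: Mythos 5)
The paper offers no proof of this proposition --- it is quoted verbatim from \cite[III]{illusieraynaud} and \cite[\S 4]{katoI} --- so there is no in-house argument to compare against. Your sketch follows the standard route of those references: construct $C$ by ``dividing $V$ by $p$'' via $VF=p$, anchor the induction at $n=1$ in the classical Cartier isomorphism, and identify kernel and image by d\'evissage along $0\to V^{n}\Omega_X^i+dV^{n}\Omega_X^{i-1}\to W_{n+1}\Omega_X^i\xrightarrow{R}W_n\Omega_X^i\to 0$. In outline this is sound, and your reading of the base case ($Z\Omega_X^i$ as the source, $dV^{0}\Omega_X^{i-1}=B\Omega_X^i$ as the kernel, $F=C^{-1}$) is the correct one; note only that it presupposes the indexing of the displayed diagram ($Z_1W_n\Omega_X^i=\operatorname{Im}(F\colon W_{n+1}\Omega_X^i\to W_n\Omega_X^i)$, so $Z_1W_1\Omega_X^i=Z\Omega_X^i$), the statement's text and diagram being internally inconsistent on this point.

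Two substantive caveats. First, your uniqueness argument asserts that $V$ is injective; it is not, for $i\ge 1$: from $Vd=p\,dV$ one gets $V(dV^{n-1}a)=p\,dV^{n}a=0$ while $dV^{n-1}a\ne 0$, so $V$ kills precisely the subsheaf $dV^{n-1}\Omega_X^{i-1}$ that the proposition identifies as $\ker C$ --- exactly as it must, given $V=\underline{p}\circ C$ with $\underline{p}$ injective. What uniqueness actually requires, and what is true (Illusie I 3.4, itself proved via the standard local basis), is only the injectivity of the multiplication-by-$p$ map $\underline{p}\colon W_n\Omega_X^i\to W_{n+1}\Omega_X^i$; the word ``equivalently'' must go. Second, the well-definedness of $C(F\eta):=R\eta$ is not a formal consequence of $FV=VF=p$: it is the inclusion $\ker(F\colon W_{n+1}\Omega_X^i\to W_n\Omega_X^i)\subseteq\ker R$, and this, together with the exact determination of $\ker C$, is the entire content of the cited results. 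You correctly defer it to an \'etale-local computation with Illusie's basis, but as written the proposal is an accurate roadmap rather than a proof: every nontrivial step is pushed into that one deferred computation.
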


For a smooth variety $X$ over $k$,  the composite morphism
\[ W_{n+1}\Ome_X^{i} \xrightarrow{F} \Wno_X^{i} \twoheadrightarrow \Wno_X^{i}/dV^{n-1}\Ome_X^{i-1}\] 
is trivial on $\text{Ker}(R:W_{n+1}\Ome_X^{i} \to \Wno_X^{i} )=V^n\Ome_X^i+dV^n\Ome_X^{i-1}$. Therefore $F$ induces a morphism $$ F: \Wno_X^{i} \to \Wno_X^{i}/dV^{n-1}\Ome_X^{i-1}. $$
\begin{definition}
Let $X$ be a smooth variety over $k$. For any positive integer $n$, and any non-negative integer $i$, we define the $i$-th logarithmic de Rham-Witt sheaf of length $n$ as
\[ \Wno_{X,\log}^{i}:=\text{Ker}(\Wno_X^i\xrightarrow{1-F} \Wno_X^{i}/dV^{n-1}\Ome_X^{i-1}).\]
For any $x\in X$, we denote $\Wno_{x,\log}^i:=\Wno_{\kappa(x),\log}^i$, where $\kappa(x)$ is the residue field at $x$.
\end{definition}

\begin{remark}(Local description of $\Wno_{X,\log}^{i}$  \cite[I 1.3]{illusiederham})\label{localdescriptionoflog}
The $i$-th logarithmic de Rham-Witt sheaf $\Wno_{X,\log}^{i}$ is the additive subsheaf of  $\Wno_{X}^i $, which is  \'etale locally generated by sections $d\log[x_1]_n\cdots d\log[x_i]_n$, where $x_i\in  \mathcal{O}_X^{\times}$, $[x]_n$ is the Teichm\"uller representative of $x$ in $W_n\mathcal{O}_X$, and $d\log[x]_n:=\frac{d[x]_n}{[x]_n}$. In other words, it is generated by the image of 
\[ \begin{array}{crll}
        d\log : & (\mathcal{O}_X^{\times})^{\otimes i} & \longrightarrow & \Wno_X^{i} \\\relax
         & (x_1,\cdots, x_i) & \longmapsto & d\log[x_1]_n\cdots d\log[x_i]_n 
\end{array} \]

\end{remark}
\begin{proposition}[\cite{colliottorsion},\cite{grossuwa2},\cite{illusiederham}] \label{exactseqsmooth}
For a smooth variety $X$ over $k$, we have the following exact sequences of \'etale sheaves on $X$:
\begin{itemize}
\item[(i)] $0 \to \Wno_{X,\log}^i \xrightarrow{p^m} W_{n+m}\Ome_{X,\log}^i \xrightarrow{R} W_m\Ome_{X,\log}^i \to 0$;
\item[(ii)] $0 \to \Wno_{X,\log}^i \to W_{n}\Ome_{X}^i \xrightarrow{1-F}  W_n\Ome_{X}^i/dV^{n-1}\Ome_{X}^{i-1} \to 0$;
\item[(iii)] $0 \to \Wno_{X,\log}^i \to Z_1W_{n}\Ome_{X}^i \xrightarrow{C-1} W_n\Ome_{X}^i \to 0$.
\end{itemize}
\end{proposition}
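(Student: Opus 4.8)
The plan is to prove the three sequences in the order (ii), (iii), (i), bootstrapping from the definition of $\Wno^i_{X,\log}$ and the Cartier theory recalled above; throughout, exactness is meant in the \'etale topology, and this is the whole point. For (ii), left exactness and exactness in the middle are immediate from the definition $\Wno^i_{X,\log}=\ker(1-F)$, so everything reduces to the surjectivity of $1-F\colon \Wno^i_X \to \Wno^i_X/dV^{n-1}\Ome^{i-1}_X$ as a map of \'etale sheaves. I would check this on stalks at geometric points, i.e. show that every local section is hit after a further \'etale cover. The mechanism is of Artin-Schreier-Witt type: a preimage of a section $a$ is a solution of $x-F(x)=a$, and the identity part contributes an invertible term to the differential of the defining equation while the Frobenius-semilinear part $F$ contributes nothing, so the equation is separable and the cover is \'etale. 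This explains simultaneously why the cokernel vanishes \'etale-locally but not Zariski-locally. To handle all $i$ and $n$ uniformly I would run a d\'evissage on $n$: the case $n=1$ reduces, via the inverse Cartier isomorphism of Theorem \ref{catieroperator}, to the classical Artin-Schreier surjectivity of $a\mapsto a-a^p$; the inductive step compares level $n$ with levels $1$ and $n-1$ through the projection $R$ by the snake lemma.

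For (iii), I would first show $\ker(1-F)\subseteq Z_1W_n\Ome^i_X$: if $\omega\equiv F\omega \bmod dV^{n-1}\Ome^{i-1}_X$, then applying $d$ and using the de Rham-Witt relation $dF=pFd$ together with $p\,\Ome^{\bullet}_X=0$ gives $F^{n-1}d\omega=0$, i.e. $\omega\in\ker(F^{n-1}d)=Z_1W_n\Ome^i_X$. On $Z_1W_n\Ome^i_X=\mathrm{Im}(F)$ the higher Cartier operator $C$ of Proposition \ref{highercartier} is a one-sided inverse to $F$ on cycles; using this I would identify $\ker(C-1\colon Z_1W_n\Ome^i_X \to \Wno^i_X)$ with $\ker(1-F)=\Wno^i_{X,\log}$, the $d\log$-symbols of Remark \ref{localdescriptionoflog} being fixed by both $F$ and $C$. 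Surjectivity of $C-1$ is once more an \'etale-local Artin-Schreier-Witt statement, now organized by the exact sequence $0\to dV^{n-1}\Ome^{i-1}_X \to Z_1W_{n+1}\Ome^i_X \xrightarrow{C} \Wno^i_X$ of Proposition \ref{highercartier} for the induction and by the Cartier isomorphism for the base case.

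Finally, (i) I would deduce formally from (ii) (or (iii)) by a snake-lemma comparison of the defining sequences at levels $n$, $m$ and $n+m$, using the standard transition maps between $\Wno^i_X$, $W_m\Ome^i_X$ and $W_{n+m}\Ome^i_X$; injectivity of $p^m$ and the identification of the cokernel with $W_m\Ome^i_{X,\log}$ then fall out of the diagram, compatibly with the $d\log$-generators. The genuine obstacle in all three parts is the surjectivity: exactness holds \'etale-locally but fails Zariski-locally, and securing it uniformly in $i$ and $n$ requires careful control of the subsheaves $dV^{n-1}\Ome^{i-1}_X$ and $Z_1W_n\Ome^i_X$ under $F$, $C$, $V$ and $d$. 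Once the base case $n=1$ is reduced to Artin-Schreier via the Cartier isomorphism, the remaining induction, though delicate, is formal.
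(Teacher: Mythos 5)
The paper does not actually prove this proposition: its ``proof'' consists of citing Lemma~2 and Lemma~3 of Colliot-Th\'el\`ene--Sansuc--Soul\'e for (ii) and (i), and Lemma~1.6 of Gros--Suwa for (iii), adding only the remark that (iii) is easily deduced from (ii). Your sketch reconstructs exactly the arguments of those references, so in substance you are following the same route: (ii) by \'etale-local Artin--Schreier--Witt surjectivity of $1-F$ with a d\'evissage on $n$ through the Cartier isomorphism at level $1$; (iii) by transporting (ii) through the $F$/$C$ relationship of Proposition~\ref{highercartier} (your verification that $\ker(1-F)\subseteq Z_1W_n\Ome_X^i$ via $dF=pFd$ and $p\,\Ome_X^{\bullet}=0$ is correct and is precisely the point the paper leaves implicit); and (i) by comparing the sequences (ii) at levels $n$, $m$, $n+m$. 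Two places where your outline is thinner than the cited proofs deserve flagging. First, for $i>0$ the surjectivity of $1-F$ is not literally ``solve $x-F(x)=a$'': one must first reduce, using local coordinates and the structure of $W_n\Ome_X^i$, to generators of the form $a\,d\log[x_1]_n\cdots d\log[x_i]_n$ (whose form part is $F$-fixed, so the equation becomes Artin--Schreier--Witt in the coefficient $a\in W_n\mathcal{O}$) together with the generators in $VW_{n-1}\Ome_X^i+dVW_{n-1}\Ome_X^{i-1}$, which are treated separately; your ``separable defining equation'' heuristic covers only the first kind. Second, part (i) does not simply ``fall out'' of a snake lemma: the injectivity of $p^m$ on the logarithmic part and, above all, the identification $\ker(R)\cap W_{n+m}\Ome^i_{X,\log}=p^mW_n\Ome^i_{X,\log}$ require a genuine argument using the description $\ker(R^m)=V^n W_m\Ome_X^i+dV^nW_m\Ome_X^{i-1}$ together with (ii); this is the actual content of the cited Lemma~3. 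With those two points filled in along standard lines, your proof is complete and agrees with the sources the paper relies on.
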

\begin{proof}
The first assertion is Lemma 3 in \cite{colliottorsion},  and the second is Lemma 2 in loc.cit.. The last one is Lemma 1.6 in \cite{grossuwa2}, which can easily be deduce from (ii). In particular, for $n=1$, (iii) can be also found in \cite{illusiederham}.
\end{proof}

The logarithmic de Rham-Witt sheaves $\Wno_{X,\log}^i$ are $\Zpn$-sheaves, which  have a similar duality theory as the $\Z/\ell^n\Z$-sheaves $\mu_{\ell}^{\otimes n}$ with $\ell\neq p$ for a smooth proper variety:
\begin{theorem}(Milne duality \cite[1.12]{milneduality})\label{milneduality}
Let $X$ be a smooth proper variety over $k$ of dimension $d$, and let $n$ be a positive integer. Then the following holds:
\begin{itemize}
\item[(i)] There is a canonical trace map $\text{tr}_X:H^{d+1}(X, \Wno_{X,\log}^d) \to\Zpn$. It is bijective if $X$ is connected;
\item[(ii)] For any integers $i$ and $r$ with $0\leq r\leq d$, the natural pairing 
\[ H^i(X, \Wno_{X,\log}^r) \times  H^{d+1-i}(X, \Wno_{X,\log}^{d-r}) \to \Zpn \]
is a non-degenerate pairing of finite $\Zpn$-modules.
\end{itemize}
\end{theorem}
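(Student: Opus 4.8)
The plan is to deduce this arithmetic statement over the finite field $k$ from a purely geometric Poincar\'e duality over the algebraic closure $\bar k$, exploiting that the absolute Galois group $G=\mathrm{Gal}(\bar k/k)\cong\widehat{\Z}$ has cohomological dimension one. Write $\bar X=X\times_k\bar k$, and abbreviate $\mathcal F^r=\Wno_{X,\log}^r$, using the same symbol for its pullback to $\bar X$. The first input I would establish is the \emph{geometric} duality: the groups $H^i(\bar X,\mathcal F^r)$ are finite, vanish for $i>d$, and the cup-product pairing
\[ H^i(\bar X,\Wno_{\bar X,\log}^r)\times H^{d-i}(\bar X,\Wno_{\bar X,\log}^{d-r})\longrightarrow H^d(\bar X,\Wno_{\bar X,\log}^d)\cong\Zpn \]
is perfect, the last isomorphism furnishing the geometric trace.

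The proof of this geometric duality is a d\'evissage to coherent Serre duality, and it is the step I expect to be the main obstacle. Using Proposition \ref{exactseqsmooth}(i) one reduces to $n=1$. Then by Proposition \ref{exactseqsmooth}(iii) the sheaf $\Wno_{\bar X,\log}^r$ fits in a short exact sequence $0\to\Wno_{\bar X,\log}^r\to Z_1\Wno_{\bar X}^r\xrightarrow{C-1}\Wno_{\bar X}^r\to0$ whose outer terms are coherent modules on the smooth proper variety $\bar X$. This reduces the cup-product pairing for the logarithmic sheaves to the Serre duality pairings $H^i(\bar X,\Omega_{\bar X}^r)\times H^{d-i}(\bar X,\Omega_{\bar X}^{d-r})\to H^d(\bar X,\Omega_{\bar X}^d)\cong\bar k$, which yields finiteness and perfectness simultaneously. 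The delicate point, as in \cite{milneduality}, is to verify that the cup product is matched \emph{compatibly in each degree} with the coherent duality pairing and that all the connecting maps in the d\'evissage are controlled; this is where the real work lies.

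Next I would descend along $\bar X\to X$ by means of the Hochschild--Serre spectral sequence $E_2^{s,t}=H^s(G,H^t(\bar X,\mathcal F^r))\Rightarrow H^{s+t}(X,\mathcal F^r)$. Since $\mathrm{cd}(G)=1$ it collapses into short exact sequences
\[ 0\to H^1(G,H^{i-1}(\bar X,\mathcal F^r))\to H^i(X,\mathcal F^r)\to H^0(G,H^i(\bar X,\mathcal F^r))\to0. \]
For a finite $G=\widehat{\Z}$-module $M$ one has $H^0(G,M)=M^G$ and $H^1(G,M)=M_G$, with higher cohomology vanishing, and a perfect $G$-equivariant pairing $M\times M^\vee\to\Zpn$ induces a perfect pairing $M^G\times (M^\vee)_G\to\Zpn$ between invariants and coinvariants. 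The trace map is then the composite $H^{d+1}(X,\mathcal F^d)\cong H^1(G,H^d(\bar X,\Wno_{\bar X,\log}^d))\cong H^1(G,\Zpn)\cong\Zpn$, where the first isomorphism uses $H^{d+1}(\bar X,\mathcal F^d)=0$; it is bijective exactly when $X$ is connected (the permutation action of $G$ on the geometric components yields coinvariants $\Zpn$).

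Finally I would combine the two inputs. By geometric duality the $G$-modules $H^j(\bar X,\mathcal F^r)$ and $H^{d-j}(\bar X,\mathcal F^{d-r})$ are $\Zpn$-dual, so the two short exact sequences above, for $(i,r)$ and for $(d+1-i,d-r)$, pair orthogonally: the subobject $H^1(G,H^{i-1}(\bar X,\mathcal F^r))$ of $H^i(X,\mathcal F^r)$ is paired against the quotient $H^0(G,H^{d+1-i}(\bar X,\mathcal F^{d-r}))$ of $H^{d+1-i}(X,\mathcal F^{d-r})$, while the quotient $H^0(G,H^i(\bar X,\mathcal F^r))$ is paired against the subobject $H^1(G,H^{d-i}(\bar X,\mathcal F^{d-r}))$. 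Each induced pairing is perfect by the procyclic duality of invariants against coinvariants, so a short five-lemma argument on the filtered objects gives perfectness of the full cup-product pairing on $X$, with finiteness inherited from the geometric cohomology groups. This completes both parts (i) and (ii).
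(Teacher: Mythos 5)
Your descent formalism is sound as pure formalism (Hochschild--Serre over $G=\mathrm{Gal}(\overline{k}/k)\cong\widehat{\Z}$, duality of invariants against coinvariants for finite $G$-modules), but the geometric input you feed into it is false, and this is a genuine gap. For $0<r<d$ the groups $H^i(\overline{X},W_n\Omega^r_{\overline{X},\log})$ are \emph{not} finite in general. Your own d\'evissage shows where this breaks: the sequence $0\to W_1\Omega^r_{\overline{X},\log}\to Z\Omega^r_{\overline{X}}\xrightarrow{C-1}\Omega^r_{\overline{X}}\to 0$ exhibits $H^i(\overline{X},\Omega^r_{\overline{X},\log})$ as an extension of $\ker(C-1)$ by $\mathrm{coker}(C-1)$, where $C-1$ is a map between two \emph{different} finite-dimensional $\overline{k}$-vector spaces $H^j(\overline{X},Z\Omega^r)$ and $H^j(\overline{X},\Omega^r)$. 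Only in the extreme cases $r=0$ and $r=d$ is this an Artin--Schreier-type semilinear endomorphism of a single space, to which Lang's theorem applies; for intermediate $r$ the cokernel can be an infinite group. Concretely, for a surface one has a surjection $H^1(\overline{X},\Omega^1_{\overline{X},\log})\twoheadrightarrow\mathrm{Br}(\overline{X})[p]$, and for a supersingular $K3$ surface $\mathrm{Br}(\overline{X})[p]$ is isomorphic to the additive group of $\overline{k}$ (Artin), hence infinite. The correct duality over $\overline{k}$ (Milne, Illusie--Raynaud, Ekedahl) is a statement in a derived category of unipotent quasi-algebraic group schemes, not a perfect pairing of finite groups, and descending from it to $\mathbb{F}_q$ is substantially harder than the invariants/coinvariants argument you sketch.

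The proof the paper points to (Milne's original one) never leaves the finite field, and that is precisely what makes it work: over $\mathbb{F}_q$ every $H^j(X,\Omega^r_X)$ is a finite-dimensional $\mathbb{F}_q$-vector space, hence already a finite group, so all kernels and cokernels in the d\'evissage are automatically finite. One reduces to $n=1$ by Proposition \ref{exactseqsmooth}(i) (the one step you share with the paper), then pairs the two-term complexes $[Z\Omega^r\xrightarrow{1-C}\Omega^r]$ and $[\Omega^{d-r}\xrightarrow{C^{-1}-1}\Omega^{d-r}/B\Omega^{d-r}]$ into $[\Omega^d\xrightarrow{1-C}\Omega^d]$ and invokes Serre duality over $\mathbb{F}_q$ composed with $\mathrm{tr}_{\mathbb{F}_q/\mathbb{F}_p}$; the shift to degree $d+1$ comes from the length of these complexes, not from an extra Galois cohomological degree. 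As written, your argument rests on a false lemma and cannot be repaired without importing the unipotent-group-scheme duality machinery.
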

\begin{remark}
The proof  can be obtained in the following way: using the exact sequence (i) in Proposition \ref{exactseqsmooth}, we reduce to the case $n=1$,  which can be obtained  from Serre's coherent duality via the exact sequence (ii) and (iii) in the same proposition.
\end{remark}

\subsection{Normal crossing varieties}
In \cite{satoncv}, Sato generalized the definition of logarithmic de Rham-Witt sheaves from smooth varieties to more general varieties, and proved that they share  similar properties on normal crossing varieties.
 
 Let $Z$ be a variety over $k$ of dimension $d$. For a non-negative integer $m$ and a positive integer $n>0$,  we denote by $C^{\bullet}_n(Z,m)$ the following complex of \'etale sheaves on X 
 \[ \bigoplus_{x\in Z^0}i_{x*}\Wno_{x,\log}^m \xrightarrow{(-1)^m\cdot \partial} \bigoplus_{x\in Z^1}i_{x*}\Wno_{x,\log}^{m-1}\xrightarrow{(-1)^m\cdot \partial} \cdots  \xrightarrow{(-1)^m\cdot \partial} \bigoplus_{x\in Z^c}i_{x*}\Wno_{x,\log}^{m-c} \rightarrow \cdots \]
 where $i_x$ is the natural map $x \to Z$, $Z^i$ is the set of codimension $i$ points, and $\partial$ denotes the sum of Kato's residue maps (see \cite[1.7]{jssduality}\cite{katoresiduemap}).
 \begin{definition}
 \begin{itemize}
 \item[(i)] The $m$th homological logarithmic Hodge-Witt sheaf is defined as the 0-th cohomology sheaf $\mathscr{H}^0(C^{\bullet}_n(Z,m)) $ of the complex $C^{\bullet}_n(Z,m)$, and denoted by $\nu_{n,Z}^m$.
 \item[(ii)] The $m$th cohomological Hodge-Witt sheaf is the image of 
  \[ d\log:  (\mathcal{O}_Z^{\times})^{\otimes m} \longrightarrow \ \bigoplus_{x\in Z^0}i_{x*}\Wno_{x,\log}^m, \] and denoted by $\lambda_{n,Z}^m$.
 \end{itemize}
  
 \end{definition}
 \begin{remark}\label{smoothlog}
 If $Z$ is smooth, then $\nu_{n,Z}^m=\lambda_{n,Z}^m=\Wno_{Z,\log}^m$, but in general $ \lambda_{n,Z}^m \subsetneq \nu_{n,Z}^m $ \cite[Rmk. 4.2.3]{satoncv}.
 \end{remark}
 
 \begin{definition}
 The variety $Z$ is called normal crossing variety if it is everywhere \'etale locally isomorphic to \[ Spec(k[x_0,\cdots,x_d]/(x_0\cdots x_a) )\] for some integer $a\in [0,d]$, where $d=\text{dim}(Z)$.  A normal crossing variety is called simple if every irreducible component is smooth.
 \end{definition}
 \begin{proposition}(\cite[Cor. 2.2.5(1)]{satoncv}) \label{acyclic}
 For a normal crossing variety $Z$, the natural map $ \nu_{n,Z}^m \longrightarrow C^{\bullet}_n(Z,m) $ is a quasi-isomorphism of complexes.
 \end{proposition}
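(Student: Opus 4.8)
The claim is equivalent to the acyclicity of $C^{\bullet}_n(Z,m)$ in positive degrees. Indeed, by definition $\nu_{n,Z}^m=\mathscr{H}^0(C^{\bullet}_n(Z,m))$, so the natural map is the inclusion of the kernel of the first differential, and it is a quasi-isomorphism precisely when $\mathscr{H}^p(C^{\bullet}_n(Z,m))=0$ for all $p>0$. Being a quasi-isomorphism may be tested on \'etale stalks, and both the terms $\bigoplus_{x\in Z^p}i_{x*}\Wno^{m-p}_{x,\log}$ and Kato's boundary maps $\partial$ are compatible with \'etale localization; hence I may replace $Z$ by its local model $\text{Spec}(k[x_0,\dots,x_d]/(x_0\cdots x_a))$ and assume $Z$ is a \emph{simple} normal crossing variety with smooth irreducible components $Z_0,\dots,Z_a$.

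For $\varnothing\neq I\subseteq\{0,\dots,a\}$ put $Z_I:=\bigcap_{i\in I}Z_i$, a smooth variety of dimension $d-|I|+1$, and let $Z_I^{\circ}:=Z_I\setminus\bigcup_{I'\supsetneq I}Z_{I'}$ be its open stratum, with inclusion $\iota_I^{\circ}:Z_I^{\circ}\hookrightarrow Z$; every point of $Z$ lies in a unique $Z_I^{\circ}$. The plan is to filter $C^{\bullet}_n(Z,m)$ by the number $j=|I(x)|$ of components through a point: since a specialization can only enlarge this set, the subgroups spanned by the summands with $|I(x)|\geq j$ form a decreasing filtration $F^{\bullet}$ by subcomplexes. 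A point $x$ lying on exactly $j$ components has codimension $q=p-j+1$ in $Z_I^{\circ}$, and its term $\Wno^{m-p}_{x,\log}=\Wno^{(m-j+1)-q}_{x,\log}$ is the codimension-$q$ term of the Gersten complex of $Z_I^{\circ}$ for the sheaf $\Wno^{m-j+1}_{Z_I^{\circ},\log}$. Hence the $j$-th graded piece $\mathrm{gr}^j$ is the direct sum over $|I|=j$ of these Gersten complexes, the vertical differentials being the Gersten residues inside a fixed stratum and the filtration differentials being Kato's residues onto the deeper strata $Z_{I\cup\{i\}}^{\circ}$.

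By the Gersten resolution of logarithmic de Rham--Witt sheaves on the smooth varieties $Z_I^{\circ}$ (Gros--Suwa \cite{grossuwa2}; here $\nu=\lambda=\Wno_{\log}$ by Remark \ref{smoothlog}), each $\mathrm{gr}^j$ has cohomology concentrated in degree $p=j-1$, equal to $\bigoplus_{|I|=j}(\iota_I^{\circ})_*\Wno^{m-j+1}_{Z_I^{\circ},\log}$. The associated spectral sequence therefore degenerates onto the horizontal \v{C}ech-type complex
\[ \bigoplus_{|I|=1}(\iota_I^{\circ})_*\Wno^{m}_{Z_I^{\circ},\log} \xrightarrow{\ \partial\ } \bigoplus_{|I|=2}(\iota_I^{\circ})_*\Wno^{m-1}_{Z_I^{\circ},\log} \xrightarrow{\ \partial\ } \cdots \]
sitting in degrees $0,1,2,\dots$, whose differentials are the alternating sums of Kato residues. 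It remains to prove that this complex is exact in positive degrees and has $0$-th cohomology equal to $\nu_{n,Z}^m$.

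This last step is the crux, and it is local along the singular locus and essentially combinatorial. Working \'etale-locally at a point of the deepest stratum and using the symbolic description of $\Wno^{\bullet}_{\log}$ by classes $d\log[u_1]_n\cdots d\log[u_r]_n$ (Remark \ref{localdescriptionoflog}), one computes each residue $\partial$ explicitly --- it deletes the factor $d\log[x_i]_n$ belonging to the newly added component --- and must verify, with the correct signs, that the resulting symbolic complex is a Koszul-type resolution. I expect the delicate point to be the identification of its augmentation with the homological sheaf $\nu_{n,Z}^m$, which is strictly larger than the symbolic subsheaf $\lambda_{n,Z}^m$ (Remark \ref{smoothlog}): it is precisely because $\nu_{n,Z}^m$ is \emph{defined} as a kernel of residue maps that it coincides with $\mathscr{H}^0$ of the complex above, whereas $\lambda_{n,Z}^m$ would be too small. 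This combinatorial exactness, together with the bookkeeping identifying the filtration differentials with the global boundary $\partial$, is the main obstacle.
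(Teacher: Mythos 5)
The paper offers no proof of this proposition: it is quoted verbatim from Sato \cite[Cor.~2.2.5(1)]{satoncv}, so there is nothing internal to compare against. Judged on its own, your proposal contains a genuine gap.

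The decisive problem is the assertion that each graded piece $\mathrm{gr}^j$ of your filtration ``has cohomology concentrated in degree $p=j-1$, equal to $\bigoplus_{|I|=j}(\iota_I^{\circ})_*\Wno^{m-j+1}_{Z_I^{\circ},\log}$''. That graded piece is the \emph{termwise} pushforward along the locally closed immersion $\iota_I^{\circ}:Z_I^{\circ}\hookrightarrow Z$ of the Gersten complex of $Z_I^{\circ}$. This pushforward factors through the open immersion $Z_I^{\circ}\hookrightarrow Z_I$, which is only left exact on \'etale sheaves; so applying it termwise to the Gersten resolution does not yield a resolution of $(\iota_I^{\circ})_*\Wno^{m-j+1}_{Z_I^{\circ},\log}$, but (at best, when the terms are pushforward-acyclic) a complex computing $R(\iota_I^{\circ})_*\Wno^{m-j+1}_{Z_I^{\circ},\log}[-(j-1)]$, whose cohomology sheaves in degrees $>j-1$ are the higher direct images $R^{>0}(\iota_I^{\circ})_*\Wno^{m-j+1}_{Z_I^{\circ},\log}$. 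These do not vanish in positive characteristic: already for the open immersion $\mathbb{G}_m\hookrightarrow\mathbb{A}^1$ one has $R^1j_*\Z/p^n\neq 0$, because of wildly ramified Artin--Schreier coverings along the boundary. Hence the $E_1$-page of your spectral sequence is not the \v{C}ech-type complex you write down, and the claimed degeneration fails. The extra terms must of course cancel against one another through the $d_1$-differentials into the deeper strata --- that cancellation is governed by the localization/Gysin sequences for $\Wno_{\log}$ and is precisely the hard content of the statement --- but your argument does not supply it. This is why the standard proof works with the \emph{closed} strata $Z_I=\bigcap_{i\in I}Z_i$, along which pushforward is exact, rather than with the open strata.

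Second, even granting the degeneration, you explicitly defer the remaining step --- exactness of the \v{C}ech-type complex in positive degrees and the identification of its $\mathscr{H}^0$ with $\nu_{n,Z}^m$ --- calling it ``the main obstacle.'' That step is not sign bookkeeping: it is where the purity and Gysin isomorphisms for logarithmic de Rham--Witt sheaves on the smooth strata actually enter. So the proof is incomplete even on its own terms.
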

 
\begin{theorem}(\cite[Thm. 1.2.2]{satoncv})\label{satoduality}
Let $Z$ be a normal crossing variety over  a finite field, and proper of dimension $d$. Then the following holds: 
\begin{itemize}
\item[(i)] There is a canonical trace map $\text{tr}_Z: H^{d+1}(Z, \nu_{n,Z}^d) \to \Zpn$. It is bijective if $Z$ is connected.
\item[(ii)] For any integers $i$ and $j$ with $0\leq j \leq d$, the natural pairing 
\[ H^i(Z, \lambda_{n,Z}^j) \times H^{d+1-i}(Z, \nu_{n,Z}^{d-j})\rightarrow H^{d+1}(Z, \nu_{n,Z}^d) \xrightarrow{tr_Z} \Zpn  \]
is a non-degenerate pairing of finite $\Zpn$-modules.
\end{itemize}
\end{theorem}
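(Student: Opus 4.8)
The plan is to deduce this from Milne duality (Theorem \ref{milneduality}) on the smooth strata of $Z$ by a d\'evissage, using the Gersten-type complex $C^{\bullet}_n(Z,d)$ both to construct the trace map and to compare the two sides stratum by stratum. Throughout, the asymmetry between the cohomological sheaf $\lambda_{n,Z}^j$ and the homological sheaf $\nu_{n,Z}^{d-j}$ is exactly what makes the pairing perfect on the singular variety $Z$: on the smooth locus they both agree with $\Wno^{\bullet}_{\log}$ (Remark \ref{smoothlog}), while along the crossings $\lambda$ is cut out by the global $d\log$ symbols and $\nu$ carries the residual contributions recorded by $C^{\bullet}_n(Z,d)$.

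First I would construct the trace map. By Proposition \ref{acyclic} the augmentation $\nu_{n,Z}^d \to C^{\bullet}_n(Z,d)$ is a quasi-isomorphism, so $R\Gamma(Z, \nu_{n,Z}^d) \cong R\Gamma(Z, C^{\bullet}_n(Z,d))$ and there is a niveau spectral sequence $E_1^{p,q} = \bigoplus_{x \in Z^p} H^q(x, \Wno^{d-p}_{x,\log}) \Rightarrow H^{p+q}(Z, \nu_{n,Z}^d)$. In total degree $d+1$ the relevant edge term is $(p,q)=(d,1)$, contributed by the closed points $x \in Z^d$, whose residue fields are finite; there $\Wno^0_{x,\log} = \Zpn$ and $H^1(\kappa(x),\Zpn) \cong \Zpn$ by finite-field duality. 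Summing these local contributions and composing with the corestrictions to $H^1(\Fp,\Zpn)=\Zpn$ defines $\text{tr}_Z$, and a connectedness argument shows the resulting map is bijective when $Z$ is connected, giving (i).

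For part (ii) the pairing is induced by the product of symbols $\lambda_{n,Z}^j \otimes \nu_{n,Z}^{d-j} \to \nu_{n,Z}^d$ coming from the multiplicative structure of the de Rham-Witt complex, which I must first check is compatible, up to the displayed signs, with the boundary maps $\partial$ of $C^{\bullet}_n(Z,\bullet)$ (Kato's reciprocity for residues). To prove perfectness I would reduce to $n=1$ by the Bockstein sequences $0 \to (\,\cdot\,)_{1} \to (\,\cdot\,)_{n} \to (\,\cdot\,)_{n-1} \to 0$ for $\nu$ and $\lambda$ (the analogues of Proposition \ref{exactseqsmooth}(i), obtained stratum-wise), together with the five lemma and the compatibility of the pairings under these sequences. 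For $n=1$ I would run a double induction, on $\dim Z$ and on the number $s$ of irreducible components: writing $Z = Y \cup W$ with $Y$ a smooth (hence proper) component and $W$ the union of the remaining components, the localization (Mayer-Vietoris) sequences attached to $C^{\bullet}_n$ express $H^{\bullet}(Z, \lambda_{n,Z}^j)$ and $H^{\bullet}(Z, \nu_{n,Z}^{d-j})$ in terms of the corresponding groups on $Y$, on $W$, and on the lower-dimensional crossing $Y \cap W$, where the degrees match because the Gysin/residue map drops the superscript by one, so that $\lambda^{j}_{Y\cap W}$ is paired with $\nu^{(d-1)-j}_{Y\cap W}$. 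Milne duality (Theorem \ref{milneduality}) handles the smooth piece $Y$, the inductive hypothesis handles $W$ and $Y \cap W$, and a five-lemma comparison of the two exact sequences under the pairing yields perfectness on $Z$; finiteness of all groups follows from properness.

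The hard part will be the bookkeeping at the crossings. Concretely, I expect the main obstacle to be establishing the exact sequences relating $\lambda_{n,Z}^{\bullet}$ and $\nu_{n,Z}^{\bullet}$ to the logarithmic Hodge-Witt sheaves on the smooth strata $Z^{(k)}$, with the correct Gysin and residue maps, and then verifying that the product pairing is strictly compatible with these identifications, signs included, so that it descends to a perfect pairing of the two localization sequences. This is precisely the local analysis of $\nu$ and $\lambda$ on a normal crossing variety; once it is in place, perfectness propagates formally from the smooth case through the five lemma, and the trace normalization from the closed-point computation above.
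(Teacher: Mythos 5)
This theorem is not proved in the paper at all: it is quoted verbatim from Sato \cite[Thm.\ 1.2.2]{satoncv} and used as a black box (its only role here is to supply the trace map of Corollary \ref{tracemap} and the unramified duality via the purity theorem). So the comparison to make is with Sato's argument, and your outline is essentially a faithful reconstruction of it: the trace map is indeed built from the niveau spectral sequence $E_1^{p,q}=\bigoplus_{x\in Z^p}H^q(x,\Wno^{d-p}_{x,\log})$ attached to $C^{\bullet}_n(Z,d)$, where only $(p,q)=(d,1)$ survives in total degree $d+1$ because char-$p$ fields have $p$-cohomological dimension $\leq 1$, and the sum of local traces descends to $E_2^{d,1}$ by Kato's reciprocity (unramified class field theory for curves over finite fields), which is also what gives bijectivity for connected $Z$. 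The duality itself is proved by reduction to $n=1$ and a d\'evissage onto smooth proper strata where Milne duality (Theorem \ref{milneduality}) applies, with the pairing $\lambda^j_{n,Z}\otimes\nu^{d-j}_{n,Z}\to\nu^d_{n,Z}$ checked to commute with the boundary maps of $C^{\bullet}_n$.

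The one place your plan as written would break is the induction step ``write $Z=Y\cup W$ with $Y$ a smooth component.'' The theorem is stated for normal crossing varieties that need not be simple, and then the irreducible components need not be smooth (a nodal cubic is already a counterexample with $s=1$), so there may be no smooth $Y$ to peel off and the base case of your induction is not covered by Milne duality. Sato avoids this by running the d\'evissage not over components but over the smooth proper strata $Z^{(k)}$ defined via normalizations ($Z^{(1)}=$ disjoint union of the normalized components, etc.), using global resolutions of $\lambda^q_{n,Z}$ by restriction maps and of $\nu^q_{n,Z}$ by Gysin maps in terms of the $a_{k*}\Wno^{\bullet}_{Z^{(k)},\log}$, and then comparing the two resulting spectral sequences under the pairing. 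Your Mayer--Vietoris version is fine for \emph{simple} normal crossing varieties --- which is all this paper ever uses, since $X_s$ is the special fiber of a \emph{strictly} semistable scheme --- but to get the theorem as stated you should either replace components by normalization strata or insert an \'etale-local reduction to the simple case. With that repair, and the sign/residue bookkeeping you already flag, the argument goes through.
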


\subsection{Review on purity}

In the $\ell$-adic setting, the sheaf $\mu_{\ell^n}^{\otimes r}$ on a regular scheme has purity. This was called Grothendieck's absolute purity conjecture, and it was proved by Gabber and can be found in  \cite{fujiwara}. In the $p$-adic case, we may ask if purity holds for the logarithmic de Rham-Witt sheaves. But these sheaves only have semi-purity(see Remark \ref{semipurity} below).
\begin{proposition}(\cite{groschernclass})\label{vanishingcodimension}
	Let $i: Z  \hookrightarrow X$ be a closed immersion of smooth schemes of codimension $c$ over a perfect field $k$ of characteristic $p>0$. Then, for $r\geq 0$ and $ n \geq 1$, $R^mi^!\Wno_{X,\log}^r=0$ if $m\neq c, c+1$.
\end{proposition}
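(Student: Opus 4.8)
The plan is to prove the semi-purity statement, namely that $R^m i^! \Wno_{X,\log}^r = 0$ for $m \neq c, c+1$, where $i\colon Z \hookrightarrow X$ is a codimension-$c$ closed immersion of smooth $k$-schemes. My first step is to reduce from the logarithmic de Rham-Witt sheaf to the full de Rham-Witt sheaf by exploiting the defining short exact sequence of \'etale sheaves from Proposition \ref{exactseqsmooth}(ii),
\[ 0 \to \Wno_{X,\log}^r \to \Wno_X^r \xrightarrow{1-F} \Wno_X^r/dV^{n-1}\Ome_X^{r-1} \to 0. \]
Applying $Ri^!$ gives a distinguished triangle relating $Ri^!\Wno_{X,\log}^r$ to the local cohomology of the quasi-coherent sheaves $\Wno_X^r$ and $\Wno_X^r/dV^{n-1}\Ome_X^{r-1}$. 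Since these latter are quasi-coherent $W_n\O_X$-modules (Proposition I 1.13 of Illusie), their local cohomology $R^m i^!$ is controlled by the depth/codimension formula for coherent cohomology: on a regular (hence Cohen-Macaulay) scheme, for a quasi-coherent sheaf that is locally free in the relevant sense, $R^m i^!$ of a codimension-$c$ regular immersion vanishes outside of $m = c$.

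The key computation is therefore to establish that $R^m i^! \Wno_X^r = 0$ for $m \neq c$, and likewise for the quotient $\Wno_X^r/dV^{n-1}\Ome_X^{r-1}$. For this I would use the standard filtration of the de Rham-Witt sheaf by the images of $V$ and $dV$ (Illusie's canonical filtration), whose graded pieces are built from the coherent sheaves $\Ome_X^j$, $Z\Ome_X^j$, $B\Ome_X^j$, and $\mathscr H^j(\Ome_X^\bullet)$ on the smooth scheme $X$. Each of these is a coherent, indeed locally free or at least Cohen-Macaulay-admissible, $\O_X$-module (via the Frobenius-twisted structure noted after Theorem \ref{catieroperator}), so Grothendieck's local cohomology vanishing $R^m i^!(\text{loc.\ free}) = 0$ for $m < c$ and $m > c$ applies to each graded piece. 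A dévissage up the filtration then yields the single-degree concentration $R^m i^! \Wno_X^r = 0$ for $m \neq c$. Feeding the two sheaves into the long exact sequence obtained from $Ri^!$ applied to the $(1-F)$-sequence produces a connecting map that can shift cohomology by exactly one degree, so $R^m i^! \Wno_{X,\log}^r$ can be nonzero only in degrees $m = c$ and $m = c+1$, which is precisely the claim.

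I expect the main obstacle to be verifying the coherent local-cohomology vanishing for the graded pieces with the correct $\O_X$-module structure. The subtlety is that $Z\Ome_X^j$, $B\Ome_X^j$ and the Hodge-Witt cohomology sheaves $\mathscr H^j(\Ome_X^\bullet)$ carry $\O_X$-structures only through the absolute Frobenius, so I must check that Frobenius pullback does not disturb the codimension of $Z$ in $X$ and that these sheaves remain Cohen-Macaulay (equivalently, that they have the expected depth along $Z$) — this follows because Frobenius is finite and flat on a smooth $k$-scheme, preserving regularity and depth, but it requires care. A secondary point is to ensure the dévissage along Illusie's filtration is compatible with $Ri^!$, i.e.\ that the relevant short exact sequences of sheaves remain exact as \'etale sheaves so that the triangles assemble correctly; this is standard but must be stated cleanly. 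Once these coherent inputs are in place, the reduction via the $(1-F)$ sequence is formal and gives the two-degree concentration.
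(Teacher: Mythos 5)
Your argument is correct and is essentially the standard proof of this semi-purity statement (the paper itself gives no proof, citing Gros \cite{groschernclass}, where the same strategy is used): reduce via the \'etale-sheaf sequence $0 \to \Wno_{X,\log}^r \to \Wno_X^r \xrightarrow{1-F} \Wno_X^r/dV^{n-1}\Ome_X^{r-1} \to 0$ to the concentration of $R^\bullet i^!$ in degree $c$ for the two quasi-coherent terms, which follows by d\'evissage along Illusie's canonical filtration whose graded pieces are Frobenius-twisted locally free $\mathcal{O}_X$-modules. The points you flag as needing care (Frobenius being finite and a homeomorphism, so it does not affect codimension or local cohomology of the underlying abelian sheaves, and the agreement of Zariski and \'etale local cohomology for quasi-coherent sheaves) are exactly the right ones and are standard.
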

For the logarithmic de Rham-Witt sheaf at top degree, i.e., $\Wno_{X, \log}^d$ where $d=\text{dim}(X)$, the following theorem tells us  $R^{c+1}i^!\Wno_{X,\log}^d=0$. 
\begin{theorem}(\cite{grossuwa,milneduality,suwa}) \label{classicalpurity}
Assume $i: Z  \hookrightarrow X$ is as above.  Let $d=\text{dim}(X)$. Then, for $n\geq 1$, there is a canonical isomorphism (called Gysin morphism)
\[  Gys_{i}^d : \Wno_{Z, \log}^{d-c}[-c] \xrightarrow{\quad \cong \quad} Ri^!\Wno_{X,\log}^d \]
in  $D^+(Z, \Zpn)$.
\end{theorem}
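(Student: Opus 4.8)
The plan is to reduce the statement to the case $n=1$ and then extract it from the Cartier sequence by means of Grothendieck's coherent duality, exploiting that the top-degree Kähler differentials form a line bundle. First I would set up a dévissage in $n$. The exact sequence of Proposition \ref{exactseqsmooth}(i) (with length parameters $1$ and $n-1$) reads
\[ 0 \to \Omega^d_{X,\log} \xrightarrow{p^{n-1}} \Wno^d_{X,\log} \xrightarrow{R} W_{n-1}\Omega^d_{X,\log} \to 0, \]
and analogously on $Z$. Applying $Ri^!$ produces a morphism of distinguished triangles, once the Gysin maps are constructed compatibly with $p^{n-1}$ and $R$; by the five lemma it then suffices to treat $n=1$, the case $W_{n-1}$ being the inductive hypothesis. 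I would construct the Gysin morphism itself at finite level through the residue/boundary map of the localization triangle for $j\colon X\setminus Z \hookrightarrow X$, so that compatibility with $R$ and with multiplication by $p$ is built in from the outset and the canonicity of $Gys^d_i$ is manifest.

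For $n=1$ I would invoke the Cartier sequence, Proposition \ref{exactseqsmooth}(iii). Since $\dim X = d$ we have $\Omega_X^{d+1}=0$, hence $Z_1\Omega_X^d = \Omega_X^d$ and the sequence collapses to
\[ 0 \to \Omega^d_{X,\log} \to \Omega^d_X \xrightarrow{C-1} \Omega^d_X \to 0. \]
This is the decisive simplification at the top degree: the middle term is already the full line bundle $\Omega_X^d=\wedge^d\Omega^1_X$, with no $B\Omega^d_X$-contribution surviving. Applying $Ri^!$ gives a distinguished triangle whose two outer terms are computed by coherent duality. By the fundamental local isomorphism for the regular immersion $i$ of codimension $c$, together with the conormal sequence $0\to \mathcal{N}_{Z/X}^\vee \to i^*\Omega^1_X \to \Omega^1_Z \to 0$, one obtains a canonical Gysin/residue isomorphism
\[ Ri^!\Omega_X^d \cong \bigl(i^*\Omega^d_X \otimes \wedge^c\mathcal{N}_{Z/X}\bigr)[-c] \cong \Omega^{d-c}_Z[-c], \]
concentrated in degree $c$. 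In particular $R^mi^!\Omega^d_X=0$ for $m\neq c$, so the long exact sequence already yields $R^mi^!\Omega^d_{X,\log}=0$ for $m\neq c,c+1$, recovering semi-purity (Proposition \ref{vanishingcodimension}).

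The heart of the matter is to identify the endomorphism of $\Omega^{d-c}_Z$ induced by $C-1$ under this residue isomorphism with the Cartier endomorphism $C_Z-1$ on $Z$. Because $C$ is $p^{-1}$-linear, I would first rewrite it as an $\mathcal{O}_X$-linear map $\Omega^d_X \to F_*\Omega^d_X$ through the absolute Frobenius, and then check by a local computation in coordinates adapted to $Z=\{t_1=\cdots=t_c=0\}$ that the Cartier operator commutes with taking residues along $Z$, up to the expected $d\log$-twist. This compatibility between the Frobenius/Cartier structure and the coherent residue is the genuine Gros--Suwa input, and it is where I expect the real work to lie. Granting it, the vanishing of the neighbouring $Ri^!\Omega_X^d$-terms reduces the long exact sequence to
\[ 0 \to R^ci^!\Omega^d_{X,\log} \to \Omega^{d-c}_Z \xrightarrow{C_Z-1} \Omega^{d-c}_Z \to R^{c+1}i^!\Omega^d_{X,\log}\to 0. \]

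Finally, since $d-c=\dim Z$, the very same top-degree Cartier sequence on $Z$ shows that $C_Z-1$ is surjective with kernel $\Omega^{d-c}_{Z,\log}$. Hence $R^{c+1}i^!\Omega^d_{X,\log}=0$ and $R^ci^!\Omega^d_{X,\log}\cong \Omega^{d-c}_{Z,\log}$, that is, $Ri^!\Omega^d_{X,\log}\cong \Omega^{d-c}_{Z,\log}[-c]$, which is the asserted Gysin isomorphism at level $n=1$. Feeding this back into the dévissage in $n$ and verifying that the triangles are compatible with the transition maps $R$ and $p^{n-1}$ completes the induction, while canonicity of $Gys^d_i$ follows from the canonicity of the residue isomorphism and of the Cartier sequences.
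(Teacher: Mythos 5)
First, a remark on context: the paper does not prove Theorem \ref{classicalpurity} at all --- it is quoted from Gros--Suwa and Milne --- so there is no internal proof to compare with, and I am judging your argument on its own terms. Your skeleton (d\'evissage in $n$ via Proposition \ref{exactseqsmooth}(i), then the top-degree Cartier sequence $0\to\Ome^d_{X,\log}\to\Ome^d_X\xrightarrow{C-1}\Ome^d_X\to 0$ at $n=1$) is the right one and is essentially how the cited sources proceed. But there is a genuine error at the decisive step. The functor $Ri^!$ in the statement is the derived functor of sections supported on $Z$ (for abelian \'etale sheaves), so $R^ci^!\Ome_X^d$ is the local cohomology sheaf $\mathcal{H}^c_Z(\Ome_X^d)=\varinjlim_n \Ext^c_{\mathcal{O}_X}(\mathcal{O}_X/\mathcal{I}^n,\Ome_X^d)$. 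The fundamental local isomorphism you invoke computes only the first term of this colimit, namely the coherent-duality shriek $\Ext^c_{\mathcal{O}_X}(\mathcal{O}_Z,\Ome^d_X)\cong \wedge^c\mathcal{N}_{Z/X}\otimes i^*\Ome^d_X$. These are very different objects: already for $X=\mathbb{A}^1$ and $Z$ the origin, $\mathcal{H}^1_Z(\Ome^1_X)=\Ome^1_{k[t,t^{-1}]}/\Ome^1_{k[t]}$ is infinite-dimensional over $k$, whereas $\Ome^0_Z=k$. Hence your isomorphism $Ri^!\Ome^d_X\cong\Ome^{d-c}_Z[-c]$ is false, and the four-term exact sequence with $\Ome^{d-c}_Z\xrightarrow{C_Z-1}\Ome^{d-c}_Z$ in the middle does not arise.

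What the vanishing $R^mi^!\Ome^d_X=0$ for $m\ne c$ (which you correctly assert, by a depth argument) actually yields is the exact sequence $0\to R^ci^!\Ome^d_{X,\log}\to \mathcal{H}^c_Z(\Ome^d_X)\xrightarrow{C-1}\mathcal{H}^c_Z(\Ome^d_X)\to R^{c+1}i^!\Ome^d_{X,\log}\to 0$, and the substance of Gros--Suwa/Milne is the analysis of $C-1$ on this large, non-coherent sheaf: in coordinates with $Z=\{t_1=\cdots=t_c=0\}$ one shows that $C-1$ is surjective on $\mathcal{H}^c_Z(\Ome^d_X)$ and that its kernel is exactly the image of $\Ome^{d-c}_{Z,\log}$ under $\omega\mapsto\tilde\omega\wedge\frac{dt_1}{t_1}\cdots\frac{dt_c}{t_c}$. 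This cannot be read off from the Cartier sequence on $Z$: the residue map $\mathcal{H}^c_Z(\Ome^d_X)\to\Ome^{d-c}_Z$ is surjective with enormous kernel (the higher polar parts), and one must show that this kernel contributes nothing to either $\ker(C-1)$ or $\mathrm{coker}(C-1)$. So the ``real work'' is not the compatibility of $C$ with a residue \emph{isomorphism}, as you frame it --- no such isomorphism exists --- but the local computation on the full local cohomology sheaf. Your d\'evissage in $n$ and the construction of the Gysin map via the localization triangle are fine in outline, but as written the case $n=1$, which carries all the content, does not go through.
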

\begin{remark}\label{semipurity}
Note that the above theorem is only for the $d$-th logarithmic de Rham-Witt sheaf. For $m< d$,  the $R^{c+1}i^!\Wno_{X,\log}^m$ is non zero in general \cite[Rem. 2.4]{milneduality}. That's the reason why we say they only have semi-purity.
\end{remark}

Sato generalized the above theorem to normal crossing varieties.
\begin{theorem}(\cite[Thm. 2.4.2]{satoncv})\label{satopurity}
Let $X$ be a normal crossing varieties of dimension $d$, and $i: Z \hookrightarrow X$ be a closed immersion of pure codimension $c\geq 0$. Then, for $n\geq 1$, there is a canonical isomorphism(also called Gysin morphism)
\[ Gys_i^d: \nu_{n,Z}^{d-c}[-c] \xrightarrow{\quad \cong \quad}   Ri^!\nu_{n,X}^{d} \]
in $D^+(Z, \Zpn)$.
\end{theorem}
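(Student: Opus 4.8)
The statement is \'etale-local on $X$: the functor $Ri^!$ and the sheaves $\nu_{n,\bullet}^{\bullet}$ commute with \'etale localisation, and a morphism in $D^{+}(Z,\Zpn)$ is an isomorphism as soon as it is one \'etale-locally. I would therefore begin by reducing to the standard local model, in which $X$ is a union of coordinate hyperplanes, every irreducible component and every closed stratum $X_I=\bigcap_{\lambda\in I}X_\lambda$ is smooth, and $Z$ meets each stratum in an admissible (transversal) way, so that $Z\cap X_I$ has pure codimension $c$ in $X_I$. On each \emph{smooth} stratum the classical Gysin isomorphism of Theorem \ref{classicalpurity} is available, and the entire task is to glue these isomorphisms along the combinatorics of the normal crossing.

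For the construction of $Gys_i^d$ I would use the Kato-complex resolution of Proposition \ref{acyclic}, which presents $\nu_{n,X}^d$ by $C_n^{\bullet}(X,d)$ and $\nu_{n,Z}^{d-c}$ by $C_n^{\bullet}(Z,d-c)$, each term being a sum of pushforwards of top-degree logarithmic sheaves from smooth strata. Applying the classical Gysin map of Theorem \ref{classicalpurity} on each stratum produces a candidate map on the level of these complexes; the point is to check that it commutes with Kato's residue differentials $\partial$, which is exactly what allows the stratum-wise maps to descend to a single, coordinate-independent morphism
\[ Gys_i^d:\ \nu_{n,Z}^{d-c}[-c]\longrightarrow Ri^!\nu_{n,X}^d \]
in $D^{+}(Z,\Zpn)$.

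To prove that $Gys_i^d$ is an isomorphism I would run a d\'evissage on the number $s$ of irreducible components of $X$. When $s=1$ the variety $X$ is smooth and the assertion is precisely Theorem \ref{classicalpurity}. For the inductive step write $X=X_0\cup Y$ with $X_0$ a single smooth component and $Y$ the normal crossing union of the remaining $s-1$ components, set $W=X_0\cap Y$ (normal crossing of dimension $d-1$ and of fewer components than $X$), and use a Mayer--Vietoris short exact sequence of the shape
\[ 0\longrightarrow \nu_{n,X}^d\longrightarrow \Wno_{X_0,\log}^d\oplus\nu_{n,Y}^d\longrightarrow \nu_{n,W}^{d-1}\longrightarrow 0, \]
whose right-hand maps are residue maps along $W$. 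Since $Z$ meets $X_0$, $Y$ and $W$ transversally, base change for $Ri^!$ along these transversal closed immersions turns the long exact sequence obtained by applying $Ri^!$ into one compatible, through $Gys^{\bullet}$, with the corresponding sequence on the strata of $Z$. The Gysin maps for $X_0$ (smooth purity), for $Y$ and for $W$ (the inductive hypothesis) being isomorphisms, the five lemma forces $Gys_i^d$ to be an isomorphism too. The semi-purity vanishing $R^mi^!\Wno_{X,\log}^{r}=0$ for $m\neq c,c+1$ of Proposition \ref{vanishingcodimension}, combined with the vanishing of the top term provided by Theorem \ref{classicalpurity} on each stratum, guarantees that $Ri^!\nu_{n,X}^d$ is concentrated in degree $c$, as required.

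The main obstacle I anticipate is the compatibility of the classical Gysin isomorphisms with Kato's residue differentials: one has to verify by an explicit local computation with $d\log$-symbols that, for every adjacent pair of strata, the square formed by the two Gysin maps and the two residue maps commutes on the nose, with the signs dictated by the $(-1)^m$ twist in the definition of $C_n^{\bullet}(-,-)$. A secondary technical point is to justify that $Ri^!$ may be computed stratum by stratum, i.e.\ that it commutes with the pushforwards from the smooth strata via transversal base change; here the semi-purity estimates of Proposition \ref{vanishingcodimension} are essential to exclude cohomology in the wrong degrees and to keep the two Mayer--Vietoris sequences in perfect alignment.
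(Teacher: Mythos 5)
The paper gives no proof of this statement: Theorem \ref{satopurity} is imported verbatim from Sato \cite[Thm.\ 2.4.2]{satoncv} and used as a black box, so there is no in-paper argument to compare yours against. Measured against Sato's actual proof and against the closely parallel proof the paper does give for its own Theorem \ref{newpurity}, your overall skeleton --- resolve $\nu_{n,X}^{d}$ and $\nu_{n,Z}^{d-c}$ by the Kato complexes $C_n^{\bullet}(-,-)$, define the Gysin map stratum-wise from smooth purity, and check sign-compatibility with Kato's residue maps $\partial$ --- is the right one. But the engine that makes this work in the literature is the local-global spectral sequence $E_1^{u,v}=\bigoplus_{x\in Z^u}R^{u+v}\imath_{x*}(Ri_x^{!}\,\cdot\,)$ together with Moser's vanishing (Proposition \ref{moservanishing}) to force degeneration at $E_1$, exactly as in the proof of Theorem \ref{newpurity}; your substitute for this engine has two genuine gaps.

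First, you appeal to Proposition \ref{vanishingcodimension} to conclude that $Ri^{!}\nu_{n,X}^{d}$ is concentrated in degree $c$. That proposition concerns closed immersions of \emph{smooth} schemes and the sheaves $\Wno_{X,\log}^{r}$; here $X$ is only a normal crossing variety and the target is $\nu_{n,X}^{d}$, which is not a sum of pushforwards from smooth strata but the kernel of a map between such, so the proposition does not apply. The concentration in degree $c$ is half of the assertion being proved and has to come out of the spectral-sequence (or Mayer--Vietoris) computation, not be fed into it. Second, your induction on the number of components via
\[ 0\longrightarrow \nu_{n,X}^d\longrightarrow \Wno_{X_0,\log}^d\oplus\nu_{n,Y}^d\longrightarrow \nu_{n,W}^{d-1}\longrightarrow 0 \]
needs the exactness of this sequence (in particular the surjectivity of the residue map on the right, which is not formal) and, more seriously, needs $Z$ to meet $X_0$, $Y$ and $W=X_0\cap Y$ transversally so that the intersections again have pure codimension $c$ and the inductive hypothesis applies to them. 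Transversality is nowhere in the hypotheses as you have stated them; in Sato's formulation the admissibility of the immersion is an explicit assumption, and without it this step collapses. If you instead run the pointwise argument --- regular purity at each $x\in Z^{u}$ (Corollary \ref{corofshihopurity} in the smooth case), Moser's vanishing to kill $R^{q}\imath_{x*}$ for $q\geq 1$, and then the compatibility of the stratum-wise Gysin maps with $\partial$ as in Theorem \ref{compatibilityofgysinmaps} --- both difficulties disappear.
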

\begin{remark}
The second Gysin morphism coincides with the first one, when $X$ and $Z$ are smooth \cite[2.3,2.4]{satoncv}. In loc.cit., Sato studied the Gysin morphism of $\nu_{n,X}^r$ for $0\leq r \leq d$. In fact the above isomorphism for $\nu_{n,X}^d$ was already proved by Suwa \cite[2.2]{suwa} and Morse \cite[2.4]{moser}.

\end{remark}

\begin{corollary}
If $i: Z \hookrightarrow X $ is a normal crossing divisor(i.e normal crossing subvariety of codimension 1 in $X$) and $X$ is smooth,  then we have 
\[ Gys_i^d: \nu_{n,Z}^{d-1}[-1] \xrightarrow{\quad \cong \quad} Ri^!\Wno_{X,\log}^d \]
in $D^+(Z, \Zpn)$.
\end{corollary}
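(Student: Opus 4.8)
The plan is to obtain this statement as a direct specialization of Sato's purity theorem (Theorem~\ref{satopurity}). First I would observe that a smooth variety $X$ of dimension $d$ is in particular a normal crossing variety of dimension $d$: its \'etale-local model is the case $a=0$ of $\mathrm{Spec}(k[x_0,\dots,x_d]/(x_0\cdots x_a))$, namely affine $d$-space, so smoothness is the trivial instance of the normal crossing condition. Hence Theorem~\ref{satopurity} is applicable to $X$. By Remark~\ref{smoothlog}, on the smooth scheme $X$ the homological logarithmic Hodge--Witt sheaf at top degree coincides with the logarithmic de Rham--Witt sheaf, i.e. $\nu_{n,X}^d=\Wno_{X,\log}^d$.

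Next I would note that a normal crossing divisor $i\colon Z\hookrightarrow X$ is by definition a normal crossing subvariety of pure codimension $c=1$, so that $\dim Z=d-1$ and $d-c=d-1$. Feeding $c=1$ into Theorem~\ref{satopurity} yields a canonical isomorphism
\[ Gys_i^d\colon \nu_{n,Z}^{d-1}[-1]\xrightarrow{\quad \cong \quad} Ri^!\nu_{n,X}^d \]
in $D^+(Z,\Zpn)$. Substituting the identification $\nu_{n,X}^d=\Wno_{X,\log}^d$ from the first step transforms the target into $Ri^!\Wno_{X,\log}^d$, producing the asserted Gysin isomorphism and completing the argument.

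Since the whole statement is a near-immediate corollary of Theorem~\ref{satopurity}, I do not expect a genuine obstacle. The only points meriting attention are bookkeeping ones: verifying that a smooth variety indeed qualifies as a normal crossing variety in the sense required by Theorem~\ref{satopurity}, and confirming the top-degree identification $\nu_{n,X}^d=\Wno_{X,\log}^d$ so that the resulting map is the expected Gysin morphism rather than merely an abstract isomorphism in the derived category. Both are already recorded in Remark~\ref{smoothlog} and in the remark following Theorem~\ref{satopurity}, where the compatibility of Sato's Gysin morphism with the classical one of Theorem~\ref{classicalpurity} on the smooth locus is asserted.
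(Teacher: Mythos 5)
Your proposal is correct and is essentially identical to the paper's own argument: the paper likewise deduces the corollary from Theorem \ref{satopurity} with $c=1$ together with the identification $\nu_{n,X}^d=\Wno_{X,\log}^d$ for smooth $X$. Your extra remark that a smooth variety is the $a=0$ instance of a normal crossing variety is a harmless bit of bookkeeping the paper leaves implicit.
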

\begin{proof}
This follows from the fact that $\Wno_{X,\log}^d = \nu_{n,X}^d $, when $X$ is smooth over $k$.
\end{proof}

We want to generalize this corollary to the case where $X$ is regular. For this, we need a purity result of Shiho  \cite{shihopurity}, which is a generalization of Theorem \ref{classicalpurity}  for smooth schemes to regular schemes.

\begin{definition}\label{definitionOfLog}
Let $X$ be a scheme over $\F_p$, and $i \in \mathbb{N}_0, n \in \mathbb{N}$. Then we define the $i$-th logarithmic de Rham-Witt sheaf $\Wno_{X,\log}^i$ as the subsheaf of $\Wno_{X}^i$, which is  generated by the image of 
\[ d\log: (\mathcal{O}_X^{\times})^{\otimes i} \longrightarrow \Wno_{X}^i,  \]
where $d\log$ is defined by \[  d\log(x_1\otimes \cdots \otimes x_i)=d\log[x_1]_n\cdots d\log[x_i]_n, \]
and $[x]_n$ is the Teichm\"uller representative of $x$ in $W_n\mathcal{O}_X$.

\end{definition}

\begin{remark}
Note that this definition is a simple generalization of the classical definition for smooth $X$, by comparing with the local description of logarithmic de Rham-Witt sheaves in Remark \ref{localdescriptionoflog}.
\end{remark}

As in  Theorem \ref{catieroperator}, we can define the inverse Cartier operator similarly for a scheme over $\F_p$.  Using the N\'eron-Popescu approximation theorem \cite{swanneron}(see Theorem \ref{popescu} below) and  Grothendieck's limit theorem (SGA 4 \cite[VII, Thm. 5.7]{SGA4}), Shiho showed the following results.

\begin{proposition} (\cite[Prop. 2.5]{shihopurity} )
If $X$ is a regular scheme over $\F_p$, the  inverse Cartier homomorphism $C^{-1}$ is an isomorphism. 
\end{proposition}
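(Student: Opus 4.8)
The plan is to bootstrap from the classical Cartier isomorphism for smooth schemes (Theorem \ref{catieroperator}) to the regular case by writing a regular $\F_p$-algebra as a filtered colimit of smooth ones. Since whether a morphism of sheaves is an isomorphism may be checked on stalks, and since both the source and the target of $C^{-1}$ commute with localization, I would first reduce to showing that $C^{-1}$ is an isomorphism on the (Zariski or \'etale) stalks of $X$. Such a stalk is a Noetherian regular local $\F_p$-algebra $A$; because $\F_p$ is perfect, the structural map $\F_p \to A$ is automatically geometrically regular, so the N\'eron--Popescu approximation theorem (Theorem \ref{popescu}) applies and presents $A$ as a filtered colimit $A = \varinjlim_\lambda A_\lambda$ of smooth finite-type $\F_p$-algebras.

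The core of the argument is then that every object entering the definition of $C^{-1}$ commutes with filtered colimits of rings. K\"ahler differentials satisfy $\Omega_A^i = \varinjlim_\lambda \Omega_{A_\lambda}^i$, the de Rham differential is compatible with the transition maps, and, since filtered colimits are exact, the de Rham cohomology sheaves satisfy $\mathscr{H}^i(\Omega_A^\bullet) = \varinjlim_\lambda \mathscr{H}^i(\Omega_{A_\lambda}^\bullet)$, compatibly with their Frobenius-twisted $\mathcal{O}$-module structures. The operator $C^{-1}$ is defined functorially by the normalizations $a \mapsto a^p$ and $dx \mapsto x^{p-1}\,dx$ together with multiplicativity, so it is compatible with all the transition maps and we obtain $C^{-1}_A = \varinjlim_\lambda C^{-1}_{A_\lambda}$. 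Each $C^{-1}_{A_\lambda}$ is an isomorphism by Cartier's theorem (Theorem \ref{catieroperator}) applied to the smooth $\F_p$-algebra $A_\lambda$, and a filtered colimit of isomorphisms is an isomorphism; hence $C^{-1}_A$, and therefore $C^{-1}$ on $X$, is an isomorphism. Grothendieck's limit theorem (SGA 4 \cite[VII, Thm. 5.7]{SGA4}) is what legitimizes passing these identifications through the limit $\mathrm{Spec}\,A = \varprojlim_\lambda \mathrm{Spec}\,A_\lambda$ at the level of the relevant (\'etale) sheaves and their cohomology.

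The step requiring the most care, and the main obstacle, is the bookkeeping of the compatibilities with the colimit: one must verify that the Frobenius-semilinear source and the Frobenius-twisted cohomology sheaves are matched correctly under the transition maps, so that $C^{-1}$ genuinely arises as the colimit of the $C^{-1}_{A_\lambda}$, rather than merely map to it. One must also arrange the approximation so that the limit is taken over a single cofiltered system of smooth schemes, which is precisely what the combination of N\'eron--Popescu and Grothendieck's limit theorem supplies; once these formal compatibilities are in place, the conclusion is immediate from the smooth case.
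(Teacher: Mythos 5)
Your proposal is correct and follows essentially the same route the paper attributes to Shiho: the paper gives no proof of its own but explicitly notes that the result is obtained via the N\'eron--Popescu approximation theorem together with Grothendieck's limit theorem, reducing to the smooth case where the classical Cartier isomorphism applies. Your reduction to stalks, the presentation of a regular local $\F_p$-algebra as a filtered colimit of smooth finite-type $\F_p$-algebras, and the compatibility of $C^{-1}$ with filtered colimits are exactly the ingredients of that argument.
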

Using the same method, we can prove:
\begin{theorem}
The results of Proposition \ref{highercartier} also hold for a regular scheme over $\F_p$.
\end{theorem}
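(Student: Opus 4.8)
The plan is to follow Shiho's strategy for the inverse Cartier isomorphism (Proposition 2.5 of \cite{shihopurity}): present $X$ locally as a filtered limit of smooth $\F_p$-schemes, and then descend the higher Cartier morphism, the isomorphism induced by $F$, and the exact sequence of Proposition \ref{highercartier} to the limit. Since all three assertions concern (quasi-coherent) sheaves on $X$, they may be checked locally, so I would first reduce to the case $X=\operatorname{Spec}A$ with $A$ a noetherian regular $\F_p$-algebra. As $\F_p$ is perfect, the structural map $\F_p \to A$ is geometrically regular, so the N\'eron--Popescu approximation theorem (Theorem \ref{popescu}) exhibits $A$ as a filtered colimit $A=\varinjlim_\lambda A_\lambda$ of smooth $\F_p$-algebras of finite type, for each of which Proposition \ref{highercartier} already holds.

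The key input is that the de Rham--Witt complex commutes with filtered colimits of $\F_p$-algebras, i.e. $\Wno_A^\bullet=\varinjlim_\lambda \Wno_{A_\lambda}^\bullet$ compatibly with the operators $d$, $F$, $V$ and $R$; this follows from the functorial construction of $\Wno^\bullet$ in \cite{illusiederham} together with the corresponding elementary fact for K\"ahler differentials. Because filtered colimits are exact, every subquotient occurring in Proposition \ref{highercartier} is formed compatibly with the colimit; in particular
\[ Z_1W_{n+1}\Ome_A^i=\varinjlim_\lambda Z_1W_{n+1}\Ome_{A_\lambda}^i, \qquad dV^{n-1}\Ome_A^{i-1}=\varinjlim_\lambda dV^{n-1}\Ome_{A_\lambda}^{i-1}. \]
For each $\lambda$ the smooth case yields a \emph{unique} Cartier morphism $C_\lambda$ fitting into the defining commutative triangle; by this uniqueness the $C_\lambda$ are compatible with the transition maps (both $C_\mu$ and the pulled-back $C_\lambda$ satisfy the same characterizing diagram over $A_\mu$) and hence assemble into $C=\varinjlim_\lambda C_\lambda$ on $A$. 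Passing the commutative triangle, the isomorphism induced by $F$, and the three-term exact sequence through the exact filtered colimit then produces the corresponding statements over $A$, and the uniqueness of $C$ descends from uniqueness at each finite level together with $Z_1W_{n+1}\Ome_A^i=\varinjlim_\lambda Z_1W_{n+1}\Ome_{A_\lambda}^i$.

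I expect the main obstacle to be making rigorous the interchange of the de Rham--Witt functor and its subquotients $Z_1W_{n+1}\Ome^\bullet$ and $dV^{n-1}\Ome^\bullet$ with the filtered colimit, and then globalizing the locally constructed $C$ over a non-affine regular $X$. The first point is handled by combining the colimit-preservation of $\Wno^\bullet$ with exactness of filtered colimits and Grothendieck's limit theorem (\cite[VII, Thm. 5.7]{SGA4}), which guarantees that the relevant identities survive in the limit; the second follows from the uniqueness of $C$, which forces the local morphisms of sheaves to glue into a global one. Once these compatibilities are in place, the defining triangle for $C$, the isomorphism $\Wno_X^i \xrightarrow{F} Z_1W_{n+1}\Ome_X^i/dV^{n-1}\Ome_X^{i-1}$, and the exact sequence $0\to dV^{n-1}\Ome_X^{i-1}\to Z_1W_{n+1}\Ome_X^i \xrightarrow{C}\Wno_X^i$ transfer verbatim to the regular case.
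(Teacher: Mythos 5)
Your proposal is correct and follows exactly the route the paper intends: the paper gives no written proof beyond the remark that Shiho's method for the inverse Cartier isomorphism applies, namely N\'eron--Popescu approximation (Theorem \ref{popescu}) combined with passage to filtered colimits, which is precisely the reduction you carry out. The details you supply --- compatibility of $W_n\Omega^{\bullet}$ and its subquotients $Z_1W_{n+1}\Omega^i$, $dV^{n-1}\Omega^{i-1}$ with filtered colimits, and descent of existence, uniqueness and exactness through the colimit --- are the expected ones and are sound.
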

\begin{proposition} (\cite[Prop. 2.8, 2.10, 2.12]{shihopurity}) \label{logtocoh}
Let $X$ be a regular scheme over $\F_p$. Then we have the following exact sequences:
\begin{itemize}
\item[(i)] $0 \to \Wno_{X,\log}^i \xrightarrow{p^m} W_{n+m}\Ome_{X,\log}^i \xrightarrow{R} W_m\Ome_{X,\log}^i \to 0$;
\item[(ii)] $0 \to \Wno_{X,\log}^i \to W_{n}\Ome_{X}^i \xrightarrow{1-F}  W_n\Ome_{X}^i/dV^{n-1}\Ome_{X}^{i-1} \to 0$;
\item[(iii)] $0 \to \Wno_{X,\log}^i \to Z_1W_{n}\Ome_{X}^i \xrightarrow{C-1} W_n\Ome_{X}^i \to 0$.
\end{itemize}
\end{proposition}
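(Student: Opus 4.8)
The plan is to deduce all three exact sequences from their smooth counterparts in Proposition \ref{exactseqsmooth} by the same limit argument that gives the inverse Cartier isomorphism in the regular case. Since $\F_p$ is perfect, the structure morphism $\F_p \to \mathcal{O}_{X,x}$ is regular at every point, so by the N\'eron-Popescu approximation theorem (Theorem \ref{popescu}) every local ring of $X$ is a filtered colimit of smooth $\F_p$-algebras. Working Zariski-locally, I would assume $X = \text{Spec}(B)$ with $B$ a regular $\F_p$-algebra and write $B = \varinjlim_\lambda B_\lambda$ with each $B_\lambda$ smooth of finite type over $\F_p$, so that $X = \varprojlim_\lambda X_\lambda$ is an inverse limit of smooth affine $\F_p$-schemes with affine transition maps.

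The key point is that every sheaf and every operator occurring in the three sequences commutes with this colimit. By the very construction of the de Rham-Witt complex one has $\Wno_X^i = \varinjlim_\lambda \Wno_{X_\lambda}^i$, compatibly with $d$, $F$, $V$, and $R$. Consequently the image $dV^{n-1}\Ome_X^{i-1}$, the kernel $Z_1\Wno_X^i$ (described via the regular analogue of Proposition \ref{highercartier}), and the quotient $\Wno_X^i/dV^{n-1}\Ome_X^{i-1}$ are all the colimits over $\lambda$ of the corresponding objects on the $X_\lambda$, because filtered colimits of \'etale sheaves are exact and therefore preserve kernels, images, and quotients. Likewise $\Wno_{X,\log}^i$, being generated by the image of $d\log$ (Definition \ref{definitionOfLog}), equals $\varinjlim_\lambda \Wno_{X_\lambda,\log}^i$, since $d\log$ is compatible with the transition morphisms. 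That \'etale sheaves and their cohomology on $X$ are computed as these colimits is precisely Grothendieck's limit theorem (SGA 4 \cite[VII, Thm. 5.7]{SGA4}).

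Granting this, the argument concludes formally. For each index $\lambda$ the scheme $X_\lambda$ is smooth over $\F_p$, so the three sequences of Proposition \ref{exactseqsmooth} are exact on $X_\lambda$; applying the exact functor $\varinjlim_\lambda$ then transports exactness to $X$, giving (i), (ii) and (iii). For (i) one identifies the maps $p^m$ and $R$ in the limit; for (ii) and (iii) one uses that the Frobenius $F$ and the Cartier operator $C$ on the regular scheme, furnished by the regular analogues of Theorem \ref{catieroperator} and Proposition \ref{highercartier}, are themselves the colimits of their smooth counterparts, so that the maps $1-F$ and $C-1$ and their kernels and cokernels pass correctly through the colimit.

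The main obstacle is the verification in the second step that forming the logarithmic subsheaf and the relevant subquotients genuinely commutes with the colimit: concretely, that no section of $\Wno_{X,\log}^i$ beyond $\varinjlim_\lambda \Wno_{X_\lambda,\log}^i$ appears in the limit, and that $\Wno_X^i/dV^{n-1}\Ome_X^{i-1}$ is computed on the nose. This rests on the exactness of filtered colimits together with the compatibility of $F$ and $C$ with the limit; once that compatibility is established, the exactness of each of the three sequences is inherited term by term from the smooth case, and no further computation is needed.
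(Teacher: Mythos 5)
Your proposal is correct and is essentially the paper's approach: the paper simply cites Shiho (deducing (iii) from (ii) as in the smooth case), and Shiho's proofs of these statements are exactly the N\'eron--Popescu plus Grothendieck limit-theorem argument you reconstruct, as the paper itself notes just before stating the proposition. The one point you rightly flag --- that the $d\log$-generated subsheaf and the subquotients $dV^{n-1}\Ome_X^{i-1}$, $Z_1\Wno_X^i$ commute with the filtered colimit --- is handled by exactness of filtered colimits and the compatibility of $F$, $V$, $R$, $C$ with base change along the transition maps, so no gap remains.
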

\begin{proof}
The claim (iii) is easily obtained from (ii), as in the smooth case. When $n=1$, this is Proposition 2.10 in loc.cit..
\end{proof}

Let $\mathscr{C}$ be a category of regular schemes of characteristic $p>0$, such that, for any $x\in X$, the absolute Frobenius $\mathcal{O}_{X,x} \to \mathcal{O}_{X,x} $ of the local ring $\mathcal{O}_{X,x} $ is finite. Shiho showed the following cohomological purity result.
\begin{theorem}(\cite[Thm. 3.2]{shihopurity}) \label{shihopurity}
Let $X$ be a regular scheme over $\F_p$, and let $i: Z \hookrightarrow X$ be a regular closed immersion of codimension $c$. Assume moreover that $[\kappa(x): \kappa(x)^p]=p^N$ for any $x \in X^0$, where $\kappa(x)$ is the residue field at $x$. Then there exists a canonical isomorphism 
\[ \theta_{i,n}^{q,m,\log}: H^{q}(Z, \Wno_{Z,\log}^{m-c}) \xrightarrow{\quad \cong \quad} H^{q+c}_Z(X, \Wno_{X,\log}^{m}) \]
if $q=0$ holds or if $q>0, m=N, X \in \text{ob}(\mathscr{C})$ hold.
\end{theorem}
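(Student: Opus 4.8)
The plan is to reduce the statement to the case of schemes that are smooth of finite type over $\F_p$, where the classical purity of Gros--Suwa and Milne (Theorem \ref{classicalpurity}) and the semi-purity vanishing (Proposition \ref{vanishingcodimension}) are available, and then to descend the resulting isomorphisms along a N\'eron--Popescu approximation. Concretely, I would work Zariski-locally and write $X=\varprojlim_\lambda X_\lambda$ as a cofiltered limit of smooth affine $\F_p$-schemes of finite type, using the N\'eron--Popescu theorem (Theorem \ref{popescu}): since $X$ is regular over the perfect field $\F_p$, its local rings are geometrically regular, hence filtered colimits of smooth finite-type $\F_p$-algebras. At a generic point $x\in X^0$ the hypothesis $[\kappa(x):\kappa(x)^p]=p^N$ forces each relevant $X_\lambda$ to have dimension $N$, so that the degree $m=N$ occurring in the second case is exactly the top de Rham--Witt degree on $X_\lambda$.

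Next I would arrange the closed immersion to be approximated compatibly. Because $i:Z\hookrightarrow X$ is a regular closed immersion of codimension $c$, it is cut out locally by a regular sequence; spreading this sequence out over a suitable index I obtain closed immersions $i_\lambda:Z_\lambda\hookrightarrow X_\lambda$ of smooth $\F_p$-schemes of codimension $c$ with $Z=\varprojlim_\lambda Z_\lambda$. The key compatibility is that the logarithmic de Rham--Witt sheaves commute with these limits: from the $d\log$-generation of Definition \ref{definitionOfLog} and the fact that the de Rham--Witt complex commutes with filtered colimits of rings, together with the exact sequences of Proposition \ref{logtocoh} (which control $Z_1\Wno_X^i$ and $dV^{n-1}\Ome_X^{i-1}$ under colimits), one gets that $\Wno_{X,\log}^m$ and $\Wno_{Z,\log}^{m-c}$ are the colimits of the pullbacks of $\Wno_{X_\lambda,\log}^m$ and $\Wno_{Z_\lambda,\log}^{m-c}$. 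Grothendieck's limit theorem (SGA 4 \cite[VII, Thm. 5.7]{SGA4}) then yields
\[ H^q(Z, \Wno_{Z,\log}^{m-c})=\varinjlim_\lambda H^q(Z_\lambda, \Wno_{Z_\lambda,\log}^{m-c}), \qquad H^{q+c}_Z(X, \Wno_{X,\log}^m)=\varinjlim_\lambda H^{q+c}_{Z_\lambda}(X_\lambda, \Wno_{X_\lambda,\log}^m). \]
The map $\theta_{i,n}^{q,m,\log}$ is then defined as the colimit of the classical Gysin maps on the smooth approximants, which makes its canonicity manifest.

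It remains to check the isomorphism on each smooth pair $(X_\lambda,Z_\lambda)$, and here the two cases diverge. For $q=0$: the semi-purity vanishing $R^j i_\lambda^!\Wno_{X_\lambda,\log}^m=0$ for $j<c$ (Proposition \ref{vanishingcodimension}) collapses the local-to-global spectral sequence to give $H^c_{Z_\lambda}(X_\lambda, \Wno_{X_\lambda,\log}^m)=H^0(Z_\lambda, R^c i_\lambda^!\Wno_{X_\lambda,\log}^m)$, and the sheaf-level Gysin isomorphism $R^c i_\lambda^!\Wno_{X_\lambda,\log}^m\cong \Wno_{Z_\lambda,\log}^{m-c}$ (valid in every degree $m$, even where $R^{c+1}i_\lambda^!$ fails to vanish) supplies the claim after passing to colimits. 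For $q>0$, $m=N$: since $X_\lambda$ is smooth of dimension $N$, the degree $m=N$ is the top degree and the full purity isomorphism $Ri_\lambda^!\Wno_{X_\lambda,\log}^N\cong \Wno_{Z_\lambda,\log}^{N-c}[-c]$ of Theorem \ref{classicalpurity} gives $H^{q+c}_{Z_\lambda}(X_\lambda, \Wno_{X_\lambda,\log}^N)\cong H^q(Z_\lambda, \Wno_{Z_\lambda,\log}^{N-c})$ for all $q$; taking the colimit finishes the case. The hypothesis $X\in\text{ob}(\mathscr{C})$, i.e. finiteness of the absolute Frobenius on each local ring, enters precisely here, to guarantee the coherence of the $\Wno^i$ along the system and that the higher cohomology groups ($q>0$) commute with the approximating limit.

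The main obstacle will be the bookkeeping of the approximation itself: one must ensure that a single cofiltered system $(X_\lambda)$ simultaneously realizes $X$ as a limit of smooth schemes, carries the regular immersion $i$ to regular immersions $i_\lambda$ of the same codimension $c$, and makes the Gysin maps compatible in the system so that their colimit is well defined and independent of the choices. A second delicate point is justifying that $\Wno_{X,\log}^m$ really is the colimit of the smooth sheaves at the level of \'etale sheaves, rather than merely on sections, which is where the exact sequences of Proposition \ref{logtocoh} and the finiteness hypotheses must be used with care. Once these compatibilities are in place, the reduction to the smooth purity theorems is essentially formal.
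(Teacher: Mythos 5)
The paper does not reprove this statement---it is quoted from Shiho---and the only methodological hint it gives is that the \emph{foundational} results of this subsection (the Cartier isomorphism and the exact sequences of Proposition \ref{logtocoh}) are obtained by N\'eron--Popescu approximation plus Grothendieck's limit theorem. Your strategy pushes the approximation much further, all the way to transporting the classical purity isomorphisms from smooth approximants, and it breaks at one load-bearing claim: that the hypothesis $[\kappa(x):\kappa(x)^p]=p^N$ ``forces each relevant $X_\lambda$ to have dimension $N$.'' This is false. A N\'eron--Popescu presentation $\mathcal{O}_{X,x}=\varinjlim_\lambda A_\lambda$ of, say, a local ring containing $\F_p[[t]]$ (the situation of every application in this paper) necessarily uses smooth $\F_p$-algebras of unbounded dimension, since $\F_p((t))$ has infinite transcendence degree over $\F_p$; and for a smooth integral $X_\lambda$ the $p$-rank of the generic point \emph{equals} $\dim X_\lambda$, so it cannot be $p^N$ for cofinally many $\lambda$. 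Consequently $m=N$ is not the top de Rham--Witt degree on any $X_\lambda$, Theorem \ref{classicalpurity} applies on $X_\lambda$ only to $\Wno^{\dim X_\lambda}_{X_\lambda,\log}$, and your treatment of the case $q>0$, $m=N$ has no smooth-case input to take a colimit of. A secondary problem in the same vein: spreading out the regular sequence does give regular immersions $Z_\lambda\hookrightarrow X_\lambda$ of codimension $c$ for large $\lambda$, but nothing makes $Z_\lambda$ smooth over $\F_p$, which your appeal to the smooth-case Gysin isomorphism in the $q=0$ case requires; a N\'eron--Popescu system for $X$ does not restrict to one for $Z$.

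The hypothesis should be exploited in exactly the opposite way: $[\kappa(x):\kappa(x)^p]=p^N$ together with $X\in\text{ob}(\mathscr{C})$ makes $N$ the top degree \emph{on $X$ itself} --- by the Kunz and Kimura--Niitsuma results quoted in \S 2.1 of the paper, $\Omega^1_X$ is locally free of rank $N$, so $\Omega^{N+1}_X=0$ and $\Omega^N_X$ is invertible. The limit argument is only needed to establish the sheaf-level inputs (the Cartier theory and Proposition \ref{logtocoh}); after that the purity computation has to be carried out directly on the regular pair $(X,Z)$, in the style of Gros--Suwa and Milne: the exact sequences reduce $H^{q+c}_Z(X,\Wno^N_{X,\log})$ to local cohomology with supports of the quasi-coherent sheaves $\Wno^N_X$, $Z_1\Wno^N_X$, etc., which is controlled by depth and coherent duality for the regular local rings of $X$ along $Z$ --- and this is where the finiteness of Frobenius genuinely enters, not in commuting $H^{>0}$ with the limit. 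As written, your proposal does not prove the $q>0$ case, and the $q=0$ case rests on a compatible smooth approximation of the pair that you have not constructed.
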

\begin{remark}\label{regularvanishingcodimension}
	In \cite[Cor. 3.4]{shihopurity}, Shiho also generalized Proposition \ref{vanishingcodimension} to the case that $Z \hookrightarrow X$ is a regular closed immersion, and without the assumption on the residue fields.
\end{remark}

\begin{corollary}\label{corofshihopurity}
Let $X$ be as in Theorem \ref{shihopurity}, $i_x: x \to X$ be a point of codimension $c$. Then the canonical morphism
\[ \theta_{i_x,n}^{\log}: \Wno_{x,\log}^{N-c}[-c] \xrightarrow{\quad \cong \quad }  Ri_x^!\Wno_{X,\log}^N\]
is an isomorphism in $D^+(x, \Zpn)$.
\end{corollary}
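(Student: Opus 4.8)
The plan is to reduce the statement to Shiho's cohomological purity (Theorem \ref{shihopurity}) by localizing and then passing to strict henselizations, where the residue field becomes separably closed and the higher cohomology of the point disappears. Write $c$ for the codimension of $x$ (so $c=p$ in the notation of the statement). Since the functor $Ri_x^!$ and the local cohomology sheaves $R^qi_x^!$ depend only on a Zariski neighbourhood of $x$, I would first replace $X$ by the regular local scheme $S=\mathrm{Spec}(\mathcal{O}_{X,x})$ of dimension $c$; then $i_x\colon x\hookrightarrow S$ is the regular closed immersion of the closed point, and $Ri_x^!\Wno_{X,\log}^N$ is unchanged. The residue field $\kappa(x)$ has $p$-rank $N-c$, so $\Wno_{x,\log}^{N-c}$ is the top logarithmic de Rham--Witt sheaf on $x$ and the target of the asserted isomorphism makes sense.

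Next I would pin down the degrees in which $Ri_x^!\Wno_{X,\log}^N$ is supported. By the semi-purity of Remark \ref{regularvanishingcodimension} (Shiho's generalisation of Proposition \ref{vanishingcodimension}), one has $R^qi_x^!\Wno_{X,\log}^N=0$ for $q\neq c,c+1$, so the object lives in the two degrees $c$ and $c+1$. The degree-$c$ term is identified by the $q=0$ case of Theorem \ref{shihopurity}: this case holds unconditionally and, being functorial in étale opens, sheafifies to a morphism $\Wno_{x,\log}^{N-c}\to R^ci_x^!\Wno_{X,\log}^N$ which I would check is an isomorphism on stalks, hence an isomorphism of étale sheaves. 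This produces the Gysin morphism $\theta_{i_x,n}^{\log}$ and reduces the whole statement to the vanishing of the single sheaf $R^{c+1}i_x^!\Wno_{X,\log}^N$.

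For this last vanishing, which is the heart of the matter, I would argue stalkwise at a geometric point $\bar x$ of $x$. Since forming $Ri_x^!$ commutes with the limit defining the strict henselisation (Grothendieck's limit theorem, as already used by Shiho), the stalk $(R^qi_x^!\Wno_{X,\log}^N)_{\bar x}$ equals the local cohomology $H^q_{\bar x}(S^{\mathrm{sh}},\Wno_{S^{\mathrm{sh}},\log}^N)$ over the strictly henselian regular local ring $S^{\mathrm{sh}}=\mathrm{Spec}(\mathcal{O}_{X,\bar x}^{\mathrm{sh}})$. Now $\bar x$ has separably closed residue field, so $H^q(\bar x,-)=0$ for $q\geq 1$; applying Theorem \ref{shihopurity} to the closed immersion $\bar x\hookrightarrow S^{\mathrm{sh}}$ with $m=N$, where the isomorphism is available for all $q\geq 0$, gives $H^{q+c}_{\bar x}(S^{\mathrm{sh}},\Wno_{S^{\mathrm{sh}},\log}^N)\cong H^q(\bar x,\Wno_{\bar x,\log}^{N-c})=0$ for every $q\geq 1$. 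Taking $q=1$ shows $(R^{c+1}i_x^!\Wno_{X,\log}^N)_{\bar x}=0$, and since this holds at every geometric point, $R^{c+1}i_x^!\Wno_{X,\log}^N=0$. Combined with the previous paragraph, $Ri_x^!\Wno_{X,\log}^N$ is concentrated in degree $c$ and is canonically isomorphic to $\Wno_{x,\log}^{N-c}[-c]$, as claimed.

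The main obstacle is the legitimacy of invoking Theorem \ref{shihopurity} over the strictly henselian local ring $S^{\mathrm{sh}}$: I must verify that $S^{\mathrm{sh}}$ still satisfies the hypotheses of that theorem, namely that it is regular over $\F_p$ (clear), that its generic points still have $p$-rank $N$ (true because strict henselisation only enlarges the residue fields by separable, hence $p$-basis preserving, extensions), and crucially that $S^{\mathrm{sh}}\in\mathrm{ob}(\mathscr{C})$, i.e. that the Frobenius-finiteness condition survives the limit. This, together with the commutation of $Ri_x^!$ with strict henselisation, is exactly the kind of descent along the Néron--Popescu approximation that underlies Shiho's results, and I expect it to be the only delicate point; the remaining steps are formal consequences of semi-purity and the $m=N$ case of cohomological purity.
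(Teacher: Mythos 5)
Your proposal is correct and follows essentially the same route as the paper: localize at $x$, use semi-purity (Remark \ref{regularvanishingcodimension}) to confine $Ri_x^!$ to degrees $p$ and $p+1$, identify the degree-$p$ cohomology sheaf via Theorem \ref{shihopurity}, and kill $R^{p+1}i_x^!$ using the $q>0$ case of that theorem -- indeed the paper explicitly offers your stalkwise vanishing argument as the ``alternative way''. The one delicate point you flag (whether $S^{\mathrm{sh}}\in\mathrm{ob}(\mathscr{C})$) can be sidestepped by applying Theorem \ref{shihopurity} to each \'etale neighbourhood $X'$ of $\bar x$ (each of which lies in $\mathscr{C}$) and then passing to the filtered colimit of the resulting isomorphisms, rather than applying the theorem to the limit ring itself.
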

\begin{proof}
Let $X_x$ be the localization of $X$ at $x$. The assertion is a local problem, hence we may assume $X=X_x$.  By the above Remark \ref{regularvanishingcodimension}, we have $R^{j}i_x^!\Wno_{X,\log}^N =0$ for $j \neq c, c+1$.  The natural map $\Wno_{x,\log}^{N-c}[-c] \to R^{c}i_x^!\Wno_{X,\log}^N[0]$ induces the desired morphism $\theta_{i_x,n}^{\log}$, and the above theorem tells us this morphism induces isomorphisms on cohomology groups. An alternative way is to show  $R^{c+1}i_x^!\Wno_{X,\log}^N=0$ directly as Shiho's arguments in the proof of Theorem \ref{shihopurity}.
\end{proof}

\subsection{A new result on purity for semistable schemes}
We recall the following definitions.
\begin{definition}
For a regular scheme $X$ and a divisor $D$ on $X$,  we say that $D$ has normal crossing if it satisfies the following conditions:
\begin{itemize}
\item[(i)] $D$ is reduced, i.e. $D= \bigcup_{i\in I}D_i$ (scheme-theoretically), where $\{ D_i \}_{i\in I}$ is the family of irreducible components of $D$;
\item[(ii)] For any non-empty subset $J \subset I$, the (scheme-theoretically) intersection $\bigcap_{j\in J}D_j$ is a regular scheme of codimension  \#$J$ in $X$, or otherwise empty.
\end{itemize}
If moreover each $D_i$ is regular, we called $D$ has simple normal crossing.
\end{definition}

Let $R$ be a complete discrete valuation ring, with quotient field $K$, residue field $k$, and the maximal ideal $\m=(\pi) $, where $\pi$ is a uniformizer of $R$. 
\begin{definition}
Let $X \to Spec(R)$ be a scheme of finite type over $Spec(R)$. We call $X$ a semistable (resp. strictly semistable) scheme over $Spec(R)$, if it satisfies the following conditions:
\begin{itemize}
\item[(i)] $X$ is regular, $X \to Spec(R)$ is flat, and the generic fiber $X_{\eta}:=X_K:=X\times_{Spec(R)}Spec(K)$ is smooth;
\item[(ii)] The special fiber $X_s:=X_k:=X\times_{Spec(R)}Spec(k)$ is a divisor with normal crossings (resp. simple normal crossings) on $X$.
\end{itemize}
\end{definition}
\begin{remark}[Local description of semistable schemes]
Let $X$ be a semistable scheme over $Spec(R)$, then it is everywhere \'etale locally isomorphic to 
\[  Spec(R[T_0, \cdots, T_d]/(T_0\cdots T_a-\pi))  \]
for some integer $a$ with $a \in [0, d]$, where $d$ denotes the relative dimension of $X$ over $Spec(R)$. 
In particular, this implies the special fiber of a semistable(resp. strictly semistable) scheme is a normal (resp. simple normal) crossing variety.
\end{remark}
Let $k$ be a perfect field of characteristic $p>0$, and let $B:=Spec(k[[t]])$ be the affine scheme given by the formal power series with residue field $k$. Our new purity result is the following theorem.
\begin{theorem} \label{newpurity}
Let $X\to B$ be a projective strictly semistable scheme of relative dimension $d$, and let $i: X_s \hookrightarrow X$ be the natural morphism.  Then, there is a canonical isomorphism
\[ Gys_{i,n}^{\log}:  \nu_{n,X_s}^d[-1] \xrightarrow{\quad \cong \quad} Ri^!\Wno_{X,\log}^{d+1}  \]
in $D^{+}(X_s, \Zpn)$.
\end{theorem}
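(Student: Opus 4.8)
The plan is to deduce the statement from the interplay of two Gersten-type (coniveau/Cousin) resolutions: Shiho's pointwise purity on the regular scheme $X$, and Sato's acyclicity on the normal crossing special fiber $X_s$. First I would record the numerics. Since $f$ is flat of relative dimension $d$ over the one-dimensional base $B$, the scheme $X$ is regular of dimension $d+1$, and each generic point $\eta$ of $X$ dominates the generic point of $B$, whose residue field is $k((t))$ with $[k((t)):k((t))^p]=p$ because $k$ is perfect (so $k((t))^p=k((t^p))$). As $\kappa(\eta)$ has transcendence degree $d$ over $k((t))$, one gets $[\kappa(\eta):\kappa(\eta)^p]=p^{d+1}$, so the invariant of Theorem \ref{shihopurity} is $N=d+1$ uniformly and $\Wno^{d+1}_{X,\log}$ is the top logarithmic sheaf. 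I would also check that $X\in\mathrm{ob}(\mathscr C)$: the absolute Frobenius of $k[[t]]$ is the finite free extension $k[[t^p]]\hookrightarrow k[[t]]$ of rank $p$, and finiteness is inherited by schemes of finite type. Hence Corollary \ref{corofshihopurity} applies at every point $x\in X^{c}$ and yields $Ri_x^!\Wno^{d+1}_{X,\log}\cong\Wno^{d+1-c}_{x,\log}[-c]$, concentrated in degree $c$.

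The vanishing of $R^{q}i_x^!$ off the codimension is exactly the condition for the (\'etale) Cousin complex of $\Wno^{d+1}_{X,\log}$ to resolve it. So the next step is to produce the quasi-isomorphism
\[ \Wno^{d+1}_{X,\log}\xrightarrow{\ \sim\ }\mathcal C^{\bullet},\qquad \mathcal C^{c}=\bigoplus_{x\in X^{c}} i_{x*}\Wno^{d+1-c}_{x,\log}, \]
with residue maps as differentials, and then to compute $Ri^!$ through it. The terms $\mathcal C^{c}$ are direct sums of pushforwards from points and I expect them to be acyclic for $i^!=\mathcal H^0_{X_s}$, so that $Ri^!\Wno^{d+1}_{X,\log}\cong i^!\mathcal C^{\bullet}$ termwise; moreover $i^!$ should retain exactly the points lying in $X_s$, since a section of $i_{x*}\Wno^{m}_{x,\log}$ is supported on $X_s$ precisely when $x\in X_s$. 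A point $x\in X^{c}$ lying in $X_s$ has codimension $c-1$ in $X_s$, and $d+1-c=d-(c-1)$, so
\[ (i^!\mathcal C)^{c}=\bigoplus_{x\in X_s^{\,c-1}} i_{x*}\Wno^{\,d-(c-1)}_{x,\log}. \]

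Comparing with the definition of Sato's complex, this reads $i^!\mathcal C^{\bullet}=C^{\bullet}_n(X_s,d)[-1]$. Finally I would invoke Proposition \ref{acyclic}, which gives $\nu^d_{n,X_s}\xrightarrow{\sim}C^{\bullet}_n(X_s,d)$ on the normal crossing variety $X_s$; shifting by $-1$ produces $\nu^d_{n,X_s}[-1]\xrightarrow{\sim}Ri^!\Wno^{d+1}_{X,\log}$, and I would \emph{define} $Gys^{\log}_{i,n}$ to be this composite. In particular one gets $R^{1}i^!\Wno^{d+1}_{X,\log}\cong\nu^d_{n,X_s}$ and $R^{q}i^!\Wno^{d+1}_{X,\log}=0$ for $q\neq 1$; note this route avoids invoking semi-purity for $i$ itself, which is unavailable because $X_s$ is singular along its crossings, so that Remark \ref{regularvanishingcodimension} does not apply to $i$ directly.

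The hard part will not be the skeleton above but two compatibility points needed to make $i^!\mathcal C^{\bullet}$ equal $C^{\bullet}_n(X_s,d)[-1]$ \emph{on the nose} rather than merely termwise. First, I must identify the Cousin differentials, which come from the connecting maps of Shiho's pointwise Gysin isomorphisms, with Kato's residue maps $(-1)^d\partial$ defining $C^{\bullet}_n(X_s,d)$, signs included; I would reduce this residue-compatibility to the local two-component situation via the \'etale-local model $\mathrm{Spec}\,k[[t]][T_0,\dots,T_d]/(T_0\cdots T_a-t)$. Second, in the \'etale topology I must justify the $i^!$-acyclicity of the Cousin terms and that $i^!$ selects precisely the points of $X_s$; I would handle this with the niveau/coniveau spectral sequence for $Ri^!$ together with Corollary \ref{corofshihopurity}, which makes the sheaves $i_{x*}\Wno^{m}_{x,\log}$ behave as the Cousin sheaves of the classical theory. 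Canonicity of $Gys^{\log}_{i,n}$ then follows from that of the pointwise Gysin maps and of Sato's quasi-isomorphism.
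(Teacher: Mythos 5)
Your skeleton is close in spirit to the paper's argument (pointwise purity via Corollary \ref{corofshihopurity}, Moser-type acyclicity, identification with Sato's complex $C^{\bullet}_n(X_s,d)$, then Proposition \ref{acyclic}), and your numerical checks ($N=d+1$, $X\in\mathrm{ob}(\mathscr C)$) are exactly the paper's preparatory lemma. But your route differs in one structural choice that creates a genuine gap: you first build a Cousin/Gersten resolution $\Wno^{d+1}_{X,\log}\xrightarrow{\sim}\mathcal C^{\bullet}$ over \emph{all} points of $X$ and then apply $i^!$ termwise. Establishing that quasi-isomorphism, and showing that $Ri^!$ kills the columns indexed by points of the generic fiber, both require the acyclicity $R^{q}i_{x*}\Wno^{m}_{x,\log}=0$ ($q\geq 1$) for points $x\in X_\eta$. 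The only vanishing result available (Proposition \ref{moservanishing}) is stated for schemes of finite type over a \emph{perfect field}; it does not apply to $X$ or to points of $X_\eta$, whose residue fields are finitely generated over $k((t))$. Without it, the generic-fiber terms need not satisfy $i_{x*}=Ri_{x*}$, so you cannot conclude $Ri^!i_{x*}\Wno^m_{x,\log}=Ri^!R\jmath_*(\cdots)=0$, and the identification $i^!\mathcal C^{\bullet}=C^{\bullet}_n(X_s,d)[-1]$ breaks down. (The resolution claim itself is also the Gersten conjecture for $\Wno^{d+1}_{X,\log}$ on the regular, non-smooth-over-a-field scheme $X$, which is not among the cited inputs.)

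The paper sidesteps this entirely: it runs the local--global (coniveau) spectral sequence for $Ri^!\Wno^{d+1}_{X,\log}$ \emph{over $Z=X_s$ only}, $E_1^{u,v}=\bigoplus_{x\in Z^{u}}R^{u+v}\imath_{x*}(Ri_x^!\Wno^{d+1}_{X,\log})$, so every pushforward $\imath_{x*}$ is from a point of the finite-type-over-$\F_q$ scheme $X_s$ and Moser's vanishing applies verbatim; generic-fiber points never enter. If you replace your global resolution by this spectral sequence, your argument collapses onto the paper's. You correctly isolate the remaining hard point, the compatibility of the pointwise Gysin maps with Kato's residue maps $(-1)^{d}\partial$ (signs included); the paper devotes Theorem \ref{compatibilityofgysinmaps} to exactly this, reducing it to known $(-1)$-commutativity statements from \cite{jssduality} rather than to a direct computation in the local model, and that step cannot be omitted since without it you only have a termwise isomorphism of complexes.
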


 We will use Shiho's cohomological purity result (Theorem \ref{shihopurity}) in the proof, and the following lemma guarantees our $X$ satisfies the assumption there.

\begin{lemma} Let $X$ be as in Theorem \ref{newpurity}.
\item[(i)] Let $A$ be a ring of characteristic $p>0$. If the absolute Frobenius $F: A \to A,  a \mapsto a^p$ is finite, then the same holds for any quotient or localization. 
\item[(ii)] For any $x \in X$, the absolute Frobenius $F: \mathcal{O}_{X,x} \to \mathcal{O}_{X,x}$ is finite. In particular, our $X$ is in the category $\mathscr{C}$.
\item[(iii)] For any $x \in X^0$,  we have $[\kappa(x): \kappa(x)^p]=p^{d+1}$.
\end{lemma}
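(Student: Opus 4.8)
The plan is to treat the three parts in order, since (ii) uses (i) and (iii) uses the $F$-finiteness established in (ii). For part (i) I would argue directly with generators. Write the hypothesis that $F\colon A \to A$ is finite as the statement that $A = \sum_{j=1}^N A^p a_j$ for finitely many $a_1,\dots,a_N$, where $A^p$ is the subring of $p$-th powers. For a quotient $A/I$ one has $(A/I)^p = \mathrm{im}(A^p)$, and reducing the relations modulo $I$ shows $A/I$ is generated over $(A/I)^p$ by the classes $\bar a_j$. For a localization $S^{-1}A$, given $a/s$ I would use the identity $a/s = (a s^{p-1})/s^p$ and expand $a s^{p-1} = \sum_j c_j^p a_j$ with $c_j \in A$, so that $a/s = \sum_j (c_j/s)^p\, a_j$; hence $a_1/1,\dots,a_N/1$ generate $S^{-1}A$ over $(S^{-1}A)^p$. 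Both steps are routine once that identity is in hand.

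For part (ii) the idea is to reduce finiteness of the absolute Frobenius on $\O_{X,x}$ to $F$-finiteness of the base ring $k[[t]]$ together with the fact that $X \to B$ is of finite type. First I would check that $k[[t]]$ is $F$-finite: since $k$ is perfect one has $k[[t]]^p = k[[t^p]]$, and $1,t,\dots,t^{p-1}$ form a basis of $k[[t]]$ over $k[[t^p]]$, so $[k[[t]]:k[[t]]^p]=p$. Next, a polynomial ring over an $F$-finite ring $R$ is $F$-finite, since $(R[\underline T])^p = R^p[\underline T^{\,p}]$ and the finitely many monomials $\underline T^{\,e}$ with $0\le e_i<p$, multiplied by a generating set of $R$ over $R^p$, suffice; hence any finitely generated $k[[t]]$-algebra is $F$-finite by part (i). As $X$ is projective, hence of finite type, over $B$, each $\O_{X,x}$ is a localization of such an algebra and is therefore $F$-finite, again by part (i). Since $X$ is regular of characteristic $p$ by hypothesis, this places $X$ in the category $\mathscr{C}$.

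Part (iii) is the real content, and the plan is to compute $[\kappa(x):\kappa(x)^p]$ via absolute Kähler differentials. I would first note that flatness of $X$ over the domain $k[[t]]$ forces every codimension-zero point $x\in X^0$ to lie over the generic point $\eta=\mathrm{Spec}(K)$, $K=k((t))$ (the uniformizer $t$ is a nonzerodivisor, so it avoids every minimal prime); thus $x$ is a generic point of the smooth $K$-variety $X_\eta$ of dimension $d$, and $\kappa(x)$ is separably generated over $K$ of transcendence degree $d$. Because $k$ is perfect, $\Omega^1_{k/\mathbb{F}_p}=0$, so absolute differentials agree with differentials over $k$, and for any $F$-finite field $F$ of characteristic $p$ with $k\subseteq F$ one has $[F:F^p]=p^{\dim_F \Omega^1_{F/k}}$. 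From $[K:K^p]=p$ I get $\dim_K \Omega^1_{K/k}=1$. Separability of $\kappa(x)/K$ makes the fundamental sequence
\[
0 \to \Omega^1_{K/k}\otimes_K \kappa(x) \to \Omega^1_{\kappa(x)/k} \to \Omega^1_{\kappa(x)/K} \to 0
\]
exact on the left, and $\dim_{\kappa(x)}\Omega^1_{\kappa(x)/K}=\mathrm{trdeg}(\kappa(x)/K)=d$. Counting dimensions yields $\dim_{\kappa(x)}\Omega^1_{\kappa(x)/k}=1+d$, whence $[\kappa(x):\kappa(x)^p]=p^{d+1}$.

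The main obstacle I anticipate lies entirely in part (iii): one must justify that the generic points of $X$ see only the generic fiber (the flatness argument), and one must know that the differential sequence is left exact, which is precisely where smoothness of $X_\eta$ (hence separability of the function-field extensions) enters. The remaining care is bookkeeping — ensuring that $\kappa(x)$ is $F$-finite, so that $\dim_{\kappa(x)}\Omega^1_{\kappa(x)/k}$ is finite and equals $\log_p[\kappa(x):\kappa(x)^p]$, which is guaranteed by part (ii).
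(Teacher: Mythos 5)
Your proof is correct. Parts (i) and (ii) follow the paper's argument in substance: the paper phrases (i) as tensoring a surjection $\bigoplus_{i=1}^m A \twoheadrightarrow F_*A$ with $A/I$ or $S^{-1}A$, while you track generators explicitly (the identity $a/s=(as^{p-1})/s^p$ is exactly the right move for localization); and (ii) is word for word the same reduction through $k[[t]] \to k[[t]][\underline{T}] \to \mathcal{O}_{X,x}$. For (iii) you take a genuinely different, more self-contained route. The paper simply invokes the standard fact that for a finitely generated field extension $L/K$ of transcendence degree $d$ in characteristic $p$ one has $[L:L^p]=p^d\,[K:K^p]$ -- a statement that needs no separability hypothesis -- together with $[k((t)):k((t))^p]=p$. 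You instead reprove this in the case at hand via the cotangent sequence, which forces you to supply two extra inputs: left-exactness of $0 \to \Omega^1_{K/k}\otimes_K\kappa(x) \to \Omega^1_{\kappa(x)/k} \to \Omega^1_{\kappa(x)/K}\to 0$ and the equality $\dim_{\kappa(x)}\Omega^1_{\kappa(x)/K}=d$, both of which hold only because $\kappa(x)/K$ is separably generated, i.e., because the generic fiber $X_\eta$ is smooth. Your flatness observation pinning the points of $X^0$ over the generic point of $B$ is a detail the paper leaves implicit, and it is worth making explicit as you do. The trade-off is that your argument is tied to the semistability hypothesis through smoothness of $X_\eta$, whereas the paper's one-line citation would survive without it; on the other hand your version makes every step verifiable from first principles.
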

\begin{proof}
(i) By the assumption, there is a surjection of $A$-modules $\bigoplus_{i=1}^m A \twoheadrightarrow A$ for some $m$, where the $A$-module structure in the target is twisted by $F$. For the quotienting out by an ideal $I$, then tensoring with $A/I$, we still have a surjection $\bigoplus_{i=1}^m A/I \twoheadrightarrow A/I $ . If $S$ is a multiplicative set, then tensoring with $S^{-1}A$ still gives a surjection  $\bigoplus_{i=1}^m S^{-1}A \twoheadrightarrow S^{-1}A $ .

(ii) Note that $k[[t]]$ as $k[[t]]^p$-module is free with basis $\{1, t, \cdots, t^{p-1} \}$. The absolute Frobenius on the polynomial ring $k[[t]][x_1,\cdots,x_n]$ is also finite. Now the local ring $\mathcal{O}_{X,x}$ is obtained from a polynomial ring over $k[[t]]$ after passing to a quotient and a localization. Hence the assertion follows by (i).

(iii) For $x\in X^0$, the transcendence degree $\text{tr.deg}_{k((t))}\kappa(x)=d$, and $\kappa(x)$ is a finitely generated extension over $k((t))$. So the $p$-rank of $\kappa(x)$ is the $p$-rank of $k((t))$ increased by $d$, and we know that $[k((t)):k((t))^p]=p$.
\end{proof}
We need the following result of Moser.
\begin{proposition}(\cite[Prop. 2.3]{moser})\label{moservanishing}
Let $Y$ be a scheme of finite type over a perfect field of characteristic $p>0$, let $i_x: x \to Y$ be a point of $Y$ with  dim($\overline{\{x\}}$)$=c$. Then we have $R^qi_{x*}\Wno_{x,\log}^c=0$, for all $n\geq 1$ and $q\geq 1$.
\end{proposition}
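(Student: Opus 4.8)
The plan is to reduce everything to Galois cohomology of a family of fields and then to exploit that, under the hypothesis $\dim\overline{\{x\}}=p$, the sheaf $\Wno_{x,\log}^p$ is the \emph{top} logarithmic de Rham-Witt sheaf at $x$. Write $Z=\overline{\{x\}}$ with its reduced structure and factor $i_x$ as $x\xrightarrow{j}Z\xrightarrow{\iota}Y$, where $j$ is the inclusion of the generic point and $\iota$ is a closed immersion. Since $\iota_*$ is exact, $R^qi_{x*}\Wno_{x,\log}^p=\iota_*R^qj_*\Wno_{x,\log}^p$, so it suffices to show $R^qj_*\Wno_{x,\log}^p=0$ for $q\geq 1$. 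This is a statement on stalks: for a geometric point $\bar z$ of $Z$, the stalk $(R^qj_*\Wno_{x,\log}^p)_{\bar z}$ is $\bigoplus_\alpha H^q(\mathrm{Spec}(L_\alpha),\Wno_{L_\alpha,\log}^p)$, where the $L_\alpha$ are the residue fields at the minimal primes of $\mathcal{O}^{sh}_{Z,\bar z}$, i.e.\ the function fields of the branches of $Z$ through $z$. Each $L_\alpha$ is a separable algebraic extension of $\kappa(x)$, so $[L_\alpha:L_\alpha^p]=[\kappa(x):\kappa(x)^p]=p^{\dim\overline{\{x\}}}=p^{p}$; hence $\Omega^{p}_{L_\alpha}$ is one-dimensional and $\Wno_{L_\alpha,\log}^p$ is the top logarithmic sheaf of $L_\alpha$.

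The case $q\geq 2$ is then formal and needs no top-degree input. From the exact sequence (ii) of Proposition \ref{logtocoh},
\[ 0 \to \Wno_{L,\log}^p \to \Wno_L^p \xrightarrow{1-F} \Wno_L^p/dV^{n-1}\Omega_L^{p-1} \to 0, \]
one observes that $\Wno^p$ and the quotient $\Wno^p/dV^{n-1}\Omega^{p-1}$ are quasi-coherent étale sheaves on $\mathrm{Spec}(L)$: their pullback to the separable closure is induced from an $L$-vector space, so their positive-degree Galois cohomology vanishes by the (Witt-vector) additive Hilbert 90, i.e.\ the normal basis theorem. The long exact sequence then gives $H^q(L,\Wno_{L,\log}^p)=0$ for all $q\geq 2$ and every such field $L$.

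The decisive case is $q=1$, where the stalk is $H^1(L_\alpha,\Wno_{L_\alpha,\log}^p)$. Here the point of the top-degree hypothesis appears: at $n=1$ the sequence (iii) collapses, since $Z\Omega^p_L=\Omega^p_L$ in top degree, to $0\to\Omega^p_{L,\log}\to\Omega^p_L\xrightarrow{C-1}\Omega^p_L\to 0$, whence $H^1(L,\Omega^p_{L,\log})=\mathrm{coker}\bigl(C-1:\Omega^p_L\to\Omega^p_L\bigr)$. Thus the proposition comes down to surjectivity of $C-1$ on the one-dimensional space $\Omega^p_L$, i.e.\ to the solvability of an Artin--Schreier equation over $L$. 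This is false for a general field of $p$-rank $p$, so the strict henselness of $A_\alpha=\mathcal{O}^{sh}_{Z,\bar z}/\mathfrak{p}_\alpha$ must be used. I would prove the vanishing (for all $n$ at once) by induction on $c=\dim A_\alpha$: when $c=0$ the residue field is separably closed and all higher cohomology vanishes trivially; for the inductive step I would apply the localization triangle $i_*Ri^!\Wno_{\log}^p\to\Wno_{\log}^p\to Rj'_*\Wno_{\log}^p$ on the relevant strictly local scheme, using the crucial fact that for the \emph{top} sheaf the semi-purity defect disappears, so that $Ri^!\Wno_{\log}^p$ is concentrated in a single degree by the Gysin isomorphism of Corollary \ref{corofshihopurity} (based on Theorem \ref{shihopurity}). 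This forces $R^{\geq 1}j'_*\Wno_{\log}^p=0$ and propagates the vanishing from the lower-dimensional residue field up to $L_\alpha$.

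The main obstacle is exactly this last step, because Shiho's purity is available only for \emph{regular} immersions, whereas $A_\alpha$ need not be regular for an arbitrary finite-type $Y$. In the situation actually required in this paper — where $Z$ is a stratum of the strictly semistable $X$, so that $X$ is regular and the branches $A_\alpha$ are regular local rings — the Gysin triangle applies directly and the induction closes. For a fully general $Y$ one must either reduce to the regular case by a de Jong alteration, or invoke the Gersten-resolution techniques of Gros--Suwa for the top logarithmic Hodge--Witt sheaf; controlling the purely inseparable phenomena introduced by alterations is the delicate point, and the place where I expect the real work to lie.
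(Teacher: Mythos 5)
First, a point of reference: the paper does not prove this proposition at all --- it is quoted as a black box from Moser \cite[Prop.\ 2.3]{moser} and fed into the spectral-sequence argument for Theorem \ref{newpurity} --- so the benchmark is Moser's proof. That proof has exactly the skeleton you set up: factor $i_x$ through the closure, identify the stalks of $R^qj_*$ with $\bigoplus_\alpha H^q(L_\alpha,-)$ over the branch fields of the strict henselizations, and dispose of $q\geq 2$ by the quasi-coherence/$\mathrm{cd}_p\leq 1$ argument. All of that part of your proposal is correct. But the entire content of the proposition is the case $q=1$, namely the vanishing $H^1(L_\alpha,\Wno_{L_\alpha,\log}^m)=0$ with $m=\dim\overline{\{x\}}$ (the integer the statement unfortunately also calls $p$), and this you do not prove: your inductive argument needs Shiho-type purity, hence regularity of the branches, and your final paragraph concedes that the general case is left open. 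Since the statement --- and, crucially, its use in this paper --- concerns arbitrary finite-type $Y$ and arbitrary points, this is a genuine gap, not a deferrable technicality.

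Two concrete problems with the proposed repair. (a) Your claim that regularity holds ``in the situation actually required in this paper'' is false: in the proof of Theorem \ref{newpurity} the proposition is applied to \emph{every} point $x\in Z^u$ of $Z=X_s$, for every $u$; the closure $\overline{\{x\}}$ of such a point is an arbitrary integral closed subvariety of the special fiber, in general singular and not a stratum of the semistable structure, so purity for regular immersions (Theorem \ref{shihopurity}) is unavailable precisely where the paper needs the result. (b) The alteration fallback founders on the issue you yourself name: descending the vanishing of $H^1(-,\Wno_{\log}^m)$ along a generically finite cover uses restriction--corestriction, which only controls the prime-to-the-degree part, and alterations in characteristic $p$ cannot in general be chosen of degree prime to $p$. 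The missing ingredient --- what Moser actually invokes --- is field-theoretic rather than scheme-theoretic: the vanishing of the top-degree group $H^1(L,\Wno_{L,\log}^m)$ for the fields arising from strict henselizations is the key lemma in Gros--Suwa's proof of the Gersten conjecture for logarithmic Hodge--Witt sheaves \cite{grossuwa}. It is proved by induction on the dimension of the excellent strictly henselian local domain, the essential case being a henselian discrete valuation field whose residue field $\kappa$ is separably closed with $[\kappa:\kappa^p]=p^{m-1}$: there Kato's filtration \cite{katogalois} on $H^{m+1}_{p^n}$ has unramified part built from Galois cohomology of $\kappa$ (zero, as $\kappa$ is separably closed) and higher graded pieces built from $\Omega^m_\kappa$ and $\Omega^{m-1}_\kappa/Z\Omega^{m-1}_\kappa$, both of which vanish because $\Omega^m_\kappa=0$ and top-degree forms are closed. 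Quoting or reproving that result is what would close your $q=1$ case; $Ri^!$-purity plays no role in it.
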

With the help of the above preparations, we can now prove the theorem. 
\begin{proof}[Proof of Theorem \ref{newpurity}]\renewcommand{\qedsymbol}{}
By  \cite[1.5]{jssduality},  we have the following local-global spectral sequence of \'etele sheaves on $Z:=X_s$:
\[ E^{u,v}_1 = \bigoplus_{x\in Z^u} R^{u+v}\imath_{x*}(Ri_x^!\Wno_{X,\log}^{d+1}) \Longrightarrow R^{u+v}i^!(\Wno_{X,\log}^{d+1}) \]
where, for $x \in Z$, $\imath$ (resp. $i_x=i \circ \imath$) denotes the natural map $x \to Z$ (resp. $x \to X$).

For $x \in Z^u$, we have an isomorphism $$\theta_{i_x,n}^{\log}:  \Wno_{x,\log}^{d-u}[-u-1] \xrightarrow{\quad \cong \quad} Ri_x^!\Wno_{X,\log}^{d+1}, $$ by Corollary \ref{corofshihopurity}.
Then \[ E^{u,v}_1=\bigoplus_{x\in Z^u}R^{v-1}\imath_{x*}\Wno_{x,\log}^{d-u}=0, \qquad \text{if} \quad v\neq 1. \]
where the last equality follows from Proposition \ref{moservanishing}. Hence the local-global spectral sequence degenerates at the $E_1$-page, i.e., $R^ri^!\Wno_{X,\log}^{d+1}$ is the $(r-1)$-th cohomology sheaf of the following complex
$$\xymatrix@C=0.56cm@R=0.5cm{ \bigoplus\limits_{x\in Z^0} \imath_{x*}R^1i_x^!\Wno_{X,\log}^{d+1} \ar[r]^{d^{00}_1}  &\bigoplus\limits_{x\in Z^1} \imath_{x*}R^2i_x^!\Wno_{X,\log}^{d+1} \ar[r] & \cdots\\
   \cdots\ar[r] &\bigoplus\limits_{x\in Z^{r-1}} \imath_{x*}R^ri_x^!\Wno_{X,\log}^{d+1} \ar[r]^-{d^{r-1,0}_1} & \bigoplus\limits_{x\in Z^{r}} \imath_{x*}R^{r+1}i_x^!\Wno_{X,\log}^{d+1} & \\
   \cdots\ar[r] & \bigoplus\limits_{x\in Z^{d}} \imath_{x*}R^{d+1}i_x^!\Wno_{X,\log}^{d+1} .	
}$$
We denote this complex by $B_{n}^{\bullet}(Z,d)$. Then Corollary \ref{corofshihopurity} implies that,  $\theta_{i_x,n}^{\log}  $ gives an isomorphism $C_{n}^{s}(Z,d) \cong B^{s}_{n}(Z,d)$, for any $s$, i.e., a term-wise isomorphism between the complexes $C_{n}^{\bullet}(Z,d)$ and $B_{n}^{\bullet}(Z,d)$. The following theorem will imply that $\theta_{i_x,n}^{\log} $ induces an isomorphism of complexes.
\end{proof}
\begin{theorem}\label{compatibilityofgysinmaps}
Let $X, Z:=X_s$ be as in Theorem \ref{newpurity}. For $x\in Z^c$ and $y\in Z^{c-1}$ with $x \in \overline{\{y\}}$, then the following diagram 
\[ \xymatrix@C=2cm@R=1cm{ H^q(y, \Wno_{y,\log}^{r-c})  \ar[r]^-{(-1)^r\partial_{y,x}^{\text{val}}}\ar[d]^-{\theta_{i_y,n}^{\log}} & H^q(x, \Wno_{x,\log}^{r-c-1}) \ar[d]^-{\theta_{i_x,n}^{\log}}\\
	H^{c+q}_y(X, \Wno_{X,\log}^{r}) \ar[r]_-{\delta_{y,x}^{loc}(\Wno_{X,\log}^r)} & H^{c+1+q}_x(X,\Wno_{X,\log}^r)
	} \]
	commutes if $q=0$ or $ (q,r)=(1,d+1)$, where $\partial_{y,x}^{\text{val}}$ is Kato's residue map and $\delta_{y,x}^{loc}(\Wno_{X,\log}^r)$ is defined in \cite[1.7]{jssduality}.
\end{theorem}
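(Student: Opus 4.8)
The plan is to deduce the commutativity from the already-established compatibility of Gysin maps with residues on smooth schemes, by a localization followed by a N\'eron--Popescu approximation. I first record that the whole square is local at $x$: replacing $X$ by $\mathrm{Spec}(\mathcal{O}_{X,x})$ changes neither the four groups nor any of the four maps, since $\theta^{\log}$, the local cohomology groups $H^{*}_{x}$ and $H^{*}_{y}$, Kato's residue $\partial^{\mathrm{val}}_{y,x}$, and the niveau boundary $\delta^{loc}_{y,x}$ are all defined in a neighbourhood of $x$. By the dimension computation of the preceding Lemma, $y\in Z^{c-1}$ has codimension $c$ in $X$ and $x\in Z^{c}$ has codimension $c+1$, so after localizing the maximal ideals of $\mathcal{O}_{X,y}$ and $\mathcal{O}_{X,x}$ are generated by regular sequences and both vertical arrows are the Shiho purity isomorphisms of Corollary \ref{corofshihopurity} with $N=d+1$. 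This also explains why the square is only posed for $r=d+1$ once $q>0$, while for $q=0$ Shiho's Theorem \ref{shihopurity} supplies $\theta^{\log}$ for every $r$.

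The key idea is that Shiho's isomorphism is by construction the colimit of the classical smooth Gysin isomorphisms of Theorem \ref{classicalpurity}: writing the regular local ring $\mathcal{O}_{X,x}$ as a filtered colimit $\varinjlim_{\lambda}A_{\lambda}$ of smooth $\F_p$-algebras via the N\'eron--Popescu approximation theorem \cite{swanneron} (Theorem \ref{popescu}), and descending the prime ideals cutting out $\overline{\{y\}}$ and $x$ to smooth closed subschemes of $\mathrm{Spec}(A_\lambda)$ for $\lambda$ large (Grothendieck's limit theorem), one has $\theta^{\log}_{i_x,n}=\varinjlim_\lambda Gys^{d}$ and likewise for $\theta^{\log}_{i_y,n}$. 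I would then check that both horizontal maps pass to this colimit: Kato's residue $\partial^{\mathrm{val}}_{y,x}$ is determined by the semilocal ring of $\overline{\{y\}}$ at $x$ together with the $d\log$ symbol map, data which are read off from the approximating system, and $\delta^{loc}_{y,x}$, being the $d_1$-differential of the niveau spectral sequence, is functorial for the flat transition maps $A_\lambda\to A_{\lambda'}$ and for $A_\lambda\to\mathcal{O}_{X,x}$. Consequently the square for $X$ is the filtered colimit of the corresponding squares for the smooth schemes $\mathrm{Spec}(A_\lambda)$, and it suffices to prove the statement when $X$ is smooth over a perfect field.

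For smooth $X$ over a perfect field the commutativity is the compatibility between the Gysin isomorphism $Gys^{d}$ and Kato's residue maps: under $Gys^{d}$ the $d_1$-differential $\delta^{loc}_{y,x}$ of the niveau spectral sequence corresponds, up to the sign $(-1)^{r}$, to the residue $\partial^{\mathrm{val}}_{y,x}$. For top-degree logarithmic forms this can be verified on the generating symbols $d\log[u_1]_n\cdots d\log[u_r]_n$ using the explicit form of the Gysin map and the definition of the residue at the codimension-one specialization $x\in\overline{\{y\}}$; it is the same compatibility that underlies the acyclicity of Proposition \ref{acyclic} and Sato's Gysin morphism for normal crossing varieties (Theorem \ref{satopurity}), and is available from Gros--Suwa \cite{grossuwa} and \cite{satoncv}. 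To pass from $n=1$ to arbitrary $n$ I would use the exact sequence of Proposition \ref{logtocoh}(i), with respect to whose associated long exact sequences of local cohomology all four maps are compatible; running this d\'evissage is where the auxiliary degree $(q,r)=(1,d+1)$ enters, since the connecting homomorphisms raise the cohomological degree by one and the $q=0$ assertion for $W_n\Omega^{r}_{X,\log}$ has to be propagated through the $q=1$ level.

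The main obstacle will be the bookkeeping in the approximation step rather than any single deep input: one must produce a cofinal system of smooth models in which the incidence $x\in\overline{\{y\}}$, the two codimensions, and the residue datum at $x$ on $\overline{\{y\}}$ are all faithfully reflected, and verify that forming Kato's residue and the niveau boundary genuinely commutes with the filtered colimit of regular local rings --- in particular that the residue computed on $\mathcal{O}_{X,x}$ agrees with the limit of the residues computed on the $A_\lambda$. Tracking the sign $(-1)^{r}$ coherently through the approximation, the d\'evissage in $n$, and the two Gysin isomorphisms is a further delicate, if routine, point.
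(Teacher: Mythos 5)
Your route is genuinely different from the paper's, and it has two gaps that I do not see how to close. First, the N\'eron--Popescu step: you assert that $\theta^{\log}_{i_x,n}=\varinjlim_\lambda Gys^d$ and that Kato's residue and the niveau boundary ``pass to this colimit'', but the transition maps $A_\lambda\to A_{\lambda'}\to\mathcal{O}_{X,x}$ in a Popescu presentation are not flat, so neither local cohomology with supports nor the $d_1$-differential of the coniveau spectral sequence pulls back along them for free; one must also arrange that the primes cutting out $\overline{\{y\}}$ and $x$ descend with the correct codimensions cofinally in $\lambda$. Carrying this out is not bookkeeping --- it is essentially the content of Shiho's compatibility theorem \cite[Thm.~5.4]{shihopurity}, which is only established for $n=1$ (the paper remarks on exactly this limitation after the proof). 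So at best your limit argument reproves the $n=1$ case. Second, the d\'evissage in $n$ does not close: the short exact sequence of Proposition \ref{logtocoh}(i) on the residue field of $y$ gives a long exact sequence in which $H^0(y,W_n\Omega^{r-c}_{y,\log})\to H^0(y,\Omega^{r-c}_{y,\log})$ need not be surjective, so a five-lemma induction on $n$ for the $q=0$ square requires the commutativity of the $q=1$ squares for the \emph{same} $r$; but the $q=1$ compatibility is only available (and only asserted) for $r=d+1$. Hence the $q=0$ statement for general $r$ cannot be reached by this d\'evissage.

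The paper avoids both problems by exploiting strict semistability rather than approximation: since $x\in\overline{\{y\}}\subseteq Z_j$ for a \emph{smooth} irreducible component $Z_j$ of the special fiber (a smooth variety over the finite residue field), both purity maps $\theta^{\log}_{i_y,n}$ and $\theta^{\log}_{i_x,n}$ are factored through the Gysin maps into $Z_j$ followed by $\theta_{Z_j\hookrightarrow X}$ and a Leray edge map. The big square then splits into three: the Gysin/residue compatibility on the smooth variety $Z_j$, which is \cite[Thm.~3.1.1 and Cor.~3.4.1]{jssduality} and is valid for all $n$ precisely in the degrees $q=0$ or $(q,r)=(1,d+1)$ (this is where the hypothesis on $(q,r)$ comes from); the functoriality of $\delta^{loc}_{y,x}$; and the functoriality of the Leray spectral sequence. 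The two signs $(-1)$ cancel. No limit argument and no induction on $n$ are needed. If you want to salvage your approach, you would need to supply the $n>1$ analogue of Shiho's Theorem 5.4 yourself, which is a harder statement than the one being proved.
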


\begin{proof} [Theorem \ref{compatibilityofgysinmaps} implies \ref{newpurity}]
By Theorem \ref{compatibilityofgysinmaps}, under the Gysin isomorphisms $\theta^{\log}$, the morphisms $d^{r,0}_1$ coincide with the boundary maps of logarithmic de Rham-Witt sheaves. Hence $C^{\bullet}_n(Z,d)$ coincides with  $B^{\bullet}_n(Z,d)$. And Proposition \ref{acyclic} said the complex $C^{\bullet}_n(Z,d)$ is acyclic at positive degree, and isomorphic to $\nu_{n,Z}^d$ at zero degree. This shows the claim.
\end{proof}

We are now turning to the proof of commutativity of the above diagram .
\begin{proof}[Proof of Theorem \ref{compatibilityofgysinmaps}]
	We may assume $y\in Z_j$ for some irreducible component $Z_j$ of $Z$.  Note that $Z_j$ is smooth by our assumption. Then we have a commutative diagram:
\[ \xymatrix@C=2.5cm@R=1.5cm{ H^q(y, \Wno_{y,\log}^{r-c}) \ar[r]^-{\theta_{i_y,n}^{\log}} \ar[d]_-{Gys_{i_y}} & H^{c+q}_y(X, \Wno_{X,\log}^{r})\\
	H^{c-1+q}_y(Z_j, \Wno_{Z_j,\log}^{r-1}) \ar[r]^-{\theta_{Z_j\hookrightarrow X}} & H^{c-1+q}_y(Z_j, \mathscr{H}^1_{Z_j}(\Wno_{X,\log}^{r})), \ar[u]	
} \]
where the right-vertical morphism is the edge morphism of Leray spectral sequence.

Hence we have the following diagram:
\[ \xymatrix@C=1cm@R=0.3cm{ H^q(y, \Wno_{y,\log}^{r-c}) \ar[rr]^-{(-1)^r\partial_{y,x}^{\text{val}}}  \ar[dd]_-{Gys_{y \to Z_j}}&&  H^q(x, \Wno_{x,\log}^{r-c-1}) \ar[dd]_-{Gys_{x \to Z_j}}\\
	& (1)& \\
H^{c-1+q}_y(Z_j, \Wno_{Z_j,\log}^{r-1}) 	\ar[rr]^-{\delta_{y,x}^{loc}(\Wno_{Z_j,\log}^{r-1})} \ar[dd]_-{\theta_{Z_j\hookrightarrow X}}  && H^{c+q}_x(Z_j, \Wno_{Z_j,\log}^{r-1}) \ar[dd]_-{\theta_{Z_j\hookrightarrow X}}\\
&(2)&\\
 H^{c-1+q}_y(Z_j, \mathscr{H}^1_{Z_j}(\Wno_{X,\log}^{r})) \ar[rr]^-{\delta_{y,x}^{loc}(\mathscr{H}^1_{Z_j}(\Wno_{X,\log}^{r}))} \ar[dd] && H^{c+q}_x(Z_j, \mathscr{H}^1_{Z_j}(\Wno_{X,\log}^{r}))\ar[dd]\\
 &(3)& \\
 H^{c+q}_y(X, \Wno_{X,\log}^{r}) \ar[rr]^-{\delta_{y,x}^{loc}(\Wno_{X,\log}^{r})} &&H^{c+1+q}_x(X,\Wno_{X,\log}^r).
 }\]
 The square (1) is $(-1)$-commutative if $q=0$ or $(q,r)=(1,d+1)$. This is Theorem 4.1.1 for $q=0$ and Corollary 4.4.1 for  $(q,r)=(1,d+1)$ in \cite{jssduality}. The square (2) is commutative by the functoriality of $\delta_{y,x}^{\text{loc}}$. The square (3) is $(-1)$-commutative by the functoriality of the Leray spectral sequences: here the sign $(-1)$ arise from the difference of degrees. Therefore the desired diagram is commutative.
\end{proof}
\begin{remark}
	In \cite[Thm. 5.4]{shihopurity}, Shiho proved this compatibility for more general regular schemes $X$ over $\Fp$ where the residue fields of $x,y $ are not necessarily to be finite or perfect, but assuming that $n=1$.
\end{remark}

\begin{corollary}[Cohomological purity]
	Assume $X$ and $Z:=X_s$ are as before. For any integer $ i \leq d+1$, there is a canonical isomorphism \[ Gys_{i,n} : H^{d+1-i}(Z, \nu_{n,Z}^d) \xrightarrow{\quad \cong \quad} H^{d+2-i}_Z(X, \Wno_{X,\log}^{d+1})   \]
\end{corollary}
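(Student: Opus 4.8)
The plan is to deduce the statement directly from the sheaf-level purity isomorphism of Theorem \ref{newpurity} by passing to hypercohomology on $Z$. The essential input is the standard identification, valid for any closed immersion $i : Z \hookrightarrow X$ and any object $\mathcal{F} \in D^+(X_{\text{\'et}}, \Zpn)$, of cohomology with support along $Z$ with the hypercohomology of the exceptional inverse image: since for a closed immersion $i_* = Ri_*$ is exact and right adjoint to $Ri^!$, one has $R\underline{\Gamma}_Z(\mathcal{F}) \cong i_* Ri^!\mathcal{F}$, whence a canonical isomorphism $R\Gamma_Z(X, \mathcal{F}) \cong R\Gamma(Z, Ri^!\mathcal{F})$ and therefore $H^q_Z(X, \mathcal{F}) \cong \mathbb{H}^q(Z, Ri^!\mathcal{F})$ for every $q$. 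This is exactly the mechanism already underlying the local-global spectral sequence used in the proof of Theorem \ref{newpurity}.

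First I would apply this with $\mathcal{F} = \Wno_{X,\log}^{d+1}$. Theorem \ref{newpurity} provides a canonical isomorphism $\nu_{n,Z}^d[-1] \xrightarrow{\cong} Ri^!\Wno_{X,\log}^{d+1}$ in $D^+(Z, \Zpn)$, so substituting and taking $\mathbb{H}^{d+2-i}(Z, -)$ of both sides yields
\[ H^{d+2-i}_Z(X, \Wno_{X,\log}^{d+1}) \cong \mathbb{H}^{d+2-i}(Z, Ri^!\Wno_{X,\log}^{d+1}) \cong \mathbb{H}^{d+2-i}(Z, \nu_{n,Z}^d[-1]). \]
Since $\nu_{n,Z}^d$ is a single sheaf placed in degree zero, the shift by $[-1]$ moves it to degree one, so $\mathbb{H}^{d+2-i}(Z, \nu_{n,Z}^d[-1]) = H^{d+1-i}(Z, \nu_{n,Z}^d)$. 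Composing these isomorphisms defines $Gys_{i,n}$ and proves the claim; the hypothesis $i \leq d+1$ simply guarantees that the cohomological degree $d+1-i$ is non-negative, so that the statement is non-vacuous.

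Equivalently, one may argue through the local-to-global spectral sequence $E_2^{p,q} = H^p(Z, R^q i^! \Wno_{X,\log}^{d+1}) \Rightarrow H^{p+q}_Z(X, \Wno_{X,\log}^{d+1})$: Theorem \ref{newpurity} shows that $R^q i^! \Wno_{X,\log}^{d+1}$ vanishes for $q \neq 1$ and equals $\nu_{n,Z}^d$ for $q=1$, so the spectral sequence degenerates and its edge map identifies $H^{p+1}_Z(X, \Wno_{X,\log}^{d+1})$ with $H^p(Z, \nu_{n,Z}^d)$; taking $p = d+1-i$ recovers the asserted isomorphism. I do not expect a genuine obstacle here: all the substantive content resides in Theorem \ref{newpurity} (and the compatibility of Gysin maps in Theorem \ref{compatibilityofgysinmaps}), and this corollary is purely the extraction of its cohomological consequence, the only point requiring care being the degree bookkeeping forced by the shift $[-1]$ and the verification that $Gys_{i,n}$ so defined is canonical.
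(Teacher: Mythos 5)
Your proposal is correct and follows essentially the same route as the paper, which deduces the corollary from Theorem \ref{newpurity} via the hypercohomology spectral sequence $E^{u,v}=H^u(Z, R^vi^!\Wno_{X,\log}^{d+1}) \Rightarrow H^{u+v}_Z(X,\Wno_{X,\log}^{d+1})$; your first, derived-category formulation ($H^q_Z(X,\mathcal{F})\cong \mathbb{H}^q(Z,Ri^!\mathcal{F})$ followed by substituting $\nu_{n,Z}^d[-1]$ and shifting degrees) is just the same argument packaged without the spectral sequence. The degree bookkeeping and the identification of the isomorphism as the one induced by $Gys_{i,n}^{\log}$ are handled correctly.
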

\begin{proof}
	This follows from the spectral sequence
	\[ E^{u,v}_1=H^u(Z, R^vi^!\Wno_{X,\log}^{d+1}) \Rightarrow H^{u+v}_Z(X,\Wno_{X,\log}^{d+1})  \]
	and the above purity theorem.
\end{proof}

\begin{corollary}\label{tracemap} Assume that the residue field $k=\Fp$. Then
	there is a canonical map, called the trace map:
	\[ Tr: H^{d+2}_Z(X, \Wno_{X,\log}^{d+1}) \longrightarrow \Zpn.\]
It is bijective if $Z$ is connected.
\end{corollary}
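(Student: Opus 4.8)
The plan is to realize $Tr$ as the composite of the Gysin isomorphism furnished by the cohomological purity corollary above with Sato's trace map for the special fiber. First I would record that, because $X \to B$ is projective with $B = \text{Spec}(\Fp[[t]])$ and $k = \Fp$, the special fiber $Z = X_s$ is projective, hence proper, over the finite field $\Fp$; moreover $Z$ is a (reduced) simple normal crossing variety of dimension $d$, since it is a divisor in the $(d+1)$-dimensional regular scheme $X$ and $X$ is strictly semistable. Thus $Z$ satisfies exactly the hypotheses of Theorem \ref{satoduality}.

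Next, specializing the cohomological purity corollary to $i=0$ yields a canonical isomorphism
\[ Gys_{0,n}: H^{d+1}(Z, \nu_{n,Z}^d) \xrightarrow{\ \cong\ } H^{d+2}_Z(X, \Wno_{X,\log}^{d+1}). \]
On the other hand, Theorem \ref{satoduality}(i) provides a canonical trace map $\text{tr}_Z : H^{d+1}(Z, \nu_{n,Z}^d) \to \Zpn$, which is bijective when $Z$ is connected. I would then simply define
\[ Tr := \text{tr}_Z \circ Gys_{0,n}^{-1} : H^{d+2}_Z(X, \Wno_{X,\log}^{d+1}) \longrightarrow \Zpn. \]
Canonicity is inherited from the canonicity of both $Gys_{0,n}$ (established via the compatibility of Gysin maps of Theorem \ref{compatibilityofgysinmaps}) and of $\text{tr}_Z$. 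When $Z$ is connected, $Tr$ is a composite of two bijections, hence bijective.

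In truth there is no substantial obstacle here: all the work has already been done in the purity theorem and in Sato's duality, and the statement is essentially a bookkeeping assembly of these two inputs. The only point that genuinely uses the hypothesis $k = \Fp$ is the applicability of $\text{tr}_Z$, which is defined only over a finite base field; the properness required for that map is precisely the projectivity of $X \to B$ passing to the special fiber.
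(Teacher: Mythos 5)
Your construction is exactly the paper's: the author likewise defines $Tr$ as the composite of Sato's trace map $\text{tr}_Z$ from Theorem \ref{satoduality} with the inverse of the cohomological purity isomorphism $Gys_{i,n}$ specialized to $i=0$, with bijectivity for connected $Z$ inherited from $\text{tr}_Z$. Your additional remarks verifying that $Z$ is a proper simple normal crossing variety over $\Fp$ are correct and only make explicit what the paper leaves implicit.
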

\begin{proof}
	We define this trace map as $ tr_Z\circ Gys_{i,n}^{-1}$, where $tr_Z$ is the trace map in Theorem \ref{satoduality}.
\end{proof}
We conclude this chapter with the following  compatibility result.
\begin{proposition}
	Let $X, Z$ be as in Theorem \ref{newpurity}, and let $W$ be a smooth closed subscheme of $Z$ of codimension $r$, giving the following commutative diagram:
	 \[ \xymatrix{ W\ar@{^(->}[rr]^{\imath} \ar@{_(->}[dr]_{i_W} && Z  \ar@{^(->}[dl]^i \\
		& X&  }\]
	Then the following diagram:
	\[ \xymatrix{  \Wno_{W, \log}^{d-r} \ar[rr]^{Gys_{W\hookrightarrow Z}}  \ar[dr]_{\theta_{W\hookrightarrow X}}&& R^r\imath^!\nu_{n,Z}^d \ar[dl]^{Gys_{Z \hookrightarrow X}}\\
		& R^{r+1}i_W^!\Wno_{X,\log}^{d+1} &
		}\]
		commutes. Recall that $\theta_{W \hookrightarrow X}$ is the Gysin morphism defined by Shiho in Theorem \ref{shihopurity}, $Gsy_{W \hookrightarrow Z}$ be the Gysin morphism defined by Sato in Theorem \ref{satopurity}, and $Gys_{Z\hookrightarrow X}$ is the Gysin morphism in Theorem \ref{newpurity}.
\end{proposition}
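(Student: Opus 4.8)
The plan is to deduce the commutativity from a \emph{transitivity} of the three Gysin morphisms along the factorization $i_W=i\circ\imath$, using the canonical isomorphism $Ri_W^!\cong R\imath^!\circ Ri^!$. Applying $R\imath^!$ to the isomorphism $Gys_{Z\hookrightarrow X}\colon\nu_{n,Z}^d[-1]\xrightarrow{\cong}Ri^!\Wno_{X,\log}^{d+1}$ of Theorem \ref{newpurity}, and precomposing with Sato's isomorphism $Gys_{W\hookrightarrow Z}\colon\Wno_{W,\log}^{d-r}[-r]\xrightarrow{\cong}R\imath^!\nu_{n,Z}^d$ of Theorem \ref{satopurity} (where $\nu_{n,W}^{d-r}=\Wno_{W,\log}^{d-r}$ by Remark \ref{smoothlog}, as $W$ is smooth), yields after passing to $R^{r+1}$-cohomology exactly the composite $Gys_{Z\hookrightarrow X}\circ Gys_{W\hookrightarrow Z}$ around the triangle. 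Thus the claim is the identity $\theta_{W\hookrightarrow X}=Gys_{Z\hookrightarrow X}\circ Gys_{W\hookrightarrow Z}$ of isomorphisms $\Wno_{W,\log}^{d-r}\to R^{r+1}i_W^!\Wno_{X,\log}^{d+1}$.

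First I would observe that, by the purity and semi-purity results (Theorems \ref{shihopurity}, \ref{satopurity}, \ref{newpurity} and Remark \ref{regularvanishingcodimension}), every $R^{\bullet}$-term in the triangle is a single étale sheaf on $W$ placed in one degree, so the assertion is the equality of two maps of sheaves with source $\Wno_{W,\log}^{d-r}$. Since $W$ is smooth, it is in particular a simple normal crossing variety, and Proposition \ref{acyclic} together with Remark \ref{smoothlog} provides the Cousin resolution of $\Wno_{W,\log}^{d-r}$, whose zeroth term gives an injection $\Wno_{W,\log}^{d-r}\hookrightarrow\bigoplus_{w\in W^0}i_{w*}\Wno_{w,\log}^{d-r}$. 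Consequently it suffices to verify the commutativity after localizing at each generic point $w$ of $W$.

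Fix a generic point $w$ of $W$; then $w\in Z^r$ and $w\in X^{r+1}$, and by the Lemma following Theorem \ref{newpurity} one has $N=d+1$, so Corollary \ref{corofshihopurity} supplies the point Gysin $\theta_{i_w,n}^{\log}\colon\Wno_{w,\log}^{d-r}\xrightarrow{\cong}R^{r+1}i_w^!\Wno_{X,\log}^{d+1}$. On localizing $Z$ at $w$, Sato's $Gys_{W\hookrightarrow Z}$ becomes the point-level purity isomorphism $\Wno_{w,\log}^{d-r}\xrightarrow{\cong}R^r\imath_w^!\nu_{n,Z}^d$ attached to the $w$-term of the Cousin complex of $\nu_{n,Z}^d$, while $Gys_{Z\hookrightarrow X}$ was built in the proof of Theorem \ref{newpurity} precisely term-wise through the maps $\theta_{i_x,n}^{\log}$. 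Chasing the transitivity isomorphism $Ri_W^!\cong R\imath^!\circ Ri^!$ then identifies the localized clockwise composite with $\theta_{i_w,n}^{\log}$. On the other hand, Shiho's $\theta_{W\hookrightarrow X}$ is compatible with localization along $w\hookrightarrow W$ (this is how Corollary \ref{corofshihopurity} is deduced from Theorem \ref{shihopurity}), so it too restricts to $\theta_{i_w,n}^{\log}$ at $w$. Hence both composites agree at every generic point, and by the reduction above they coincide.

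The hard part is the matching at the generic point in the third step, namely verifying that the transitivity isomorphism of the functors $Ri^!$, $R\imath^!$ and $Ri_W^!$ carries $R\imath^!(Gys_{Z\hookrightarrow X})\circ Gys_{W\hookrightarrow Z}$ onto the single point Gysin $\theta_{i_w,n}^{\log}$. This is itself a transitivity statement for the point-level Gysin maps, which I expect to reduce, via the Néron--Popescu limit construction underlying Theorem \ref{shihopurity}, to the classical transitivity of Gysin morphisms in the smooth case, combined with the compatibility with the boundary maps established in Theorem \ref{compatibilityofgysinmaps}. The signs there (the $(-1)$-commutativity) must be tracked but do not affect the asserted equality of isomorphisms.
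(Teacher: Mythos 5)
Your argument is correct in substance, but it takes a different route from the paper's. The paper first reduces to the case $W\subseteq Z_j$ for an irreducible component $Z_j$ of $Z$, inserts the intermediate object $R^r\imath_W^!\Wno_{Z_j,\log}^d$, and invokes transitivity of Sato's Gysin maps (for $W\hookrightarrow Z_j\hookrightarrow Z$) and of Shiho's Gysin maps (for $W\hookrightarrow Z_j\hookrightarrow X$, via Shiho's Rem.~3.13) to reduce the whole statement to the codimension-zero case $r=0$; only then does it compare the two maps at generic points, where Sato's Gysin is the identity in the Cousin presentation and $Gys_{Z\hookrightarrow X}$ is, by its very construction in the proof of Theorem \ref{newpurity}, the point Gysin $\theta_{i_x,n}^{\log}$. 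You skip the factorization through $Z_j$ entirely and run the generic-point comparison for arbitrary $r$; what this buys is that you never need Sato's or Shiho's transitivity for chains of closed immersions, but the price is that you must know that Shiho's sheaf-level $\theta_{W\hookrightarrow X}$ localizes at a generic point $w$ of $W$ to $\theta_{i_w,n}^{\log}$ for $W$ of positive dimension --- a compatibility with localization rather than a transitivity, which does hold essentially by construction (the sheaf map is obtained by sheafifying the $q=0$ case of Theorem \ref{shihopurity} over opens, exactly as in Corollary \ref{corofshihopurity}), but which you should state as such. Two smaller imprecisions: the reduction to generic points requires the \emph{target} $R^{r+1}i_W^!\Wno_{X,\log}^{d+1}$ to inject into $\bigoplus_{w\in W^0}\imath_{w*}R^{r+1}i_w^!\Wno_{X,\log}^{d+1}$ (which follows from the vanishing in Proposition \ref{moservanishing} applied to $R\Gamma_W$ of the complex $B_n^{\bullet}(Z,d)$, or from identifying the target with $\Wno_{W,\log}^{d-r}$ and using the Gersten resolution on the smooth $W$), not merely the Cousin resolution of the source; and your final paragraph misattributes the remaining work to Theorem \ref{compatibilityofgysinmaps} and N\'eron--Popescu --- the boundary-map compatibility is what makes $Gys_{Z\hookrightarrow X}$ a map of complexes in the first place, but it plays no role in the pointwise identification you still owe, which is the localization compatibility just described.
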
 
\begin{proof}
	We may assume $W\subseteq Z_j$, for some irreducible component $Z_j$ of $Z$.  We denote the natural morphisms as in the following diagram: 
	\[ \xymatrix{ W \ar[r]^-{\imath_W}  \ar[dr]_{i_W}&Z_i \ar[r]^{\imath_{Z_j}} \ar[d]^{i_{Z_j}}& Z \ar[dl]^i\\
	& X&  }\]
	Then we have
	\[ \xymatrix@C=.5cm@R=0.3cm{ \Wno_{W}^{d-r} \ar[rrrr]_{(1)} \ar[drr] \ar[dddrr]^{(2)} &&&& R^r\imath^!\nu_{n,Z}^d \ar[dddll]_{(3)}\\
	&& R^r\imath_W^!\Wno_{Z_j}^d \ar[urr] \ar[dd] &&\\
		&&&&\\
		&&R^{r+1}i_W^!\Wno_{X,\log}^{d+1}&&
	}\]
  Sato's Gysin morphisms satisfy a transitivity property, which implies the square (1) commutes. So does (2) due to Shiho's remark in \cite[Rem. 3.13]{shihopurity}. It's enough to show the square (3) commute. Hence we reduced to the case $r=0$. Then we may write $\Wno_{W,\log}^d$ as the kernel of $\oplus_{x\in W^0}i_{x*}\Wno_{x,\log}^d \to \oplus_{y\in W^1}i_{y*}\Wno_{y,\log}^{d-1} $. Then Sato's Gysin morphism will be the identity, and our definition of Gysin morphism locally is exactly that given by Shiho(see the proof of Theorem \ref{newpurity}). 
\end{proof}

\section{Coherent duality}
From now on, we fix the notation as in the following diagram.
$$\xymatrix@!=4pc {
X_s \ar@{^{(}->}[r]^-{i} \ar[d]^{f_s} & X \ar[d]^f &X_{\eta}\ar[d]^{f_{\eta}}\ar@{_{(}->}[l]_-j \\
s \ar@{^{(}->}[r]^-{i_s} &B=\Spec(k [[t]] )&\eta \ar@{_{(}->}[l]_-{j_{\eta}}
}
$$
where $f$ is a projective strictly semistable scheme, $Z=X_s$ is the special fiber, $X_{\eta}$ is the generic fiber, and $k$ is a perfect field of characteristic $p>0$.
\subsection{The absolute differential sheaf $\Omega_X^1$}
We recall some lemmas on local algebras.
\begin{lemma}
	Let $(A,\m, k) \to (A', \m',k')$ be a local morphism of Noetherian local rings. If $A'$ is regular and flat over $A$, then $A$ is regular.
\end{lemma}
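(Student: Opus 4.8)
The plan is to deduce regularity of $A$ from Serre's homological criterion: a Noetherian local ring is regular if and only if its residue field has finite projective dimension (equivalently, the ring has finite global dimension). Thus it suffices to show $\operatorname{pd}_A k < \infty$, and the strategy is to transport this finiteness from $A'$, where it is guaranteed by regularity, across the flat map $A \to A'$. Here I use that the local rings occurring in our setting are Noetherian, so that minimal free resolutions of finitely generated modules are available.

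First I would record that a flat local homomorphism $(A,\m,k)\to(A',\m',k')$ of Noetherian local rings is automatically faithfully flat: since $\m A' \subseteq \m' \neq A'$, we have $A'\otimes_A k = A'/\m A' \neq 0$. This is what lets exactness be both transported and detected across $A \to A'$. Next, take a minimal free resolution $F_\bullet \to k$ of the residue field over $A$, so that all entries of its differentials lie in $\m$. Because $A'$ is flat over $A$, the tensored complex $F_\bullet \otimes_A A'$ is again exact and hence is a free resolution of $k \otimes_A A' = A'/\m A'$ over $A'$. The key observation is that this base-changed resolution is again minimal: the entries of its differentials lie in the image of $\m$, which is contained in $\m A' \subseteq \m'$ precisely because the homomorphism is local. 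Consequently $F_\bullet \otimes_A A'$ and $F_\bullet$ have the same length, giving $\operatorname{pd}_{A'}(A'/\m A') = \operatorname{pd}_A k$.

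Finally, since $A'$ is regular, every finitely generated $A'$-module has projective dimension at most $\dim A' < \infty$; in particular $\operatorname{pd}_{A'}(A'/\m A') < \infty$. By the displayed equality we obtain $\operatorname{pd}_A k < \infty$, and Serre's criterion then yields that $A$ is regular.

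The one point to handle with care is the minimality of the base-changed resolution, that is, checking that the differentials land in $\m'$ so that no cancellation can shorten the resolution and spuriously lower the projective dimension; this is exactly where the local (and not merely flat) nature of the homomorphism enters. Everything else is a formal consequence of faithful flatness together with Serre's theorem. An alternative route that avoids the minimality bookkeeping is to argue directly with $\operatorname{Tor}$: flatness of $A \to A'$ gives a natural isomorphism $\operatorname{Tor}^A_i(k,k)\otimes_A A' \cong \operatorname{Tor}^{A'}_i(A'/\m A', A'/\m A')$, and the right-hand side vanishes for all large $i$ by regularity of $A'$; faithful flatness then forces $\operatorname{Tor}^A_i(k,k) = 0$ for large $i$, which is again the finiteness $\operatorname{pd}_A k < \infty$ we need.
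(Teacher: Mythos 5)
Your proof is correct and takes essentially the same approach as the paper: the paper's own argument is precisely your "alternative route" via $\operatorname{Tor}^A_q(k,k)\otimes_A A'\cong \operatorname{Tor}^{A'}_q(A'/\m A',A'/\m A')$, vanishing for $q>\dim A'$, faithful flatness, and Serre's criterion. Your main route via minimal free resolutions is just a mild repackaging of the same idea.
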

\begin{proof}
	Note that flat base change commutes with homology. Hence,   for $q>\text{dim}(A')$, we have  $\text{Tor}_{q}^A(k,k)\otimes_AA'=\text{Tor}_{q}^{A'}(k\otimes A', k\otimes A')=0$.  Since $A'$ is faithful flat over $A$, this implies $\text{Tor}_q^A(k,k)=0$, for $q>\text{dim}(A')$. Therefore, the global dimension of $A$ is finite. Thanks to Serre's theorem \cite[Thm. 19.2]{matsumura}, $A$ is regular. 
\end{proof}
\begin{lemma}
	Let $A$ be a regular local ring of characteristic $p>0$ such that $A^p \to A$ is finite. Then $\Omega_A^1$ is a free $A$-module.
\end{lemma}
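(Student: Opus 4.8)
The plan is to exhibit an explicit free basis of $\Omega_A^1$ coming from a $p$-basis of $A$. Since $A$ has characteristic $p$, we have $d(a^p)=pa^{p-1}\,da=0$, so the absolute differentials satisfy $\Omega_A^1=\Omega_{A/\mathbb{F}_p}^1=\Omega_{A/A^p}^1$; thus everything can be computed relative to the subring $A^p$. As $A^p\to A$ is finite, $\Omega_A^1$ is a finitely generated $A$-module, so over the local ring $A$ it suffices to produce a generating set that is also $A$-linearly independent. I would obtain both from a $p$-basis.

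First I would record that $A$ is a free $A^p$-module of finite rank. By Kunz's theorem the regularity of $A$ is equivalent to the flatness of the Frobenius $A^p\to A$; combined with the finiteness hypothesis and the fact that $A^p$ is a Noetherian local ring, a finite flat module over such a ring is free. Writing $d=\dim A$, $k=A/\mathfrak{m}$ and $p^s=[k:k^p]$, the formula $[A:A^p]=p^{\dim A}[k:k^p]$ for a regular local ring of characteristic $p$ gives that the rank equals $q=p^{d+s}$.

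Next I would construct the $p$-basis. Let $t_1,\dots,t_d$ be a regular system of parameters of $A$, and lift a $p$-basis $\bar y_1,\dots,\bar y_s$ of $k$ to elements $y_1,\dots,y_s\in A$. Setting $x_1,\dots,x_{d+s}:=t_1,\dots,t_d,y_1,\dots,y_s$, the claim is that the $p^{d+s}$ reduced monomials $\prod_i x_i^{e_i}$ with $0\le e_i<p$ form an $A^p$-basis of $A$. Since their number equals the rank $q$ of the free $A^p$-module $A$, by Nakayama it is enough to check that they generate, and for that it suffices to check generation after reduction modulo the maximal ideal $\mathfrak{m}_{A^p}=\mathfrak{m}\cap A^p$; under the Frobenius this identifies $A/\mathfrak{m}_{A^p}A$ with $A/(t_1^p,\dots,t_d^p)$, and one reduces to spanning this $k^p$-vector space using that $\mathrm{gr}_{\mathfrak{m}}A$ is a polynomial ring over $k$ and that the $\bar y_j$ form a $p$-basis of $k$. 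I expect this spanning verification to be the \emph{main obstacle}, since it is precisely where regularity enters, through the interplay of the regular parameters with the lifted residue-field $p$-basis.

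Finally, granting the $p$-basis, I would conclude that $\{dx_1,\dots,dx_{d+s}\}$ is a free basis of $\Omega_A^1$. Spanning is immediate: every element of $A$ is a polynomial in the $x_i$ with $A^p$-coefficients and $d$ annihilates $A^p$, so $da$ is an $A$-combination of the $dx_i$. For linear independence I would construct, for each $j$, the $A^p$-linear partial derivative $D_j\colon A\to A$ defined on the monomial basis by $D_j\bigl(\prod_i x_i^{e_i}\bigr)=e_j x_j^{e_j-1}\prod_{i\ne j}x_i^{e_i}$; a short check (using that in characteristic $p$ the carry $x_j^p\in A^p$ is killed consistently with the Leibniz rule) shows each $D_j$ is a derivation of $A$ over $A^p$ with $D_j(x_i)=\delta_{ij}$, so the pairing $\langle dx_i,D_j\rangle=\delta_{ij}$ forces the $dx_i$ to be $A$-linearly independent. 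Hence $\Omega_A^1$ is free of rank $d+s=\log_p[A:A^p]$.
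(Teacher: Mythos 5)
Your proof is correct, and its skeleton coincides with the paper's: invoke Kunz's theorem that regularity of $A$ forces $A^p\to A$ to be flat (hence, with finiteness, $A$ is free over the Noetherian local ring $A^p$), produce a $p$-basis, and observe that the differentials of a $p$-basis give a free basis of $\Omega^1_A=\Omega^1_{A/A^p}$. The one genuine difference is that where the paper simply cites the Kimura--Niitsuma resolution of Kunz's conjecture for the existence of a $p$-basis, you reprove that result in the finite-Frobenius case: you exhibit the candidate $p$-basis as a regular system of parameters together with lifts of a $p$-basis of the residue field, reduce the verification to a rank count plus a Nakayama/generation argument modulo $\mathfrak{m}_{A^p}A=(t_1^p,\dots,t_d^p)$, and settle the spanning step via the associated graded ring $\mathrm{gr}_{\mathfrak m}A\cong k[T_1,\dots,T_d]$. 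That step, which you rightly flag as the crux, does go through: the images of the reduced monomials in each graded piece of $A/(t_1^p,\dots,t_d^p)$ are $\prod\bar y_j^{f_j}\prod T_i^{e_i}$, which span over $k^p$, and the filtration is finite. Your construction of the dual derivations $D_j$ for linear independence is also the standard and correct argument (it descends from $\partial/\partial X_j$ on $A^p[X_1,\dots,X_r]/(X_i^p-x_i^p)$ since $pX_j^{p-1}=0$). What your route buys is a self-contained argument in exactly the case needed (finite Frobenius), at the cost of length; what the paper's route buys is brevity, plus the extra observation (not needed in your version) that $A^p$ is itself regular. No gaps.
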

\begin{proof}
	By \cite[Thm. 2.1]{kunzregularinp}, regularity implies that $A^p \to A$ is flat, so faithfully flat. The above lemma implies that $A^p$ is regular as well. Then we use a conjecture of Kunz, which was proved in \cite{kimuranittsuma},  that there exists a $p$-basis of $A$. Therefore the assertion follows.
\end{proof}

\begin{proposition}
	The absolute differential sheaf $\Ome_{X}^1$ is a locally free $\mathcal{O}_X$-module of rank $d+1$.
\end{proposition}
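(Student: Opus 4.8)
The plan is to deduce both assertions from the two lemmas just established, reducing the problem first to a stalkwise statement and then to a rank computation. Since $X$ is strictly semistable it is in particular regular, so every local ring $\mathcal{O}_{X,x}$ is a regular local ring of characteristic $p$; moreover the lemma following Theorem~\ref{newpurity} shows that the absolute Frobenius $\mathcal{O}_{X,x}^p \to \mathcal{O}_{X,x}$ is finite. The second lemma above therefore applies and yields that each stalk $(\Ome_X^1)_x = \Ome^1_{\mathcal{O}_{X,x}}$ is a free $\mathcal{O}_{X,x}$-module.

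To promote freeness of all stalks to local freeness of the quasi-coherent sheaf $\Ome_X^1$, I would argue directly on affine opens. Let $\operatorname{Spec}(A) \subseteq X$ be affine; by the local description of a semistable scheme together with parts (i) and (ii) of the lemma following Theorem~\ref{newpurity}, the ring $A$ is regular and its absolute Frobenius $A^p \to A$ is finite, so $A$ is finite over $A^p$. Running the argument of the second lemma above for $A$ (the theorem of Kimura--Niitsuma producing a $p$-basis applies to any such $F$-finite regular ring, not merely to a local one), we obtain a finite $p$-basis $x_1, \dots, x_m$ of $A$, whence $\Ome^1_{A/\mathbb{F}_p}$ is free with basis $dx_1, \dots, dx_m$. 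Thus $\Ome_X^1$ is locally free of finite rank.

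Finally I would compute the rank, which by local freeness is constant on each connected component and may be read off at its generic point $\eta$. There $(\Ome_X^1)_\eta = \Ome^1_{L/\mathbb{F}_p}$ for $L = \kappa(\eta)$, and for a field of characteristic $p$ one has $\dim_L \Ome^1_{L/\mathbb{F}_p} = \log_p [L : L^p]$. Since $\eta \in X^0$, part (iii) of the lemma following Theorem~\ref{newpurity} gives $[L : L^p] = p^{d+1}$, so the rank equals $d+1$.

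The two lemmas do the real work, so the only delicate point is the passage from free stalks to a locally free coherent sheaf, and this is exactly where the finiteness of the Frobenius is indispensable: it supplies a \emph{finite} $p$-basis, so that $\Ome_X^1$ is finitely generated---something not automatic for absolute differentials over $k[[t]]$---and it forces the rank to be the same, $d+1$, at every point. As a sanity check one sees this directly: at a closed point the $d+1$ comes from a regular system of parameters (the perfect residue field contributing nothing), while at the generic point the full $d+1$ is the $p$-rank of the function field.
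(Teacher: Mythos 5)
Your overall architecture (free stalks from the second lemma, a finiteness argument, and a rank count at the generic point via part (iii) of the lemma following Theorem~\ref{newpurity}) matches the intent of the paper's proof, and your rank computation is correct and in fact more explicit than what the paper writes. But the step you use to pass from free stalks to local freeness contains a genuine error: the Kimura--Niitsuma theorem does \emph{not} extend from regular local rings to arbitrary $F$-finite regular rings. A $p$-basis $x_1,\dots,x_m$ of $A$ over $A^p$ forces $\Omega^1_{A/\mathbb{F}_p}$ to be \emph{free} on $dx_1,\dots,dx_m$, so a global $p$-basis cannot exist whenever $\Omega^1_A$ is projective but not free --- which already happens for the coordinate ring of a smooth affine variety over $\overline{\mathbb{F}}_p$ with nontrivial cotangent bundle. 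So the parenthetical claim that the theorem ``applies to any such $F$-finite regular ring, not merely to a local one'' is false, and with it your assertion that $\Omega^1_A$ is free on every affine open.

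What you actually need at this point is only that $\Omega^1_X$ is \emph{coherent}; once that is known, a coherent sheaf on a locally Noetherian scheme whose stalks are all free is locally free by the standard finite-presentation argument, and your stalkwise application of the second lemma finishes the job. Coherence is available by two correct routes. The paper uses the exact sequence $f^*\Omega^1_B \to \Omega^1_X \to \Omega^1_{X/B} \to 0$: the outer terms are coherent ($\Omega^1_B$ is free of rank one on $dt$ since $k$ is perfect, and $\Omega^1_{X/B}$ is coherent since $f$ is of finite type), hence so is the middle term. Alternatively, your $F$-finiteness instinct can be made correct without invoking a $p$-basis: since $d(a^p)=0$ one has $\Omega^1_{A/\mathbb{F}_p}=\Omega^1_{A/A^p}$, and if $y_1,\dots,y_N$ generate $A$ as an $A^p$-module then $dy_1,\dots,dy_N$ generate $\Omega^1_{A/A^p}$ over $A$. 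Either repair closes the gap; the rest of your argument stands.
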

\begin{proof}
	Note that we have an exact sequence \[ f^*\Ome_B^1 \to \Ome_X^1 \to \Ome_{X/B}^1 \to 0.\]
	Both $f^*\Ome_B^1$ and $\Ome_{X/B}^1 $ are coherent, so is $\Ome_X^1$. Then we may reduce to local case, and the assertion is clear by the above lemma.
	
\end{proof}

\begin{corollary}
	The sheaves $\Ome_X^i$, $Z\Ome_X^i$ (via F), $\Ome_{X}^i/B\Ome_{X}^i$ (via F) are locally free $\mathcal{O}_X$-modules.
\end{corollary}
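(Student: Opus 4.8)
The plan is to derive all three statements from the local freeness of $\Ome_X^1$ established above, together with the inverse Cartier isomorphism for regular schemes over $\F_p$, by propagating local freeness through the standard short exact sequences of the de Rham complex.

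For the first sheaf this is immediate: $\Ome_X^i = \bigwedge^i\Ome_X^1$ is locally free of rank $\binom{d+1}{i}$ because $\Ome_X^1$ is, and in particular $\Ome_X^i = 0$ for $i \geq d+2$. For the remaining two I first fix the ``via $F$'' structures. Since $X$ is regular of characteristic $p$ with finite absolute Frobenius it has a $p$-basis locally, so $\mathcal{O}_X$ is locally free over $\mathcal{O}_X^p$; restricting scalars along $F$ therefore keeps $\Ome_X^i$ locally free and coherent as an $\mathcal{O}_X$-module via $F$. Both $Z\Ome_X^i$ and $B\Ome_X^i$ are $\mathcal{O}_X$-submodules of $\Ome_X^i$ only via $F$ (as $d(a^p\omega)=a^pd\omega$), and $d$ is $F$-linear, so $Z\Ome_X^i$ and $B\Ome_X^{i+1}$ are coherent, being the kernel and image of an $F$-linear map of coherent sheaves.

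The induction runs on the two exact sequences of $\mathcal{O}_X$-modules via $F$
\[ 0 \to Z\Ome_X^i \to \Ome_X^i \xrightarrow{\ d\ } B\Ome_X^{i+1} \to 0, \qquad 0 \to B\Ome_X^i \to Z\Ome_X^i \to \mathscr{H}^i(\Ome_X^{\bullet}) \to 0, \]
where in the second one the cokernel is locally free because $\mathscr{H}^i(\Ome_X^{\bullet}) \cong \Ome_X^i$ by the inverse Cartier isomorphism (valid here by Shiho's result). As every sheaf involved is coherent, local freeness may be checked on the stalks, which are finitely generated modules over the local rings $\mathcal{O}_{X,x}$ acting through $F$; over a local ring finitely generated projective modules are free and any short exact sequence with locally free cokernel splits. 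I then argue by descending induction on $i$. For $i \geq d+2$ everything vanishes. Assuming $B\Ome_X^{i+1}$ is locally free, the first sequence splits locally, so its kernel $Z\Ome_X^i$ is a local direct summand of the locally free $\Ome_X^i$ and is locally free; then the second sequence splits locally (its cokernel $\mathscr{H}^i(\Ome_X^{\bullet})$ being locally free), exhibiting $B\Ome_X^i$ as a local direct summand of $Z\Ome_X^i$, hence locally free. This proves the claim for $Z\Ome_X^i$ at every $i$.

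Finally, the filtration $B\Ome_X^i \subseteq Z\Ome_X^i \subseteq \Ome_X^i$ gives the exact sequence
\[ 0 \to \mathscr{H}^i(\Ome_X^{\bullet}) \to \Ome_X^i/B\Ome_X^i \to B\Ome_X^{i+1} \to 0, \]
whose outer terms are locally free (the left by Cartier, the right by the induction just completed); being an extension of finitely generated projective sheaves, $\Ome_X^i/B\Ome_X^i$ is locally free. The only delicate point is that a sub- or quotient-sheaf of a locally free sheaf need not be locally free; this is circumvented at each stage by splitting off a locally free term, which is possible precisely because the Cartier isomorphism identifies the homology $\mathscr{H}^i(\Ome_X^{\bullet})$ with the locally free $\Ome_X^i$. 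I expect this identification, rather than any computation, to be the real content.
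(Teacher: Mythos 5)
Your proof is correct and follows exactly the route the paper intends: the paper's proof is the one-line remark ``as in the smooth case, using Cartier isomorphisms, we can show this inductively,'' and your descending induction through the two short exact sequences, with the inverse Cartier isomorphism supplying local freeness of $\mathscr{H}^i(\Ome_X^{\bullet})$ and the $p$-basis (Kunz/Kimura--Niitsuma) making $\mathcal{O}_X$ locally free over $\mathcal{O}_X^p$, is precisely the standard argument being invoked. You have simply written out the details the paper omits.
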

\begin{proof}
	As in the smooth case, using Cartier isomorphisms, we can show this inductively.
\end{proof}

\subsection{Grothendieck duality theorem}

The Grothendieck duality theorem studies the right adjoint functor of $Rf_*$ in $D^+_{\text{qc}}(X)$, the derived category of $\mathcal{O}_X$-modules with bounded from below quasi-coherent cohomology sheaves. There are several approaches to this functor.

In our case, we follow a more geometric approach, which was given by Hartshorne in \cite{hartshorneduality}. Here we only use the Grothendieck duality theorem for projective morphisms.
\begin{definition}
	A morphism $g: M \to N$ of schemes is called projectively embeddable if it factors as \[ \xymatrix{M \ar[rr]^g \ar[dr]^{p} &&N \\
		&\mathbb{P}^n_N \ar[ur]^{q}&
		  }\]
		  for some $n \in \mathbb{N}$, where $q$ is the natural projection, $p$ is a finite morphism.
\end{definition}
\begin{remark}\label{projective}
	In our case, the projectivity of the scheme $f: X \to B={\rm Spec}(k[[t]])$ and the fact that the basis is affine imply that $f$ factors through  a closed immersion $X\hookrightarrow \mathbb{P}^n_S$ for some $n\in \mathbb{N}$ [EGA, II 5.5.4 (ii)].
\end{remark}

\begin{theorem}(Grothendieck duality theorem \cite[\S 11]{hartshorneduality})
	Let $g:M \to N$ be a projectively embeddable morphism of noetherian schemes of finite Krull dimension. Then, there exists a functor $f^!: D^+_{qc}(N) \to D^+_{qc}(M)$ such that the following holds:
	\begin{itemize}
	    \item[(i)] If $h: N \to T $ is a second projectively embeddable morphism, then $(g\circ h)^!=h^!\circ g^!$;
		\item[(ii)] If $g$ is smooth of relative dimension $n$, then $g^!(\mathscr{G})=f^*(\mathscr{G})\otimes \Ome_{M/N}^{n}[n]$;
		\item[(iii)] If $g$ is a finite morphism, then \textup{$g^!(\mathscr{G})=\bar{g}^*R\sHom_{\mathcal{O}_N}(g_*\mathcal{O}_M, \mathscr{G} )$}, where $\bar{g}$ is the induced morphism $(M, \mathcal{O}_M) \to (N, g_*\mathcal{O}_M)$;
		\item[(iv)] There is an isomorphism \textup{ \[\theta_g: Rg_*R\sHom_{\mathcal{O}_M}(\mathscr{F}, g^!\mathscr{G}) \xrightarrow{\cong} R\sHom_{\mathcal{O}_N}(Rf_*\mathscr{F}, \mathscr{G}),    \]}
		for  $\mathscr{F} \in D^-_{qc}(M)$, $\mathscr{G} \in D^+_{qc}(N)$.
	\end{itemize}
\end{theorem}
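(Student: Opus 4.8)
The plan is to follow Hartshorne's geometric construction of the twisted inverse image functor \cite{hartshorneduality}. The starting observation is that a projectively embeddable morphism $g:M\to N$ factors, by the very definition, as $g=q\circ p$ with $q:\P^n_N\to N$ the structural projection (smooth of relative dimension $n$) and $p:M\to\P^n_N$ a finite morphism. It therefore suffices to \emph{define} $g^!$ on the two elementary classes of morphisms — smooth projections and finite morphisms — by the formulas (ii) and (iii), and to set $g^!:=p^!\circ q^!$. With this construction properties (ii) and (iii) hold tautologically, so the entire content of the theorem is concentrated in establishing the duality isomorphism (iv), and in deducing from it both the independence of $g^!$ from the chosen factorization and the transitivity (i).

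First I would treat the two building blocks separately. For the finite morphism $p$, the functor $p_*$ is exact and $p$ is affine, so $Rp_*=p_*$; the isomorphism $\theta_p$ is then the derived form of the elementary coinduction adjunction $\sHom_{A}(\mathscr{F},\sHom_{B}(A,\mathscr{G}))\cong\sHom_{B}(\mathscr{F},\mathscr{G})$ for the ring map $B=\O_{\P^n_N}\to p_*\O_M=A$, which yields $p_*R\Hom_{\O_M}(\mathscr{F},p^!\mathscr{G})\cong R\Hom_{\O_{\P^n_N}}(p_*\mathscr{F},\mathscr{G})$. For the projection $q$, the isomorphism $\theta_q$ is Serre duality on projective space: one first constructs a trace $\mathrm{tr}_q:Rq_*\Ome^n_{\P^n_N/N}[n]\to\O_N$ — equivalently the identification $R^nq_*\Ome^n_{\P^n_N/N}\cong\O_N$ — and then checks that the induced cup-product pairing is perfect, proceeding from the explicit computation of $Rq_*\O(m)$ for line bundles, extending to arbitrary locally free sheaves, then to coherent sheaves by resolution, and finally to all of $D^+_{qc}$ by the standard way-out functor lemmas \cite[Ch.~I, \S7]{hartshorneduality}.

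With $\theta_p$ and $\theta_q$ in hand, I would assemble $\theta_g$ for $g=q\circ p$ by composition, using $Rg_*=Rq_*\circ p_*$, as the string
\[ Rg_*R\Hom_{\O_M}(\mathscr{F},g^!\mathscr{G})\xrightarrow{\theta_p}Rq_*R\Hom_{\O_{\P^n_N}}(p_*\mathscr{F},q^!\mathscr{G})\xrightarrow{\theta_q}R\Hom_{\O_N}(Rg_*\mathscr{F},\mathscr{G}). \]
Applying $R\Gamma(N,-)$ to (iv) and passing to $H^0$ gives the adjunction $\Hom_{D(M)}(\mathscr{F},g^!\mathscr{G})\cong\Hom_{D(N)}(Rg_*\mathscr{F},\mathscr{G})$, which exhibits $g^!$ as a right adjoint of $Rg_*$. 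Since right adjoints are unique up to canonical isomorphism, this \emph{simultaneously} proves that $g^!$ is independent of the factorization and gives the transitivity (i), because the right adjoint of a composite is the composite of the right adjoints in the opposite order; one only needs to check that the comparison maps are compatible with the two factorizations, which is again formal.

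The main obstacle will be the construction and verification of the trace map for $q:\P^n_N\to N$, i.e.\ genuine relative Serre duality: producing a canonical, base-change–compatible isomorphism $R^nq_*\Ome^n_{\P^n_N/N}\cong\O_N$ and proving that the resulting pairing is perfect for all of $D^+_{qc}$. Everything else is essentially formal once this is settled — the finite case is pure algebra, the passage to a general $g$ is bookkeeping, and well-definedness together with transitivity fall out of the uniqueness of adjoints. A secondary subtlety is controlling the way-out behaviour so that $\theta_g$ is valid on the whole of $D^+_{qc}$ and not merely for bounded complexes of coherent sheaves; this is precisely where the way-out lemmas of \cite{hartshorneduality} are needed, and I would invoke them rather than reprove them.
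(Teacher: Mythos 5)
The paper offers no proof of this statement: it is quoted verbatim as background from Hartshorne's \emph{Residues and Duality} \cite{hartshorneduality}, which the text explicitly says it is following, so there is nothing internal to compare your argument against. Your sketch is a faithful outline of Hartshorne's actual proof --- factor $g$ through $\mathbb{P}^n_N$, define $(-)^!$ by formula (iii) for the finite part and (ii) for the smooth projection, prove the duality isomorphism separately in each case (coinduction adjunction for finite morphisms, relative Serre duality for $\mathbb{P}^n_N \to N$ via the trace $R^nq_*\Omega^n_{\mathbb{P}^n_N/N}\cong\mathcal{O}_N$ and way-out lemmas), and compose. The one place where you are glibber than the source is the claim that independence of the factorization and transitivity ``fall out of the uniqueness of adjoints'': note that (iv) is only asserted for $\mathscr{F}\in D^-_{qc}(M)$, so to pin down $g^!\mathscr{G}\in D^+_{qc}(M)$ by the represented functor you must test against a subcategory of $D^-_{qc}\cap D^+_{qc}$ that still detects isomorphisms (e.g.\ perfect complexes, which generate $D_{qc}$ on a noetherian scheme); Hartshorne himself instead verifies the compatibility of the trace maps for different factorizations by direct, and notoriously laborious, diagram chases. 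Your adjoint-uniqueness shortcut is legitimate once that generation point is made explicit, so I would record it as a remark rather than a gap.
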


\subsection{Relative coherent duality}


In this subsection, we fix a base scheme $B={\rm Spec}(k[[t]])$, and prove the following relative duality result.

\begin{theorem}\label{fc}
Let $Y$ be a regular scheme and let $f: Y \to B=Spec(k[[t]])$ be a projective morphism of relative dimension $d$. There exists a canonical isomorphism
$$\text{Tr}_f : \Ome_Y^{d+1}[d]  \xrightarrow{\cong}   f^!\Ome_B^1.$$
\end{theorem}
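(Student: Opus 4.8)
The plan is to reduce the computation of $f^!\Ome_B^1$ to the two cases for which the Grothendieck duality theorem gives an explicit answer — a smooth morphism and a regular closed immersion — by factoring $f$ through a projective space, and then to identify the resulting shifted line bundle with $\Ome_X^{d+1}[d]$ using that $\Ome_X^1$ is locally free. Since $f$ is projective and $B$ is affine, I may choose a closed immersion $p\colon X \hookrightarrow P:=\P^N_B$ with $q\colon P \to B$ the structure morphism, so that $f = q\circ p$ and, by part (i) of the theorem, $f^! = p^!\circ q^!$. Here $q$ is smooth of relative dimension $N$, while $p$ is a regular closed immersion of codimension $c:=N-d$ (both $X$ and $P$ being regular Noetherian), with ideal sheaf $\mathcal{I}$ and normal bundle $\mathcal{N}=(\mathcal{I}/\mathcal{I}^2)^\vee$ locally free of rank $c$.

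First I compute $q^!\Ome_B^1$. Part (ii) gives $q^!\Ome_B^1 = q^*\Ome_B^1 \otimes \Ome_{P/B}^N[N]$, and since $q$ is smooth the sequence $0 \to q^*\Ome_B^1 \to \Ome_P^1 \to \Ome_{P/B}^1 \to 0$ is short exact with locally free terms; taking determinants yields $\Ome_P^{N+1} \cong q^*\Ome_B^1 \otimes \Ome_{P/B}^N$, so $q^!\Ome_B^1 \cong \Ome_P^{N+1}[N]$. Next, as $p$ is finite, part (iii) gives $p^!(\Ome_P^{N+1}[N]) = \bar p^*\,R\Hom_{\O_P}(p_*\O_X,\Ome_P^{N+1})[N]$. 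Because $p$ is a regular immersion of codimension $c$, the Koszul resolution of $p_*\O_X$ furnishes the fundamental local isomorphism: $R\Hom_{\O_P}(p_*\O_X,\O_P)$ is concentrated in degree $c$ and equals $p_*(\wedge^c\mathcal{N})$. Pulling out the line bundle $\Ome_P^{N+1}$ and applying $\bar p^*$ gives $p^!(\Ome_P^{N+1}[N]) \cong \wedge^c\mathcal{N}\otimes p^*\Ome_P^{N+1}[N-c]$, and since $N-c=d$ this is a line bundle in degree $-d$, as required.

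It remains to identify $\wedge^c\mathcal{N}\otimes p^*\Ome_P^{N+1}$ with $\Ome_X^{d+1}$, and this is where the hypotheses on $X$ enter decisively. The conormal sequence $\mathcal{I}/\mathcal{I}^2 \xrightarrow{\delta} p^*\Ome_P^1 \to \Ome_X^1 \to 0$ is right exact. By the proposition of the previous subsection $\Ome_X^1$ is locally free of rank $d+1$, while $\Ome_P^1$ is locally free of rank $N+1$; hence the kernel $\mathcal{K}$ of the surjection $p^*\Ome_P^1 \twoheadrightarrow \Ome_X^1$ is locally free of rank $c$. The conormal map factors as a surjection $\mathcal{I}/\mathcal{I}^2 \twoheadrightarrow \mathcal{K}$ of locally free sheaves of the same rank $c$, which is therefore an isomorphism (the kernel is a locally free summand of rank zero), so $\delta$ is injective and the conormal sequence is a short exact sequence of vector bundles. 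Taking determinants gives $p^*\Ome_P^{N+1} \cong \det(\mathcal{I}/\mathcal{I}^2)\otimes\Ome_X^{d+1} \cong (\wedge^c\mathcal{N})^\vee\otimes\Ome_X^{d+1}$, whence $\wedge^c\mathcal{N}\otimes p^*\Ome_P^{N+1}\cong \Ome_X^{d+1}$. Combining the three steps produces an isomorphism $f^!\Ome_B^1 \cong \Ome_X^{d+1}[d]$.

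The genuinely delicate point is not this numerical bookkeeping but the verification that the composite of these canonical isomorphisms is precisely the adjunction trace $\text{Tr}_f$ attached to the morphism $Rf_*\Ome_X^{d+1}[d]\to \Ome_B^1$. This requires checking the compatibility of the smooth-case isomorphism in (ii) and of the fundamental local isomorphism underlying (iii) with the adjunction isomorphism $\theta_g$ of part (iv); granting these standard compatibilities from the theory of the dualizing complex, the isomorphism constructed above coincides with $\text{Tr}_f$. I expect this trace-compatibility to be the main obstacle, since the one essential geometric input — the left exactness of the conormal sequence — is clean and rests entirely on the local freeness of $\Ome_X^1$, which is exactly where the regularity of the strictly semistable scheme $X$, together with finiteness of the absolute Frobenius, is used.
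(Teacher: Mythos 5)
Your proof is correct and follows essentially the same route as the paper: factor $f$ through $\mathbb{P}^N_B$, compute the smooth and finite pieces via parts (ii) and (iii) of Grothendieck duality together with the Koszul resolution, and identify the result with $\Ome_X^{d+1}$ using the left-exactness of the conormal sequence, which the paper establishes by exactly your rank-counting argument relying on the local freeness of $\Ome_X^1$ (Lemma \ref{regularimmersion}). Your closing remarks on trace-compatibility go beyond what the paper writes out, which simply defines $\text{Tr}_f$ by adjunction and concludes; this is a reasonable point to flag but does not change the substance of the argument.
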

 
 This theorem can be obtained by some explicit calculations.
 
\begin{lemma}\label{regularimmersion}
Let $\iota : Y \hookrightarrow  \mathbb{P}_B^{N}$ be a regular closed immersion with the defining sheaf $\mathcal{I}$. Then the sequence of $\mathcal{O}_Y$-modules 
\[ 0\to  \iota^{*}\mathcal{I} /\mathcal{I}^2 \to \iota^{*}\Ome^1_{\mathbb{P}_B^{N}}  \to \Ome^1_Y \to 0\]
is exact. 
\end{lemma}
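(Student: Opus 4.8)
The plan is to realise the displayed sequence as the conormal (second fundamental) exact sequence of the closed immersion $\iota$ for the \emph{absolute} differentials over $\F_p$, and then to upgrade its automatic right-exactness to short-exactness by a rank count, exploiting that all three terms are locally free. The point is that injectivity of the conormal map is the only thing requiring the regularity of $\iota$, and for locally free sheaves it can be detected numerically.

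First I would record the general right-exact conormal sequence attached to the closed immersion $\iota\colon X \hookrightarrow \mathbb{P}^N_B$, taken over the base $\F_p$, namely
\[ \mathcal{I}/\mathcal{I}^2 \xrightarrow{\ \delta\ } \iota^{*}\Ome^1_{\mathbb{P}^N_B} \to \Ome^1_X \to 0, \]
which holds for any closed immersion and, by construction, identifies $\mathrm{Im}(\delta)$ with the kernel of the surjection $\iota^{*}\Ome^1_{\mathbb{P}^N_B} \twoheadrightarrow \Ome^1_X$; here $\Ome^1_{X}=\Ome^1_{X/\F_p}$ and $\Ome^1_{\mathbb{P}^N_B}=\Ome^1_{\mathbb{P}^N_B/\F_p}$ are exactly the absolute sheaves considered in this section. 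It then remains only to prove that $\delta$ is injective. Next I would pin down the ranks. Since $\iota$ is a regular closed immersion of codimension $N-d$ (because $\dim \mathbb{P}^N_B = N+1$ while $\dim X = d+1$), the conormal sheaf $\mathcal{I}/\mathcal{I}^2$ is locally free of rank $N-d$. The sheaf $\Ome^1_X$ is locally free of rank $d+1$ by the proposition proved above, and $\Ome^1_{\mathbb{P}^N_B}$ is locally free of rank $N+1$: indeed $\mathbb{P}^N_B$ is smooth of relative dimension $N$ over the regular base $B$, so for the projection $g\colon \mathbb{P}^N_B \to B$ the sequence $0 \to g^{*}\Ome^1_B \to \Ome^1_{\mathbb{P}^N_B} \to \Ome^1_{\mathbb{P}^N_B/B} \to 0$ is exact with locally free terms of ranks $1$ and $N$, the rank-one statement for $\Ome^1_B$ following from the local-algebra lemmas above (the Frobenius of $k[[t]]$ is finite, so $\Ome^1_B$ is free with basis $dt$). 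Pulling back, $\iota^{*}\Ome^1_{\mathbb{P}^N_B}$ is locally free of rank $N+1$ on $X$, and hence the kernel $\mathcal{K}:=\ker\!\big(\iota^{*}\Ome^1_{\mathbb{P}^N_B} \twoheadrightarrow \Ome^1_X\big)$ is locally free of rank $(N+1)-(d+1)=N-d$.

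Finally, the conormal sequence exhibits a surjection $\bar{\delta}\colon \mathcal{I}/\mathcal{I}^2 \twoheadrightarrow \mathcal{K}$ of locally free $\mathcal{O}_X$-modules of the same rank $N-d$. Locally this is a surjection $\mathcal{O}_X^{\,N-d} \twoheadrightarrow \mathcal{O}_X^{\,N-d}$, which is automatically an isomorphism, since a surjective endomorphism of a finitely generated module over a commutative ring is bijective by the determinant trick. Therefore $\bar{\delta}$ is an isomorphism, so $\delta$ is injective with image exactly $\mathcal{K}$, and the displayed sequence is short exact. I expect the only genuinely delicate point to be bookkeeping rather than depth: namely making sure one uses the \emph{absolute} conormal sequence over $\F_p$ (not the relative one over $B$) so that the middle and right terms are the absolute sheaves of the statement, and confirming that $\Ome^1_B$ is locally free of rank one so the rank count $(N-d)+(d+1)=N+1$ closes up. Both are guaranteed by the local structure of the strictly semistable $X$ and the local-algebra lemmas already established.
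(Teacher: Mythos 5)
Your proof is correct and follows essentially the same route as the paper's: reduce to injectivity of the conormal map, note that the kernel $\mathcal{K}$ of $\iota^{*}\Omega^1_{\mathbb{P}^N_B}\twoheadrightarrow \Omega^1_X$ is locally free, match its rank with that of $\iota^{*}\mathcal{I}/\mathcal{I}^2$, and conclude that the induced surjection between locally free sheaves of equal rank is an isomorphism. You merely spell out the rank bookkeeping ($N-d$, $d+1$, $N+1$) and the determinant-trick justification that the paper leaves implicit.
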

\begin{proof}
We only need to show that the left morphism is injective. Let $\mathcal{K}$ be the kernel of the canonical morphism $\iota^{*}\Ome^1_{\mathbb{P}_B^{N}}  \to \Ome^1_Y$. Since both $\iota^{*}\Ome^1_{\mathbb{P}_B^{N}}$ and $ \Ome^1_Y $ are coherent and locally free,  the kernel $\mathcal{K}$ is coherent and flat. Then it follows that $\mathcal{K}$ is also locally free. By counting the ranks, we have rank($\mathcal{I}/\mathcal{I}^2$)=rank($\mathcal{K}$). Thus, the induced surjective morphism $\iota^{*}\mathcal{I}/\mathcal{I}^2 \to \mathcal{K}$ is an isomorphism.
\end{proof}

\begin{proof}[Proof of Theorem \ref{fc}]
By Remark \ref{projective}, we have a decomposition of maps
\[ \xymatrix{ Y \ar@{^{(}->}[r]^-{\iota}  \ar[d]^-f& \mathbb{P}_B^{N} \ar[dl]^-p \\
B & }\]
for some $N\in \mathbb{N}$. Here $\iota$ is a regular closed immersion. Using Koszul resolution of $\mathcal{O}_Y$ with respect to the closed immersion $\iota$, we have
$$\iota^!\Ome_{\mathbb{P}_B^{N}}^{N+1}  \cong \iota^*\Ome_{\mathbb{P}_B^{N}}^{N+1}\otimes \iota^{*}\text{det}(\mathcal{I} /\mathcal{I}^2)^{\vee}[d-N]. $$
 By Lemma  \ref{regularimmersion}, we have 
$$ \iota^*\Ome_{\mathbb{P}_B^{N}}^{N+1}\otimes \iota^{*}\text{det}(\mathcal{I} /\mathcal{I}^2)^{\vee} \cong \Ome_Y^{d+1}.   $$
Since $p$ is smooth, we have isomorphisms 
$$ p^!\Ome_B^1\cong p^!\mathcal{O}_B\otimes p^*\Ome_B^1 \cong \Ome_{\mathbb{P}_B^{N}/B}^N[N] \otimes p^*\Ome_B^1 \cong \Ome_{\mathbb{P}_B^{N}}^{N+1}[N]. $$
Noting that $$ f^!\Ome_B^1=\iota^! p^!\Ome_B^1,$$  the theorem follows.

\end{proof}
\begin{remark}
From the proof, we can see Theorem \ref{fc} is still true in more general situations. In the light of our application, we just proof this simple case and remark that we only use the case  that the residue field $k$ of the base scheme is a finite field $\Fp$.
\end{remark}

\subsection{Grothendieck local duality}

Let $(R,\mathfrak{m})$ be a regular local ring of dimension $n$ with maximal ideal $\mathfrak{m}$, and $R/\mathfrak{m}\cong \Fp$ is a finite field of characteristic $p$.  For any finite $R$-module $M$, we have a canonical pairing
\begin{equation}\label{localpairing}
H_{\m}^i(\Spec(R), M) \times \text{Ext}_{R}^{n-i}(M, \Ome_R^n) \to H_{\m}^n(\Spec(R), \Ome_R^n) \xrightarrow{\text{Res}} \Fp \xrightarrow{tr_{\Fp/\F_p}} \Z/p\Z.
\end{equation}
 
 \begin{theorem}[Grothendieck local duality]
For each $i\geq 0$, the pairing (\ref{localpairing}) induces isomorphisms
$$ \normalfont \text{Ext}^{n-i}_R(M, \Ome_R^n) \otimes_R \hat{R} \cong \text{Hom}_{\Zp1}(H_{\m}^i(\text{Spec}(R), M), \Zp1),$$
$$\normalfont  H_{\m}^i(\Spec(R), M) \cong  \text{Hom}_{\text{cont}}(\text{Ext}^{n-i}_R(M, \Ome_R^n), \Zp1),$$
where $\text{Hom}_{\text{cont}}$ denotes the set of continuous homomorphisms with respect to $\m$-adic topology on Ext group.
\end{theorem}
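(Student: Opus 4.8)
The plan is to reduce the statement to the classical Grothendieck local duality theorem for the complete regular local ring $\hat{R}$, where the dualizing complex is fully understood, and then to track the completion carefully on each side. First I would recall that for a regular local ring $(R,\m)$ of dimension $n$, the module $\Ome_R^n$ is a dualizing module, and local cohomology $H_{\m}^i(\mathrm{Spec}(R), M)$ depends only on the $\m$-adic completion, in the sense that $H_{\m}^i(\mathrm{Spec}(R),M) \cong H_{\m \hat{R}}^i(\mathrm{Spec}(\hat{R}), M\otimes_R \hat{R})$; this is a standard flat base change property, since $\hat{R}$ is faithfully flat over $R$ and the functors $H_{\m}^i$ are computed by the stable Koszul (Čech) complex on a system of generators of $\m$, which is unaffected by completion. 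The residue map $\mathrm{Res}$ realizes $H_{\m}^n(\mathrm{Spec}(R),\Ome_R^n)$ as the injective hull $E(k)$ of the residue field, composed with $tr_{k/\F_p}$ to land in $\Zp1$.

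The main step is to invoke the classical form of the duality. Over the complete ring $\hat{R}$, Matlis duality gives a perfect pairing between $H_{\m}^i$ and the appropriate Ext group: the functor $M \mapsto \mathrm{Hom}_{\Zp1}(M,\Zp1)$, or equivalently Matlis duality into $E(k)$ followed by $tr_{k/\F_p}$, sends $H_{\m}^i(\mathrm{Spec}(\hat R), \hat M)$ to $\mathrm{Ext}_{\hat R}^{n-i}(\hat M, \Ome_{\hat R}^n)$ and vice versa. Concretely, I would first establish the isomorphism
\[
\mathrm{Ext}^{n-i}_R(M,\Ome_R^n)\otimes_R \hat{R} \cong \mathrm{Ext}^{n-i}_{\hat R}(\hat M, \Ome_{\hat R}^n),
\]
which holds because $\hat R$ is flat over $R$ and $M$ is finitely generated (so Ext commutes with flat base change), together with $\Ome_R^n \otimes_R \hat R \cong \Ome_{\hat R}^n$. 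Combining this with the base-change statement for local cohomology reduces the first displayed isomorphism precisely to Matlis/Grothendieck duality over $\hat R$, where $H_{\m}^i$ is Artinian and its Matlis dual is naturally the finite $\hat R$-module $\mathrm{Ext}^{n-i}_{\hat R}(\hat M,\Ome_{\hat R}^n)$.

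For the second displayed isomorphism I would dualize. Since $H_{\m}^i(\mathrm{Spec}(R),M)$ is an Artinian $R$-module (a torsion module supported at $\m$), it is already complete, and the pairing identifies it with the continuous dual of the $\m$-adically complete Ext group; here the $\mathrm{Hom}_{\mathrm{cont}}$ accounts exactly for the fact that $\mathrm{Ext}^{n-i}_R(M,\Ome_R^n)$ need not be complete before tensoring with $\hat R$, so its continuous dual with respect to the $\m$-adic topology agrees with the dual of its completion $\mathrm{Ext}^{n-i}_{\hat R}(\hat M,\Ome_{\hat R}^n)$. The two isomorphisms are then formally Matlis-dual to one another. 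I expect the main obstacle to be bookkeeping the topologies correctly: one must verify that the pairing is continuous for the $\m$-adic topology on the Ext side and the discrete topology on the (Artinian) local cohomology side, and that passing to continuous duals is compatible with completion. This is where the role of $\hat{R}$ is essential, and where the precise statement of Matlis duality over a complete local ring does the real work; the geometric inputs ($\Ome_R^n$ dualizing, flat base change for $H_{\m}^i$ and $\mathrm{Ext}$) are routine by comparison.
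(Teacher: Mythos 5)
Your proposal is correct and takes essentially the same route as the paper: both rest on the classical Grothendieck/Matlis local duality (the paper simply cites Hartshorne's Theorem~6.3, which already carries the $\otimes_R\hat{R}$ that you unfold via flat base change), followed by the identification of the Matlis dual of the $\m$-torsion module $H^i_{\m}(\text{Spec}(R),M)$ with its Pontrjagin dual $\text{Hom}_{\Zp1}(-,\Zp1)$ --- which is exactly the computation $H^n_{\m}(\text{Spec}(R),\Ome_R^n)=\varinjlim_r\text{Hom}_{\Zp1}(R/\m^r,\Zp1)$ plus Hom--tensor adjunction that the paper writes out explicitly. The only step worth dropping is the identification $\Ome_R^n\otimes_R\hat{R}\cong\Ome_{\hat{R}}^n$, which is delicate for absolute differentials and unnecessary: one may keep $\Ome_R^n\otimes_R\hat{R}$ as the dualizing module, and in the paper's application $R=\Fp[[t]]$ is already complete.
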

\begin{proof}
This is slightly different from the original form of Grothendieck local duality in \cite[Thm. 6.3]{hartshornelocal}.  But, in our case, the dualizing module $I=H^n_{\m}(\Spec(R), \Ome_{R}^n)$ can be written as $\varinjlim\limits_{n}\text{Hom}_{\Zp1}(R/\m^n, \Zp1) \subset Hom_{\Zp1}(A,\Zp1)$(cf. Example 3 on Page 67 in loc.cit.). Then we identify
$$Hom_R(H_{\m}^i(\text{Spec}(R), M), I) = Hom_R(H_{\m}^i(\text{Spec}(R), M), \varinjlim\limits_{n}\text{Hom}_{\Zp1}(R/\m^n, \Zp1)) $$
$$=Hom_R(H_{\m}^i(\text{Spec}(R), M), Hom_{\Zp1}(R, \Zp1)) = Hom_{\Zp1}(H_{\m}^i(\text{Spec}(R), M), \Zp1),$$
where the second equality follows from the fact that each element of $H_{\m}^i(\text{Spec}(R), M)$ is annihilated by some power of $\m$. The second isomorphism in the theorem follows from the definition of continuity.

\end{proof}

\subsection{Absolute coherent duality}
In our case, the base scheme $B=\Spec(k[[t]])$ is a complete regular local ring of dimension $1$. We assume $k=\Fp$. Combining Grothendieck local duality on the base scheme $B$ with the relative duality theorem \ref{fc}, we obtain an absolute duality on $X$.
\begin{proposition}
Let $\mathscr{F}$ be a locally free $\mathcal{O}_X$-module on $X$, and let $\mathscr{F}^{t}$ be the sheaf given by \textup{$\Hom(\mathscr{F}, \Ome_{X}^{d+1})$}. Then we have 
\begin{equation}
H_{\m}^i(B , Rf_*\mathscr{F})\cong H^i_{X_s}(X, \mathscr{F});
\end{equation}
\begin{equation}\label{extensionsheaf}
\text{Ext}^{1-i}_{\mathcal{O}_B}(Rf_*\mathscr{F}, \Ome_B^1)  \cong H^{d+1-i}(X, \mathscr{F}^{t}).
\end{equation}
\end{proposition}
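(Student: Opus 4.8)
The plan is to prove the two displayed isomorphisms by separate arguments: the first is a formal compatibility between local cohomology and the derived pushforward $Rf_*$, while the second is the concrete incarnation of Grothendieck duality obtained by inserting the relative trace isomorphism of Theorem~\ref{fc} into the adjunction $\theta_f$ of part (iv) of the Grothendieck duality theorem.

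\emph{First isomorphism.} Since $X_s = f^{-1}(s)$, where $s = V(\m)$ is the closed point of $B$, I would first record the equality of non-derived functors $\Gamma_{X_s}(X,-) = \Gamma_{\m}(B, f_*(-))$ on $\mathcal{O}_X$-modules: a section of $\mathscr{G}$ on $X$ is supported on $X_s$ precisely when its image in $f_*\mathscr{G}$ is supported on $s$, because $\Gamma(X\setminus X_s, \mathscr{G}) = \Gamma(B\setminus s, f_*\mathscr{G})$ via $X\setminus X_s = f^{-1}(B\setminus s)$. As $f_*$ sends injective $\mathcal{O}_X$-modules to $\Gamma_{\m}$-acyclic sheaves, this equality of functors lifts to the derived identity $R\Gamma_{X_s}(X,\mathscr{F}) \cong R\Gamma_{\m}(B, Rf_*\mathscr{F})$ in the derived category of $\mathcal{O}_B$-modules; taking $H^i$ gives the first claim.

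\emph{Second isomorphism.} Here I would apply $\theta_f$ with $\mathscr{G} = \Ome_B^1 \in D^+_{qc}(B)$ and the locally free sheaf $\mathscr{F} \in D^-_{qc}(X)$ to obtain
\[ Rf_*\,R\sHom_{\mathcal{O}_X}(\mathscr{F}, f^!\Ome_B^1) \xrightarrow{\cong} R\sHom_{\mathcal{O}_B}(Rf_*\mathscr{F}, \Ome_B^1). \]
By Theorem~\ref{fc} we have $f^!\Ome_B^1 \cong \Ome_X^{d+1}[d]$, and since $\mathscr{F}$ is locally free the higher sheaf Ext vanish, so $R\sHom_{\mathcal{O}_X}(\mathscr{F}, f^!\Ome_B^1) \cong \mathscr{F}^{t}[d]$. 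Applying $R\Gamma(B,-)$, using $R\Gamma(B, Rf_*(-)) = R\Gamma(X,-)$ on the left and the identity $R\Gamma(B, R\sHom_{\mathcal{O}_B}(Rf_*\mathscr{F}, \Ome_B^1)) = R\mathrm{Hom}_{\mathcal{O}_B}(Rf_*\mathscr{F}, \Ome_B^1)$ (valid because $B$ is affine and $Rf_*\mathscr{F}$ has coherent cohomology) on the right, I arrive at
\[ R\mathrm{Hom}_{\mathcal{O}_B}(Rf_*\mathscr{F}, \Ome_B^1) \cong R\Gamma(X, \mathscr{F}^{t})[d]. \]
Reading off cohomology in degree $1-i$ then yields \eqref{extensionsheaf}, since $H^{1-i}(R\Gamma(X,\mathscr{F}^{t})[d]) = H^{d+1-i}(X, \mathscr{F}^{t})$.

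\emph{Main obstacle.} Neither half requires a hard estimate; both are formal consequences of tools already established, so the real work is careful bookkeeping. The point most prone to error is tracking the shift $[d]$ coming from Theorem~\ref{fc} through the adjunction, so that the degree $1-i$ on the $\text{Ext}$ side lands on $d+1-i$ on the cohomology side. One must also read $\text{Ext}^{1-i}_{\mathcal{O}_B}$ as hyperext in the derived category, so that after $R\Gamma(B,-)$ it agrees with the module Ext over $k[[t]]$ used in the Grothendieck local duality subsection; and one must justify the acyclicity of $f_*$ for $\Gamma_{\m}$ in the first part and the coherence allowing $R\Gamma(B,-)$ to pass through $R\sHom$ in the second. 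All of these are standard.
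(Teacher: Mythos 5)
Your proposal is correct and follows essentially the same route as the paper: the second isomorphism is verbatim the paper's chain (duality adjunction $\theta_f$, insertion of $\text{Tr}_f: \Ome_X^{d+1}[d]\cong f^!\Ome_B^1$, local freeness of $\mathscr{F}$, then $R\Gamma(B,-)$), and the first is the same formal compatibility, which the paper phrases via the base-change isomorphism $Rf_{s*}Ri^!\cong Ri_s^!Rf_*$ rather than your equivalent derivation of the functor identity $\Gamma_{\m}(B,f_*(-))=\Gamma_{X_s}(X,-)$.
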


\begin{proof}
For the first equation, we have the following canonical identifications:
\begin{equation*}
\begin{array}{rclr}
       H_{\m}^i(B, Rf_*\mathscr{F})& = & H^i(B,i_{s*}Ri_s^!Rf_*\mathscr{F}) &\\
                  & = & H^i(B, i_{s*}Rf_{s*}Ri^!\mathscr{F}) &\\
                  &=& H^{i}(\Fp, Rf_{s*}Ri^!\mathscr{F}) &(i_{s*} \text{is exact})\\
                  &=& H^i(X_s, Ri^!\mathscr{F}) &\\
                  &=& H^i_{X_s}(X, \mathscr{F}) 
      \end{array}
\end{equation*}
where the second equality follows from $Rf_{s*}Ri^! \xrightarrow{\cong} Ri_{s}^!Rf_{*}$ in \cite[Tag 0A9K]{stacks-project}.

For the second,  we have
\begin{equation*}
\begin{array}{rclr}
\text{Ext}^{1-i}_{\mathcal{O}_B}(Rf_*\mathscr{F}, \Ome_B^1) &=& R^{1-i}\Gamma(B, R\kern -.9pt\sHom_{\mathcal{O}_B}(Rf_*\mathscr{F}, \Ome_B^1)) &\\
&=& R^{1-i}\Gamma(B, Rf_*R\kern -.9pt\sHom_{\mathcal{O}_X}(\mathscr{F}, f^!\Ome_B^1)) & \text{(adjunction)}\\
&=& R^{1-i}\Gamma(B, Rf_*R\kern -.9pt\sHom_{\mathcal{O}_X}(\mathscr{F}, \Ome_X^{d+1}[d])) &  \text{(Theorem \ref{fc})}\\
&=& R^{1-i}\Gamma(B, Rf_*\mathscr{F}^{t}[d]) & (\text{definition of}\   \mathscr{F}^{t} )\\
&=& H^{d+1-i}(X, \mathscr{F}^{t})
\end{array}
\end{equation*}
\end{proof}

By taking different $\mathscr{F}$ in the above theorem and using Grothendieck local duality, we obtain the following corollaries.

\begin{corollary}\label{omegaduality}
The natural pairing 
$$H^i(X, \Ome^j_{X})\times H^{d+1-i}_{X_s}(X, \Ome^{d+1-j}_{X}) \rightarrow H^{d+1}_{X_s}(X, \Ome^{d+1}_{X})\xrightarrow{tr} \Z/p\Z$$
induces isomorphisms
$$ H^{i}(X, \Ome_X^{j}) \cong \text{Hom}_{\Zp1}(H^{d+1-i}_{X_s}(X, \Ome_X^{d+1-j}) ,\Zp1),$$
$$ H^{d+1-i}_{X_s}(X, \Ome_X^{d+1-j}) \cong \text{Hom}_{\text{cont}}(H^{i}(X, \Ome_X^{j}) ,\Zp1).$$
i.e., it is a perfect pairing of topological $\Zp1$-modules if we endow $H^{i}(X, \Ome_X^{j})$ with the  $\m$-adic topology and $H^{d+1-i}_{X_s}(X, \Ome_X^{d+1-j}) $ with the discrete topology.
\end{corollary}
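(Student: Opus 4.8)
The plan is to specialize the preceding Proposition by taking two complementary choices of the locally free sheaf $\mathscr{F}$, feed the resulting identifications into Grothendieck local duality on the base $B$, and then interpret the abstract Ext-duality as the concrete cohomological pairing in the statement. First I would set $\mathscr{F} = \Ome_X^j$. By the Proposition, the local cohomology group $H^i_{X_s}(X, \Ome_X^j)$ is identified with $H^i_{\m}(B, Rf_*\Ome_X^j)$, while the Ext-group $\text{Ext}^{1-i}_{\mathcal{O}_B}(Rf_*\Ome_X^j, \Ome_B^1)$ is identified with $H^{d+1-i}(X, (\Ome_X^j)^t)$. Here I must observe that the dual sheaf $(\Ome_X^j)^t = \sHom(\Ome_X^j, \Ome_X^{d+1})$ is canonically $\Ome_X^{d+1-j}$, using that $\Ome_X^1$ is locally free of rank $d+1$ (the Proposition in $\S 2.1$) so that the wedge pairing $\Ome_X^j \otimes \Ome_X^{d+1-j} \to \Ome_X^{d+1}$ is a perfect duality of locally free sheaves. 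With this identification in hand, $H^{d+1-i}(X, (\Ome_X^j)^t) \cong H^{d+1-i}(X, \Ome_X^{d+1-j})$.

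Next I would invoke Grothendieck local duality on $B$, which is a regular complete local ring of dimension $n=1$. Applied to the finite $B$-module given by the cohomology of $Rf_*\Ome_X^j$, the theorem yields the two isomorphisms
\[
\text{Ext}^{1-i}_B(Rf_*\Ome_X^j, \Ome_B^1)\otimes_B \hat B \cong \text{Hom}_{\Zp1}(H^i_{\m}(B, Rf_*\Ome_X^j), \Zp1),
\]
\[
H^i_{\m}(B, Rf_*\Ome_X^j) \cong \text{Hom}_{\text{cont}}(\text{Ext}^{1-i}_B(Rf_*\Ome_X^j, \Ome_B^1), \Zp1).
\]
Since $B$ is already complete we have $\hat B = B$, so the completion on the left is harmless. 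Substituting the two cohomological identifications from the previous paragraph transports these into exactly the asserted isomorphisms
\[
H^i(X, \Ome_X^j) \cong \text{Hom}_{\Zp1}(H^{d+1-i}_{X_s}(X, \Ome_X^{d+1-j}), \Zp1)
\]
and its continuous-dual counterpart, with the $\m$-adic topology on one side and the discrete topology on the other matching the topologies produced by local duality. The trace map $H^{d+1}_{X_s}(X, \Ome_X^{d+1}) \to \Z/p\Z$ is precisely the composite $\text{Res} \circ tr_{k/\F_p}$ appearing in the local pairing (\ref{localpairing}), transported through the first identification of the Proposition.

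The one point demanding genuine care, and the place I expect to spend the most effort, is verifying that the natural wedge-product pairing on the $\Ome_X^{\bullet}$ really is the pairing that local duality produces after all the identifications — i.e. that the diagram relating the concrete cup-product/wedge pairing to the abstract $\text{Ext}$-to-$\text{Hom}$ evaluation genuinely commutes, compatibly with the trace. This is a compatibility-of-pairings check rather than a new estimate: one must track that the adjunction isomorphism $\theta_f$ from the Grothendieck duality theorem, the Koszul computation of $f^!\Ome_B^1$ underlying Theorem \ref{fc}, and the residue/trace normalization on $B$ all align so that the boundary identification $H^i_{\m}(B, Rf_*\mathscr{F}) \cong H^i_{X_s}(X,\mathscr{F})$ carries the module-theoretic duality pairing to the geometric one. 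Granting that compatibility, the finiteness of the modules follows because each is the continuous dual of the other and the local cohomology groups are cofinite, so the pairing is perfect in the stated topological sense.
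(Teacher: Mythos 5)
Your proposal is correct and follows essentially the same route as the paper: the paper's proof consists precisely of taking $\mathscr{F}$ to be a sheaf of differential forms in the preceding Proposition, identifying $\mathscr{F}^{t}$ with the complementary-degree forms via the perfect wedge pairing (using that $\Omega_X^1$ is locally free of rank $d+1$), and feeding the result into Grothendieck local duality on $B$. Your closing remark about checking that the abstract Ext-to-Hom duality agrees with the concrete wedge/trace pairing is the one point the paper leaves implicit, and you are right to flag it, but it does not change the argument.
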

\begin{remark}
	For the first isomorphism, we may ignore the topological structure on cohomology groups, and  view it as an  isomorphism of $\Zp1$-modules. In the next chapter, we only use this type of isomorphisms(cf. Proposition \ref{onesideiso}).
\end{remark}

\begin{corollary} \label{zomegaduality}
The natural pairing 
$$H^i(X, Z\Ome^j_{X})\times H^{d+1-i}_{X_s}(X, \Ome_X^{d+1-j}/B\Ome_{X}^{d+1-j}) \rightarrow H^{d+1}_{X_s}(X, \Ome^{d+1}_{X})\xrightarrow{tr} \Z/p\Z$$
is a perfect pairing of topological $\Zp1$-modules, if we endow the cohomology groups with the topological structures as in the above corollary.
\end{corollary}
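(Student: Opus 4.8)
The plan is to deduce Corollary~\ref{zomegaduality} from Corollary~\ref{omegaduality} by the same recipe that produced the latter, namely by feeding an appropriate locally free sheaf $\mathscr{F}$ into the absolute coherent duality proposition together with Grothendieck local duality on $B$. The key point is to identify $Z\Ome_X^j$ as a locally free $\mathcal{O}_X$-module whose $\Ome_X^{d+1}$-dual is $\Ome_X^{d+1-j}/B\Ome_X^{d+1-j}$; once this is established, the proposition applies verbatim with $\mathscr{F}=Z\Ome_X^j$ and $\mathscr{F}^t=Z\Ome_X^j{}^{\vee}\otimes\Ome_X^{d+1}$, and the perfect pairing of topological $\Zp1$-modules falls out exactly as before.

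First I would invoke the corollary following Theorem~\ref{fc} (the one asserting that $\Ome_X^i$, $Z\Ome_X^i$ via $F$, and $\Ome_X^i/B\Ome_X^i$ via $F$ are locally free $\mathcal{O}_X$-modules) so that $\mathscr{F}=Z\Ome_X^j$ is a legitimate input to the proposition. The essential step is then to produce a perfect duality pairing of locally free sheaves
\[ Z\Ome_X^j \times \bigl(\Ome_X^{d+1-j}/B\Ome_X^{d+1-j}\bigr) \longrightarrow \Ome_X^{d+1}. \]
I would construct this from the wedge product $\Ome_X^j\times\Ome_X^{d+1-j}\to\Ome_X^{d+1}$. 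The content is that this pairing descends to the quotient in the second factor and becomes nondegenerate precisely when restricted to the subsheaf $Z\Ome_X^j$ of closed forms: for $\alpha\in Z\Ome_X^j$ and $\beta$ exact, $\alpha\wedge\beta$ is exact (by the Leibniz rule, since $d\alpha=0$), so the pairing kills $B\Ome_X^{d+1-j}$ in the second slot; and perfectness amounts to the statement that $Z\Ome_X^j$ and $B\Ome_X^{d+1-j}$ are exact annihilators of one another under $\wedge$, which is the local freeness counting combined with the fact that $B\Ome_X^{d+1-j}=(Z\Ome_X^j)^{\perp}$. This last identity I would check locally using a $p$-basis, where $\Ome_X^\bullet$ looks like the de Rham complex of a polynomial ring and the Cartier isomorphism controls $Z$ and $B$.

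With the sheaf-level perfect pairing in hand, the $\Ome_X^{d+1}$-dual of $\mathscr{F}=Z\Ome_X^j$ is canonically $\Ome_X^{d+1-j}/B\Ome_X^{d+1-j}$, so equation~(\ref{extensionsheaf}) of the proposition identifies $\text{Ext}^{1-i}_{\mathcal{O}_B}(Rf_*Z\Ome_X^j,\Ome_B^1)$ with $H^{d+1-i}(X,\Ome_X^{d+1-j}/B\Ome_X^{d+1-j})$, while the first equation identifies $H^i_{\m}(B,Rf_*Z\Ome_X^j)$ with $H^i_{X_s}(X,Z\Ome_X^j)$. Grothendieck local duality on the $1$-dimensional regular local ring $B$ then converts these into the asserted duality between $H^i(X,Z\Ome_X^j)$ and $H^{d+1-i}_{X_s}(X,\Ome_X^{d+1-j}/B\Ome_X^{d+1-j})$, with the $\m$-adic and discrete topologies matched up exactly as in Corollary~\ref{omegaduality}; the trace arises from the same $H^{d+1}_{X_s}(X,\Ome_X^{d+1})\xrightarrow{tr}\Zp1$. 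I expect the main obstacle to be the verification that $B\Ome_X^{d+1-j}$ is the exact orthogonal complement of $Z\Ome_X^j$ under the wedge pairing, i.e.\ that the induced pairing is genuinely perfect and not merely nondegenerate on one side; this requires a careful local computation with a $p$-basis and the Cartier operator rather than a formal argument, though the preceding corollary guarantees all the relevant sheaves are locally free so that rank-counting closes the gap.
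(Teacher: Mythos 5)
Your overall strategy is the paper's own: the paper offers no separate argument for this corollary beyond the phrase ``by taking different $\mathscr{F}$ in the above theorem'', and the intended input is exactly the one you use, with local freeness supplied by the corollary asserting that $Z\Ome_X^i$ and $\Ome_X^i/B\Ome_X^i$ are locally free (via $F$), and the global statement then falling out of the absolute coherent duality proposition together with Grothendieck local duality on $B$. (A small indexing remark: with $\mathscr{F}=Z\Ome_X^j$ the proposition pairs $H^i_{X_s}(X,Z\Ome_X^j)$ with $H^{d+1-i}(X,\Ome_X^{d+1-j}/B\Ome_X^{d+1-j})$, i.e.\ the supports land on the opposite factors from the display; to match the statement literally one feeds in $\mathscr{F}=\Ome_X^{d+1-j}/B\Ome_X^{d+1-j}$ with $\mathscr{F}^t\cong Z\Ome_X^j$. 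Since $(\mathscr{F}^t)^t=\mathscr{F}$ this is only a relabeling.)

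The one step that would fail as written is your justification that the wedge product descends to the quotient. For $\alpha\in Z\Ome_X^j$ and $\beta=d\gamma$ you correctly note that $\alpha\wedge\beta=\pm\,d(\alpha\wedge\gamma)$ is exact, but exact is not zero in $\Ome_X^{d+1}$: we are in top degree, so every $(d+1)$-form is closed, while $B\Ome_X^{d+1}=d\Ome_X^{d}\neq 0$. Hence $\alpha\wedge\beta$ alone does not induce a map $Z\Ome_X^j\otimes\bigl(\Ome_X^{d+1-j}/B\Ome_X^{d+1-j}\bigr)\to\Ome_X^{d+1}$. The natural pairing the corollary refers to is $(\alpha,\beta)\mapsto C(\alpha\wedge\beta)$, where $C$ is the Cartier operator on $\Ome_X^{d+1}=Z\Ome_X^{d+1}$: this kills $B\Ome_X^{d+1}$, so it is well defined on the quotient, and it is $\mathcal{O}_X$-bilinear for the Frobenius-twisted module structures. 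This is precisely the pairing $\langle\alpha,\beta\rangle_{0,1}^{1}=C(\alpha\wedge\beta)$ that the paper uses later in \S 3.4, and the perfectness of the resulting sheaf pairing --- which you rightly single out as the real content, to be checked locally with a $p$-basis and the Cartier isomorphism --- is the identification $\sHom(\Ome_X^{d+1-j}/B\Ome_X^{d+1-j},\,\Ome_X^{d+1})\cong Z\Ome_X^j$. With that correction the rest of your argument goes through as stated.
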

 \begin{proof}
 The fact that $Z\Ome^j_{X}$ is dual to $\Ome_X^{d+1-j}/B\Ome_{X}^{d+1-j}$ is similar as \cite[Lem. 1.7]{milnesurface}.
 \end{proof}
Furthermore, we can do the same thing for twisted logarithmic Kähler differential sheaves.

Let $X$ be as before , and let  $\jmath : U \hookrightarrow X$ be the complement of a reduced divisor $D$ on $X$ with simple normal crossings.
Let $D_1, \cdots, D_s$ be the irreducible components of $D$. For $\underline{m}=(m_1,\cdots, m_s) $ with $m_i \in \mathbb{Z}$ let 
\begin{equation}\label{defnofD1} 
mD={\underline{m}}D=\sum_{i=1}^s m_i D_i \end{equation}
be the associated divisor.   

\begin{definition}
For the above defined $D$ on $X$ and $j \geq 0$, $m=\underline{m} \in \mathbb{Z}^s$ , we set 
\begin{equation}\label{twistedcoh}
\Ome_{X|mD}^j=\Ome_X^{j}(\log D)(-mD)=\Ome_X^{j}(\log D) \otimes \mathcal{O}_X(-mD)
\end{equation}
where $\Ome_X^{j}(\log D)$ denotes the sheaf of absolute Kähler differential $j$-forms on $X$ with logarithmic poles along $|D|$.  Similarly, we can define $Z\Ome_{X|mD}^j$, $B\Ome_{X|mD}^j$.
\end{definition}
\begin{remark}
Note that $\Ome_{X|D}^{d+1}=\Ome_X^{d+1}\otimes \mathcal{O}_X(D)\otimes \mathcal{O}_X(-D)=\Ome_X^{d+1}$, where $d$ is the relative dimension of $X$ over $B$.
\end{remark}

\begin{corollary}\label{twistedomega}
The natural pairing 
$$H^i(X, \Ome^j_{X|-mD})\times H^{d+1-i}_{X_s}(X, \Ome_{X|(m+1)D}^{d+1-j}) \rightarrow H^{d+1}_{X_s}(X, \Ome^{d+1}_{X})\xrightarrow{tr} \Z/p\Z$$
is a perfect pairing of topological $\Zp1$-modules, if we endow the cohomology groups with the topological structures as before.
\end{corollary}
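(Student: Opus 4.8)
The plan is to run the proofs of Corollaries~\ref{omegaduality} and~\ref{zomegaduality} with a single change of input sheaf. Set $\mathscr{F}=\Ome_{X|(m+1)D}^{d+1-j}$; since $X$ is regular and $D$ has simple normal crossings, the logarithmic differential sheaf $\Ome_X^{d+1-j}(\log D)$ is locally free, hence so is its twist $\mathscr{F}$, and the absolute coherent duality Proposition applies to it. Feeding $\mathscr{F}$ and its dual $\mathscr{F}^t=\sHom(\mathscr{F},\Ome_X^{d+1})$ into the two isomorphisms of that Proposition and invoking Grothendieck local duality on $B$ produces, exactly as for Corollary~\ref{omegaduality}, a perfect pairing of topological $\Z/p\Z$-modules
\[ H^i(X,\mathscr{F}^t)\times H^{d+1-i}_{X_s}(X,\mathscr{F})\longrightarrow H^{d+1}_{X_s}(X,\Ome_X^{d+1})\xrightarrow{tr}\Z/p\Z, \]
where the first factor carries the $\m$-adic topology and the second the discrete one. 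Everything therefore reduces to identifying $\mathscr{F}^t$ canonically with $\Ome_{X|-mD}^j$, so that the first factor becomes $H^i(X,\Ome_{X|-mD}^j)$ and the displayed pairing is the asserted one.

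The one genuinely new ingredient is the biduality of logarithmic differentials. On the regular scheme $X$ of dimension $d+1$ carrying the simple normal crossing divisor $D$, wedge product gives a perfect pairing
\[ \Ome_X^{d+1-j}(\log D)\otimes\Ome_X^{j}(\log D)\longrightarrow\Ome_X^{d+1}(\log D)=\Ome_X^{d+1}(D), \]
the last equality holding because the top logarithmic form has a pole of order exactly one along each component $D_i$. Hence $\sHom(\Ome_X^{d+1-j}(\log D),\Ome_X^{d+1}(\log D))\cong\Ome_X^{j}(\log D)$. Writing $\Ome_X^{d+1}=\Ome_X^{d+1}(\log D)\otimes\O_X(-D)$ and using the projection formula for the line bundle twists, I would then compute
\[ \mathscr{F}^t=\sHom\bigl(\Ome_X^{d+1-j}(\log D)(-(m+1)D),\,\Ome_X^{d+1}(\log D)(-D)\bigr)\cong\Ome_X^{j}(\log D)(mD)=\Ome_{X|-mD}^j, \]
which is the required identification; under it the evaluation pairing $\mathscr{F}^t\otimes\mathscr{F}\to\Ome_X^{d+1}$ is precisely the natural wedge pairing appearing in the statement, landing in $\Ome_X^{d+1}$ after the twists $\O_X(mD)\otimes\O_X(-(m+1)D)=\O_X(-D)$ cancel the pole.

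The main obstacle is verifying the perfect pairing of logarithmic differentials in this relative setting, where $X$ is only regular rather than smooth over a field. I would argue locally: by the local description of strictly semistable schemes, $\O_{X,x}$ is étale-locally of the form $R[T_0,\dots,T_d]/(T_0\cdots T_a-\pi)$, and recall that $\Ome_X^1$ is locally free of rank $d+1$; choosing a regular system of parameters adapted to the components of $D$ passing through $x$ exhibits an explicit local frame of $\Ome_X^1(\log D)$ consisting of the $d\log$ of the local equations of the $D_i$ together with transverse differentials, with respect to which the wedge pairing is visibly unimodular into the local generator of $\Ome_X^{d+1}(\log D)$. This reduces the perfect pairing to an isomorphism of locally free sheaves that may be checked on stalks, and the topological perfectness of the resulting cohomological pairing is then inherited verbatim from Grothendieck local duality, exactly as in Corollary~\ref{omegaduality}.
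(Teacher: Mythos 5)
Your proposal is correct and follows essentially the same route as the paper: the paper's entire proof is the observation that the wedge pairing $\Ome_X^{j}(\log D)(mD)\otimes\Ome_X^{d+1-j}(\log D)(-(m+1)D)\to\Ome_X^{d+1}$ is perfect, which is then fed into the absolute coherent duality proposition and Grothendieck local duality exactly as for Corollary \ref{omegaduality}. You have simply supplied the details (local freeness of $\Ome_X^j(\log D)$, the identification $\mathscr{F}^t\cong\Ome_{X|-mD}^j$, and the local frame argument) that the paper leaves implicit.
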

\begin{proof} \label{logcoherentpairing}
Note that the pairing 
\begin{equation}
\Ome_X^{j}(\log D)(mD) \otimes   \Ome_X^{d+1-j}(\log D)((-m-1)D) \to \Ome_X^{d+1}
\end{equation}
 is  perfect.
\end{proof}
Similarly, we define

\begin{equation}
Z\Ome^j_{X|-mD}=\ker(d\colon \Ome^j_{X|-mD} \to \jmath_*\Ome^{j+1}_U );
\end{equation}
\begin{equation}
B\Ome^j_{X|mD}=\mathrm{Image}(d\colon \Ome^{j-1}_{X|mD} \to \Ome^{j}_X ).
\end{equation}

 We have the following result.
\begin{corollary} \label{twistedzomega}
The natural pairing 
$$H^i(X, Z\Ome^j_{X|-mD})\times H^{d+1-i}_{X_s}(X, \Ome_{X|(m+1)D}^{d+1-j}/B\Ome_{X|(m+1)D}^{d+1-j}) \rightarrow H^{d+1}_{X_s}(X, \Ome^{d+1}_{X})\xrightarrow{tr} \Z/p\Z$$
is a perfect pairing of topological $\Zp1$-modules, if we endow the cohomology groups with the topological structures as before.
\end{corollary}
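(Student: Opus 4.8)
The plan is to deduce this from the absolute coherent duality established above, in exactly the way the twisted pairing of Corollary \ref{twistedomega} was obtained from the pairing of Corollary \ref{omegaduality}, and the $Z$-version of Corollary \ref{zomegaduality} was obtained from the same source. First I would apply the coherent duality isomorphisms to the sheaf $\mathscr{F}=Z\Ome^j_{X|-mD}$. By the twisted analogue of the fact that $Z\Ome^i_X$ is locally free via the absolute Frobenius $F$, the sheaf $Z\Ome^j_{X|-mD}$ is a locally free $\mathcal{O}_X$-module; hence the Proposition giving the absolute coherent duality applies to it and already supplies the trace map, the two one-sided isomorphisms, and the perfectness of the resulting pairing of topological $\Zp1$-modules. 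Thus the whole statement reduces to a single sheaf-theoretic identification, namely that the dual sheaf $\mathscr{F}^{t}=\sHom_{\mathcal{O}_X}(Z\Ome^j_{X|-mD},\Ome_X^{d+1})$ is canonically isomorphic, via the wedge pairing, to the quotient $\Ome_{X|(m+1)D}^{d+1-j}/B\Ome_{X|(m+1)D}^{d+1-j}$.

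To establish this identification I would dualize the twisted Cartier sequence
\[ 0 \to Z\Ome^j_{X|-mD} \to \Ome^j_{X|-mD} \xrightarrow{\ d\ } B\Ome^{j+1}_{X|-mD} \to 0, \]
whose terms are all locally free via $F$, by applying $\sHom_{\mathcal{O}_X}(-,\Ome_X^{d+1})$. Since Corollary \ref{twistedomega} identifies $\sHom_{\mathcal{O}_X}(\Ome^j_{X|-mD},\Ome_X^{d+1})$ with $\Ome_{X|(m+1)D}^{d+1-j}$ through the perfect wedge pairing, this yields a short exact sequence
\[ 0 \to \sHom_{\mathcal{O}_X}(B\Ome^{j+1}_{X|-mD},\Ome_X^{d+1}) \to \Ome_{X|(m+1)D}^{d+1-j} \to \mathscr{F}^{t} \to 0, \]
and it remains to see that the first term maps isomorphically onto $B\Ome_{X|(m+1)D}^{d+1-j}\subseteq \Ome_{X|(m+1)D}^{d+1-j}$. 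This last point is a local computation on the regular local rings $\mathcal{O}_{X,x}$: it is the assertion that $Z\Ome^j_{X|-mD}$ and $B\Ome^{d+1-j}_{X|(m+1)D}$ are exact annihilators of one another under the perfect pairing $\Ome^j_{X|-mD}\otimes\Ome^{d+1-j}_{X|(m+1)D}\to\Ome_X^{d+1}$. The vanishing $\langle Z\Ome^j_{X|-mD},\,B\Ome^{d+1-j}_{X|(m+1)D}\rangle = 0$ follows from the Leibniz rule, since the wedge of a closed form with an exact one is again exact; that the annihilator is no larger than $B$ is obtained from the twisted Cartier isomorphism of Theorem \ref{catieroperator} by the same rank bookkeeping that underlies Corollary \ref{zomegaduality}.

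The main obstacle, and the only genuinely new input beyond the untwisted Corollary \ref{zomegaduality}, is checking that the differential $d$ and the Cartier operator remain compatible with the twist by the line bundles $\mathcal{O}_X(-mD)$ and $\mathcal{O}_X((m+1)D)$, so that the displayed twisted sequences are exact sequences of locally free sheaves and the pairing of Corollary \ref{twistedomega} indeed restricts to the asserted duality between $Z$ and $\Ome/B$. This is precisely where the hypothesis that $D$ has simple normal crossings enters: locally $\Ome^1_X(\log D)$ admits a basis consisting of $d\log$-sections along the components of $D$ together with the remaining $dT_i$, on which $d$ and $C$ act by the usual formulas, so the twist is compatible and the local annihilator computation goes through as in the untwisted case. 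Once this compatibility is secured, feeding the identification of $\mathscr{F}^{t}$ back into the coherent duality isomorphisms produces the perfect pairing of topological $\Zp1$-modules in the stated form.
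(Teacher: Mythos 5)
Your proposal follows the same route the paper intends: Corollary \ref{twistedzomega} is obtained, just like Corollaries \ref{zomegaduality} and \ref{twistedomega}, by feeding the locally free (via Frobenius) sheaf $\mathscr{F}=Z\Ome^j_{X|-mD}$ into the absolute coherent duality proposition (see (\ref{extensionsheaf})) and identifying $\mathscr{F}^{t}$ with $\Ome_{X|(m+1)D}^{d+1-j}/B\Ome_{X|(m+1)D}^{d+1-j}$, a sheaf-level perfectness the paper leaves unproved with the word ``Similarly.'' Your filling-in of that identification --- dualizing the twisted Cartier sequence and the exact-annihilator computation, with the pairing understood as $(\alpha,\beta)\mapsto C(\alpha\wedge\beta)$ so that $B$ really does annihilate $Z$ --- is exactly the mechanism the paper uses later in the two-term-complex pairing of \S 3.4, so the proposal is correct and essentially identical in approach.
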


\section{Duality}
In the rest of this paper, we assume the residue field $k$ of the base scheme $B$ is a finite field $\Fp$.
Recall that $f: X \to B=\text{Spec}(\Fp[[t]])$ is a projective strictly semistable scheme of relative dimension $d$.
In this section, we will prove two duality theorems. The first one is for $H^i(X, \Wno^j_{X,\log})$, which we call unramified duality. The second is for $H^i(U, \Wno^j_{U,\log})$, where $U$ is the open complement of a reduced effective Cartier divisor with Supp$(D)$ has simple normal crossing. We call it the ramified duality. 
\subsection{Unramified duality}
The product on logarithmic de Rham-Witt sheaves
$$ \Wno_{X,\log}^j \otimes \Wno_{X,\log}^{d+1-j} \to \Wno_{X,\log}^{d+1}$$
induces a pairing
$$ i^{*}\Wno_{X,\log}^j \otimes^{\mathbb{L}}  Ri^{!}\Wno_{X,\log}^{d+1-j} \to Ri^{!}(\Wno_{X,\log}^j \otimes \Wno_{X,\log}^{d+1-j})  \to Ri^{!}\Wno_{X,\log}^{d+1},   $$
where the first morphism is given by the  adjoint  map of the diagonal map $\phi$ in the following diagram \[ \xymatrix{ Ri_*( i^{*}\Wno_{X,\log}^j \otimes^{\mathbb{L}}  Ri^{!}\Wno_{X,\log}^{d+1-j}) \ar[r]^{\cong} \ar[dr]_{\phi} &\Wno_{X,\log}^j \otimes^{\mathbb{L}}  Ri_*Ri^{!}\Wno_{X,\log}^{d+1-j} \ar[d]^{\text{adj}} \\
	& \Wno_{X,\log}^j \otimes \Wno_{X,\log}^{d+1-j}. }\] Here the isomorphism is given by the projection formula.

Apply $R\Gamma(X_s, \cdot)$ and the proper base change theorem(SGA4$\frac{1}{2}$, \cite[Arcata IV]{SGA41/2}), we have a pairing 
\begin{equation} \label{pairingwithoutfiltration}
H^i(X, \Wno^j_{X,\log})\times H^{d+2-i}_{X_s}(X, \Wno^{d+1-j}_{X,\log}) \rightarrow H^{d+2}_{X_s}(X, \Wno^{d+1}_{X,\log})\xrightarrow{Tr} \Zpn,
\end{equation} 
where the trace map Tr is given by Corollary \ref{tracemap}.
\begin{theorem}\label{unramifiedduality}
The pairing (\ref{pairingwithoutfiltration}) induces an isomorphism
\[ H^i(X, W_n\Ome_{X,\log}^j)  \xrightarrow{\cong} Hom_{\Zpn}(H^{d+2-i}_{X_s}(X, W_n\Ome_{X,\log}^{d+1-j}),\Zpn).\]
of $\Zpn$-modules. If we endow $H^{d+2-i}_{X_s}(X, W_n\Ome_{X,\log}^{d+1-j})$ with the discrete topology, and endow $H^i(X, W_n\Ome_{X,\log}^j)$ with the compact-open topology, we get an isomorphism, by the Pontryagin duality
\[  H^{d+2-i}_{X_s}(X, W_n\Ome_{X,\log}^{d+1-j}) \xrightarrow{\cong} Hom_{\Zpn,\mathrm{cont}}(H^i(X, W_n\Ome_{X,\log}^j),\Zpn).\]
\end{theorem}
\begin{proof}\renewcommand{\qedsymbol}{}
By the exact sequence (i) in Proposition \ref{logtocoh}, the problem is reduced to the case $n=1$. In this case, we use the classical method as in \cite{milneduality}, i.e., using the exact sequence (ii) and (iii) in Proposition \ref{logtocoh}, we reduce the problem to coherent duality. Before we do this, we have to check the compatibility between the trace maps.  It is enough to do this on the base scheme $B$, by the definitions of trace map and residue map(cf. (\ref{localpairing})).
\end{proof}
\begin{proposition}
The following diagram
\[ \xymatrix@C=1.5cm@R=1.5cm{ H^2_{\m}(B, \Ome_{B,\log}^1) \ar[r]^-{\text{Tr}} & \Zp1 \ar@{=}[d]\\
  H^1_{\m}(B, \Ome_{B}^1) \ar[u]^{\delta} \ar[r]^-{\text{Res}} & \Zp1 }   \]
  commutes, where $\delta$ is the connection map induced by the following exact sequence
  \[  0\to \Ome_{B, \log}^1 \to \Ome_B^1 \to \Ome_B^1 \to 0.\]
  \end{proposition}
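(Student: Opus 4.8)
The plan is to reduce everything to an explicit computation on the one-dimensional base $B=\text{Spec}(\Fp[[t]])$, writing $K=\Fp((t))$ for its fraction field. Since $\Ome_B^1$ is free of rank one on $dt$ and $B$ has dimension one, the coherent local cohomology is $H^1_{\m}(B,\Ome_B^1)=(K/\Fp[[t]])\,dt$ and $H^2_{\m}(B,\Ome_B^1)=0$. Under this identification the bottom arrow $\text{Res}$ is the composite $\text{tr}_{k/\F_p}\circ r$, where $r(\sum_{n\ge 1}a_n t^{-n}dt)=a_1$ reads off the coefficient of $d\log t=t^{-1}dt$. On $B$ every $1$-form is closed, so $Z_1\Ome_B^1=Z\Ome_B^1=\Ome_B^1$, and the displayed sequence is the $n=1$ case of Proposition \ref{logtocoh}(iii), namely $0\to\Ome_{B,\log}^1\to\Ome_B^1\xrightarrow{C-1}\Ome_B^1\to 0$. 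Because $H^2_{\m}(B,\Ome_B^1)=0$, the long exact sequence of $H^*_{\m}(B,-)$ shows that $\delta$ is surjective and identifies $H^2_{\m}(B,\Ome_{B,\log}^1)=\text{coker}\big(C-1:H^1_{\m}(B,\Ome_B^1)\to H^1_{\m}(B,\Ome_B^1)\big)$.

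Next I would show that $\text{Res}$ factors through $\delta$. The classical formulas for the Cartier operator give $C(t^{mp-1}dt)=t^{m-1}dt$ and $C(t^{j}dt)=0$ for $j\not\equiv -1\pmod p$; in particular $C(d\log t)=d\log t$, and by $p^{-1}$-semilinearity (using that $k$ is perfect) one gets $r(C\omega)=r(\omega)^{1/p}$ on the residue coefficient. Hence $r\circ(C-1)=\big((\,\cdot\,)^{1/p}-\mathrm{id}\big)\circ r$, and since $\text{tr}_{k/\F_p}$ is invariant under Frobenius we obtain $\text{tr}_{k/\F_p}\circ r\circ(C-1)=0$. Therefore $\text{Res}=\text{tr}_{k/\F_p}\circ r$ annihilates $\text{Im}(C-1)=\ker\delta$ and descends to a homomorphism $\overline{\text{Res}}:H^2_{\m}(B,\Ome_{B,\log}^1)\to\Zp1$ with $\overline{\text{Res}}\circ\delta=\text{Res}$. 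The proposition is thus equivalent to the identity $\text{Tr}=\overline{\text{Res}}$.

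To compare the two, recall that $\text{Tr}=tr_s\circ Gys^{-1}$, where $Gys:H^1(s,\nu_{1,s}^0)=H^1(\Fp,\Zp1)\xrightarrow{\cong}H^2_{\m}(B,\Ome_{B,\log}^1)$ is the Gysin isomorphism of Theorem \ref{newpurity} for $s\hookrightarrow B$ (the case $d=0$) and $tr_s$ is Sato's trace of Theorem \ref{satoduality}(i). On the one hand, the operator $1-C$ is invertible on the subspace $\{a_1=0\}\subset H^1_{\m}(B,\Ome_B^1)$, since $C$ is locally nilpotent there (it strictly decreases the pole order); consequently $r$ induces an isomorphism $\text{coker}(C-1)\xrightarrow{\cong}\text{coker}(\wp:k\to k)$, where $\wp(x)=x^p-x$. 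On the other hand, Artin--Schreier theory identifies $H^1(\Fp,\Zp1)=\text{coker}(\wp)$, and on this cokernel $tr_s$ is exactly $\text{tr}_{k/\F_p}$. Granting that $Gys$ is compatible with these two residue identifications, both $\text{Tr}$ and $\overline{\text{Res}}$ become $\text{tr}_{k/\F_p}$ on $\text{coker}(\wp)$, which finishes the proof.

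The main obstacle is precisely this last compatibility: that the purity isomorphism $Gys$ of Theorem \ref{newpurity}, transported through Artin--Schreier on the source and through the Cartier description of $H^2_{\m}(B,\Ome_{B,\log}^1)$ on the target, is induced by the residue $r$. This is the base case (for the one-dimensional $B$) of the compatibility between the coherent residue and Kato's boundary map in the Gersten--Kato complex of logarithmic de Rham--Witt sheaves. I expect it to follow by unwinding Shiho's construction of the Gysin map in codimension one, since that map is built from exactly the boundary/residue homomorphism; but keeping careful track of the normalization and of the Frobenius twist introduced by the Cartier operator is the delicate point.
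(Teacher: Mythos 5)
Your reduction is correct and in several places more explicit than the paper's argument: the identification $H^1_{\m}(B,\Ome_B^1)\cong (K/\F_q[[t]])\,dt$, the vanishing of $H^2_{\m}(B,\Ome_B^1)$, the formula $r\circ C=(\,\cdot\,)^{1/p}\circ r$ showing that $\text{Res}$ kills $\mathrm{Im}(C-1)=\ker\delta$, and the computation $\mathrm{coker}(C-1)\cong\mathrm{coker}(\wp)$ via local nilpotence of $C$ on the zero-residue part are all sound. But the step you defer — that the purity isomorphism $Gys\colon H^1(\F_q,\Zp1)\to H^2_{\m}(B,\Ome_{B,\log}^1)$, read through Artin--Schreier on the source and through your Cartier-operator presentation of the target, is induced by the residue $r$ with the correct normalization — is a genuine gap, and it is not a formality. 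After your reduction, both $\text{Tr}$ and $\overline{\text{Res}}$ are isomorphisms $\Z/p\to\Z/p$ between the two copies of $\Z/p$ you have produced, so what you have actually established is only that they agree up to a unit in $\F_p^{\times}$; for $p>2$ the remaining normalization check is exactly the content of the proposition, and "I expect it to follow by unwinding Shiho's construction" does not discharge it.

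For comparison, the paper proves precisely this compatibility by factoring it into a diagram of five squares: the Gysin map $H^0(\F_q,\Ome_{\F_q}^0)\to H^1_{\m}(B,\Ome_B^1)$ is identified locally with $a\mapsto a\,d\log t$ in $\Ome_K^1/\Ome_B^1$ (citing Shiho's Lemma 3.5 and Gros), the Artin--Schreier boundary on the closed point is matched with $\text{tr}_{\F_q/\F_p}$ via the compatibility of Milne duality with coherent duality, and the composite $a\mapsto a\,d\log t$ followed by the Tate residue is checked to be the identity. If you carried out the analogous unwinding — tracking the Gysin class of $1\in H^0(\F_q,\Z/p)$ to $d\log t=t^{-1}dt$ and verifying $r(d\log t)=1$ — your argument would close and would arguably be cleaner than the paper's, since you have already isolated the single class on which everything must be tested. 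As written, however, the key identity is asserted rather than proved.
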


\begin{proof}
We have the following diagram
\[ \xymatrix@C=0.3cm@R=0.3cm{ H^2_{\m}(B, \Ome_{B,\log}^1)  \ar[rrrrrr]^{\text{Tr}} &&&&&& \Zp1 \ar@{=}[ddddddd]\ar@{=}[ddll]\\
&&&(1)&&&\\
&& H^1(\Fp, \Ome_{\Fp, \log}^0) \ar[uull]^{Gys_{i_s}}_{\cong}  \ar[rr]^{\cong}& &\Zp1 \ar@{=}[dd]&&\\
&(2) &&(3)&&& \\
&& H^0(\Fp, \Ome_{\Fp}^0) \ar[dddll]_{\text{Gys}_{i_x}}\ar[uu]^{\delta} \ar[rr]^{tr_{\Fp/\F_p}} && \Zp1 \ar[dl]^{\varphi}\ar@{=}[dddrr]&&\\
&& (4) &\Ome_K^1/\Ome_B^1 \ar[ddlll]^{\cong}\ar[ddrrr]_{\text{ Tate Residue}} & (5) && \\
&&&(6)&&&\\
H^1_{\m}(B, \Ome_{B}^1)\ar[uuuuuuu]^{\delta} \ar[rrrrrr]^{\text{Res}} &&&&& &\Zp1
} \]
where the morphism $\varphi: a\mapsto ad\log(t)$, and $K=\Fp((t))$. 

The diagrams (1) and (6) are commutative by the definition of Tr and Res. (2) is commutative by the functoriality of $\delta$. That classical Milne duality is compatible with coherent duality implies (3) is commutative. The local description of the Gysin map will imply that the diagram (4) commutes \cite[Lem. 3.5]{shihopurity} , \cite[II (3.4)]{groschernclass}. The diagram (5) commutes by the explicit definition of $\varphi$ and  the definition of the Tate residue map. 
\end{proof}
\begin{proof}[Proof of Theorem \ref{unramifiedduality}(cont.)]
By taking the cohomolgy groups of the exact sequences (ii) and (iii) in Proposition \ref{logtocoh}, we have the following commutative diagram with exact rows:
\[  \xymatrix@C=.3cm@R=.5cm{ \cdots \ar[r]  &  H^i(X, \Ome_{X,\log}^j) \ar[r] \ar[d] & H^i(X, Z\Ome_{X}^j) \ar[r]\ar[d]^{\cong} & H^i(X, \Ome_X^j) \ar[r]\ar[d]^{\cong} & \\
\cdots \ar[r]  & H^{d+2-i}_{X_s}(X, \Ome_{X,\log}^{d+1-j})^* \ar[r] & H^{d+1-i}_{X_s}(X, \Ome_X^{d+1-j}/B\Ome_X^{d+1-j})^* \ar[r] &H^{d+1-i}_{X_s}(X, \Ome_X^{d+1-j})^*\ar[r] &
}\]
where $M^*$ means $\text{Hom}_{\Zp1}(M, \Zp1)$, for any $\Zp1$-module $M$.

The isomorphisms for cohomology groups of $Z\Ome_{X}^j$ and $\Ome_{X}^j$ are from the coherent duality theorem, see Corollary \ref{zomegaduality} and \ref{omegaduality}. Hence, we have \[  H^i(X, \Ome_{X,\log}^j)  \xrightarrow{\cong} H^{d+2-i}_{X_s}(X, \Ome_{X,\log}^{d+1-j})^*.  \]
\end{proof}
\begin{remark}\label{thank_referee}
\begin{itemize}
	\item[(i)] Note that $H^i(X,W_n\Omega^r_{X,\log})$ is not finite in general. In Theorem \ref{unramifiedduality}, if  $H^{d+2-i}_{X_s}(X, W_n\Ome_{X,\log}^{d+1-j})$ is finite (e.g., $j=0$), then we get a perfect pairing of finite $\Z/p^n\Z$-modules.
	\item[(ii)] For the case of $j=0$, all the cohomology groups in Theorem \ref{unramifiedduality} are finite, by using the purity theorem \ref{newpurity}, the above pairing agrees with that in \cite[Thm. 1.2.2 ]{satoncv}.
\end{itemize}
\end{remark}

\subsection{Relative Milnor $K$-sheaf}
On a smooth variety over a field, the logarithmic de Rham-Witt sheaves are closely related to the Milnor $K$-sheaves via the Bloch-Gabber-Kato theorem \cite{blochkato}. In this section, we first recall some results on the Milnor $K$-sheaf on a regular scheme, and then define the relative Milnor $K$-sheaf with respect to some divisor $D$ as in \cite[\S2.3]{ruellingsaito}. At last, we show the Bloch-Gabber-Kato theorem still holds on a regular scheme over $\F_p$. This result is well known to the experts but due to the lack of reference, we give a detailed proof.

In this section, we fix $Y$ to be a connected regular scheme over $\F_p$ of dimension $d$ (cf. Remark \ref{connectedness} below). 

\begin{definition}
	For any integer $r$, we define the $r$th-Milnor K-sheaf $\mathcal{K}^M_{r,Y_{\text{Zar}}}$ to be the sheaf 
	\[ T \mapsto \text{Ker}\left(i_{\eta*}K^M_r(\kappa(\eta)) \xrightarrow{\partial} \bigoplus_{x \in Y^{1}\cap T}i_{x*}K^M_{r-1}(\kappa(x)\right)\]
	on the Zariski site of $Y$, where $T$ is any open subset of $Y$,  $\eta$ is the generic point of $Y$, $i_x: x\to T$ is the natural map and $\partial$ is the sheafified residue map of Milnor K-theory of fields (cf. \S 1.3.2). The sheaf $\mathcal{K}^M_{r,Y}$ is the associated sheaf of $\mathcal{K}^M_{r,Y_{\text{Zar}}}$ on the (small) \'etale site of $Y$.
\end{definition}
In particular, $\mathcal{K}^M_{r,Y}=0$ for $r< 0$, $\mathcal{K}^M_{0,Y}=\Z$ and $\mathcal{K}^M_{1,Y}=\mathcal{O}_X^{\times}$.
 \begin{remark}\label{connectedness} The connectedness assumption on $Y$ is just for simplification our notations.
 In case $Y$ is not connected, we write $Y=\coprod_jY_j$ as disjoint union of its connected components. Then define  \[ \mathcal{K}^M_{r,Y}:=\bigoplus_ji_{Y_j*}\mathcal{K}^M_{r,Y_j}  \]
 where $i_{Y_j}: Y_j \to Y$ be the natural map. The results in this section still hold for non-connected $Y$.
 \end{remark}
We define $\mathcal{K}^{M,\text{naive}}_{*, Y}$  to be the sheaf on the  \'etale site of Y associated to the functor 
\[  A \mapsto \bigoplus_{i\geq 0}(A^{\times})^{\otimes i}/<a\otimes b |a+b=1>\]
from commutative rings to graded rings.
\begin{theorem}(\cite[Thm. 1.3]{kerzgersten}, \cite[Thm. 13, Prop. 10]{kerzmilnorfinite})\label{kerzthm} Let $Y$ be a connected regular scheme over $\F_p$.
\begin{itemize}
\item[(i)] The natural homomorphism  $ \mathcal{K}^{M,\text{naive}}_{r,Y}  \to  \mathcal{K}^M_{r,Y} $  is surjective;
\item[(ii)] If the residue fields at all point of $Y$ are infinite, the map $\mathcal{K}^{M,\text{naive}}_{r, Y}  \to \mathcal{K}^M_{r,Y}$ is an isomorphism;
\item[(iii)] The Gersten complex
\[ 0\to \mathcal{K}^{M}_{r,Y} \to i_{\eta*}K^M_{r}(\kappa(\eta)) \to \bigoplus_{x \in Y^{1}}i_{x*}K^M_{r-1}(\kappa(x)) \to  \bigoplus_{x \in Y^{2}}i_{x*}K^M_{r-2}(\kappa(x))  \to \cdots\]
is universally exact(see Definition \ref{universalexact} below) on the \'etale site of $Y$.
\end{itemize}
\end{theorem}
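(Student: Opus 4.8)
The plan is to reduce each of the three assertions to a statement about the henselian (or geometric) local rings $\mathcal{O}_{Y,y}$ of $Y$, each of which is a regular local $\F_p$-algebra, and then to invoke Kerz's solution of the Gersten conjecture for Milnor $K$-theory. Since both $\mathcal{K}^{M,\text{naive}}_{r,Y}$ and $\mathcal{K}^M_{r,Y}$ are étale sheaves, and the Gersten complex is a complex of such sheaves, surjectivity of the comparison map and exactness may all be checked stalkwise; throughout I would therefore fix a regular local ring $A=\mathcal{O}_{Y,y}^{h}$ with fraction field $E$ and residue field $\kappa$.

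For statement (ii), assume all residue fields are infinite. Here I would appeal to the Gersten conjecture for Milnor $K$-theory of regular local rings with infinite residue field, which identifies both the naive theory and the Gersten kernel with the kernel of the first residue map and shows that the two constructions agree. The mechanism is a Gabber--Quillen type presentation: one moves the closed point into a favourable position relative to a projection, reducing to a relative affine line, where a homotopy argument together with the injectivity of $K^M_r(A)\to K^M_r(E)$ forces the canonical surjection $\mathcal{K}^{M,\text{naive}}_{r}\to\mathcal{K}^M_{r}$ to be an isomorphism. This is \cite[Thm. 1.3]{kerzgersten}.

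For statements (i) and (iii) in full generality I would pass through Kerz's improved Milnor $K$-theory $\widehat{K}^M_{r}$, which repairs the Steinberg and multiplicativity relations over rings with few units and sits between the naive theory and the Gersten kernel. Surjectivity (i) then follows because $\widehat{K}^M_{r}$ is generated by symbols and maps onto $\mathcal{K}^M_{r}$, while the comparison of $\widehat{K}^M_{r}$ with $\mathcal{K}^{M,\text{naive}}_{r}$ recovers surjectivity of the naive symbols; this is \cite[Prop. 10]{kerzmilnorfinite}. For the universal exactness in (iii) I would first establish exactness of the Gersten complex for $A$ essentially smooth over a field via the coniveau spectral sequence together with transfer (norm) maps in Milnor $K$-theory, and then upgrade exactness to universal exactness by exploiting the explicit functoriality of these transfers under arbitrary additive base change, which is the content of \cite[Thm. 13]{kerzmilnorfinite}.

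The main obstacle is the finite residue field case, which is unavoidable here since $Y$ lies over $\F_p$ and its closed points typically have finite residue fields. The classical Gabber presentation lemma requires enough units in the residue field and hence fails over finite fields, so the infinite-residue-field arguments of \cite{kerzgersten} do not apply directly. Kerz's device to circumvent this is the improved theory, built from the rational function field extension $A(t)$ (whose residue field is infinite) together with a two-variable descent that recovers the value over $A$ from the infinite-residue-field case; verifying that the Gersten complex remains exact and, crucially, \emph{universally} exact after this passage is the technical heart of \cite{kerzmilnorfinite}, and it is what I would invoke to finish (i) and (iii).
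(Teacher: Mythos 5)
The paper gives no proof of this theorem at all --- it is quoted verbatim from Kerz's papers \cite{kerzgersten} and \cite{kerzmilnorfinite} --- and your proposal is an accurate reconstruction of how those cited proofs go, with each ingredient (the Gabber--Quillen presentation and homotopy argument in the infinite-residue-field case, the improved Milnor $K$-theory built from the rational function ring $A(t)$ to handle finite residue fields, and the transfer maps yielding universal exactness) attributed to the correct source. The one point I would add is that the local rings of the regular schemes arising in this paper are not essentially of finite type over $\F_p$, so one also needs N\'eron--Popescu approximation (Theorem \ref{popescu}) together with the fact that universal exactness is preserved under filtered colimits to pass from the essentially smooth case to general regular local rings; this is built into Kerz's statements but is the reason universal (rather than plain) exactness is the right notion to carry through the argument.
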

\begin{definition}\label{universalexact}
Let $A' \to A \to A''$ be a sequence of abelian groups. We say this sequence is universally exact if $F(A') \to F(A) \to F(A'')$ is exact for every additive functor $F: Ab \to \mathcal{B}$ which commutes with filtered small limit. Here $Ab$ is the category of abelian groups and $\mathcal{B}$ is an abelian category satisfying AB5 (see \cite{grothendiecktohoku}).
\end{definition}
\begin{remark}
\item[(i)] Another way to define the Milnor $K$-sheaf \cite{kerzmilnorfinite} is to first define the naive $K$-sheaf $\mathcal{K}^{M,\text{naive}}_{r, Y}$ as above, and then the (improved) Milnor $K$-sheaf as the universal continuous functor associated to the naive $K$-sheaf. By Theorem \ref{kerzthm} (iii), this definition agree with ours.
\item[(ii)] Theorem \ref{kerzthm} (i) implies that $\mathcal{K}^M_{r,Y}$ is \'etale locally generated by the symbols of the form $\{x_1, \cdots, x_r \}$ with all $x_i \in \mathcal{O}_Y^{\times}$.
\item[(iii)] The functor $-\otimes_{\Z} \Z/p^n\Z$ is an additive functor and it commutes with filtering small limit, so the universal exactness property of Gersten complex implies the following sequence
\[  0\to \mathcal{K}^{M}_{r,Y}/p^n \to i_{\eta*}K^M_{r}(\kappa(\eta))/p^n \to \bigoplus_{x \in Y^{(1)}}i_{x*}K^M_{r-1}(\kappa(x))/p^n \to \cdots \]
is exact.
\end{remark}

Let $D=\cup_{i=1}^sD_i$ be a reduced effective Cartier divisor on $Y$ such that Supp$(D)$ has simple normal crossing, let $D_1,\cdots, D_s$ be the irreducible components of $D$, and let $\jmath: U:=Y-D \hookrightarrow Y$ be the open complement.  For $\underline{m}=(m_1,\cdots, m_s) $ with $m_i \in \mathbb{N}$ let 
\begin{equation*}
mD={\underline{m}}D=\sum_{i=1}^s m_i D_i 
\end{equation*}
be the associated divisor.  On  $\mathbb{N}^s$, we define a semi-order as follows:
\[  {\underline{m'}} \geq    \underline{m'} \quad \text{if} \quad m'_i \geq m_i \quad \text{for all $i$.}\]
Using this semi-order, we denote
\begin{equation*} 
{\underline{m'}}D \geq {\underline{m}} D \quad  \text{if} \quad \underline{m'} \geq \underline{m}.   
\end{equation*} 
By the above theorem, we may define the relative Milnor $K$-sheaves with respect to $mD$ using symbols(cf. \cite[Def. 2.7]{ruellingsaito}):
\begin{definition}
For $r\in \Z, m \in \mathbb{N}^s$, we define the Zariski sheaf $\mathcal{K}^M_{r,Y|mD,\text{Zar}}$ to be the image of the following map
\begin{align*}
\text{Ker}(\mathcal{O}_Y^{\times}\to \mathcal{O}_{mD}^{\times}) \otimes_{\Z} \jmath_*\mathcal{K}^M_{r-1,U_{\text{Zar}}} &\to \jmath_*\mathcal{K}^M_{r,U_{\text{Zar}}}\\
x \otimes \{ x_1, \cdots, x_{r-1}\} &\mapsto \{x,x_1,\cdots,x_{r-1} \}
\end{align*}
and define $\mathcal{K}^M_{r,Y|mD}$ to be its associated sheaf on the \'etale site. 
\end{definition}
By this definition, it is clear that $\jmath_*\mathcal{K}^M_{r,Y|mD}=\mathcal{K}^M_{r,U}$, for any $m\in \mathbb{N}^s$.
\begin{proposition}(\cite[Cor. 2.13]{ruellingsaito})\label{inclusionK}
If $m' \geq m$, then we have the inclusions of \'etale sheaves 
\[  \mathcal{K}^M_{r,X|m'D} \subseteq \mathcal{K}^M_{r,X|mD} \subseteq \mathcal{K}^M_{r,X}. \]
\end{proposition}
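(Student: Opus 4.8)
The plan is to treat the two inclusions separately, after first realizing all three sheaves as subsheaves of one ambient object. Since $\mathcal{K}^M_{r,X}$ and $\mathcal{K}^M_{r,U}$ are both defined as kernels of residue maps inside $i_{\eta*}K^M_r(\kappa(\eta))$ for the common generic point $\eta$ of $X$ and $U$, restriction to $U$ gives an injection $\mathcal{K}^M_{r,X}\hookrightarrow \jmath_*\mathcal{K}^M_{r,U}$; by definition $\mathcal{K}^M_{r,X|mD}$ is also a subsheaf of $\jmath_*\mathcal{K}^M_{r,U}$. Thus all of $\mathcal{K}^M_{r,X|m'D}$, $\mathcal{K}^M_{r,X|mD}$ and $\mathcal{K}^M_{r,X}$ sit inside $\jmath_*\mathcal{K}^M_{r,U}$, and it suffices to compare them on local (\'etale) sections, using that \'etale sheafification is exact so that inclusions of presheaf images upgrade to inclusions of sheaves.

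The inclusion $\mathcal{K}^M_{r,X|m'D}\subseteq \mathcal{K}^M_{r,X|mD}$ for $m'\geq m$ is formal. Locally a section $x$ of $\text{Ker}(\mathcal{O}_X^{\times}\to \mathcal{O}_{m'D}^{\times})$ is a unit with $x-1$ vanishing to order at least $m'_i\geq m_i$ along each $D_i$, hence $x\in \text{Ker}(\mathcal{O}_X^{\times}\to \mathcal{O}_{mD}^{\times})$; so the first tensor factor defining $\mathcal{K}^M_{r,X|m'D}$ is contained in that defining $\mathcal{K}^M_{r,X|mD}$. As both sheaves are the images of the same symbol map into $\jmath_*\mathcal{K}^M_{r,U}$, the inclusion of sources yields the inclusion of images.

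For the inclusion $\mathcal{K}^M_{r,X|mD}\subseteq \mathcal{K}^M_{r,X}$, I would use Theorem \ref{kerzthm}(i) to reduce, \'etale-locally, to the generating symbols $\{x,x_1,\dots,x_{r-1}\}$ with $x\in \text{Ker}(\mathcal{O}_X^{\times}\to \mathcal{O}_{mD}^{\times})$ and $\{x_1,\dots,x_{r-1}\}\in \mathcal{K}^M_{r-1,U}$, and then check that each such symbol satisfies the residue-vanishing conditions cutting out $\mathcal{K}^M_{r,X}$ inside $i_{\eta*}K^M_r(\kappa(\eta))$. At a codimension-one point of $U$ the residue vanishes because the symbol already lies in $\mathcal{K}^M_{r,U}$. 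At a codimension-one point $\xi$ on a component $D_i$, the key point is that $m_i\geq 1$ forces $x\equiv 1$ modulo the maximal ideal of $\mathcal{O}_{X,\xi}$, so $\bar x=1$ in $\kappa(\xi)$. Writing each $x_j=\pi^{a_j}u_j$ with $\pi$ a local equation of $D_i$ and $u_j$ a unit, and expanding $\{x,x_1,\dots,x_{r-1}\}$ multilinearly, every resulting term either has all entries units (so its tame symbol is $0$) or, after using anticommutativity to move a $\pi$ to the front and the Steinberg relation to remove repeated uniformizers, contributes $\pm\{\bar x,\dots\}=\pm\{1,\dots\}=0$ in $K^M_{r-1}(\kappa(\xi))$. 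Hence $\partial_\xi\{x,x_1,\dots,x_{r-1}\}=0$ and the symbol lies in $\mathcal{K}^M_{r,X}$.

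The main obstacle is precisely this last residue computation at the points of $D$; everything else is formal manipulation of images and kernels. The decisive input is the hypothesis $m_i\geq 1$, which guarantees $\bar x=1$ and thereby annihilates the tame symbol; without it the symbols could have nonzero residue along $D$ and the inclusion would fail. A minor additional point to verify is that these comparisons of generating symbols, which are statements about local sections, genuinely descend to inclusions of \'etale sheaves, which is ensured by the exactness of \'etale sheafification together with the good behaviour of the residue complex recorded in Theorem \ref{kerzthm}.
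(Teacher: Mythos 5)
Your argument is correct, but it follows a genuinely different route from the paper's. The paper essentially defers to \cite[Cor.~2.13]{ruellingsaito} (observing that the Zariski/Nisnevich statement passes to the \'etale site) and, instead of repeating that argument, illustrates the second inclusion on the two-variable example $\{1+at,t\}$: one reduces to $a\in A^{\times}$ via the factorization $1+at=\bigl(1+t\tfrac{1}{1+t(a-1)}\bigr)\bigl(1+t(a-1)\bigr)$ and then uses the Steinberg relation $\{1+at,-at\}=0$ to rewrite $\{1+at,t\}=-\{1+at,-a\}$, a symbol all of whose entries are units on $X$, hence visibly a section of $\mathcal{K}^M_{2,X}$. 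You instead verify the defining residue conditions of $\mathcal{K}^M_{r,X}$ directly: at codimension-one points of $U$ via the multiplicative structure on $\mathcal{K}^M_{\bullet,U}$, and at the generic points of the $D_i$ via the tame-symbol computation, where $m_i\geq 1$ forces $\bar x=1$ in the residue field. Your multilinear expansion is in effect a proof of the projection formula $\partial_{\xi}(\{x\}\cdot\alpha)=-\{\bar x\}\cdot\partial_{\xi}(\alpha)$ for $x$ a unit at $\xi$, which with $\bar x=1$ kills the residue in one line; quoting that formula would shorten the computation and also removes any need to invoke Theorem~\ref{kerzthm}(i) to write $\alpha$ as a sum of unit symbols, since the expansion takes place in $K^M_r(\kappa(\eta))$ anyway. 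The trade-off: the paper's rewriting exhibits the generators as honest unit symbols on $X$, which is the stronger statement actually proved in \cite{ruellingsaito} and is of independent use, while your verification is self-contained but relies on identifying $\mathcal{K}^M_{r,X}$ with the kernel of the residue map as \'etale sheaves --- exactly the universal exactness of the Gersten complex, Theorem~\ref{kerzthm}(iii), which you correctly flag at the end. The first inclusion is formal in both treatments.
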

\begin{proof}
The statement in \cite{ruellingsaito} is on the Zariski and Nisnevich sites, and in particular,  works on the \'etale site.
\end{proof}

Now we recall the Bloch-Gabber-Kato theorem on regular schemes.
\begin{lemma}\label{dlogmap}
There is a natural map $d\log \left[-\right]_n: \mathcal{K}^M_{r,Y} \to \Wno_Y^r$.
\end{lemma}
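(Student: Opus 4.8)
The plan is to construct the $d\log$ map by first exhibiting it on symbols and then checking that it is well-defined on the relations defining Milnor $K$-theory, so that it descends to a map of sheaves. On a local section, given a symbol $\{x_1,\ldots,x_r\}$ with all $x_i \in \mathcal{O}_Y^\times$, I would set
\[ d\log\left[\{x_1,\ldots,x_r\}\right]_n := d\log[x_1]_n \cdots d\log[x_r]_n \in \Wno_Y^r. \]
By Theorem \ref{kerzthm}(i) (surjectivity of $\mathcal{K}^{M,\text{naive}}_{r,Y} \to \mathcal{K}^M_{r,Y}$) and Remark following it, the sheaf $\mathcal{K}^M_{r,Y}$ is \'etale locally generated by such symbols, so it suffices to define the map on these generators and verify compatibility with the relations.

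The first step is therefore to check that this assignment respects the two defining relations of Milnor $K$-theory: multilinearity in each slot (which is immediate from the additivity $d\log[xy]_n = d\log[x]_n + d\log[y]_n$, itself a consequence of the multiplicativity of Teichm\"uller lifts $[xy]_n = [x]_n[y]_n$), and the Steinberg relation: whenever $x_i + x_j = 1$ for two entries, the corresponding product of $d\log$ forms vanishes. Over a field this Steinberg vanishing is the classical computation underlying the Bloch-Gabber-Kato map, and I would reduce the general regular case to it. The main obstacle is precisely this Steinberg relation: one must show $d\log[x]_n \cdot d\log[1-x]_n = 0$ in $\Wno_Y^2$ for $x, 1-x \in \mathcal{O}_Y^\times$. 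The cleanest route is to reduce to the universal case, i.e.\ to the ring $\Z[x,(x(1-x))^{-1}]$ or its reduction mod $p$, compute the identity there once, and then pull back along the classifying map; alternatively, since the statement is \'etale local and the sheaves are unramified in a suitable sense, one reduces via the residue fields $\kappa(x)$ where the field-theoretic Bloch-Gabber-Kato identity applies directly.

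After the two relations are verified, the assignment gives a well-defined map on the naive $K$-sheaf $\mathcal{K}^{M,\text{naive}}_{r,Y}$, and I must check that it factors through the actual Milnor $K$-sheaf $\mathcal{K}^M_{r,Y}$. Here I would invoke the universal exactness of the Gersten complex (Theorem \ref{kerzthm}(iii)) together with compatibility with the boundary/residue maps: the $d\log$ map commutes with the residue symbols on both sides, so an element in the kernel defining $\mathcal{K}^M_{r,Y}$ maps into the corresponding logarithmic de Rham-Witt subsheaf. Concretely, one uses that the target $\Wno_Y^r$ also carries Kato's residue maps (used earlier in the definition of $C^\bullet_n(Z,m)$), and that $d\log$ intertwines the Milnor residue with the de Rham-Witt residue; this forces the naive map to respect the Gersten-type defining relation and hence to descend.

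Finally, I would note that the construction is manifestly compatible with \'etale localization (by part (ii) of the base change property of $\Wno_X^i$ under \'etale morphisms recorded earlier), so gluing the locally defined maps yields the desired natural morphism of \'etale sheaves $d\log[-]_n : \mathcal{K}^M_{r,Y} \to \Wno_Y^r$. The naturality in $Y$ follows because both the symbol presentation and the Teichm\"uller lift are functorial. I expect the Steinberg relation to be the only genuinely nontrivial point; everything else is formal bookkeeping with symbols and localization.
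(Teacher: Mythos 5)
Your first step---defining $d\log$ on symbols and checking multilinearity plus the Steinberg relation---is the standard opening and is fine, but the crux of this lemma is the step you describe as ``factoring through the actual Milnor $K$-sheaf,'' and your argument for it does not work as stated. The whole point of the lemma is the case of finite residue fields, where the surjection $\mathcal{K}^{M,\text{naive}}_{r,Y}\to\mathcal{K}^M_{r,Y}$ need not be injective (Theorem \ref{kerzthm}(ii) only gives an isomorphism when all residue fields are infinite); to descend your symbol-wise map you must show that it kills the kernel of this surjection. Commuting with residue maps and the universal exactness of the Gersten complex do not give this: those facts concern where the image of $\mathcal{K}^M_{r,Y}\subseteq i_{\eta*}K^M_r(\kappa(\eta))$ lands, not whether a sum of symbols in units of $\mathcal{O}_Y$ that dies in the improved $K$-sheaf has vanishing $d\log$ in $\Wno_Y^r$. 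The natural repair of your route would be to push everything to the generic point: a relation in $\hat K^M_r(A)$ becomes a relation in $K^M_r(\mathrm{Frac}(A))$, where the field-level Bloch--Gabber--Kato identity applies, and one would then need injectivity of $\Wno_A^r\to \Wno_{\mathrm{Frac}(A)}^r$ for a regular local $\F_p$-algebra $A$. But that injectivity is itself a nontrivial Gersten-type statement (N\'eron--Popescu plus Gros--Suwa, with care about non-injective transition maps in the colimit), and you neither state nor prove it; so as written there is a genuine gap, and the Steinberg relation is not ``the only genuinely nontrivial point.''

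The paper takes a different route that avoids this entirely: it uses Kerz's presentation of the improved Milnor $K$-theory as $\hat K^M_r(A)=\ker\bigl(K^M_r(A(t))\xrightarrow{i_{1*}-i_{2*}}K^M_r(A(t_1,t_2))\bigr)$, where $A(t)$ is the rational function ring over $A$. Since $A(t)$ has infinite residue fields, the naive symbol map is already well defined on $K^M_r(A(t))$, and the lemma reduces to the explicit computation (the ``Claim'' in the paper) that the corresponding kernel $\ker\bigl(\Wno_{A(t)}^r\xrightarrow{i_{1*}-i_{2*}}\Wno_{A(t_1,t_2)}^r\bigr)$ is exactly $\Wno_A^r$, proved for $n=1$ via the decomposition of $\Omega^1_{A(t)}$ and for general $n$ by induction along $0\to V^n\Ome_A^r+dV^n\Ome_A^{r-1}\to W_{n+1}\Ome_A^r\to \Wno_A^r\to 0$. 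That computation is the actual content of the proof and is absent from your proposal.
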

\begin{proof}\renewcommand{\qedsymbol}{}
If all the residue fields of $Y$ are infinite, this is clear by Theorem \ref{kerzthm}. If not, we can still construct this map via the following local computations. We first define this for any ring $A$.
 Recall that the (improved) Milnor $K$-theory of $A$ can be defined by the first row of the following diagram \cite{kerzmilnorfinite}:
\[  \xymatrix{ 0 \ar[r] &\hat{K}^M_r(A) \ar[r] & K^M_r(A(t)) \ar[r]^-{i_{1*}-i_{2*}} \ar[d]^{d\log[-]_n}& K^M_r(A(t_1,t_2)) \ar[d]^{d\log[-]_n}\\
0 \ar[r] & M_n \ar[r] & W_n\Omega_{A(t)}^r  \ar[r]^-{i_{1*}-i_{2*}} & W_n\Omega_{A(t_1, t_2)}^r
} \]
Here $A(t)$ is the rational function ring over $A$, that is $A[t]_S$ the localization of the one variable polynomial ring with respect to the multiplicative set $S=\{ \sum_{i\in I} a_it^i | \left< a_i \right>_{i\in I}=A \}$.  The assertion follows from the following claim.
\end{proof}
\textbf{Claim}  $M_n=W_n\Omega_A^r$.

Once we have the $d\log$ map for any ring, we will get a map on the Zariski site by sheafification. The desired map is obtained by taking the associated map on the \'etale site. Now it suffices to prove the above Claim.
\begin{proof}[Proof of Claim]
We first assume $n=1$, and in this case we have 
\[   A(t) \otimes_{A} \Omega_A^1 \oplus A(t) \xrightarrow{\cong} \Omega_{A(t)}^1 \]
\[ (a\otimes w, b) \mapsto a\cdot w+b\cdot dt\]
 Using this explicit expression, we can show the kernel of $i_{1*}-i_{2*}$ is $\Omega_A^r$. For general $n$, we first noted that $W_n\Omega_A^r \subseteq M_n$, and we can prove the claim by induction on $n$ via the exact sequence
 \[  0\to V^n\Ome_A^r+dV^n\Ome_A^{r-1} \to W_{n+1}\Ome_A^r \xrightarrow{R} \Wno_A^r\to 0 \]
 as $i_{1*}$ and $i_{2*}$ commute with $R$ and $V$.
\end{proof}
By Definition \ref{definitionOfLog}, the image of $d\log[-]_n$ contains in $\Wno_{Y,\log}^r$. It is clear that the map $d\log[-]_n$ factors through $\mathcal{K}^M_{r,Y}/p^n$. Therefore we have the following result. 
\begin{proposition}\label{blochgabberkato}
The natural map in Lemma \ref{dlogmap} induces an isomorphism 
\[ d\log \left[-\right]: \mathcal{K}^M_{r,Y}/p^n \xrightarrow{\cong} \Wno_{Y,\log}^r.\]
\end{proposition}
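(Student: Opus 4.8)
The plan is to prove surjectivity and injectivity separately, and to reduce the injectivity to the classical Bloch-Gabber-Kato theorem over the function field $\kappa(\eta)$ of $Y$ by restricting to the generic point. Since $d\log[-]_n$ is already known to be a well-defined map $\mathcal{K}^M_{r,Y}/p^n \to \Wno_{Y,\log}^r$ (it respects the Steinberg relations and $p^n$-divisibility by Lemma \ref{dlogmap} and the discussion preceding the statement), it only remains to check that it is bijective as a map of \'etale sheaves.

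For surjectivity, I would argue directly from the definition. By Definition \ref{definitionOfLog}, $\Wno_{Y,\log}^r$ is the subsheaf of $\Wno_Y^r$ generated \'etale-locally by sections $d\log[x_1]_n\cdots d\log[x_r]_n$ with $x_i \in \mathcal{O}_Y^{\times}$. On the other hand, Theorem \ref{kerzthm}(i) (and the remark following it) shows that $\mathcal{K}^M_{r,Y}$ is \'etale-locally generated by symbols $\{x_1,\dots,x_r\}$ with all $x_i \in \mathcal{O}_Y^{\times}$, and $d\log$ sends such a symbol to precisely such a generator. Hence every local generator of $\Wno_{Y,\log}^r$ lies in the image, and $d\log$ is surjective.

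For injectivity, I would pass to the generic point $i_\eta : \eta \hookrightarrow Y$. Tensoring the universally exact Gersten complex of Theorem \ref{kerzthm}(iii) with $\Z/p^n\Z$ (an additive functor commuting with filtered colimits, as noted in the remark after that theorem) yields an exact sequence of \'etale sheaves whose initial segment is an injection $\mathcal{K}^M_{r,Y}/p^n \hookrightarrow i_{\eta*}\bigl(K^M_r(\kappa(\eta))/p^n\bigr)$. The field $\kappa(\eta)$ has characteristic $p$, so the classical Bloch-Gabber-Kato theorem \cite{blochkato} gives an isomorphism $d\log : K^M_r(\kappa(\eta))/p^n \xrightarrow{\ \cong\ } \Wno_{\kappa(\eta),\log}^r$. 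Because $d\log[-]_n$ is natural with respect to restriction to $\eta$, the composite
\[
\mathcal{K}^M_{r,Y}/p^n \xrightarrow{\ d\log\ } \Wno_{Y,\log}^r \longrightarrow i_{\eta*}\Wno_{\kappa(\eta),\log}^r
\]
coincides with the Gersten injection above followed by $i_{\eta*}$ of this isomorphism; being an injection followed by an isomorphism, the composite is injective, and therefore the first arrow $d\log$ is injective. Combining with surjectivity gives the claimed isomorphism.

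The main obstacle is not any single hard computation but verifying the compatibilities that make the generic-point argument go through: namely that the $d\log[-]_n$ map constructed in Lemma \ref{dlogmap} genuinely restricts to the field-level $d\log$ under $\mathcal{K}^M_{r,Y} \to i_{\eta*}K^M_r(\kappa(\eta))$, so that the square relating the two $d\log$ maps commutes, and that Kato's residue maps governing the Gersten complex on the $\mathcal{K}^M$ side are compatible with those on the $\Wno_{-,\log}$ side (only the injectivity at the first step is actually needed here). The heaviest external input is the universal exactness in Theorem \ref{kerzthm}(iii), which is what lets me base-change the Gersten complex along $-\otimes \Z/p^n\Z$; once that is granted, the reduction to the classical field case is formal.
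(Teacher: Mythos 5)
Your proof is correct and follows essentially the same route as the paper: surjectivity from the symbol generators of Theorem \ref{kerzthm}(i), and injectivity by restriction to the generic point, where the mod-$p^n$ Gersten injection from Theorem \ref{kerzthm}(iii) and the classical Bloch--Gabber--Kato theorem for $\kappa(\eta)$ show the composite into $i_{\eta*}\Wno_{\eta,\log}^r$ is injective. The only difference is that the paper first localizes and invokes N\'eron--Popescu to reduce to the smooth case (chiefly to obtain the injection $\Wno_{Y,\log}^r\hookrightarrow i_{\eta*}\Wno_{\eta,\log}^r$ via Remark \ref{smoothlog}), a step your argument correctly identifies as dispensable, since injectivity of $d\log$ already follows from injectivity of the composite.
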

\begin{proof}
It is enough to show this map is injective. This is a local question, so we may assume $Y=\text{Spec}(A)$ is a regular local ring over $\F_p$. The N\'eron-Popescu desingularization theorem below tells us that we can assume $Y=\text{Spec}(A)$ is smooth over $\F_p$. Then we have the following commutative diagram 
\[ \xymatrix{   \mathcal{K}^M_{r,Y}/p^n \ar@{^(->}[r] \ar[d]_{d\log[-]_n} & i_{\eta*}K^M_r(\kappa(\eta))/p^n \ar[d]^{\cong}\\
	\Wno_{X,\log}^r \ar@{^(->}[r] & i_{\eta*}\Wno_{\eta,\log}^r }\]
of \'etale sheaves on $Y$. The injection on the first row follows from Theorem \ref{kerzthm}(iii), and Remark \ref{smoothlog} implies the injection on the second row. Therefore the assertion follows from the fact that the right vertical map is an isomorphism, which is  given by the classical Bloch-Gabber-Kato theorem\cite{blochkato}.
\end{proof}

\begin{theorem}[N\'eron-Popescu, \cite{swanneron}]\label{popescu}
Any regular local ring $A$ of characteristic $p$ can be written as a filtering colimit $\varinjlim\limits_{i}A_i$ with each $A_i$ is smooth (of finite type) over $\Fp$.
\end{theorem}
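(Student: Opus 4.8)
The plan is to recognize this statement as the special case of the general N\'eron--Popescu desingularization theorem in which the base is a perfect field, and to reduce to that theorem after verifying its hypothesis.

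First I would recall the general criterion: a homomorphism $\varphi\colon R\to S$ of Noetherian rings is a filtered colimit of smooth finite-type $R$-algebras if and only if $\varphi$ is \emph{regular}, i.e.\ flat with geometrically regular fibres. Applying this to the structure map $\Fp\to A$, I check regularity. Flatness is automatic because $\Fp$ is a field, so the only issue is geometric regularity of the unique fibre $A$. Since $\Fp$ is perfect (being finite), geometric regularity over $\Fp$ coincides with ordinary regularity: it suffices to test $A\otimes_{\Fp}\Fp^{1/p}=A$, which is regular precisely because $A$ is. As $A$ is a regular local ring by hypothesis, $\Fp\to A$ is regular, and the general theorem yields $A=\varinjlim_i A_i$ with each $A_i$ smooth of finite type over $\Fp$.

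The substance therefore lies in the general theorem, and here I would follow Popescu's argument as streamlined by Swan. The scheme of the proof is to show that every $R$-algebra homomorphism $C\to S$ out of a finitely presented algebra $C=R[x_1,\dots,x_n]/(f_1,\dots,f_m)$ factors as $C\to D\to S$ through some smooth finite-type $R$-algebra $D$; granting this for all such $C$, the filtered colimit of the resulting smooth algebras $D$ recovers $S$. To locate and remove the non-smooth locus one works with the Jacobian (Fitting) ideal $H_{C/R}\subseteq C$ attached to the presentation, whose vanishing locus is exactly where $C$ fails to be smooth over $R$; the task is to enlarge $C$ inside $S$ so that the image of this ideal grows until the obstruction disappears.

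The genuinely hard part is the desingularization induction that carries this out. Its base case is the field case: for $R=k$ a field and $S$ geometrically regular over $k$, a finite-type $k$-subalgebra is factored through a smooth one, using that geometric regularity of a finite-type $k$-algebra is equivalent to smoothness. The inductive step is the technical core: given a presentation of $C$ with nontrivial non-smooth locus in $S$, one exploits flatness together with geometric regularity of the fibres of $R\to S$ to construct an auxiliary element and an enlarged finitely presented $C'$ with $C\to C'\to S$ whose non-smooth locus is strictly smaller for a suitable numerical invariant, and then iterates. Manufacturing $C'$ — and verifying that it still maps to $S$ while strictly decreasing the invariant — is where the full strength of regularity (rather than mere flatness) is indispensable, and it is the step I would cite from Swan rather than reproduce. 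For the present paper the theorem is used only as a black box to replace a regular local ring by an ind-smooth one, so the reduction in the first step is all that is truly needed in context.
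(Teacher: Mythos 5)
Your reduction is correct and matches what the paper does: the paper states this result without proof, citing Swan, and your verification that $\F_p\to A$ is a regular homomorphism (flatness is automatic over a field, and geometric regularity of the fibre reduces to regularity of $A$ because the base field is perfect) together with an appeal to the general N\'eron--Popescu theorem is exactly the intended justification. The desingularization induction you sketch is correctly identified as the content of the cited reference rather than something to be reproved here.
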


\subsection{Relative logarithmic de Rham-Witt sheaves}
Assume $X \to \text{Spec}(\Fp[[t]])$ is as before, i.e., a projective strictly semistable scheme over $B=\text{Spec}(\Fp[[t]])$. Let $D=\cup_{i=1}^sD_i$ be a reduced effective Cartier divisor on $Y$ such that Supp$(D)$ has simple normal crossing,  let $D_1,\cdots, D_s$ be the irreducible components of $D$, and let $\jmath: U:=Y-D \hookrightarrow Y$ be the open complement of $D$. For $\underline{m}=(m_1,\cdots, m_s) $ with $m_i \in \mathbb{N}$ let 
\begin{equation}\label{defnofD} 
mD={\underline{m}}D=\sum_{i=1}^s m_i D_i 
\end{equation}
be the associated divisor.  In the previous section,  we defined a semi-order on $\mathbb{N}^s$ and 
denoted
\begin{equation}\label{order}  
{\underline{m'}}D \geq {\underline{m}} D \quad  \text{if} \quad \underline{m'} \geq \underline{m}.   
\end{equation} 

\begin{definition}\label{twistedlog}
For $r\geq 0, n\geq 1$, we define \[ \Wno_{X|mD,\log}^r \subset \jmath_*\Wno_{U,\log}^r  \] to be the \'etale additive subsheaf generated \'etale locally by sections
\[ d\log [x_1]_n \wedge \cdots \wedge d\log [x_r]_n \quad \text{with } \quad x_i \in \jmath_*\mathcal{O}_U^{\times}, \ \text{for all $i$, and}\    x_1 \in \ker(\mathcal{O}_X \to \mathcal{O}_D), \]
where $[x]_n=(x,0,\cdots,0) \in W_n(\mathcal{O}_X)$ is the Teichm\"uller representative of $x\in \mathcal{O}_X$, and the symbol $d\log[x]_n =\frac{d[x]_n}{[x]_n}$ as before. 
\end{definition}

\begin{corollary}
For any $m \in \mathbb{N}^s$, $\jmath^*\Wno_{X|mD,\log}^r=\Wno_{U,\log}^r$.	
If $m' \geq m$, then we have the inclusions of \'etale sheaves 
\[  \Wno_{X|m'D,\log}^r \subseteq \Wno_{X|mD,\log}^r \subseteq \Wno_{X,\log}^r. \]

\end{corollary}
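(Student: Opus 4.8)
The plan is to reduce everything to the description of $\Wno_{X|mD,\log}^r$ by $d\log$-symbols in Definition \ref{twistedlog}, handling the two easy parts directly from the definition and transporting the delicate inclusion from the corresponding statement for relative Milnor $K$-sheaves (Proposition \ref{inclusionK}) across the Bloch--Gabber--Kato isomorphism (Proposition \ref{blochgabberkato}).

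First I would establish $\jmath^*\Wno_{X|mD,\log}^r = \Wno_{U,\log}^r$ by a stalk computation over $U$. Since $\jmath$ is an open immersion, $\jmath^*$ is exact and preserves stalks, and the inclusion $\Wno_{X|mD,\log}^r \subseteq \jmath_*\Wno_{U,\log}^r$ gives $\jmath^*\Wno_{X|mD,\log}^r \hookrightarrow \jmath^*\jmath_*\Wno_{U,\log}^r = \Wno_{U,\log}^r$. It then suffices to check this is an isomorphism on stalks at every $u \in U$. As $u \notin \mathrm{Supp}(D)$, the invertible ideal sheaf $\mathcal{O}_X(-mD)$ has stalk $\mathcal{O}_{X,u}$, so the defining condition $x_1 \in 1+\mathcal{O}_X(-mD)$ imposes no constraint among units (every unit $v$ equals $1+(v-1)$). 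Hence the germs generating $\Wno_{X|mD,\log}^r$ at $u$ are exactly those generating $\Wno_{U,\log}^r$, and the two stalks coincide.

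Next, the inclusion $\Wno_{X|m'D,\log}^r \subseteq \Wno_{X|mD,\log}^r$ for $m' \geq m$ is immediate from Definition \ref{twistedlog}: here $m' \geq m$ gives $m'D \geq mD$, whence $\mathcal{O}_X(-m'D) \subseteq \mathcal{O}_X(-mD)$ as subsheaves of $\mathcal{O}_X$, so $1+\mathcal{O}_X(-m'D) \subseteq 1+\mathcal{O}_X(-mD)$. Thus every generating section $d\log[x_1]_n \wedge \cdots \wedge d\log[x_r]_n$ of $\Wno_{X|m'D,\log}^r$ is also a generating section of $\Wno_{X|mD,\log}^r$, and the inclusion of the generated subsheaves follows.

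The main obstacle is the remaining inclusion $\Wno_{X|mD,\log}^r \subseteq \Wno_{X,\log}^r$, since the generators of the left-hand side involve factors $x_2,\dots,x_r \in \mathcal{O}_U^\times$ which need not be regular along $D$, so it is not clear a priori that such a symbol extends to an absolute logarithmic form on $X$. To handle this I would pass through Milnor $K$-theory. By construction the map $d\log[-]_n$ of Lemma \ref{dlogmap} carries the relative Milnor $K$-sheaf $\mathcal{K}^M_{r,X|mD}$, generated by symbols $\{x_1,\dots,x_r\}$ with $x_1 \in \mathrm{Ker}(\mathcal{O}_X^\times \to \mathcal{O}_{mD}^\times) = 1+\mathcal{O}_X(-mD)$, onto $\Wno_{X|mD,\log}^r$, the generators matching those of Definition \ref{twistedlog} (note such an $x_1$ is a unit on all of $X$, being $\equiv 1$ along $D$). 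Likewise, by Bloch--Gabber--Kato it carries $\mathcal{K}^M_{r,X}$ onto $\Wno_{X,\log}^r$, which we view inside $\jmath_*\Wno_{U,\log}^r$ via the restriction map; the latter is injective because $\mathcal{K}^M_{r,X} \hookrightarrow i_{\eta*}K^M_r(\kappa(\eta))$ by Theorem \ref{kerzthm}(iii) and $U$ contains the generic point. Both images then live in $\jmath_*\Wno_{U,\log}^r$, and the $d\log$ maps are compatible, so applying $d\log[-]_n$ to the inclusion $\mathcal{K}^M_{r,X|mD} \subseteq \mathcal{K}^M_{r,X}$ of Proposition \ref{inclusionK} yields precisely $\Wno_{X|mD,\log}^r \subseteq \Wno_{X,\log}^r$. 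Concretely, the symbol identity $\{1+at,t\} = -\{1+at,-a\}$ underlying Proposition \ref{inclusionK}, which rewrites a relative symbol in terms of units on $X$, is exactly what guarantees that the corresponding logarithmic forms extend across $D$, and this is the only genuinely nontrivial input.
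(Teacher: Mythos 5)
Your proposal is correct and follows essentially the same route as the paper: the paper's proof simply observes that $\Wno_{X|mD,\log}^r$ is the image of $\mathcal{K}^M_{r,X|mD}$ under $d\log[-]_n$, from which the first claim is immediate and the inclusions follow from Proposition \ref{inclusionK} together with Proposition \ref{blochgabberkato}. Your additional details (the stalkwise check over $U$, the direct verification of the first inclusion from $\mathcal{O}_X(-m'D)\subseteq\mathcal{O}_X(-mD)$, and the identification of the symbol identity $\{1+at,t\}=-\{1+at,-a\}$ as the genuinely nontrivial input for $\Wno_{X|mD,\log}^r\subseteq\Wno_{X,\log}^r$) are all consistent with, and usefully expand on, the paper's one-line argument.
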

\begin{proof}
By definition, $\Wno_{X|mD,\log}^r $ is the image of $\mathcal{K}^M_{r,X|mD}$ under the $d\log[-]_n$. Hence the first claim is clear and the second follows from Proposition \ref{inclusionK} and \ref{blochgabberkato}.
\end{proof}

\begin{theorem}(\cite[Thm. 1.1.6]{jszduality})\label{twistedexact}
There is an exact sequence of \'etale sheaves on X: 
\[ 0 \to W_{n-1}\Ome_{X|[m/p]D,\log}^{r} \xrightarrow{p} \Wno_{X|mD,\log}^r \xrightarrow{R} \Ome_{X|mD,\log}^r \to 0 ,\]
where $[m/p]D=\sum_{i=1}^s[m_i/p]D_i$, and $[m_i/p]=\text{min}\{ m\in \mathbb{N}| m\geq m_i/p \}. $
\end{theorem}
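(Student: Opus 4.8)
The plan is to exhibit the asserted sequence as a subsequence of the untwisted one. Specializing Proposition \ref{logtocoh}(i) (replace $n$ by $n-1$ and take $m=1$) gives the short exact sequence of \'etale sheaves
\[ 0 \to W_{n-1}\Ome_{X,\log}^r \xrightarrow{\ p\ } \Wno_{X,\log}^r \xrightarrow{\ R\ } \Ome_{X,\log}^r \to 0, \]
where $R$ denotes the $(n-1)$-fold restriction to level one. By Definition \ref{twistedlog} the three relative sheaves $W_{n-1}\Ome_{X|[m/p]D,\log}^r$, $\Wno_{X|mD,\log}^r$ and $\Ome_{X|mD,\log}^r$ are subsheaves of $W_{n-1}\Ome_{X,\log}^r$, $\Wno_{X,\log}^r$ and $\Ome_{X,\log}^r$ respectively (all sitting inside the corresponding pushforwards from $U$), and these inclusions form a commutative ladder from the sequence of the theorem to the untwisted one. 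Since exactness of \'etale sheaves may be tested on stalks, I would work over a strict henselization $\mathcal{O}_{X,\bar x}^{\mathrm{sh}}$, a regular local ring of characteristic $p$ in which $D$ is cut out by a subproduct of a regular system of parameters.

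The three easy points come first. The map $R$ sends the generating symbol $d\log[x_1]_n\wedge\cdots\wedge d\log[x_r]_n$ of $\Wno_{X|mD,\log}^r$ (with $x_1\in 1+\mathcal{O}_X(-mD)$) to $d\log x_1\wedge\cdots\wedge d\log x_r$, a generating symbol of $\Ome_{X|mD,\log}^r$; hence $R$ is well defined on the twisted subsheaves and surjects onto $\Ome_{X|mD,\log}^r$. Injectivity of the left-hand $p$ is inherited from the untwisted sequence, since the left vertical inclusion is injective. The one place where the precise twist enters is the well-definedness of $p$ on the twisted source: on symbols the untwisted map sends $d\log[x_1]_{n-1}\wedge\cdots$ to $d\log[x_1^p]_n\wedge d\log[x_2]_n\wedge\cdots = p\cdot(d\log[x_1]_n\wedge\cdots)$, using multiplicativity of the Teichm\"uller lift, $p\,d\log[x]_n=d\log[x^p]_n$. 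If $x_1=1+a$ with $a\in\mathcal{O}_X(-[m/p]D)$, then in characteristic $p$ one has $x_1^p=1+a^p$ with $a^p\in\mathcal{O}_X(-p[m/p]D)\subseteq\mathcal{O}_X(-mD)$, the inclusion holding because $p[m_i/p]\ge m_i$ by the definition $[m_i/p]=\lceil m_i/p\rceil$. So the image symbol lies in $\Wno_{X|mD,\log}^r$; this is exactly why the divisor $[m/p]D$ appears.

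It remains to prove exactness in the middle. The inclusion $\operatorname{im}(p)\subseteq\ker(R)$ is clear, so the content is the reverse: given $\omega\in\Wno_{X|mD,\log}^r$ with $R(\omega)=0$, the untwisted sequence yields a unique $\eta\in W_{n-1}\Ome_{X,\log}^r$ with $p(\eta)=\omega$, and I must show $\eta\in W_{n-1}\Ome_{X|[m/p]D,\log}^r$. Equivalently, everything reduces to the identity of subsheaves
\[ p^{-1}\!\left(\Wno_{X|mD,\log}^r\right)=W_{n-1}\Ome_{X|[m/p]D,\log}^r \quad\text{inside}\quad W_{n-1}\Ome_{X,\log}^r, \]
whose inclusion $\supseteq$ is the previous paragraph. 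For the reverse inclusion I would reduce to the smooth case: by N\'eron--Popescu desingularization (Theorem \ref{popescu}) the ring $\mathcal{O}_{X,\bar x}^{\mathrm{sh}}$ is a filtered colimit of smooth $\F_p$-algebras to which the monomial divisor $D$ and its twists descend, and both $\Wno_{\log}$ and its relative variants commute with such filtered colimits; over a smooth ring one then analyzes the induced pole filtration of $\Wno_X^r$ along the components of $D$ and checks, in a symbol normal form, that a section mapping under $p$ into the $mD$-twisted sheaf must already vanish to order $[m/p]D$.

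The hard part will be precisely this reverse inclusion. Membership of an \emph{arbitrary} (non-symbolic) section of $W_{n-1}\Ome_{X,\log}^r$ in the twisted subsheaf is a delicate filtration condition rather than a statement about individual generators, so one needs an explicit understanding of how multiplication by $p$ interacts with the ramification filtration along each $D_i$ --- the substance of the local computations in the references --- whereas the other three exactness checks are essentially formal.
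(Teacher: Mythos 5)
Your formal bookkeeping is correct and matches what must be checked: $R$ carries the generating symbols of $\Wno_{X|mD,\log}^r$ onto those of $\Ome_{X|mD,\log}^r$, injectivity of $p$ is inherited from the untwisted sequence of Proposition \ref{logtocoh}(i), and the well-definedness of $p$ on the twisted source via $(1+a)^p=1+a^p$ and $p\lceil m_i/p\rceil\ge m_i$ is indeed exactly why the divisor $[m/p]D$ appears. You have also correctly isolated the entire content of the theorem in the single identity $p^{-1}\bigl(\Wno_{X|mD,\log}^r\bigr)=W_{n-1}\Ome_{X|[m/p]D,\log}^{r}$ inside $W_{n-1}\Ome_{X,\log}^r$. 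But at that point the proposal stops: saying that one ``analyzes the induced pole filtration \dots and checks, in a symbol normal form, that a section mapping under $p$ into the $mD$-twisted sheaf must already vanish to order $[m/p]D$'' is a restatement of what is to be proved, not an argument. A section of $W_{n-1}\Ome_{X,\log}^r$ is an arbitrary sum of symbols, membership in the twisted subsheaf is a condition on the existence of \emph{some} symbolic presentation rather than a termwise pole-order condition, and for $n>1$ there is no ready-made normal form in $W_{n-1}\Ome_X^r$ on which to run such a check. This reverse inclusion is the whole theorem, and it is a genuine gap in the proposal.

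It is also worth noting that the mechanism actually used in the paper (following \cite{jannsensaito}) is different from the one you gesture at: the problem is transported to the relative Milnor $K$-sheaves $\mathcal{K}^M_{r,X|mD}$ via the isomorphism $\mathcal{K}^M_{r,Y}/p^n\cong \Wno_{Y,\log}^r$ of Proposition \ref{blochgabberkato}, where the analogous exact sequence is proved by computing the graded pieces of the filtration $\{\mathcal{K}^M_{r,X|mD}\}_m$ as explicit sheaves of differential forms. That computation is precisely what would replace your missing filtration analysis; if you want a self-contained argument you should either carry out that Milnor $K$-theoretic computation or do the substantially harder de Rham--Witt-side analysis you allude to. Your reduction to stalks and the use of N\'eron--Popescu (Theorem \ref{popescu}) to pass to the smooth case are consistent with how the paper handles regularity elsewhere, so that part of the strategy is sound; it is only the central step that is missing.
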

\begin{proof}
This is a local problem, and the local proof in \cite{jszduality} also works in our situation. The idea is reduced to a similar result for Milnor $K$-groups by the Bloch-Gabber-Kato theorem. Then the graded pieces (with respect to $m$) on Milnor $K$-groups can be represented as differential forms, which will give the desired exactness.
\end{proof}
If $m'D\geq mD$ , then  the relation $\mathcal{O}_X(-m'D) \subseteq \mathcal{O}_X(-mD)$ induces a natural transitive map $\Wno_{X|m'D,\log}^r \hookrightarrow \Wno_{X|mD,\log}^r$. This gives us a pro-system of abelian \'etale sheaves $ \lq\lq \varprojlim\limits_m" W_{n}\Ome_{X|mD, \log}^r$.
\begin{corollary}\label{exactofprosystem}
The following sequence is exact
\begin{equation*}
0 \to \lq\lq \varprojlim_m" W_{n-1}\Ome_{X|mD, \log}^r \to  \lq\lq \varprojlim_m" W_{n}\Ome_{X|mD, \log}^r  \to  \lq\lq \varprojlim_m" \Ome_{X|mD, \log}^r  \to 0
\end{equation*}
as pro-objects, where $\lq\lq \varprojlim\limits_m" $ is the pro-system of sheaves defined by the ordering between the $D$'s, which is defined in (\ref{order}). 
\end{corollary}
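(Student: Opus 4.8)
The plan is to obtain this as a formal consequence of the level-wise short exact sequences of Theorem~\ref{twistedexact}, the only genuine issue being the index shift $m \mapsto [m/p]$ that appears in the first term there. First I would note that the three families of sheaves organize into pro-systems whose transition maps are the inclusions induced by $\mathcal{O}_X(-m'D)\subseteq \mathcal{O}_X(-mD)$ for $m'\geq m$; here one uses that $m'\geq m$ forces $[m'/p]\geq [m/p]$, so that $\lq\lq\varprojlim_m" W_{n-1}\Ome_{X|[m/p]D,\log}^{r}$ is a genuine pro-system. Since the maps $p$ and $R$ of Theorem~\ref{twistedexact}, together with all the transition inclusions, are restrictions of the corresponding canonical maps between $\jmath_*W_{n-1}\Ome_{U,\log}^r$, $\jmath_*\Wno_{U,\log}^r$ and $\jmath_*\Ome_{U,\log}^r$ on $U$, the squares expressing their compatibility with the transition maps commute automatically. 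Thus Theorem~\ref{twistedexact} yields, level by level, a short exact sequence, hence a short exact sequence of pro-objects
\[ 0 \to \lq\lq\varprojlim_m" W_{n-1}\Ome_{X|[m/p]D,\log}^{r} \xrightarrow{p} \lq\lq\varprojlim_m" \Wno_{X|mD,\log}^r \xrightarrow{R} \lq\lq\varprojlim_m" \Ome_{X|mD,\log}^r \to 0, \]
using that a level-wise short exact sequence of inverse systems indexed by one and the same directed set is exact in the category of pro-sheaves.

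It then remains to identify the first term with $\lq\lq\varprojlim_m" W_{n-1}\Ome_{X|mD,\log}^{r}$. For this I would invoke the standard cofinality principle for pro-objects applied to the reindexing $\psi\colon m\mapsto [m/p]$ on $\mathbb{N}^s$. The map $\psi$ is monotone for the semi-order~(\ref{order}); it satisfies $\psi(m)\leq m$ componentwise, since $[m_i/p]=\lceil m_i/p\rceil\leq m_i$; and its image is cofinal in $(\mathbb{N}^s,\geq)$, because $\psi(pm_0)=m_0$ for every $m_0$. The inequality $\psi(m)\leq m$ gives level-wise inclusions $W_{n-1}\Ome_{X|mD,\log}^{r}\hookrightarrow W_{n-1}\Ome_{X|[m/p]D,\log}^{r}$, defining a pro-morphism in one direction, and cofinality produces an inverse pro-morphism; a routine check using monotonicity of $\psi$ shows that the two are mutually inverse in the pro-category, yielding the desired pro-isomorphism. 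Substituting this isomorphism into the sequence above completes the proof.

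The step I expect to require the most care is precisely this reindexing: one must confirm that $\psi$ is genuinely monotone and cofinal, so that the $[m/p]$-twisted pro-system is \emph{canonically} pro-isomorphic to the untwisted one. Once this cofinality bookkeeping is done, no sheaf-theoretic computation remains, since exactness of the sequence is detected level-wise and was already supplied by Theorem~\ref{twistedexact}.
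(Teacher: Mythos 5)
Your argument is correct and is exactly the intended derivation: the paper states this corollary without proof immediately after Theorem~\ref{twistedexact}, leaving precisely the reindexing $m\mapsto[m/p]$ to be absorbed by a cofinality argument in the pro-category, which you carry out correctly (monotonicity, $[m/p]\leq m$, and $\psi(pm_0)=m_0$ all check out). The remaining ingredients — compatibility of $p$ and $R$ with the transition inclusions inside $\jmath_*\Wno_{U,\log}^r$, and level-wise exactness implying exactness of pro-objects over a fixed index set — are handled as one would expect.
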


In \cite{jszduality}, using the filtered de Rham-Witt complexes, we define a pairing between $\Wno_{U, \log}^r$ and the pro-system $\lq\lq \varprojlim\limits_m" \Wno_{X|mD, \log}^{d+1-r}$. This pairing induces a pairing on the cohomology groups.  In this paper, we give an alternative way to define the pairing between $H^i(U,\Wno_{U, \log}^r)$ and $ \varprojlim\limits_{m} H^{d+2-i}_{X_s}(X, \Wno_{X|mD,\log}^{d+1-r}) $, which can be done without introducing the filtered de Rham-Witt complexes \cite[\S 2]{jszduality}. 

\begin{theorem}\label{pairingcoherentlog} The wedge product on de Rham-Witt complexes induces natural maps
\begin{equation}\label{pairing.Hom.1}
\jmath_*\Wno_{U}^r \to \sHom(\lq \lq \varprojlim_m" \Wno_{X|mD, \log}^{d+1-r}, \Wno_{X}^{d+1})
=\varinjlim\limits_m \sHom(\Wno_{X|mD, \log}^{d+1-r}, \Wno_{X}^{d+1});
\end{equation}
\begin{equation}\label{pairing.Hom.2}
\jmath_*Z_1\Wno_{U}^r  \to \varinjlim_m \sHom(\Wno_{X|mD, \log}^{d+1-r}, W_n\Omega_X^d).
\end{equation}
\end{theorem}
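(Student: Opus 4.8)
The plan is to construct the two morphisms \eqref{pairing.Hom.1} and \eqref{pairing.Hom.2} by first producing, at the level of individual truncations $\Wno_{X|mD,\log}^{d+1-r}$, a pairing with $\jmath_*\Wno_U^r$ (resp.\ $\jmath_*Z_1\Wno_U^r$) landing in $\Wno_X^{d+1}$ (resp.\ $W_m\Omega_X^d$), and then passing to the limit. First I would recall that the wedge product on the de Rham--Witt complex gives, \'etale locally, a pairing
\[
\Wno_U^r \otimes \Wno_{U,\log}^{d+1-r} \to \Wno_U^{d+1},
\qquad
\omega \otimes \eta \mapsto \omega \wedge \eta,
\]
and that on the open part $U$ this restricts to the usual cup product. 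Since $\Wno_{X|mD,\log}^{d+1-r}\subseteq \jmath_*\Wno_{U,\log}^{d+1-r}$ by the corollary following Definition \ref{twistedlog}, the content of \eqref{pairing.Hom.1} is the claim that for a local section $\omega\in\jmath_*\Wno_U^r$ and a generator
\[
\eta = d\log[x_1]_n\wedge\cdots\wedge d\log[x_{d+1-r}]_n,
\qquad x_1\in 1+\mathcal{O}_X(-mD),\ x_i\in\mathcal{O}_U^\times,
\]
the a priori meromorphic section $\omega\wedge\eta$ of $\jmath_*\Wno_U^{d+1}$ in fact extends to a section of $\Wno_X^{d+1}$ over $X$, i.e.\ has no poles along $D$. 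This is the crux of the construction.

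To prove this extension property I would reduce to the level $n=1$ and to the associated graded sheaves via the exact sequence of Theorem \ref{twistedexact} together with Corollary \ref{exactofprosystem}: once the statement is known for $\Omega_{X|mD,\log}^{d+1-r}$ against $\jmath_*\Omega_U^r$ (and, for \eqref{pairing.Hom.2}, against $\jmath_*Z_1\Omega_U^r$), an induction on $n$ propagates it, using that the Verschiebung and restriction operators are compatible with the wedge pairing. At level one the question becomes the purely local coherent statement that
\[
\Omega_U^r \wedge \Omega_X^{d+1-r}(\log D)(-mD) \subseteq \Omega_X^{d+1};
\]
here I would work in a local chart where $D$ is a union of coordinate hyperplanes and check on monomial differential forms that the pole of order controlled by $\log D$ is killed by the vanishing coming from the twist $(-mD)$ and, crucially, by the condition $x_1\in 1+\mathcal{O}_X(-mD)$, which forces $d\log[x_1]=d\log(1+a)$ with $a\in\mathcal{O}_X(-mD)$ to be a \emph{holomorphic} one-form rather than merely logarithmic. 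This last observation is what makes $\omega\wedge\eta$ regular and is the heart of why the relative sheaf $\Wno_{X|mD,\log}^{d+1-r}$ pairs into $\Wno_X^{d+1}$ and not just into $\jmath_*\Wno_U^{d+1}$.

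Granting the extension property, the adjunction $\Hom(A\otimes B, C)\cong\Hom(A,\sHom(B,C))$ turns the pairing into the desired map $\jmath_*\Wno_U^r \to \sHom(\Wno_{X|mD,\log}^{d+1-r},\Wno_X^{d+1})$, and the identification of $\sHom$ out of a filtered colimit (here an inverse limit of sheaves, giving $\varinjlim_m$ on $\sHom$) yields the stated right-hand side; the map \eqref{pairing.Hom.2} follows the same route, where one checks in addition that if the first factor is a section of $Z_1\Wno_U^r=\ker(F^{n-1}d)$ then the product lands in the subsheaf $W_m\Omega_X^d$, which again reduces to a level-one local computation with the Cartier operator of Theorem \ref{catieroperator} and Proposition \ref{highercartier}. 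I expect the main obstacle to be precisely the local pole-order bookkeeping in the regularity claim $\omega\wedge\eta\in\Wno_X^{d+1}$: one must track simultaneously the logarithmic poles from $\Omega_X^\bullet(\log D)$, the zeros from the twist $(-mD)$, and the extra vanishing from $x_1\in 1+\mathcal{O}_X(-mD)$, and verify these cancel on each monomial in an \'etale-local semistable chart $\Fp[[t]][T_0,\dots,T_d]/(T_0\cdots T_a-t)$ where $D$ meets the special fiber.
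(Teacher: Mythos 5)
Your construction has the right formal shape (pair, then use adjunction, then pass to the colimit), but the statement you isolate as ``the crux'' is false in the form you give it, and the quantity that actually causes the difficulty is missing from your bookkeeping. For a \emph{fixed} $m$ there is no pairing $\jmath_*\Wno_{U}^r\otimes \Wno_{X|mD,\log}^{d+1-r}\to \Wno_{X}^{d+1}$, and your level-one claim $\jmath_*\Omega_U^r\wedge\Omega_X^{d+1-r}(\log D)(-mD)\subseteq\Omega_X^{d+1}$ is false: a local section $\omega$ of $\jmath_*\Wno_{U}^r$ is only a form on $U$ and may have a pole of arbitrarily large order $N$ along $D$, and the vanishing of order roughly $m$ supplied by the twist and by $x_1\in 1+\mathcal{O}_X(-mD)$ cannot cancel a pole of order $N>m$ (take $\omega=T_1^{-N}dT_1$ in a chart where $D_1=\{T_1=0\}$). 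You propose to track ``the logarithmic poles from $\Omega_X^{\bullet}(\log D)$, the zeros from the twist $(-mD)$, and the extra vanishing from $x_1\in 1+\mathcal{O}_X(-mD)$,'' but the pole order of $\omega$ itself --- the only genuinely unbounded quantity --- does not appear in that list. Consequently the direct limit over $m$ in the target is not a cosmetic repackaging obtained from ``$\sHom$ out of a filtered colimit'': it is the only reason the map exists. The paper's proof fixes a local section $\alpha$ of $\jmath_*\Wno_{U}^r$, observes that its poles along $D$ are locally bounded, and chooses $m$ \emph{depending on} $\alpha$ so large that $\alpha\wedge\beta$ lies in $\Wno_{X}^{d+1}$ for every generator $\beta$ of $\Wno_{X|mD,\log}^{d+1-r}$; the result is a well-defined element of the colimit because enlarging $m$ only restricts the homomorphism.

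Separately, your reduction to $n=1$ via Theorem \ref{twistedexact} is not how the paper proceeds and would be awkward to run: the subobject in that sequence is embedded by multiplication by $p$, so one would be trying to deduce regularity of $\alpha\wedge\beta$ at level $n-1$ from regularity of $p(\alpha\wedge\beta)$ at level $n$, which points the induction the wrong way. The paper instead works directly at level $n$ with an explicit Witt-vector computation: $[1+ta]_n=(x_0,\dots,x_{n-1})+(y_0,\dots,y_{n-1})$ with the first summand in $W_n(\F_p)$ (hence killed by $d$) and $y_i\in tA$, whence the coefficients of $d[1+t^{p^{m(n-1)}}]_n$ lie in $[t]_n^{p^m-1}W_n(A)$. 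This quantifies exactly how much Teichm\"uller-coordinate vanishing along $D$ the condition $x_1\in 1+\mathcal{O}_X(-m'D)$ purchases at Witt level $n$ (note the loss of a factor of roughly $p^{n-1}$ relative to the naive expectation), and that estimate is what legitimizes ``take $m$ big enough to eliminate the poles of $\alpha$.'' Your observation that $d\log(1+a)$ is holomorphic at level one is correct but far from sufficient.
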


For the proof, we need some calculations with Witt vectors.

\begin{lemma}\label{lem;giesserhe} (\cite[Lem. 1.2.3]{geisserhe})
Let $R$ be any ring, and $t\in R$, then $ [1+t]_{m}-[1]_m=(y_0,\cdots, y_{m-1})$ with $y_i\equiv t$ mod $ t^2R$ for $0 \leq i \leq m-1.$ Here $[x]_m=(x,0,\cdots,0) \in W_m(R)$ is the Teichm\"uller representative of $x\in R$.
\end{lemma}
As a consequence, we have
\begin{corollary} 
Let $A$ be an $\Fp$-algebra, and let $a, t\in A$. Then \[ [1+ta]_{m}-[1]_m=(y_0,\cdots, y_{m-1}) \]
with  $y_{i} \in tA$ for  $0\leq i \leq m-1$. 
\end{corollary}

\begin{corollary}
With the notations as above, we have $d[1+tA]_m = d(y_0,\ldots,y_m)$ with $y_i \in tA$.
\end{corollary}
\begin{proof}
We have $dx = 0$ for $x\in W_m(\mathbb F_p)$.
\end{proof}

\begin{corollary}
With the notations above we have the following formula, for $t\in A$
\begin{equation*}
       d[1+t^{p^{(m-1)}n}]_{m} =  d(t^{p^{(m-1)}n}y_0,\cdots, t^{p^{(m-1)}n}y_{m-1})=d([t]_m^{n}\cdot (c_0,\cdots, c_{m-1})).
\end{equation*}
\end{corollary}

\begin{proof}
	The last equality is due to the fact that $[t]_m \cdot (y_0,\ldots,y_{m-1})=(ty_0,t^py_1,\ldots, t^{p^{m-1}}y_{m-1})$.
\end{proof}

\begin{proof}[Proof of Theorem \ref{pairingcoherentlog}]
Let $\alpha$  be a given local section of $\Wno_{U}^r$, we need to find a suitable $m$ such that, for any local section $\beta$ of $\Wno_{X|mD,\log}^{d+1-r}$, $\alpha \wedge \beta $ is a local section of $\Wno_{X}^{d+1}$. This is equivalent to show that we can find $m$ such that, for any $a_{1} \in 1+\mathcal{O}_X(-mD) $  the coefficient (with respect to a local basis) of \[ \alpha  \wedge \frac{d[1+a_{1}]_n}{[1+a_{1}]_n} \wedge \frac{d[a_{2}]_n}{[a_{2}]_n} \wedge \cdots \wedge \frac{d[a_{d+1-r}]_n}{[a_{d+1-r}]_n}  \]
lies in $W_n(\mathcal{O}_X)$. By the above Corollary, this is possible if we take $m$ big enough to eliminate the \lq \lq poles" of $\alpha$  along $D$. This gives us the map \ref{pairing.Hom.1}, and the map \ref{pairing.Hom.2} is defined similarly.
\end{proof}

In  \cite{milnesurface},  Milne defined a pairing of two-term complexes as follows:

Let $$\mathscr{F}^{\bullet}=(\mathscr{F}^0 \xrightarrow{d_{\mathscr{F}}} \mathscr{F}^1),  \quad \mathscr{G}^{\bullet}=(\mathscr{G}^0 \xrightarrow{d_{\mathscr{G}}} \mathscr{G}^1)$$
and $$ \mathscr{H}^{\bullet}=(\mathscr{H}^0 \xrightarrow{d_{\mathscr{H}}} \mathscr{H}^1) $$
be two-term complexes.  A pairing of two-term complexes
\[\mathscr{F}^{\bullet} \times \mathscr{G}^{\bullet}  \to \mathscr{H}^{\bullet}\]
is a system of pairings  \[ <\cdot, \cdot>_{0,0}^0 :  \mathscr{F}^0 \times \mathscr{G}^0 \to \mathscr{H}^0;    \]
\[ <\cdot, \cdot>_{0,1}^1 :  \mathscr{F}^0 \times \mathscr{G}^1 \to \mathscr{H}^1;    \]
\[ <\cdot, \cdot>_{1,0}^1 :  \mathscr{F}^1 \times \mathscr{G}^0 \to \mathscr{H}^1,   \]
such that \begin{equation}\label{compatibility}  d_{\mathscr{H}} (<x, y>_{0,0}^0)=<x, d_{\mathscr{G}}(y)>_{0,1}^1+<d_{\mathscr{F}}(x), y>_{1,0}^1 \end{equation}
for all $x\in \mathscr{F}^0$, $y \in \mathscr{G}^0$. Such a pairing is the same as a mapping
\[  \mathscr{F}^{\bullet} \otimes \mathscr{G}^{\bullet} \to \mathscr{H}^{\bullet}. \]

In our situation, we set 
\[ W_n\mathscr{F}^{\bullet}:= [ \jmath_*Z_1\Wno_{U}^r \xrightarrow{1-C} \jmath_*\Wno_{U}^r ] ;\]
\[ W_n\mathscr{G}^{\bullet}_{-m}:= [\Wno_{X|mD, \log}^{d+1-r}\to 0];   \]
\[ W_n\mathscr{H}^{\bullet}:=[\Wno_{X}^{d+1} \xrightarrow{1-C} \Wno_{X}^{d+1}]. \]
\begin{corollary}
We have a natural map of complexes 
\begin{equation}\label{pairingtwoterms}
 W_n\mathscr{F}^{\bullet} \to \varinjlim_m\sHom( W_n\mathscr{G}^{\bullet}_{-m}, W_n\mathscr{H}^{\bullet}). \end{equation}
\end{corollary}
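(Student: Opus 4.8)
The plan is to produce (\ref{pairingtwoterms}) as the adjoint, under the tensor--Hom adjunction for complexes recalled above, of a pairing of two-term complexes $W_n\mathscr{F}^{\bullet} \times W_n\mathscr{G}^{\bullet}_{-m} \to W_n\mathscr{H}^{\bullet}$, followed by passage to $\varinjlim_m$. Since $W_n\mathscr{G}^{\bullet}_{-m}$ is concentrated in degree $0$, the internal Hom $\sHom(W_n\mathscr{G}^{\bullet}_{-m}, W_n\mathscr{H}^{\bullet})$ is again a two-term complex, with both terms equal to $\sHom(\Wno_{X|mD,\log}^{d+1-r}, \Wno_X^{d+1})$ and differential given by post-composition with $1-C$ (the term involving $d_{\mathscr{G}}$ vanishing). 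Correspondingly, such a pairing consists only of the two maps $\langle\cdot,\cdot\rangle^0_{0,0}$ and $\langle\cdot,\cdot\rangle^1_{1,0}$, the map $\langle\cdot,\cdot\rangle^1_{0,1}$ being zero since its second argument lies in the zero sheaf $W_n\mathscr{G}^1_{-m}=0$; they are subject to the single compatibility relation (\ref{compatibility}).

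For the two component pairings I would take the wedge product on the de Rham--Witt complex furnished by Theorem \ref{pairingcoherentlog}. Thus $\langle\alpha,\beta\rangle^1_{1,0}:=\alpha\wedge\beta$ on $\jmath_*\Wno_U^r \times \Wno_{X|mD,\log}^{d+1-r}$ is the map (\ref{pairing.Hom.1}), giving the degree-$1$ component $f^1$; and $\langle\alpha,\beta\rangle^0_{0,0}:=\alpha\wedge\beta$ on $\jmath_*Z_1\Wno_U^r \times \Wno_{X|mD,\log}^{d+1-r}$ is the map (\ref{pairing.Hom.2}), giving the degree-$0$ component $f^0$, its value lying in $Z_1\Wno_X^{d+1}=\Wno_X^{d+1}$. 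As in the proof of Theorem \ref{pairingcoherentlog}, one chooses $m$ large enough to absorb the poles of $\alpha$ along $D$, so that both products are genuine local sections of $\Wno_X^{d+1}$; note that $\dim X=d+1$ forces $\Omega_X^{d+2}=0$ and hence $d=0$ in top degree, so $C$ is defined on the targets. These maps are compatible with the transition maps of the pro-system in $m$, and all constructions are functorial, so applying $\varinjlim_m$ yields a candidate chain map $f^{\bullet}$.

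It then remains to check (\ref{compatibility}). Because $d_{\mathscr{G}}=0$ the middle term drops, and the relation collapses to the single identity $(1-C)(\alpha\wedge\beta)=((1-C)\alpha)\wedge\beta$, i.e. $C(\alpha\wedge\beta)=C(\alpha)\wedge\beta$, for $\alpha$ a local section of $Z_1\Wno_U^r$ and $\beta$ a logarithmic section of $\Wno_{X|mD,\log}^{d+1-r}$. Here $C(\beta)=\beta$ holds by the very definition of logarithmic forms as $\ker(C-1)$ (Proposition \ref{logtocoh}(iii)), so the identity follows once one knows that $C$ is multiplicative, $C(\alpha\wedge\beta)=C(\alpha)\wedge C(\beta)$, on the closed forms in play. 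Verifying this makes $f^{\bullet}$ a chain map, and taking the colimit over $m$ gives the asserted natural map of two-term complexes.

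The main obstacle is exactly this multiplicativity of the Cartier operator for a top-degree product, with $C$ only partially defined (on $Z_1$). For $n=1$ it is the statement that the classical Cartier operator of Theorem \ref{catieroperator}, being inverse to the graded-algebra homomorphism $C^{-1}$, is itself multiplicative; for general $n$ I would bootstrap through the higher Cartier operator of Proposition \ref{highercartier}, reducing $\beta$ to products of symbols $d\log[x]_n$ via the local description of $\Wno_{X|mD,\log}^{d+1-r}$. The top-degree vanishing $\Omega_X^{d+2}=0$, which makes $\Wno_X^{d+1}=Z_1\Wno_X^{d+1}$, ensures every relevant product is closed so that $C$ applies throughout and no boundary contribution survives.
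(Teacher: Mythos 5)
Your proposal is correct and follows essentially the same route as the paper, which states the corollary as an immediate consequence of Theorem \ref{pairingcoherentlog} together with Milne's formalism of pairings of two-term complexes; you simply make explicit the one check the paper leaves implicit, namely that since $W_n\mathscr{G}^1_{-m}=0$ the compatibility relation (\ref{compatibility}) collapses to $C(\alpha\wedge\beta)=C(\alpha)\wedge\beta$, which holds because $C$ is multiplicative and fixes logarithmic sections by Proposition \ref{logtocoh}(iii). Your observation that $\Omega_X^{d+2}=0$ makes every top-degree form closed, so that $C$ is everywhere defined on $\Wno_X^{d+1}$, is exactly the point that makes the complex $W_n\mathscr{H}^{\bullet}$ and the degree-zero pairing well defined.
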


\subsection{Ramified duality}
\begin{lemma}\label{affine}
	Let $D$ be a normal crossing divisor on $X$. Then the induced open immersion $\jmath: U:=X-|D| \to X$ is affine.
\end{lemma}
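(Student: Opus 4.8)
The plan is to reduce the assertion to the elementary fact that the complement of a principal divisor in an affine scheme is a distinguished affine open, using that affineness of a morphism can be checked on an affine open cover of the target and that $D$ is locally principal.

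First I would invoke the standard local characterization of affine morphisms: a morphism is affine if and only if the preimage of each member of \emph{some} affine open cover of the target is affine. Hence it suffices to produce an affine open cover $\{V_\alpha\}$ of $X$ such that each $\jmath^{-1}(V_\alpha)=U\cap V_\alpha$ is affine. Next I would use that $D$ is an effective Cartier divisor---this holds by hypothesis (it is reduced with simple normal crossing support), and in any case follows from the regularity of $X$, whose local rings are factorial so that prime divisors, and hence $D$, are locally principal. Equivalently, the invertible ideal sheaf $\mathcal{O}_X(-D)$ is locally generated by a single non-zero-divisor. I would therefore cover $X$ by affine opens $V_\alpha=\mathrm{Spec}(A_\alpha)$ chosen small enough that $D\cap V_\alpha=V(f_\alpha)$ for some non-zero-divisor $f_\alpha\in A_\alpha$.

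On each such chart the support $|D|\cap V_\alpha$ is, as a topological space, the zero locus of $f_\alpha$, so that $U\cap V_\alpha = V_\alpha\setminus V(f_\alpha)=D(f_\alpha)=\mathrm{Spec}\bigl((A_\alpha)_{f_\alpha}\bigr)$ is the distinguished open, which is affine. Assembling these charts along the cover then shows that $\jmath$ is an affine morphism. There is essentially no hard step here: the entire content is the local principality of $D$, which is precisely where the regularity of $X$ (forcing the normal crossing divisor to be Cartier) enters; the noetherian hypotheses in force make all the local computations harmless. If one prefers a global formulation, the same argument says that $U$ is the non-vanishing locus of the tautological section of $\mathcal{O}_X(D)$, and complements of such non-vanishing loci for the section cutting out a Cartier divisor are always affine over $X$.
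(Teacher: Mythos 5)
Your proof is correct and takes essentially the same route as the paper's: both reduce to the fact that $D$ is locally cut out by a single equation, so that $U$ is locally the distinguished affine open $D(f_\alpha)=\mathrm{Spec}\bigl((A_\alpha)_{f_\alpha}\bigr)$. The only cosmetic difference is that the paper checks affineness \'etale-locally on the target, whereas you work Zariski-locally by first noting that regularity of $X$ forces $D$ to be Cartier; your version is, if anything, slightly more elementary.
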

\begin{proof}
	To be an affine morphism is \'etale locally on the target, and \'etale locally $D$ is given by one equation.
\end{proof}
By Proposition \ref{logtocoh} (iii), in $D^b(X, \Zpn)$ we have 
\begin{equation}
R\jmath_*\Wno_{U,\log}^r \cong [ \jmath_*Z_1\Wno_{U}^r \xrightarrow{1-C} \jmath_*\Wno_{U}^r ]=W_n\mathscr{F}^{\bullet}.
\end{equation}

Then we have $\mathbb{H}^i(X, W_n\mathscr{F}^{\bullet})\cong \mathbb{H}^i(U, [Z_1W_n\Ome_U^r \to W_n\Ome_U^r])\cong H^i(U,\Wno_{U,\log}^r)$, by the above lemma and Proposition \ref{logtocoh} (iii). 
 Therefore the map (\ref{pairingtwoterms}) induces a map of cohomology groups, by taking hypercohomolgy, 
\begin{equation}\label{map.step1}
H^i(U, \Wno_{U,\log}^r) \to  \varinjlim\limits_{m} \mathbb{H}^{i}(X, \sHom( W_n\mathscr{G}^{\bullet}_{-m}, W_n\mathscr{H}^{\bullet}). 
\end{equation}

Note that we also have a natural evaluation map of two-term complexes:
\begin{equation}
\sHom(W_n\mathscr{G}^{\bullet}_{-m}, W_n\mathscr{H}^{\bullet}) \otimes W_n\mathscr{G}^{\bullet}_{-m} \to W_n\mathscr{H}^{\bullet}.
\end{equation}

As the construction of the pairing (\ref{pairingwithoutfiltration}) in \S3.1, this induces a cup product in hypercohomology of complexes
\begin{equation}
\mathbb{H}^{i}(X, \sHom( W_n\mathscr{G}^{\bullet}_{-m}, W_n\mathscr{H}^{\bullet})) \otimes \mathbb{H}^{d+2-i}_{X_s} (X, W_n\mathscr{G}^{\bullet}_{-m}) \to \mathbb{H}^{d+2}_{X_s} (X, W_n\mathscr{H}^{\bullet}).
\end{equation}

Therefore, it induces a map
\begin{equation}\label{map.step2}
 \mathbb{H}^{i}(X, \sHom( W_n\mathscr{G}^{\bullet}_{-m}, W_n\mathscr{H}^{\bullet})) \to \Hom_{\Zpn}(H^{d+2-i}_{X_s}(X, \Wno_{X|mD,\log}^{d+1-r}), H^{d+2}_{X_s}(X, \Wno_{X,\log}^{d+1}))
\end{equation} 

Now combining the maps (\ref{map.step1}), (\ref{map.step2}) and the trace map in Corollary \ref{tracemap} , we get the desired map
\begin{equation}\label{logarithmicduality}
H^i(U, \Wno_{U,\log}^r) \to  \varinjlim\limits_{m}\Hom_{\Zpn}( H^{d+2-i}_{X_s}(X, \Wno_{X|mD,\log}^{d+1-r}),\Zpn); 
\end{equation}

Now our main theorem in this paper is the following result.
\begin{theorem}\label{main theorem}
	For each $i, r\in \mathbb N$, the above map (\ref{logarithmicduality}) is an isomorphism. If we endow $ H^{d+2-i}_{X_s}(X,W_n\Ome_{X|mD,\log}^{d+1-r} )$ with the discrete topology, and endow $H^i(U, \Wno_{U,\log}^r)$ with the direct limit topology of compact-open groups, then the Pontryagin duality gives us an isomorphism of topological groups
	\[ \varprojlim\limits_m H^{d+2-i}_{X_s}(X,W_n\Ome_{X|mD,\log}^{d+1-r} ) \xrightarrow{\cong} Hom_{\Zpn,\mathrm{cont}}(H^i(U, \Wno_{U,\log}^r), \Zpn).   \]
\end{theorem}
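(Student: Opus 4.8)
The plan is to follow the strategy of Jannsen--Saito \cite{jannsensaito}, reducing the ramified logarithmic duality to the coherent dualities of Corollaries \ref{twistedomega} and \ref{twistedzomega} through Milne's formalism of pairings of two-term complexes developed in \S 3.4. Once the map (\ref{logarithmicduality}) is shown to be bijective, the topological statement will follow purely formally from Pontrjagin duality.

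First I would reduce to the case $n=1$ by induction. On $U$ the divisor is invisible, so Theorem \ref{twistedexact} degenerates to the exact sequence
\[ 0 \to W_{n-1}\Ome_{U,\log}^r \xrightarrow{p} \Wno_{U,\log}^r \xrightarrow{R} \Ome_{U,\log}^r \to 0, \]
giving a long exact sequence for $H^\bullet(U,-)$. On the dual side, applying $\varinjlim_m \Hom_{\Zpn}(H^{d+2-\bullet}_{X_s}(X,-),\Zpn)$ to the pro-system sequence of Corollary \ref{exactofprosystem} yields a matching long exact sequence; here one uses that the groups $H^{d+2-i}_{X_s}(X,\Wno_{X|mD,\log}^{d+1-r})$ are finite, so that $\varprojlim_m$ is exact on the pro-system and commutes with the relevant Hom. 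Since the map (\ref{logarithmicduality}) is compatible with both sequences by the construction of the pairing (\ref{pairingtwoterms}), the five lemma reduces the theorem to $n=1$.

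For $n=1$ I would invoke the quasi-isomorphism of Proposition \ref{logtocoh}(iii), identifying $\Ome_{U,\log}^r$ with the two-term complex $W_1\mathscr{F}^\bullet=[Z_1\Ome_U^r \xrightarrow{1-C}\Ome_U^r]$ and, likewise, each $\Ome_{X|mD,\log}^{d+1-r}$ with a two-term complex of coherent sheaves. As $\jmath$ is affine (Lemma \ref{affine}) and $\jmath_*\Ome_U^r=\varinjlim_m \Ome_{X|-mD}^r$, one has $H^i(U,\Ome_U^r)=\varinjlim_m H^i(X,\Ome_{X|-mD}^r)$, and similarly for $Z_1\Ome_U^r$. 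Passing to $\varinjlim_m$ in Corollaries \ref{twistedomega} and \ref{twistedzomega} then supplies perfect pairings of these coherent pieces against the corresponding colimits of $\Hom_{\Zp1}(H^{d+1-\bullet}_{X_s}(X,-),\Zp1)$. Matching the operator $1-C$ on the source with its counterpart on the target --- which is exactly where the compatibility of the logarithmic and coherent trace maps verified in \S 3.1 enters --- and comparing the long exact hypercohomology sequences of the two complexes by the five lemma yields (\ref{logarithmicduality}) for $n=1$.

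For the topological assertion, finiteness of the groups $H^{d+2-i}_{X_s}(X,\Wno_{X|mD,\log}^{d+1-r})$ makes $\varprojlim_m H^{d+2-i}_{X_s}(X,\Wno_{X|mD,\log}^{d+1-r})$ profinite, with $\varinjlim_m \Hom_{\Zpn}(H^{d+2-i}_{X_s}(X,\Wno_{X|mD,\log}^{d+1-r}),\Zpn)$ its Pontrjagin dual; the isomorphism (\ref{logarithmicduality}) then presents the discrete group $H^i(U,\Wno_{U,\log}^r)$ as this Pontrjagin dual, and reflexivity of discrete--profinite Pontrjagin duality gives the stated isomorphism of topological groups. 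I expect the main obstacle to be the $n=1$ step: tracking the twists by $mD$, $(m+1)D$ and $[m/p]D$ through the Cartier operator $1-C$ so that the pairing of two-term complexes lands precisely in the coherent pairings of Corollaries \ref{twistedomega} and \ref{twistedzomega}, together with the exact compatibility of the logarithmic and coherent trace maps underlying these identifications.
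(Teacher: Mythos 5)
Your proposal is correct and follows essentially the same route as the paper: reduction to $n=1$ by the five lemma using Proposition \ref{logtocoh}(i) and Corollary \ref{exactofprosystem} with the Mittag-Leffler/finiteness argument, then for $n=1$ the identification of both sides with two-term coherent complexes twisted by $-mD$ and $(m+1)D$, the coherent dualities of Corollaries \ref{twistedomega} and \ref{twistedzomega}, the affineness of $\jmath$ to compute the colimit as $H^i(U,\Ome_{U,\log}^r)$, and Pontrjagin duality for the topological statement. The only cosmetic difference is that the paper establishes the level-$m$ perfect pairing via the (degenerate) hypercohomology spectral sequences of the two-term complexes before passing to the limit, whereas you compare long exact hypercohomology sequences after passing to the colimit; for two-term complexes these are the same argument.
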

In the rest of this section, we focus on the proof of this theorem.

\begin{proof}\renewcommand{\qedsymbol}{}
We can proceed analogously to the proof of Theorem \ref{unramifiedduality}.
First, we reduce this theorem to the case $n=1$, then using Cartier operator, we will study the relation between this theorem and coherent duality theorems.
\end{proof}	

	\textbf{Step 1: Reduction to the case $n=1$.} We have a commutative diagram:
\[\xymatrix@C=.2cm@R=.6cm{ \cdots \ar[r]& \scriptstyle H^i(U, W_{n-1}\Ome_{U,\log}^r) \ar[r] \ar[d] & \scriptstyle H^i(U, W_{n}\Ome_{U,log}^r) \ar[d]\ar[r] & \scriptstyle H^i(U,\Ome_{U,\log}^r) \ar[d] \ar[r]& \cdots\\
	\cdots \ar[r]& {\scriptstyle  \varinjlim\limits_m H^{d+2-i}_{X_s}(X, W_{n-1}\Ome_{X|mD,\log}^{d+1-r} )^{\vee}} \ar[r] & \scriptstyle \varinjlim\limits_m H^{d+2-i}_{X_s}(X, W_{n}\Ome_{X|mD,\log}^{d+1-r} )^{\vee} \ar[r] &\scriptstyle \varinjlim\limits_m H^{d+2-i}_{X_s}(X,\Ome_{X|mD,\log}^{d+1-r} )^{\vee} \ar[r] & \cdots
	}\]	
where $M^{\vee}$ means $\Hom_{\text{cont}}(M, \Q/\Z)$, i.e., its Pontryagin dual, for any locally compact topological abelian group $M$. The first row is the long exact sequence induced by the short exact sequence in Proposition \ref{logtocoh} (i), and the second row is an long exact sequence induced by Corollary \ref{exactofprosystem}. 

By the five lemma and induction, our problem is reduced  to the case $n=1$.

\textbf{Step 2: Proof of  the case $n=1$.} 
In this special case,  using the relation between the relative logarithmic de Rham-Witt  and coherent sheaves, we can reformulate our pairing.
\begin{theorem}(\cite[Thm. 1.2.1]{jszduality})
We have the following exact sequence
\[ 0 \to \Ome_{X|mD,\log}^{d+1-r} \to \Ome_{X|mD}^{d+1-r} \xrightarrow{C^{-1}-1} \Ome_{X|mD}^{d+1-r}/B\Ome_{X|mD}^{d+1-r} \to 0, \]
where $\Ome_{X|mD}^j=\Ome_X^{j}(\log D) \otimes \mathcal{O}_X(-mD)$ is defined in (\ref{twistedcoh}).
\end{theorem}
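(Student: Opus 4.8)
Throughout write $j:=d+1-r$. The plan is to treat this as a statement about \'etale sheaves and to verify exactness on strict henselizations $\mathcal{O}_{X,\bar x}^{\mathrm{sh}}$ at geometric points. Since $X$ is regular over $\Fp$ and $\mathrm{Supp}(D)$ is a simple normal crossing divisor, each such local ring is regular, and by N\'eron--Popescu desingularization (Theorem \ref{popescu}) it is a filtered colimit of smooth $\Fp$-algebras; hence the logarithmic inverse Cartier operator and the Bloch--Gabber--Kato isomorphism (Proposition \ref{blochgabberkato}) are available and commute with the colimit.

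First I would check that the three maps are well defined. \'Etale-locally write $D=\{t_1\cdots t_s=0\}$ with the $t_i$ part of a regular system of parameters, and let $\tau=t_1^{m_1}\cdots t_s^{m_s}$ generate $\mathcal{O}_X(-mD)$, so a local section of $\Ome_{X|mD}^{j}$ is $\tau\omega$ with $\omega\in\Ome_X^{j}(\log D)$. The Leibniz rule $d(\tau\omega)=\tau\bigl(\sum_i m_i\,d\log t_i\wedge\omega+d\omega\bigr)$ shows that $d$ preserves $\Ome_{X|mD}^{\bullet}$, so $B\Ome_{X|mD}^{j}=d\,\Ome_{X|mD}^{j-1}$ is defined and the sequence makes sense. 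As $C^{-1}$ is Frobenius-semilinear and fixes each $d\log t_i$, one computes $C^{-1}(\tau\omega)\in\tau^{p}\,\Ome_X^{j}(\log D)=\Ome_{X|pmD}^{j}\subseteq\Ome_{X|mD}^{j}$; thus $C^{-1}-1$ is a well-defined map $\Ome_{X|mD}^{j}\to\Ome_{X|mD}^{j}/B\Ome_{X|mD}^{j}$, its first term landing in the target through the inclusion $\Ome_{X|pmD}^{j}\subseteq\Ome_{X|mD}^{j}$. The same local computation shows that the symbols generating $\Ome_{X|mD,\log}^{j}$ in Definition \ref{twistedlog} lie in $\Ome_{X|mD}^{j}$, so the left-hand map is injective.

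Next I would identify $\ker(C^{-1}-1)$ with $\Ome_{X|mD,\log}^{j}$. The inclusion $\supseteq$ is immediate: $C^{-1}$ fixes every symbol $d\log[x_1]\wedge\cdots\wedge d\log[x_{j}]$ on the nose, so the generators of $\Ome_{X|mD,\log}^{j}$ are killed by $C^{-1}-1$. For the reverse inclusion and for the surjectivity of $C^{-1}-1$ I would pass to Milnor $K$-theory via Proposition \ref{blochgabberkato} and run the local argument of Jannsen--Saito: filter by the order of vanishing along the $D_i$ and use that the graded pieces of the relative Milnor $K$-sheaf attached to $mD$ are expressed by differential forms, exactly as in the proof of Theorem \ref{twistedexact}. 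On the underlying smooth $\Fp$-algebra the logarithmic Cartier isomorphism $C^{-1}\colon\Ome_X^{j}(\log D)\xrightarrow{\cong}\mathcal{H}^{j}(\Ome_X^{\bullet}(\log D))$ identifies each graded piece with the corresponding piece of the untwisted exact sequence of Proposition \ref{logtocoh}(iii); a d\'evissage along the filtration, using the five lemma, then yields both the kernel computation and surjectivity.

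The hard part will be this last step: controlling the interplay between the Frobenius-semilinear operator $C^{-1}$, which multiplies the twisting divisor by $p$, and the $\mathcal{O}_X(-mD)$-twist, so that the Artin--Schreier equation $C^{-1}\xi-\xi=\eta$ can be solved without introducing poles beyond $mD$ and so that the kernel is exactly the symbolic sheaf $\Ome_{X|mD,\log}^{j}$. Once the graded pieces are pinned down as differential forms and matched with the untwisted Cartier sequence this is routine, and it is precisely here that the local computation of \cite{jannsensaito} is invoked; the only new point in our setting is that the semistable local model $\Fp[[t]][T_0,\dots,T_d]/(T_0\cdots T_a-t)$ is still regular over $\Fp$, so that theory applies verbatim.
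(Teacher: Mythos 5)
Your overall strategy---reduce to an \'etale-local statement in the normal-crossing coordinates, check that the maps are well defined, observe that the generators of $\Ome_{X|mD,\log}^{d+1-r}$ are fixed by $C^{-1}$, and then invoke the local computation of \cite{jannsensaito} for the middle exactness and the surjectivity---is the same reduction the paper makes; its entire proof is the remark that the local argument of \cite{jannsensaito} applies verbatim in this situation. The one substantive divergence is the key lemma you choose for the kernel computation. The paper isolates the identity $\Ome_{X|mD,\log}^{i}=\Ome_{X,\log}^{i}\cap\Ome_{X|mD}^{i}$ (inside $\jmath_*\Ome_{U,\log}^{i}$), obtained by refining \cite[Prop.~1]{katogalois}; since every $m_i\geq 1$ gives $\Ome_{X|mD}^{i}\subseteq\Ome_{X}^{i}$ and $B\Ome_{X|mD}^{i}\subseteq B\Ome_{X}^{i}$, any section of $\Ome_{X|mD}^{i}$ killed by $C^{-1}-1$ already lies in $\Ome_{X,\log}^{i}$ by the untwisted sequence of Proposition \ref{logtocoh}(iii), and the intersection identity then finishes the middle exactness in one line. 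Your proposed d\'evissage on the filtration by order of vanishing is instead the mechanism behind Theorem \ref{twistedexact}, and as sketched it is shakier here: $C^{-1}$ multiplies the twisting divisor by $p$ rather than preserving the filtration level, so the graded pieces of the twisted sequence are not ``the corresponding pieces of the untwisted sequence,'' and a five-lemma argument would presuppose knowledge of the graded pieces of $\Ome_{X|mD,\log}^{i}$, which is essentially the computation you are trying to avoid. I would record the intersection identity explicitly; the genuinely hard point that remains on either route---solving $C^{-1}\xi-\xi=\eta$ without introducing poles beyond $mD$, for the surjectivity---is exactly the local computation of \cite{jannsensaito} that you, like the paper, defer to.
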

\begin{proof}
This is again a local problem, and the local proof in \cite{jszduality} also works in our situation. The key ingredient is to show $\Ome_{X|mD,\log}^i=\Ome_{X,\log}^i \cap \Ome_{X|mD}^i$. This can be obtained by a refinement of \cite[Prop. 1]{katogalois}. 
\end{proof}

\begin{lemma}
For any $\underline{m}\in \mathbb{N}^s$, we denote $mD= {\underline{m}}D$ as in  (\ref{defnofD}),  and $$\Ome_{X|mD}^j=\Ome_X^{j}(\log D)(-mD)=\Ome_X^{j}(\log D) \otimes \mathcal{O}_X(-mD) $$ as before. The parings 
\begin{eqnarray*}
&& <\alpha, \beta>_{0,0}^0=\alpha \wedge \beta :  Z\Ome_{X|-mD}^r \times \Ome_{X|(m+1)D}^{d+1-r} \to \Ome_{X|D}^{d+1};  \\
&& <\alpha, \beta>_{0,1}^1=-C(\alpha \wedge \beta) :  Z\Ome_{X|-mD}^r  \times \Ome_{X|(m+1)D}^{d+1-r}/B\Ome_{X|(m+1)D}^{d+1-r} \to \Ome_{X|D}^{d+1};  \\
&& <\alpha, \beta>_{1,0}^1= \alpha \wedge \beta :  \Ome_{X|-mD}^r  \times \Ome_{X|(m+1)D }^{d+1-r} \to \Ome_{X|D}^{d+1}, 
\end{eqnarray*}

define a paring of (two-term) complexes \begin{equation} \label{level-s-pairing}
\mathscr{F}_m^{\bullet} \times \mathscr{G}^{\bullet}_{-m} \to \mathscr{H}^{\bullet} \end{equation}
with \begin{eqnarray*}
&&  \mathscr{F}_m^{\bullet}=(Z\Ome_{X|-mD}^r \xrightarrow{1-C} \Ome_{X|-mD}^r )\\
&&  \mathscr{G}^{\bullet}_{-m}=(\Ome_{X|(m+1)D}^{d+1-r} \xrightarrow{C^{-1}-1} \Ome_{X|(m+1)D}^{d+1-r}/B\Ome_{X|(m+1)D}^{d+1-r} )\\
&&  \mathscr{H}^{\bullet}=(\Ome_{X|D}^{d+1} \xrightarrow{1-C} \Ome_{X|D}^{d+1})=(\Ome_{X}^{d+1} \xrightarrow{1-C} \Ome_{X}^{d+1}).
\end{eqnarray*}
\end{lemma}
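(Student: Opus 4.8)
The plan is to verify the two requirements for $\big(\langle-,-\rangle^0_{0,0},\langle-,-\rangle^1_{0,1},\langle-,-\rangle^1_{1,0}\big)$ to constitute a pairing of two-term complexes: that each of the three maps is a well-defined morphism of \'etale sheaves into the claimed target, and that together they satisfy the Leibniz-type relation (\ref{compatibility}). The target bookkeeping is immediate from the definition of the twisted sheaves: for local sections $\alpha\in\Ome_{X|-mD}^r=\Ome_X^r(\log D)(mD)$ and $\beta\in\Ome_{X|(m+1)D}^{d+1-r}=\Ome_X^{d+1-r}(\log D)(-(m+1)D)$ the wedge product lands in $\Ome_X^{d+1}(\log D)(-D)=\Ome_{X|D}^{d+1}=\Ome_X^{d+1}$, which is precisely the perfect coherent pairing underlying Corollary \ref{twistedomega}. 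Since every $(d+1)$-form is closed, the (generalized) Cartier operator $C$ of Shiho \cite{shihopurity} is everywhere defined on $\Ome_X^{d+1}$, so $\langle-,-\rangle^1_{0,1}=C(\alpha\wedge\beta)$ is a legitimate section of $\Ome_{X|D}^{d+1}$. The maps $\langle-,-\rangle^0_{0,0}$ and $\langle-,-\rangle^1_{1,0}$ need no further argument.

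The one well-definedness point I would treat carefully is that $\langle-,-\rangle^1_{0,1}$ factors through the quotient by $B\Ome_{X|(m+1)D}^{d+1-r}$ in its second slot. Here I would use that the first variable $x\in\mathscr{F}^0=Z\Ome_{X|-mD}^r$ is closed: if $\beta=d\gamma$, then $x\wedge d\gamma=(-1)^r d(x\wedge\gamma)$ is again a boundary, and $C$ annihilates boundaries (it factors through $Z\Ome^{\bullet}/B\Ome^{\bullet}$). Thus $C(x\wedge\beta)$ depends only on the class of $\beta$ modulo $B\Ome_{X|(m+1)D}^{d+1-r}$, as required.

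It then remains to check (\ref{compatibility}) for $x\in\mathscr{F}^0=Z\Ome_{X|-mD}^r$ and $y\in\mathscr{G}^0=\Ome_{X|(m+1)D}^{d+1-r}$. Expanding both sides gives
\[
\begin{aligned}
d_{\mathscr{H}}\langle x,y\rangle^0_{0,0} &= x\wedge y - C(x\wedge y),\\
\langle d_{\mathscr{F}}x,y\rangle^1_{1,0}+\langle x,d_{\mathscr{G}}y\rangle^1_{0,1} &= \big(x\wedge y - C(x)\wedge y\big)+\big(C(x\wedge C^{-1}(y))-C(x\wedge y)\big),
\end{aligned}
\]
so after cancelling $x\wedge y$ and $-C(x\wedge y)$ the relation becomes equivalent to the single identity $C(x\wedge C^{-1}(y))=C(x)\wedge y$. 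This is the heart of the matter, and it is exactly the multiplicativity of the Cartier operator on closed forms: choosing a representative $\tilde z\in Z\Ome_{X|(m+1)D}^{d+1-r}$ of $C^{-1}(y)$, the form $x\wedge\tilde z$ is closed, and $C(x\wedge\tilde z)=C(x)\wedge C(\tilde z)=C(x)\wedge y$, where multiplicativity follows from $C^{-1}$ being a homomorphism of graded algebras (Theorem \ref{catieroperator} and its regular analogue \cite{shihopurity}) together with $C(\tilde z)=y$. The main obstacle is thus concentrated in this one multiplicativity identity and in the compatibility of $C$ and $C^{-1}$ with the divisor twists; everything else is formal. As a final safeguard I would confirm that no stray sign enters the Leibniz rule, which holds here because $x$ lies in degree $0$ of $\mathscr{F}^{\bullet}$, so the tensor-product sign $(-1)^{|x|}$ is trivial, matching the sign-free statement of (\ref{compatibility}).
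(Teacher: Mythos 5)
Your verification is correct and fills in exactly what the paper dismisses with ``This is easy to verify'': the twist bookkeeping $\Ome_{X|-mD}^r\otimes\Ome_{X|(m+1)D}^{d+1-r}\to\Ome_{X|D}^{d+1}=\Ome_X^{d+1}$, the well-definedness of $\langle\cdot,\cdot\rangle^1_{0,1}$ modulo boundaries (using $dx=0$ and that $C$ kills exact forms), and the reduction of the Leibniz relation (\ref{compatibility}) to the multiplicativity identity $C(x\wedge C^{-1}(y))=C(x)\wedge y$ for closed $x$. This is the intended argument; no issues.
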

\begin{proof}
This is easy to verify.
\end{proof}
By taking hypercohomology, the pairing (\ref{level-s-pairing}) induces a pairing of hypercohomology groups:
\begin{equation} \label{level-s-hypercomology}
\mathbb{H}^i(X, \mathscr{F}_m^{\bullet}) \times \mathbb{H}^{d+2-i}_{X_s}(X, \mathscr{G}_{-m}^{\bullet}) \to \mathbb{H}^{d+2}_{X_s}(X, \mathscr{H}^{\bullet}) \cong H^{d+2}_{X_s}(X, \Ome_{X,\log}^{d+1}) \xrightarrow{\text{Tr}} \Zp1.
\end{equation}

Now by Corollary \ref{twistedomega} and \ref{twistedzomega}, we can show 

\begin{proposition}\label{onesideiso}
The pairing (\ref{level-s-hypercomology}) induces the following  isomorphism.

\begin{equation} \label{isom_finit}
\begin{array}{rcl} \mathbb{H}^i(X, \mathscr{F}_m^{\bullet}) &\xrightarrow{\cong}& \text{Hom}_{\Zp1}(\mathbb{H}^{d+2-i}_{X_s}(X, \mathscr{G}_{-m}^{\bullet}), \Zp1)\\
& \cong &\text{Hom}_{\Zp1}(H^{d+2-i}_{X_s}(X, \Wno_{X|(m+1)D,\log}^{d+1-r}), \Zp1)  .
\end{array}
\end{equation}

\end{proposition}
\begin{proof}
This can be done by using the hypercohomology spectral sequences
\[  ^IE_1^{p,q}= H^q(X, \mathscr{F}_m^p)  \Rightarrow \mathbb{H}^{p+q}(X, \mathscr{F}_m^{\bullet}) ; \]
\[  ^{II}E_1^{p,q}= H_{X_s}^q(X, \mathscr{G}_{-m}^p)  \Rightarrow \mathbb{H}_{X_s}^{p+q}(X, \mathscr{G}_{-m}^{\bullet}). \]
 Corollary \ref{twistedomega} and \ref{twistedzomega} tell us that \[  ^IE_1^{p,q} \cong \text{Hom}_{\Zpn}( ^{II}E_1^{d+1-q, 1-p}, \Zpn),   \]
and this isomorphism is compatible with $d_1$. Hence we still have this kind of duality at the $E_2$-pages. Note that, by definition, $p\neq 0,1$, $^IE_1^{p,q}= ^{II}E_1^{p,q}=0$. Hence both spectral sequences degenerate at the $E_2$-pages. Therefore we have the isomorphism in the claim. 
\end{proof}
Up to now, we haven't used any topological structure on the (hyper-)cohomology group.  Note that $H^{d+2-i}_{X_s}(X, \Wno_{X|(m+1)D,\log}^{d+1-r})$ is not a finite group in general, and we endow it with discrete topology. Hence we endow $\mathbb{H}^i(X, \mathscr{F}_m^{\bullet}) $ with the compact-open topology via the isomorphism \eqref{isom_finit}.  Now the Pontryagin duality theorem implies:
\begin{proposition}
There is a perfect pairing of topological $\Zp1$-modules:
\begin{equation}
\varinjlim_m\mathbb{H}^i(X, \mathscr{F}_m^{\bullet}) \times \varprojlim_m H^{d+2-i}_{X_s}(X, \Omega_{X|(m+1)D,\log}^{d+1-r}) \to \Zp1
\end{equation}
where the first term is endowed with direct limit topology, and the second with the inverse limit topology.
\end{proposition}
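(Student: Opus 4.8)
The plan is to pass to the limit in the isomorphism of Proposition \ref{onesideiso} and then invoke Pontrjagin duality for the resulting systems of finite groups. Write $A_m := H^{d+2-i}_{X_s}(X, \Ome_{X|(m+1)D,\log}^{d+1-r})$ for the finite $\Zp1$-modules appearing on the right; their finiteness was already recorded in Step 1 above and also follows from the unramified duality. The first task is to verify that the isomorphisms $\mathbb{H}^i(X, \mathscr{F}_m^{\bullet}) \xrightarrow{\cong} \Hom_{\Zp1}(A_m, \Zp1)$ of Proposition \ref{onesideiso} are compatible with the transition maps as $m$ varies, i.e. that they assemble into an isomorphism between a direct system and the dual of an inverse system. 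On the source side, the inclusions $\mathcal{O}_X(mD) \subseteq \mathcal{O}_X(m'D)$ for $m' \geq m$ give inclusions $\Ome_{X|-mD}^{\bullet} \hookrightarrow \Ome_{X|-m'D}^{\bullet}$, hence maps of complexes $\mathscr{F}_m^{\bullet} \to \mathscr{F}_{m'}^{\bullet}$ and a direct system $\{\mathbb{H}^i(X, \mathscr{F}_m^{\bullet})\}_m$; on the target side, the inclusions $\Wno_{X|m'D,\log}^{d+1-r} \hookrightarrow \Wno_{X|mD,\log}^{d+1-r}$ give an inverse system $\{A_m\}_m$. Since the pairings (\ref{level-s-hypercomology}) are all induced by the same wedge product and land in the single group $H^{d+2}_{X_s}(X,\Ome_{X,\log}^{d+1})$, which is independent of $m$, the relevant square commutes, so Proposition \ref{onesideiso} becomes an isomorphism of ind/pro-systems.

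Next I would take $\varinjlim_m$. Because each $A_m$ is finite and $\Hom_{\Zp1}(-,\Zp1)$ (equivalently, Pontrjagin duality $\Hom(-,\Q/\Z)$ restricted to $p$-torsion groups) is exact and carries cofiltered limits of finite groups to filtered colimits of their duals, I obtain
\[ \varinjlim_m \mathbb{H}^i(X, \mathscr{F}_m^{\bullet}) \cong \varinjlim_m \Hom_{\Zp1}(A_m, \Zp1) \cong \Hom_{\Zp1}\bigl(\varprojlim_m A_m, \Zp1\bigr). \]

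Finally I would identify the topologies and conclude. The group $\varinjlim_m \mathbb{H}^i(X, \mathscr{F}_m^{\bullet})$ is a filtered colimit of finite groups with the direct limit topology, hence discrete and torsion; the group $\varprojlim_m A_m$ is a cofiltered limit of finite groups with the inverse limit topology, hence profinite (compact and totally disconnected). The displayed isomorphism identifies each as the full continuous dual of the other, so by Pontrjagin duality between discrete torsion and profinite groups the induced pairing $\varinjlim_m \mathbb{H}^i(X, \mathscr{F}_m^{\bullet}) \times \varprojlim_m A_m \to \Zp1$ is perfect. The only genuine point to check is the compatibility of the first step; everything else is the formal fact that Pontrjagin duality exchanges filtered colimits and cofiltered limits of finite groups, so the main obstacle I anticipate is simply writing out that commuting square carefully rather than any conceptual difficulty.
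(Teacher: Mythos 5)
Your proposal is correct and follows essentially the same route as the paper: apply Proposition \ref{onesideiso} termwise, check compatibility with the transition maps, and use that the continuous Pontrjagin dual $\mathrm{Hom}_{\text{cont}}(-,\Zp1)$ exchanges filtered colimits and cofiltered limits of finite groups. The paper's own proof simply records this last fact and calls the rest straightforward; you have filled in exactly the details it omits, in particular the commuting square showing the level-$m$ pairings are compatible because they all land in the fixed group $H^{d+2}_{X_s}(X,\Ome_{X,\log}^{d+1})$.
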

\begin{proof}
Note that the Pontryagin dual $\text{Hom}_{\text{cont}}(\cdot, \Zp1)$ commutes with direct and inverse limits. Then the proof is straightforward.
\end{proof}

\begin{remark}In case that the groups $H^{d+2-i}_{X_s}(X, \Omega_{X|(m+1)D,\log}^{d+1-r})$ are finite for all $m$(e.g.,$d=0$ or $r=0$), then
the direct limit topology of finite (compact-open) topological spaces is discrete, and the inverse limit topololgy of finite discrete topological spaces is profinite. 
\end{remark}

We still need to calculate the direct limit term in the above proposition.
\begin{proposition}\label{directisU}
The direct limit $\varinjlim\limits_m\mathbb{H}^i(X, \mathscr{F}_m^{\bullet}) \cong H^i(U, \Ome^r_{U,\log})$.
\end{proposition}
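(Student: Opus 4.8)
The plan is to identify the filtered colimit $\varinjlim_m\mathscr{F}_m^{\bullet}$ with the two-term complex $\mathscr{F}^{\bullet}=[\jmath_*Z\Ome_U^r \xrightarrow{1-C}\jmath_*\Ome_U^r]$ considered just before Lemma \ref{affine} (this complex is the $n=1$ instance of $W_n\mathscr{F}^{\bullet}$), and then to commute the colimit past hypercohomology, finally invoking the already-established isomorphism $\mathbb{H}^i(X,\mathscr{F}^{\bullet})\cong H^i(U,\Ome_{U,\log}^r)$.

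First I would compute the colimit degree by degree. The transition maps are the inclusions $\mathscr{F}_m^{\bullet}\hookrightarrow\mathscr{F}_{m+1}^{\bullet}$ induced by $\mathcal{O}_X(mD)\subseteq\mathcal{O}_X((m+1)D)$. Since tensor product commutes with filtered colimits and $\varinjlim_m\mathcal{O}_X(mD)=\jmath_*\mathcal{O}_U$ (every section of $\jmath_*\mathcal{O}_U$ has poles of bounded order along $D$), I would obtain
\[ \varinjlim_m\Ome_{X|-mD}^r=\Ome_X^r(\log D)\otimes\jmath_*\mathcal{O}_U=\jmath_*\Ome_U^r, \]
the sheaf of meromorphic $r$-forms regular on $U$. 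For the left-hand terms I would use that filtered colimits are exact and hence commute with the kernel of $d$:
\[ \varinjlim_m Z\Ome_{X|-mD}^r=\ker\bigl(d\colon\varinjlim_m\Ome_{X|-mD}^r\to\varinjlim_m\Ome_{X|-mD}^{r+1}\bigr)=\jmath_*Z\Ome_U^r. \]
As the differentials $1-C$ are compatible with the inclusions, their colimit is the differential $1-C$ of $\mathscr{F}^{\bullet}$, so $\varinjlim_m\mathscr{F}_m^{\bullet}=\mathscr{F}^{\bullet}$.

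Next I would move the colimit outside the hypercohomology. Because $X$ is projective over $B=\text{Spec}(\Fp[[t]])$ it is Noetherian, in particular quasi-compact and quasi-separated, so cohomology commutes with filtered direct limits of sheaves \cite[VII, Thm. 5.7]{SGA4}. Applying this to each term and using the two-column hypercohomology spectral sequence ${}^{I}E_1^{p,q}=H^q(X,\mathscr{F}_m^{p})\Rightarrow\mathbb{H}^{p+q}(X,\mathscr{F}_m^{\bullet})$ of Proposition \ref{onesideiso}, whose formation commutes with the exact functor $\varinjlim_m$, would give
\[ \varinjlim_m\mathbb{H}^i(X,\mathscr{F}_m^{\bullet})\cong\mathbb{H}^i\bigl(X,\varinjlim_m\mathscr{F}_m^{\bullet}\bigr)=\mathbb{H}^i(X,\mathscr{F}^{\bullet}). \]
Finally, the identification recorded after Lemma \ref{affine} yields $\mathbb{H}^i(X,\mathscr{F}^{\bullet})\cong H^i(U,\Ome_{U,\log}^r)$: since $\jmath$ is affine one has $R\jmath_*=\jmath_*$, and Proposition \ref{logtocoh}(iii) (for $n=1$) exhibits $[Z\Ome_U^r\xrightarrow{1-C}\Ome_U^r]$ as a resolution of $\Ome_{U,\log}^r$ on $U$. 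Combining the two displays proves the claim. The hard part will be the term-wise colimit computation in the first step, in particular checking that the Cartier differential is compatible with the pole-order filtration so that $\varinjlim_m\mathscr{F}_m^{\bullet}$ really equals $\mathscr{F}^{\bullet}$; once the colimit of complexes is identified, commuting it past cohomology on the Noetherian scheme $X$ is routine.
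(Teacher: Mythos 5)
Your proposal is correct and follows essentially the same route as the paper: identify $\varinjlim_m\mathscr{F}_m^{\bullet}$ with $[\jmath_*Z\Ome_U^r \xrightarrow{1-C} \jmath_*\Ome_U^r]$, commute the filtered colimit past hypercohomology, and then use affineness of $\jmath$ (Lemma \ref{affine}) together with Proposition \ref{logtocoh}(iii) for $n=1$. You merely spell out the term-wise colimit computation and the compatibility of $1-C$ with the pole-order filtration, which the paper asserts without detail.
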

\begin{proof}
First, direct limits commute with (hyper-)cohomology, hence \[ \varinjlim_m\mathbb{H}^i(X, \mathscr{F}_m^{\bullet}) =\mathbb{H}^i(X,  \varinjlim_m\mathscr{F}_m^{\bullet}). \]
Note that $$ \varinjlim_m\mathscr{F}_m^{\bullet}= [\jmath_*Z\Ome_U^r \xrightarrow{1-C} \jmath_*\Ome_{U}^r]. $$ 
For coherent sheaves, the affine morphism $\jmath$ (see Lemma \ref{affine}) gives an exact functor $\jmath_*$. Hence we have
\[ \mathbb{H}^i(X, [\jmath_*Z\Ome_U^r \xrightarrow{1-C} \jmath_*\Ome_{U}^r])= \mathbb{H}^i(U, [Z\Ome_U^r \xrightarrow{1-C} \Ome_{U}^r]) =H^i(U, \Ome_{U,\log}^r),  \]
where the last equality follows from the special case $n=1$ of Proposition \ref{logtocoh} (iii).
\end{proof}

\begin{proof}[Proof of \textbf{Step 2}]
The duality theorem in case $n=1$ directly follows from the above two propositions.
\end{proof}
Now the proof of our main Theorem \ref{main theorem} is complete.

We denote 
\begin{equation*}
\begin{array}{lrl}
\Phi : H^i(U, \Wno_{U,\log}^r) &\xrightarrow{\cong}& \text{Hom}_{\text{cont}}( \varprojlim\limits_{m} H^{d+2-i}_{X_s}(X, \Wno_{X|mD,\log}^{d+1-r}),\Zpn);\\
&\cong& \varinjlim\limits_m\text{Hom}( H^{d+2-i}_{X_s}(X, \Wno_{X|mD,\log}^{d+1-r}),\Zpn)
\end{array}
\end{equation*}

\[ \Psi : \varprojlim\limits_{m} H^{d+2-i}_{X_s}(X, \Wno_{X|mD,\log}^{d+1-r}) \xrightarrow{\cong} \text{Hom}_{\rm{cont}}(H^i(U, \Wno_{U,\log}^r),\Zpn). \]

Using this duality theorem,  we may at last define a filtration as follows:
\begin{definition}\label{newfiltration}
Assume $X, X_s, D, U$ are as before. For any $\chi \in H^i(U, \Wno_{U,\log}^r)$, we define the higher Artin conductor 
\[  \text{ar}(\chi):=\text{min} \{ m\in \mathbb{N}_0^s \ | \ \Phi(\chi)\    \text{factors through }   H^{d+2-i}_{X_s}(X, \Wno_{X|mD,\log}^{d+1-r})  \},   \]
For $m\in \mathbb{N}^s$, we define 
\[ \text{Fil}_mH^i(U, \Wno_{U,\log}^r):=\{ \chi \in H^i(U, \Wno_{U,\log}^r) |\  \text{ar}(\chi) \leq m \},  \]
and
\[ \pi^{\text{ab}}_1(X,mD)/p^n:= \text{Hom}(\text{Fil}_mH^1(U,\Zpn),\Q/\Z)\]
endowed with the usual profinite topology of the dual.
\end{definition}

It is clear that $\text{Fil}_{\bullet}$ is an increasing filtration with respect to the semi-order on $\mathbb{N}^s$. We have 
 \[ \text{Fil}_mH^i(U, \Wno_{U,\log}^r)= \mathrm{Image}(\text{Hom}_{\Zpn}( H^{d+2-i}_{X_s}(X, \Wno_{X|mD,\log}^{d+1-r}), \Zpn)\to H^i(U, \Wno_{U,\log}^r)) .
  \] 
The quotient  $\pi^{\text{ab}}_1(X,mD)/p^n$ can be thought of as classifying abelian \'etale coverings of $U$ whose degree divides $p^n$ with ramification bounded by the divisor $mD$.

\section{Comparison with the classical case}
In this section, we want to compare our filtration with the classical one in the local ramification theory.

\subsection{Local ramification theory}
Let $K$ be a local field, i.e., a complete discrete valuation field of characteristic $p>0$, let $\mathcal{O}_K$ be its ring of integers,  let $k$ be its finite residue field, and let $\nu_K$ be its valuation. We fix a uniformizer $\pi \in \mathcal{O}_K$, which generates the maximal ideal $\m\in \mathcal{O}_K$. 

The local class field theory \cite{serrelocalfield} gives us an Artin reciprocity homomorphism
\[ \text{Art}_K:  K^{\times} \to Gal(K^{\text{ab}}/K), \]
where $K^{\text{ab}}$ is the maximal abelian extension of $K$. Note that both $K^{\times}$ and $Gal(K^{\text{ab}}/K)$ are topological groups. Recall the topological structure on $K^{\times}$ is given by the valuation on $K$, and $Gal(K^{\text{ab}}/K)$ is the natural profinite topology.

For any $m \in \mathbb{N}$, the Atrin map induces an isomorphism of topological groups
\[ \text{Art}_K\otimes 1:  K^{\times}\otimes \Z/m\Z \xrightarrow{\cong} Gal(K^{\text{ab}}/K)\otimes \Z/m\Z . \]
In particular, take $m=p^n$, it gives:
\begin{equation}\label{localclassfield}
\text{Art}_K\otimes 1:  K^{\times}/(K^{\times})^{p^n} \xrightarrow{\cong} Gal(K^{\text{ab}}/K)\otimes \Zpn . \end{equation}

For $n \geq 1$, the Artin-Schreier-Witt theory tells us there is  a natural isomorphism
\begin{equation}\label{artinschreier}
\delta_n: W_n(K)/(1-F)W_n(K) \xrightarrow{\cong} H^1(K, \Zpn), 
\end{equation}
where $W_n(K)$ is the ring of Witt vector of length $n$ and $F$ is the Frobenius. 

Note that $H^1(K, \Zpn)$ is dual to $Gal(K^{\text{ab}}/K)\otimes \Zpn$, the interplay between (\ref{localclassfield}) and (\ref{artinschreier}) gives rise to the following theorem.
\begin{theorem}[Artin-Schreier-Witt]
	There is a perfect pairing of topological groups, that we call the Artin-Schreier-Witt symbol
	\begin{align}\label{artinschreierwitt}
	W_n(K)/(1-F)W_n(K) \times K^{\times}/(K^{\times})^{p^n} &\longrightarrow \Zpn \\
	(a,b) &\mapsto [a,b):=(b,L/K)(\alpha)-\alpha \nonumber
	\end{align}
	where $(1-F)(\alpha)=a$, for some $\alpha \in W_n(K^{\text{sep}})$, $L=K(\alpha)$, $(b,L/K)$ is the norm residue of $b$ in $L/K$, and the topological structure on the first term is discrete, on the second term is induced from $K^{\times}$.
	\end{theorem}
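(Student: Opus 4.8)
The plan is to identify the explicit symbol $[\cdot,\cdot)$ with the tautological evaluation pairing produced by composing the two isomorphisms (\ref{localclassfield}) and (\ref{artinschreier}), and then to read off perfectness from Pontrjagin duality. Recall that Artin-Schreier-Witt theory describes the character $\delta_n(a) \in H^1(K, \Zpn) = \text{Hom}_{\text{cont}}(Gal(K^{\text{sep}}/K), \Zpn)$ explicitly: choosing $\alpha \in W_n(K^{\text{sep}})$ with $(1-F)\alpha = a$, it sends $\sigma \mapsto \sigma(\alpha) - \alpha$. This difference lies in $\ker(1-F) = W_n(\F_p) = \Zpn$, since $(1-F)(\sigma\alpha - \alpha) = \sigma(a) - a = 0$ as $a \in W_n(K)$ is Galois-invariant and $\sigma$ commutes with $F$; the resulting character factors through $Gal(L/K)$ with $L = K(\alpha)$. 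Because the norm residue symbol $(b, L/K)$ is by definition the image of $\text{Art}_K(b)$ in $Gal(L/K)$, we obtain
\[ [a,b) = (b, L/K)(\alpha) - \alpha = \delta_n(a)\big(\text{Art}_K(b)\big), \]
so that the symbol is exactly the value of $\delta_n(a)$ on $(\text{Art}_K \otimes 1)(b)$.

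First I would verify that this expression is well defined. Independence of the choice of $\alpha$ is immediate, for a second solution $\alpha'$ differs from $\alpha$ by an element of the Galois-invariant group $W_n(\F_p)$, whence $\sigma\alpha' - \alpha' = \sigma\alpha - \alpha$. If $a \in (1-F)W_n(K)$ one may take $\alpha \in W_n(K)$, which is fixed by the Galois action, so that $[a,b) = 0$; this shows the symbol descends to $W_n(K)/(1-F)W_n(K)$. In the second variable, since $\delta_n(a)$ is a character of order dividing $p^n$, it kills $\text{Art}_K\big((K^\times)^{p^n}\big) = (\text{Art}_K(K^\times))^{p^n}$, so the symbol descends to $K^\times/(K^\times)^{p^n}$. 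Bilinearity is then routine, following from the additivity of $1-F$ in the first variable and the multiplicativity of the reciprocity map in the second.

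With this identification, perfectness is formal. The groups $H^1(K, \Zpn)$ and $Gal(K^{\text{ab}}/K)\otimes \Zpn = Gal(K^{\text{ab}}/K)/p^n$ are mutually Pontrjagin dual — the former discrete and torsion, the latter profinite — and their evaluation pairing into $\Zpn$ is perfect. The isomorphism $\delta_n$ of (\ref{artinschreier}) identifies $W_n(K)/(1-F)W_n(K)$ with the discrete factor, while (\ref{localclassfield}) identifies $K^\times/(K^\times)^{p^n}$ with the profinite factor as topological groups; this last compatibility is precisely the assertion that $\text{Art}_K$ is a homeomorphism, which is part of local class field theory and which equips $K^\times/(K^\times)^{p^n}$ with the topology induced from $K^\times$. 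Transporting the tautological perfect pairing along these two isomorphisms yields the claimed perfect pairing of topological groups.

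The main obstacle is the explicit compatibility displayed in the first paragraph, namely matching the classically defined symbol $[a,b)$ with the abstract duality pairing. This rests on the Galois-theoretic description of $\delta_n$ and on the identification of $(b, L/K)$ as the image of $\text{Art}_K(b)$; both are standard, but the bookkeeping with Witt vectors of length $n$ — in particular checking that $\sigma\alpha - \alpha$ lands in $W_n(\F_p)$ and transforms additively under $1-F$ — carries the real content. Once this is in place, the well-definedness, bilinearity, and topological statements follow formally from the two cited isomorphisms.
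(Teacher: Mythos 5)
Your proposal is correct, but it takes a more self-contained route than the paper. The paper's proof is essentially a two-line citation: non-degeneracy of the symbol $[\cdot,\cdot)$ is quoted from Thomas \cite[Prop.~3.2]{thomas}, and perfectness as a pairing of topological groups is then deduced from Pontrjagin duality. You instead prove non-degeneracy yourself by identifying $[a,b)$ with $\delta_n(a)\bigl(\mathrm{Art}_K(b)\bigr)$, i.e.\ with the tautological evaluation pairing between $H^1(K,\Zpn)=\mathrm{Hom}_{\mathrm{cont}}(Gal(K^{\mathrm{ab}}/K),\Zpn)$ and $Gal(K^{\mathrm{ab}}/K)\otimes\Zpn$, transported along the two isomorphisms (\ref{localclassfield}) and (\ref{artinschreier}) that the paper records just before the theorem. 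This is exactly the ``interplay'' the paper alludes to in its preamble but does not carry out; your version makes the key compatibility explicit (the check that $\sigma\alpha-\alpha$ lies in $\ker(1-F)=W_n(\F_p)$, the descent to the two quotients, and the identification of $(b,L/K)$ with the image of $\mathrm{Art}_K(b)$), and in exchange avoids the external reference. The only inputs you take on faith are the same ones the paper itself asserts, namely that (\ref{localclassfield}) and (\ref{artinschreier}) are isomorphisms of topological groups; granting these, your duality step is sound, since $p^nGal(K^{\mathrm{ab}}/K)$ is closed (being the continuous image of a compact group), so $Gal(K^{\mathrm{ab}}/K)\otimes\Zpn$ is profinite with Pontrjagin dual $\mathrm{Hom}_{\mathrm{cont}}(Gal(K^{\mathrm{ab}}/K),\Zpn)$. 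Both approaches end with the same Pontrjagin-duality formalism; yours buys independence from \cite{thomas}, while the paper's is shorter.
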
 
\begin{proof}
	This pairing is non-degenerate \cite[Prop. 3.2]{thomas}. Taking the topological structure into account, we get the perfectness by Pontryagin duality.
\end{proof}
We have filtrations on the two left terms in the pairing (\ref{artinschreierwitt}). On $W_n(K)$, Brylinski \cite{brylinski} and Kato \cite{katoswanconductor} defined an increasing filtration,  called the Brylinski-Kato filtration, using the valuation on $K$:

\[\text{fil}^{\log}_mW_n(K)=\{ (a_{n-1},\cdots, a_1, a_0) \in W_n(K)|\ p^i\nu_K(a_i) \geq -m\}. \]

We also have its non-log version introduced by Matsuda \cite{matsuda}.
\begin{equation}
\text{fil}_mW_n(K)=\text{fil}^{\log}_{m-1}W_n(K)+V^{n-n'}\text{fil}_{m}^{\log}W_{n'}(K),
\end{equation}
where $n'=\text{min}\{n, \text{ord}_p(m) \}$ and $V: W_{n-1}(K) \to W_n(K)$ is the Verscheibung on Witt vectors.

Both of them induce filtrations on the quotient $W_n(K)/(1-F)W_n(K)$, and we define
\begin{equation}\text{fil}^{\log}_mH^1(K,\Zpn)=\delta_n(\text{fil}^{\log}_m(W_n(K)/(1-F)W_n(K)))=\delta_n(\text{fil}^{\log}_mW_n(K)), \end{equation}
\begin{equation}\text{fil}_mH^1(K,\Zpn)=\delta_n(\text{fil}_m(W_n(K)/(1-F)W_n(K)))=\delta_n(\text{fil}_mW_n(K)). \end{equation}

We have the following fact on the relation of two above filtrations.
\begin{lemma}(\cite{katoswanconductor}, \cite{matsuda})\label{logvsnonlog} For an integer $m\geq 1$, we have
\begin{itemize}
	\item[(i)] $\text{fil}_mH^1(K,\Zpn) \subset \text{fil}^{\log}_{m}H^1(K,\Zpn) \subset \text{fil}_{m+1}H^1(K,\Zpn)$,
	\item[(ii)] $\text{fil}_{m}H^1(K,\Zpn)=\text{fil}_{m-1}^{\log}H^1(K,\Zpn)$ if $(m,p)=1$. 
\end{itemize}
\end{lemma}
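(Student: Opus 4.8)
\emph{Proof plan.} The plan is to move everything to the Witt group $W_n(K)$ and to use that both filtrations on $H^1(K,\Zpn)$ are by construction the images under the Artin--Schreier--Witt isomorphism $\delta_n$ of $(\ref{artinschreier})$ of the corresponding filtrations on $W_n(K)$. Since $\delta_n$ factors through $W_n(K)/(1-F)W_n(K)$, it suffices to compare $\text{fil}_mW_n(K)$ and $\text{fil}^{\log}_mW_n(K)$ as subgroups of $W_n(K)$ modulo $(1-F)W_n(K)$: an inclusion (resp.\ equality) of these subgroups modulo $(1-F)W_n(K)$ becomes, after applying $\delta_n$, the asserted inclusion (resp.\ equality) of subgroups of $H^1(K,\Zpn)$.

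First I would settle the formal part. Writing Matsuda's definition in the form
\[ \text{fil}_{m'}W_n(K)=\text{fil}^{\log}_{m'-1}W_n(K)+V^{\,n-n'}\text{fil}^{\log}_{m'}W_{n'}(K),\qquad n'=\min\{n,\text{ord}_p(m')\}, \]
two of the needed relations are immediate. Taking $m'=m+1$ exhibits $\text{fil}^{\log}_mW_n(K)$ as the first summand of $\text{fil}_{m+1}W_n(K)$, which gives $\text{fil}^{\log}_mH^1(K,\Zpn)\subseteq\text{fil}_{m+1}H^1(K,\Zpn)$, the second inclusion of (i). When $(m,p)=1$ one has $\text{ord}_p(m)=0$, hence $n'=0$, the group $W_0(K)$ is trivial, and the Verschiebung term vanishes; thus $\text{fil}_mW_n(K)=\text{fil}^{\log}_{m-1}W_n(K)$ as honest subgroups, which yields (ii).

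The only genuinely nontrivial point is therefore the first inclusion of (i), that is $\text{fil}_mH^1(K,\Zpn)\subseteq\text{fil}^{\log}_mH^1(K,\Zpn)$. Using $\text{fil}^{\log}_{m-1}W_n(K)\subseteq\text{fil}^{\log}_mW_n(K)$ and discarding the trivial case $\text{ord}_p(m)\geq n$ (where $n'=n$ and $V^{\,n-n'}=\mathrm{id}$), the whole matter comes down to the claim
\[ V^{\,n-n'}\text{fil}^{\log}_mW_{n'}(K)\ \subseteq\ \text{fil}^{\log}_mW_n(K)+(1-F)W_n(K)\qquad(0<n'=\text{ord}_p(m)<n). \]
It must be stressed that this fails before passing to the quotient: a Verschiebung $V^{\,n-n'}$ multiplies the logarithmic weight $p^{i}$ of each Witt slot by $p^{\,n-n'}$, so as an honest subgroup $V^{\,n-n'}\text{fil}^{\log}_mW_{n'}(K)$ only lies in $\text{fil}^{\log}_{p^{n-n'}m}W_n(K)$. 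I would prove the claim by induction on $n-n'$, reducing to the transfer of a single $V$, and carry out the inductive step by an explicit Witt-vector computation: using $z\equiv Fz\pmod{(1-F)W_n(K)}$ together with the surjectivity of the Frobenius on the perfect residue field $k$, one cancels the low-order coefficients sitting in the Witt slots of excessive weight leading-term by leading-term, and transfers the residual coefficients (whose $t$-exponents are prime to $p$) to slots of smaller weight, where the bound defining $\text{fil}^{\log}_m$ tolerates them.

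The main obstacle is precisely this last computation. Because $(1-F)$ mixes the Witt components through the nonlinear Witt addition polynomials, the leading-term cancellation must be organised by a decreasing induction on the valuation, with all the correcting terms kept of bounded valuation so that the adjusted element really lands in $\text{fil}^{\log}_mW_n(K)$ and not merely in some larger $\text{fil}^{\log}_{m'}$; the prime $p=2$ needs separate care, since there the additive inverse in $W_n$ itself produces a carry. Verifying that every element of the Verschiebung term can be brought into $\text{fil}^{\log}_mW_n(K)$ by such successive corrections is exactly the heart of Matsuda's comparison; granting it, an application of $\delta_n$ completes (i) and finishes the proof.
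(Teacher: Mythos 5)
The paper itself offers no argument for this lemma; it is quoted directly from Kato and Matsuda. Your reductions for part (ii) and for the second inclusion of (i) are correct, and they are indeed purely formal consequences of Matsuda's definition. The problem is your treatment of the first inclusion of (i), which you build on the claim that $V^{\,n-n'}$ multiplies the weight $p^{i}$ of each Witt slot by $p^{\,n-n'}$, so that $V^{\,n-n'}\text{fil}^{\log}_mW_{n'}(K)$ only lies in $\text{fil}^{\log}_{p^{n-n'}m}W_n(K)$ and the inclusion into $\text{fil}^{\log}_mW_n(K)$ can only hold after passing to the quotient by $(1-F)W_n(K)$. This misreads the indexing convention. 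In the notation of Kato and Matsuda (which the paper follows), a Witt vector is written $(a_{n-1},\dots,a_1,a_0)$ precisely so that $V(a_{n-2},\dots,a_0)=(0,a_{n-2},\dots,a_0)$: the Verschiebung prepends a zero and leaves the labels, hence the weights $p^{i}$ in the condition $p^{i}\nu_K(a_i)\ge -m$, unchanged. (One can confirm the normalization against the Swan conductor: the Teichm\"uller vector $[a]$ occupies the slot of weight $p^{\,n-1}$, matching the top ramification break $p^{\,n-1}\,|\nu_K(a)|$ of the cyclic degree-$p^{n}$ extension it generates, while $V^{\,n-1}(a)$ occupies the slot of weight $1$.) Consequently $V^{\,n-n'}\text{fil}^{\log}_{m}W_{n'}(K)\subseteq \text{fil}^{\log}_{m}W_{n}(K)$ as honest subgroups of $W_n(K)$, and
\[ \text{fil}_mW_n(K)=\text{fil}^{\log}_{m-1}W_n(K)+V^{\,n-n'}\text{fil}^{\log}_mW_{n'}(K)\ \subseteq\ \text{fil}^{\log}_mW_n(K) \]
follows at once; applying $\delta_n$ gives the first inclusion of (i) with no need to work modulo $(1-F)W_n(K)$.

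So the difficulty you locate is not there, and the machinery you propose to overcome it --- leading-term cancellation against $(1-F)$, decreasing induction on the valuation, special care at $p=2$ --- is solving a non-problem created by the wrong convention. This also matters for the completeness of the proof as written: you explicitly defer what you call ``exactly the heart of Matsuda's comparison'' to an unexecuted Witt-vector computation, so even on its own terms the argument stops short at the one place you declare to be nontrivial. The fix is simply to verify the $V$-compatibility of $\text{fil}^{\log}_{\bullet}$ from the definition; once that is observed, all three assertions of the lemma are formal.
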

\begin{remark}
 The non-log version of Brylinski-Kato filtration is closely related to the K\"ahler differential module $\Ome_K^1$ \cite{matsuda}, and it has an higher analogy on $H^1(U)$, where $U$ is an open smooth subscheme of a normal variety $X$ over a perfect field with $(X-U)_{\text{red}}$ is the support of an effective Cartier divisor \cite{kerzsaito}. 
\end{remark}
On $K^{\times}$, we have a natural decreasing filtration given by:
\[U_K^{-1}=K^{\times}, U_K^0=\mathcal{O}_K^{\times}, \   U_K^m=\{x\in \mathcal{O}_K^{\times}|\  x \equiv 1 \ \text{mod}\  \pi^m\}.\]

The following theorem says the paring (\ref{artinschreierwitt}) is compatible with these filtrations. 
\begin{theorem}(\cite[Thm. 1]{brylinski})
	Underling the Artin-Schreier-Witt symbol (\ref{artinschreierwitt}), the orthogonal complement of $\text{fil}_{m-1}^{\log}H^1(K,\Zpn)$ is $U_K^m \cdot (K^{\times})^{p^n}/(K^{\times})^{p^n}$, for any integer $m \geq 1$.
\end{theorem}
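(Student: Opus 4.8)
The plan is to evaluate the Artin-Schreier-Witt symbol (\ref{artinschreierwitt}) explicitly through a residue formula and then to compare the two filtrations graded piece by graded piece. Under $\delta_n$ (\ref{artinschreier}) the class of $a=(a_{n-1},\dots,a_0)\in W_n(K)$ pairs with $b\in K^\times$ by a formula of the shape
\[
[a,b)=\mathrm{Tr}_{k/\F_p}\!\left(\mathrm{res}\Bigl(\widetilde a\cdot \tfrac{d[b]_n}{[b]_n}\Bigr)\right),
\]
where the product $\widetilde a\cdot d\log[b]_n$ is formed in the de Rham--Witt module $W_n\Omega^1_K$, the map $\mathrm{res}\colon W_n\Omega^1_K\to W_n(k)$ is the Witt residue along the uniformizer $\pi$, and $\mathrm{Tr}_{k/\F_p}\colon W_n(k)\to W_n(\F_p)=\Zpn$ is the Witt trace. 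Since the symbol is biadditive, $p^n$-torsion, and kills $(1-F)W_n(K)$ on the left and $(K^\times)^{p^n}$ on the right, both sides of the asserted equality are genuine subgroups of the perfectly paired groups, so it suffices to prove the two inclusions.

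First I would establish $U_K^m\cdot(K^\times)^{p^n}/(K^\times)^{p^n}\subseteq \bigl(\mathrm{fil}^{\log}_{m-1}H^1(K,\Zpn)\bigr)^\perp$. Take $a\in\mathrm{fil}^{\log}_{m-1}W_n(K)$, so that $p^i\nu_K(a_i)\ge -(m-1)$ for all $i$, and $b=1+c\pi^m\in U_K^m$. A direct valuation estimate on $d\log[b]_n$ shows that each Witt component of $\widetilde a\cdot d\log[b]_n$ has non-negative $\pi$-valuation, so the residue vanishes; here the denominator $[b]_n$ is a unit and does not lower valuations. The subgroup $(K^\times)^{p^n}$ lies in the complement automatically, since $[a,b^{p^n})=p^n[a,b)=0$.

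For the reverse inclusion I would argue on the associated graded. The filtration $U_K^\bullet$ has graded quotients $U_K^m/U_K^{m+1}\cong \pi^m\mathcal O_K/\pi^{m+1}\mathcal O_K\cong k$, and the Brylinski--Kato filtration on $W_n(K)/(1-F)$ has matching graded pieces, again subquotients of $k$ (this is the computation underlying Lemma \ref{logvsnonlog}). Given $b\notin U_K^m\cdot(K^\times)^{p^n}$, let $m'<m$ be the largest integer with $b\in U_K^{m'}\cdot(K^\times)^{p^n}$; the case $\nu_K(b)\not\equiv 0 \bmod p^n$ is handled separately by pairing $b$ against an unramified class, which has log-conductor $0\le m-1$ and detects the valuation of $b$ via Frobenius. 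The nonzero image of $b$ in $U_K^{m'}/U_K^{m'+1}\cong k$ then pairs nontrivially, under the residue symbol, with a suitable class in $\mathrm{fil}^{\log}_{m'}H^1\subseteq\mathrm{fil}^{\log}_{m-1}H^1$: one chooses $a$ supported in the single Witt component whose valuation realizes the level $m'$, so that $\mathrm{res}(\widetilde a\, d\log[b]_n)$ collapses to the perfect residue form $k\times k\to\F_p$, $(\lambda,\mu)\mapsto\mathrm{Tr}_{k/\F_p}(\lambda\mu)$. This yields $a\in\mathrm{fil}^{\log}_{m-1}W_n(K)$ with $[a,b)\neq 0$, so $b$ is not in the orthogonal complement.

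The hard part will be the bookkeeping forced by the Frobenius and the $p$-power weighting in $\mathrm{fil}^{\log}_{m-1}$: the operator $1-F$ raises $\pi$-valuations by a factor of $p$ on the leading term, so one must verify both that the residue formula is well defined modulo $(1-F)W_n(K)$ and that the graded pairing stays perfect at levels $m$ with $p\mid m$, where contributions of several Witt components can collide. Making the choice of test Witt vector $a$ uniform across the cases $(m,p)=1$ and $p\mid m$ is the delicate point, and it is precisely here that the shift between $\mathrm{fil}^{\log}$ and Matsuda's $\mathrm{fil}$ recorded in Lemma \ref{logvsnonlog} enters.
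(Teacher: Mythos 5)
The paper does not actually prove this statement: it is quoted from Brylinski's Theorem~1, and the ``proof'' in the text is a one-line pointer to \cite[\S 5]{thomas}. Your sketch reconstructs what is essentially the argument of those sources — the Schmid--Witt residue formula for the Artin--Schreier--Witt symbol, a valuation estimate showing the residue vanishes for the inclusion $U_K^m\cdot(K^{\times})^{p^n}/(K^{\times})^{p^n}\subseteq\bigl(\mathrm{fil}^{\log}_{m-1}H^1(K,\Zpn)\bigr)^{\perp}$, and a graded analysis (reducing to the perfect trace form $k\times k\to\F_p$ at the prime-to-$p$ jumps) for the reverse inclusion. So the route is the standard one rather than a genuinely different one, and the overall architecture is sound.

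As a standalone proof, however, the proposal rests on an input you assert rather than establish: the identity $[a,b)=\mathrm{Tr}_{k/\F_p}\bigl(\mathrm{res}(\widetilde a\cdot d\log[b]_n)\bigr)$. The symbol is defined in the paper by $[a,b)=(b,L/K)(\alpha)-\alpha$ with $(1-F)\alpha=a$, and identifying this norm-residue description with a de Rham--Witt residue is precisely the Schmid--Witt explicit reciprocity formula — a theorem in its own right, and the technical heart of \cite[\S 5]{thomas}. Without it neither of your inclusions gets off the ground, so this is the one genuine gap rather than mere bookkeeping. The remaining items you flag — well-definedness of the residue pairing modulo $(1-F)W_n(K)$, the fact that both filtrations are constant across levels divisible by $p$ (so the graded pairing need only be checked at prime-to-$p$ jumps, where the factor $m'$ appearing from $d\log(1+\mu\pi^{m'})$ is a unit in $k$), and the unramified case $\nu_K(b)\not\equiv 0\bmod p^n$ — are real but routine once the residue formula is in hand, and you have located them correctly. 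In short: right strategy, matching the cited proof, but the central tool still needs to be proved or explicitly cited before the two-inclusion argument is a proof.
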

\begin{proof}
	A more complete proof can be found in \cite[\S 5]{thomas}.
\end{proof}
\begin{corollary}\label{localcompatibletheorem}
The Artin-Schreier-Witt symbol (\ref{artinschreierwitt}) induces a perfect pairing of finite groups
\[ \text{fil}_{m}(W_n(K)/(1-F)W_n(K)) \times K^{\times}/(K^{\times})^{p^n} \cdot U_K^m  \longrightarrow \Zpn.\]
\end{corollary}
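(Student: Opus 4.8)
The plan is to deduce the statement from the perfectness of the Artin–Schreier–Witt symbol (\ref{artinschreierwitt}) together with the orthogonality computation of Brylinski \cite{brylinski} for the \emph{logarithmic} filtration. First I would record the formal mechanism. Given a perfect pairing $A \times B \to \Zpn$ of locally compact abelian groups, a closed subgroup $A' \subseteq A$, and a closed subgroup $B' \subseteq B$ with $A'$ annihilating $B'$, the induced pairing $A' \times B/B' \to \Zpn$ is perfect if and only if $B'$ equals the exact annihilator $A'^{\perp}$. Applying this with $A = W_n(K)/(1-F)W_n(K) \cong H^1(K,\Zpn)$ via $\delta_n$, with $B = K^\times/(K^\times)^{p^n}$, and with $A' = \mathrm{fil}_m\big(W_n(K)/(1-F)W_n(K)\big) \cong \mathrm{fil}_mH^1(K,\Zpn)$, the assertion of the corollary becomes \emph{equivalent} to the single statement that the exact annihilator of $\mathrm{fil}_mH^1(K,\Zpn)$ under (\ref{artinschreierwitt}) is $U_K^m\cdot (K^\times)^{p^n}/(K^\times)^{p^n}$.

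Next I would invoke the theorem of Brylinski stated just above, which computes this annihilator for the logarithmic filtration as $U_K^m\cdot (K^\times)^{p^n}/(K^\times)^{p^n}$. Since (\ref{artinschreierwitt}) is a perfect pairing of a discrete group with a locally compact one, the double-annihilator property holds for closed subgroups, so the annihilator of $U_K^m\cdot (K^\times)^{p^n}/(K^\times)^{p^n}$ is exactly $\mathrm{fil}_{m-1}^{\log}H^1(K,\Zpn)$. Comparing with the previous paragraph, the corollary is therefore equivalent to the equality of subgroups of $H^1(K,\Zpn)$,
\[ \mathrm{fil}_mH^1(K,\Zpn) = \mathrm{fil}_{m-1}^{\log}H^1(K,\Zpn), \]
which is precisely the comparison between the non-logarithmic (Matsuda) and logarithmic (Brylinski–Kato) filtrations at the level of cohomology.

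I would then prove this equality. The inclusion $\mathrm{fil}_{m-1}^{\log}H^1 \subseteq \mathrm{fil}_mH^1$ is immediate from Lemma \ref{logvsnonlog}(i). For the tame case $(m,p)=1$ the reverse inclusion, in fact equality, is exactly Lemma \ref{logvsnonlog}(ii), and the corollary follows at once. The wild case $p\mid m$ is the main obstacle: at the level of Witt vectors the non-logarithmic filtration is strictly larger, $\mathrm{fil}_mW_n(K) = \mathrm{fil}_{m-1}^{\log}W_n(K) + V^{\,n-n'}\mathrm{fil}_m^{\log}W_{n'}(K) \supsetneq \mathrm{fil}_{m-1}^{\log}W_n(K)$, so one must show the surplus generators die modulo $(1-F)W_n(K)$. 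The model is $n=1$: if $p\mid m$ and $c\in k$, then since $k$ is perfect $c\,t^{-m} = F\big(c^{1/p}t^{-m/p}\big)$, whence $c\,t^{-m} \equiv c^{1/p}t^{-m/p} \pmod{(1-F)K}$, and the right-hand term already lies in $\mathrm{fil}_{m-1}^{\log}W_1(K)=\pi^{-(m-1)}\mathcal{O}_K$; iterating over leading terms collapses $\mathrm{fil}_m^{\log}W_1(K)$ into $\mathrm{fil}_{m-1}^{\log}W_1(K)$ inside $H^1$. For general $n$ I would run the analogous reduction on the extra summand $V^{\,n-n'}\mathrm{fil}_m^{\log}W_{n'}(K)$, using the $F$–$V$ relations on $W_n(K)$ and surjectivity of Frobenius on the residue field to absorb each surplus class into $\mathrm{fil}_{m-1}^{\log}H^1$; equivalently, I would check directly that $U_K^m$ annihilates this extra summand under (\ref{artinschreierwitt}), which combined with the inclusion already pins down $\big(\mathrm{fil}_mH^1\big)^{\perp} = U_K^m\cdot(K^\times)^{p^n}/(K^\times)^{p^n}$.

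Finally, the groups in the pairing are finite: $K^\times/\big((K^\times)^{p^n}U_K^m\big)$ is finite because $U_K^m$ is open of finite index modulo $p^n$-th powers, and perfectness then forces $\mathrm{fil}_mH^1(K,\Zpn)$ to be finite of the same order, completing the argument. The one genuinely delicate point, as indicated, is the wild-case collapse $\mathrm{fil}_mH^1 = \mathrm{fil}_{m-1}^{\log}H^1$ for $p\mid m$; this is exactly where Matsuda's correction term $V^{\,n-n'}\mathrm{fil}_m^{\log}W_{n'}(K)$ in the definition of the non-logarithmic filtration is what makes the two filtrations agree after the shift.
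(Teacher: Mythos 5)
Your argument is correct and pivots on the same two inputs as the paper --- Brylinski's orthogonality theorem and the Pontrjagin--duality formalism for open subgroups --- and for $(m,p)=1$ your conclusion via Lemma \ref{logvsnonlog}(ii) is identical to the paper's. Where you genuinely diverge is the wild case $p\mid m$. The paper disposes of it in one line by asserting that the filtration $U_K^{\bullet}\cdot(K^{\times})^{p^n}/(K^{\times})^{p^n}$ has no jumps at integers divisible by $p$, so that one ``may assume $(m,p)=1$''; but that assertion only holds for $n=1$. Already for $n=2$ the class of $1+t^{p}$ is nontrivial in $U_K^{p}\cdot(K^{\times})^{p^2}/U_K^{p+1}\cdot(K^{\times})^{p^2}$ (dually, $\text{fil}^{\log}_{\bullet}H^1(K,\Z/p^2\Z)$ jumps at $m=p$: the Teichm\"uller class $[t^{-1}]_2$ has logarithmic Swan conductor exactly $p$, since $\nu_K(t^{-1})=-1$ is prime to $p$ and cannot be lowered modulo $(1-F)W_2(K)$). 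So the paper's reduction to the tame case fails as stated, and your decision to prove the equality $\text{fil}_mH^1(K,\Zpn)=\text{fil}_{m-1}^{\log}H^1(K,\Zpn)$ directly for $p\mid m$ is not a detour but the step that actually carries the content; your route is the one that works uniformly in $n$. The mechanism you indicate is the right one: the Matsuda correction term is a sum of components $V^{k}([y]_{n-k})$ with $p^{\,n-k}\mid m$ and $p^{\,n-1-k}\nu_K(y)\ge -m$, and the identity $V^{k}\circ F=F\circ V^{k}$ for the functorial Frobenius $(a_i)\mapsto(a_i^p)$, together with perfectness of the residue field, lets you replace the leading term of $y$ by its $p$-th root modulo $(1-F)W_n(K)$, landing in $\text{fil}_{m-1}^{\log}W_n(K)$ because $m/p\le m-1$. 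The one soft spot is that for general $n$ you leave this as ``run the analogous reduction'': you should write it out, taking care of the non-additivity of the Teichm\"uller lift by peeling off leading terms one valuation at a time (or, as you propose, by checking instead that $U_K^{m}$ annihilates $V^{\,n-n'}\text{fil}_m^{\log}W_{n'}(K)$ under the symbol). With that computation made explicit, your proof is complete.
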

\begin{proof}
First, note that the filtration  $\{U^m_K\}_{m}$ has no jump greater or equal to 0 that divisible by $p$, as the residue field of $K$ is perfect. Then, we may assume $(m,p)=1$. By Lemma \ref{logvsnonlog} (ii) and the above Brylinski's theorem, we have, the orthogonal complement of  $\text{fil}_{m}H^1(K,\Zpn)=\text{fil}_{m-1}^{\log}H^1(K,\Zpn)$ is $U_K^m \cdot (K^{\times})^{p^n}/(K^{\times})^{p^n}$. The rest follows easily from the fact that the Pontryagin dual $H^{\wedge}$ of an open subgroup of a locally compact group $G$ is isomorphic to $G^{\wedge}/H^{\perp}$, where $H^{\perp}$ is the orthogonal complement of $H$.
\end{proof}

\subsection{Comparison of filtrations}
Let $X=B=\text{Spec}\Fp[[t]]$, $D=s=(t)$ be the unique closed point. Then $U=\text{Spec}(\Fp((t)))$. Our duality theorem \ref{main theorem} in this setting is:
\begin{corollary} \label{localfieldduality}
The pairing 
\[H^i(K, \Wno_{U,\log}^j) \times \varprojlim\limits_mH^{2-i}_{s}(B, \Wno_{B|mD,\log}^{1-j}) \to \Zpn \]
is a perfect paring of topological groups.
\end{corollary}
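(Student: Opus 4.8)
The plan is to deduce this corollary directly from the main Theorem \ref{main theorem} by specializing to relative dimension zero. First I would verify that the identity morphism $B \to B$ exhibits $B = \text{Spec}(\Fp[[t]])$ as a projective strictly semistable scheme of relative dimension $d = 0$ over itself: the ring $\Fp[[t]]$ is a complete discrete valuation ring, hence regular; the identity is flat and (trivially) projective; the generic fibre $\text{Spec}(K)$ is smooth over the field $K = \Fp((t))$; and the special fibre $s = \text{Spec}(\Fp)$ is a single regular point, which is a reduced divisor on $B$ with vacuously simple normal crossing. Thus all the standing hypotheses of Theorem \ref{main theorem} are satisfied with $X = B$, $X_s = s$, and $d = 0$.

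Next I would specialize the pairing. Taking the divisor to be $D = s = (t)$, its open complement is exactly $U = \text{Spec}(K)$, and substituting $d = 0$ and $r = j$ into the pairing of Theorem \ref{main theorem} produces
\[ H^i(K, \Wno_{U,\log}^j) \times \varprojlim\limits_m H^{2-i}_s(B, \Wno_{B|mD,\log}^{1-j}) \to H^2_s(B, \Wno_{B,\log}^1) \xrightarrow{\text{Tr}} \Zpn, \]
where the codomain $H^{d+2}_{X_s}(X, \Wno_{X,\log}^{d+1})$ has become $H^2_s(B, \Wno_{B,\log}^1)$, and the trace map is the one furnished by Corollary \ref{tracemap} (applicable since the residue field is $\Fp$ and $s$ is connected, so that $\text{Tr}$ is even bijective). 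This is precisely the pairing asserted in the corollary.

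Finally, the perfectness as a pairing of topological groups is inherited verbatim from Theorem \ref{main theorem}: the first factor carries the discrete topology, the inverse limit in the second factor carries the profinite topology, and Pontrjagin duality identifies each factor with the continuous dual of the other. I expect no genuine obstacle here, since the entire substance is contained in Theorem \ref{main theorem}; the only point requiring verification is that the degenerate base case $d = 0$ lies within its hypotheses, which—as checked above—is immediate.
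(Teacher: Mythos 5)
Your proposal is correct and follows exactly the paper's own route: the paper presents this corollary simply as the specialization of Theorem \ref{main theorem} to $X=B$, $D=s=(t)$, $U=\mathrm{Spec}(K)$, $d=0$, $r=j$. Your explicit verification that the identity $B\to B$ satisfies the standing hypotheses (regularity, flatness, projectivity, smooth generic fibre, and the special fibre as a simple normal crossing divisor) is the only content needed, and it is carried out correctly.
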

In particular, we take $i=1, j=0$, and get
\begin{equation}\label{pairingfromdualitylocalfield}
H^1(U, \Zpn) \times \varprojlim\limits_mH^1_s(B, \Wno_{B|mD,\log}^{1}) \to \Zpn.
\end{equation}

We want to compare this pairing (\ref{pairingfromdualitylocalfield}) with the Artin-Schreier-Witt symbol (\ref{artinschreierwitt}). 
\begin{lemma}\label{comparelimit}
We have
$ H^{1}_s(B, \Wno_{B|mD,\log}^1) \cong K^{\times}/(K^{\times})^{p^n}\cdot U^m_K  $. The diagram 
\[ \xymatrix{ H^1_s(B, \Wno_{B|(m+1)D,\log}^{1}) \ar[d]  &  K^{\times}/(K^{\times})^{p^n}\cdot U^{m+1}_K \ar[d]\ar[l]_-{\cong}\\
H^{1}_s(B,\Wno_{B|mD,\log}^1)  &K^{\times}/(K^{\times})^{p^n}\cdot U^m_K \ar[l]_-{\cong} }\]
commutes, where the left vertical arrow is induced by the morphism of sheaves, and the right vertical arrow is given by projection.
 In particular, 
\[  \varprojlim\limits_mH^1_s(B, \Wno_{B|mD,\log}^{1}) \cong K^{\times}/(K^{\times})^{p^n}. \]
\end{lemma}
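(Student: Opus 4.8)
The plan is to compute $H^1_s(B,\Wno_{B|mD,\log}^1)$ directly from the localization (cohomology-with-support) long exact sequence for $s\hookrightarrow B\hookleftarrow U$, and then to identify the resulting groups with the class-field-theoretic objects by Bloch--Gabber--Kato. Write $\mathscr{F}_m:=\Wno_{B|mD,\log}^1$. Since $\mathscr{F}_m$ is an \'etale subsheaf of $\jmath_*\Wno_{U,\log}^1$, it has no sections supported on the closed point, so $H^0_s(B,\mathscr{F}_m)=0$; and $\jmath^*\mathscr{F}_m=\Wno_{U,\log}^1$ by the corollary following Definition \ref{twistedlog}. First I would write down the resulting exact sequence
\[ H^0(B,\mathscr{F}_m)\hookrightarrow H^0(U,\Wno_{U,\log}^1)\to H^1_s(B,\mathscr{F}_m)\to H^1(B,\mathscr{F}_m)\to H^1(U,\Wno_{U,\log}^1), \]
so that, once $H^1(B,\mathscr{F}_m)=0$ is established, $H^1_s(B,\mathscr{F}_m)$ is exhibited as the cokernel of the injective restriction map on $H^0$.

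The computation of $H^\bullet(B,\mathscr{F}_m)$ I would carry out by \emph{henselian base change}. Since $\mathcal{O}_K=\Fp[[t]]$ is complete, hence henselian, invariance of \'etale cohomology under henselization gives $H^i(B,\mathscr{F}_m)\cong H^i(\Fp,i^*\mathscr{F}_m)$, where $i^*\mathscr{F}_m$ is the geometric stalk $M$ of $\mathscr{F}_m$ at $\bar s$ with its $G_{\Fp}=\hat{\Z}$-action. By the relative Bloch--Gabber--Kato isomorphism (Proposition \ref{blochgabberkato}, together with the identification $\mathcal{K}^M_{1,B|mD}=1+\mathcal{O}_B(-mD)=1+t^m\mathcal{O}_B$), the module $M$ is the $d\log[-]_n$-image of the principal units $1+t^m\mathcal{O}^{sh}$ of the strict henselization $\mathcal{O}^{sh}=\mathcal{O}_{B,\bar s}^{sh}$. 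Because $\hat{\Z}$ has cohomological dimension $1$, one has $H^1(\Fp,M)=M/(\sigma-1)M$ and $H^0(\Fp,M)=M^{\sigma}$ with $\sigma$ the Frobenius. The key point is then a Lang/Artin--Schreier surjectivity: the $t$-adic filtration on $1+t^m\mathcal{O}^{sh}$ has graded pieces isomorphic to $\overline{\Fp}$, on which $\sigma-1$ acts as $x\mapsto x^q-x$ and is surjective, so by successive approximation $\sigma-1$ is surjective on $1+t^m\mathcal{O}^{sh}$ and on its $p^n$-power subgroup; passing to the quotient $M$ this yields $H^1(B,\mathscr{F}_m)=0$, and via $M^{\sigma}$ together with $(1+t^m\mathcal{O}^{sh})^{\hat{\Z}}=1+t^m\mathcal{O}_K=U_K^m$ it yields $H^0(B,\mathscr{F}_m)=$ the image of $U_K^m$.

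With these in hand the identification follows: over the field $K$, Hilbert 90 gives $H^0(U,\Wno_{U,\log}^1)\cong K^\times/(K^\times)^{p^n}$ (via $d\log$ and $\Wno_{U,\log}^1\cong \mathbb{G}_m/p^n$, using Proposition \ref{blochgabberkato}), and the restriction map is the inclusion of the image of $U_K^m$. Hence $H^1_s(B,\mathscr{F}_m)\cong K^\times/(K^\times)^{p^n}U_K^m$, with the displayed arrow being $d\log$ followed by the surjective connecting map. For the commuting square I would use naturality in $m$: the sheaf inclusions $\mathscr{F}_{m+1}\hookrightarrow\mathscr{F}_m$ induce the left vertical arrow, while $H^0(U,\Wno_{U,\log}^1)$ is independent of $m$; functoriality of the connecting homomorphisms then identifies the induced map on the right with the natural projection $K^\times/(K^\times)^{p^n}U_K^{m+1}\twoheadrightarrow K^\times/(K^\times)^{p^n}U_K^m$.

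Finally, the ``in particular'' follows by passing to $\varprojlim_m$: the subgroups $U_K^m(K^\times)^{p^n}/(K^\times)^{p^n}$ form a fundamental system of open neighbourhoods of the identity with $\bigcap_m U_K^m=\{1\}$, and $K^\times/(K^\times)^{p^n}$ is complete (profinite) for this topology, so $\varprojlim_m K^\times/(K^\times)^{p^n}U_K^m\cong K^\times/(K^\times)^{p^n}$; exactness of the limit is guaranteed by the Mittag--Leffler condition from finiteness of each term, as already noted before Theorem \ref{main theorem}. I expect the main obstacle to be the second paragraph: proving $H^1(B,\mathscr{F}_m)=0$ and pinning down $H^0(B,\mathscr{F}_m)$ exactly. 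Both hinge on the Lang-type surjectivity of $\sigma-1$ on the geometric stalk and on using henselian base change to see that the stalk of the relative logarithmic de Rham--Witt sheaf is precisely the $d\log$-image of the principal units --- this is where one must reconcile the fact that $\Fp$ is \emph{not} separably closed (so that $H^1(\Fp,-)$ is a priori nonzero) with the vanishing obtained over the geometric point.
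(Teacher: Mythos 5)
Your route is genuinely different from the paper's: the paper inducts on $n$, using the exact sequence of Theorem \ref{twistedexact} together with the observation that the filtration $U_K^{\bullet}$ has no jumps at indices divisible by $p$ (so the shift $m\mapsto [m/p]$ is harmless), and only treats $n=1$ directly via the localization sequence and Bloch--Gabber--Kato; you instead handle all $n$ at once by henselian base change, computing $H^i(B,\Wno^1_{B|mD,\log})$ as Galois cohomology of the stalk at $\bar s$. Most of your argument is sound: $H^0_s=0$, the identification of the stalk with the $d\log$-image of $1+t^m\mathcal{O}^{sh}$, the identification $H^0(U,\Wno^1_{U,\log})\cong K^\times/(K^\times)^{p^n}$, the naturality giving the commuting square, and the passage to the inverse limit are all correct.

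However, the step you flag as the main obstacle contains a genuine error: $\sigma-1$ is \emph{not} surjective on $1+t^m\mathcal{O}^{sh}$, because the strict henselization $\mathcal{O}^{sh}=\bigcup_N\mathbb{F}_{q^N}[[t]]$ is not $t$-adically complete, and successive approximation only produces a solution in $\overline{\mathbb{F}}_q[[t]]$. Concretely, solving $\sigma(v)=(1+t)v$ forces $b_1^q-b_1=1$, $b_2^q-b_2=b_1+\cdots$, and so on, and the fields $\Fp(b_1,\dots,b_j)$ grow without bound, so no solution lies in $1+t\mathcal{O}^{sh}$; thus your derivation of $H^1(B,\Wno^1_{B|mD,\log})=0$ (and likewise the identification of $M^{\sigma}$ with the image of $U_K^m$) breaks at this point. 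The conclusions are nevertheless true, but they must be obtained at finite levels, where completeness is available: for each $N$ the classical computation for unramified extensions (normal basis theorem on the graded pieces $\mathbb{F}_{q^N}$ of the $t$-adic filtration, plus completeness of $1+t^m\mathbb{F}_{q^N}[[t]]$) gives $\hat{H}^i(\mathrm{Gal}(\mathbb{F}_{q^N}/\Fp),\,1+t^m\mathbb{F}_{q^N}[[t]])=0$ for all $i$; taking the colimit over $N$ yields $H^i(\hat{\Z},\,1+t^m\mathcal{O}^{sh})=0$ for $i=1,2$. Then the long exact sequence attached to
\[ 1\to \bigl(1+t^{m'}\mathcal{O}^{sh}\bigr)^{p^n}\to 1+t^m\mathcal{O}^{sh}\to M\to 1, \qquad m'=\lceil m/p^n\rceil, \]
whose kernel is isomorphic to $1+t^{m'}\mathcal{O}^{sh}$ as a Galois module via the $p^n$-power map, gives $H^1(\hat{\Z},M)=M/(\sigma-1)M=0$ and $M^{\sigma}=$ image of $U_K^m$. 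With this repair your argument goes through and recovers the lemma for all $n$ without the paper's induction.
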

\begin{proof}
We prove this by induction on $n$. If $n=1$, the localization sequence gives the following exact sequence
\[ 0 \to H^0_s(B, \Ome_{B|mD, \log}^1) \to H^0(B, \Ome_{B|mD,\log}^1) \to H^0(U,\Ome_{U,\log}^1) \to H^1_s(B, \Ome_{B|mD,\log}^1) \to 0.  \]
The Bloch-Gabber-Kato theorem \cite{blochkato} says $K^{\times}/(K^{\times})^{p}\xrightarrow{\cong} H^0(U,\Ome_{U,\log}^1) $, and by definition, it is easy to see that $U_K^m\cdot (K^{\times})^p/(K^{\times})^{p}\xrightarrow{\cong}H^0(B, \Ome_{B|mD,\log}^1) $. For the induction process, we use the exact sequence in Theorem (\ref{twistedexact}):
\[ 0 \to W_{n-1}\Ome_{B|[m/p]D,\log}^{1} \xrightarrow{p} \Wno_{B|mD,\log}^1 \xrightarrow{R} \Ome_{X|mD,\log}^1 \to 0 .\]
Note that the first term involves dividing by $p$. But for the filtration  $\{U^m_K\}_{m}$, there are no jump greater or equal to 0 that divisible by $p$, as the residue field of $K$ is perfect. The commutativity of the diagram follows also directly from the above computation.
\end{proof}

Now our main result in this section is the following:
\begin{proposition}\label{filagree} The filtration we defined in Definition \ref{newfiltration} is same as the non-log version of  Brylinski-Kato filtration, i.e., for any integer $m\geq 1$,
\[\text{Fil}_mH^1(U, \Zpn)=\text{fil}_{m}H^1(U, \Zpn).\]
\end{proposition}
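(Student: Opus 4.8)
The plan is to identify both filtrations with the orthogonal complement, inside $H^1(U,\Zpn)=H^1(K,\Zpn)$, of one and the same subgroup $U_K^m\cdot(K^\times)^{p^n}/(K^\times)^{p^n}$ of $K^\times/(K^\times)^{p^n}$; the only genuine content is then a compatibility between two pairings. First I would specialize Definition \ref{newfiltration} and Corollary \ref{localfieldduality} to $X=B$, $D=s=(t)$, $d=0$, $i=1$, $r=0$. By definition a character $\chi\in H^1(U,\Zpn)$ lies in $\text{Fil}_mH^1(U,\Zpn)$ exactly when $\Phi(\chi)$ factors through $H^1_s(B,\Wno_{B|mD,\log}^1)$, i.e.\ when $\chi$ is orthogonal, for the pairing (\ref{pairingfromdualitylocalfield}), to the kernel of the projection $\varprojlim_{m'}H^1_s(B,\Wno_{B|m'D,\log}^1)\twoheadrightarrow H^1_s(B,\Wno_{B|mD,\log}^1)$. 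By Lemma \ref{comparelimit} these groups are identified with $K^\times/(K^\times)^{p^n}$ and $K^\times/(K^\times)^{p^n}\cdot U_K^m$ compatibly with the transition maps, so that kernel becomes $U_K^m\cdot(K^\times)^{p^n}/(K^\times)^{p^n}$. Hence $\text{Fil}_mH^1(U,\Zpn)$ is precisely the orthogonal complement of $U_K^m\cdot(K^\times)^{p^n}/(K^\times)^{p^n}$ under (\ref{pairingfromdualitylocalfield}).

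On the other side, Corollary \ref{localcompatibletheorem} exhibits $\text{fil}_mH^1(K,\Zpn)=\delta_n(\text{fil}_m(W_n(K)/(1-F)W_n(K)))$ as the exact orthogonal complement of the very same subgroup $U_K^m\cdot(K^\times)^{p^n}/(K^\times)^{p^n}$, this time with respect to the Artin-Schreier-Witt symbol (\ref{artinschreierwitt}). Thus the proposition reduces to the claim that, under the isomorphism $\delta_n:W_n(K)/(1-F)W_n(K)\xrightarrow{\cong}H^1(K,\Zpn)=H^1(U,\Zpn)$ and the identification $\varprojlim_mH^1_s(B,\Wno_{B|mD,\log}^1)\cong K^\times/(K^\times)^{p^n}$ of Lemma \ref{comparelimit}, the duality pairing (\ref{pairingfromdualitylocalfield}) agrees with the symbol (\ref{artinschreierwitt}) up to multiplication by a unit of $\Zpn$ — which is harmless, since rescaling a perfect pairing by a unit leaves all orthogonal complements unchanged.

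The comparison of the two pairings is where the real work lies, and it is the step I expect to be the main obstacle. Both pairings have the shape ``cup product followed by a trace''. The pairing (\ref{pairingfromdualitylocalfield}) is induced by
\[ H^1(U,\Zpn)\times H^1_s(B,\Wno_{B|mD,\log}^1)\to H^2_s(B,\Wno_{B,\log}^1)\xrightarrow{\text{Tr}}\Zpn, \]
whereas (\ref{artinschreierwitt}), via Artin-Schreier-Witt theory and the $d\log$ map, is a cup product in the \'etale cohomology of $K$ followed by the invariant map. My plan is to evaluate the left-hand pairing on explicit representatives, namely a class $\chi=\delta_n(a)$ with $a\in W_n(K)$ together with the $d\log[b]_n$-class of an element $b\in K^\times$, using the boundary map of the localization sequence into $H^2_s$. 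I would then invoke the compatibility of $\text{Tr}$ with the Tate residue established in the proof of Theorem \ref{unramifiedduality} to rewrite the value as $\mathrm{Tr}_{k/\F_p}\,\mathrm{Res}(a\,d\log[b]_n)$ in Witt differentials. The final ingredient is the Schmid--Witt explicit reciprocity formula, which asserts precisely that the Artin-Schreier-Witt symbol $[a,b)$ equals this same residue; matching the two expressions identifies the pairings and finishes the proof. The genuine difficulty is this explicit residue computation — keeping careful track of the Witt-vector structure, the local-cohomology boundary map, and the normalization of the trace against the classical symbol — rather than any conceptual subtlety.
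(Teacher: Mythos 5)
Your overall strategy coincides with the paper's: both arguments identify $\text{Fil}_mH^1(U,\Zpn)$ with the dual of $H^1_s(B,\Wno_{B|mD,\log}^{1})\cong K^{\times}/(K^{\times})^{p^n}\cdot U_K^m$ via Lemma \ref{comparelimit}, and $\text{fil}_mH^1(U,\Zpn)$ with the dual of the same finite quotient via Corollary \ref{localcompatibletheorem}. Where you go further is in isolating the point that these two descriptions are orthogonal complements with respect to two \emph{a priori} different pairings --- the geometric duality pairing (\ref{pairingfromdualitylocalfield}) and the Artin-Schreier-Witt symbol (\ref{artinschreierwitt}) --- so that an equality of subgroups of $H^1(U,\Zpn)$ only follows once the two resulting trivializations of $H^1(U,\Zpn)$ as $\text{Hom}(K^{\times}/(K^{\times})^{p^n},\Zpn)$ are shown to differ by an automorphism preserving the filtration by the $U_K^m$. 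The paper's own proof is exactly the chain of three identifications (definition of $\text{Fil}_m$, Lemma \ref{comparelimit}, Corollary \ref{localcompatibletheorem}) and does not comment on this compatibility, so your proposal is the more scrupulous version of the same argument rather than a different route. Your plan for the missing comparison --- evaluate the duality pairing on $\delta_n(a)$ against the image of $d\log[b]_n$ under the boundary map of the localization sequence, invoke the compatibility of $\text{Tr}$ with the Tate residue established in the proof of Theorem \ref{unramifiedduality}, and conclude with the Schmid-Witt explicit formula for $[a,b)$ --- is the standard and correct tool, and agreement up to a unit of $\Zpn$ is indeed all that is needed since rescaling leaves orthogonal complements unchanged. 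The one caveat is that this residue computation is sketched rather than executed; as written, your argument is a correct reduction of the proposition to a known explicit reciprocity law, which is also all that the paper's two-line proof supplies.
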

\begin{proof}
	We have the following isomorphisms
	 \begin{equation*}
	 \begin{array}{rcl}
	 Fil_mH^1(U, \Zpn)&=&\text{Hom}_{\Zpn}(H^1_s(B, \Ome_{B|mD,\log}^1), \Zpn)\\
	 &=&\text{Hom}_{\Zpn}(K^{\times}/(K^{\times})^{p^n}\cdot U^m_K, \Zpn)\\
	 &=&fil_{m}H^1(U, \Zpn)
	 \end{array}
	 \end{equation*} 
where the first identification is due to the fact that the transition maps are surjective, the second equality is given by Lemma \ref{comparelimit}, and the last is Corollary \ref{localcompatibletheorem}.
\end{proof}

\providecommand{\bysame}{\leavevmode\hbox to3em{\hrulefill}\thinspace}
\providecommand{\MR}{\relax\ifhmode\unskip\space\fi MR }
\providecommand{\MRhref}[2]{%
	\href{http://www.ams.org/mathscinet-getitem?mr=#1}{#2}
}
\providecommand{\href}[2]{#2}


\begin{thebibliography}{CTSS83}
	
	\bibitem[AGV72]{SGA4}
	M.~Artin, A.~Grothendieck, and J.L. Verdier, \emph{Th{\'e}orie des topos et
		cohomologie {\'e}tale des sch{\'e}mas({SGA}4). {T}ome 1--3}, Lecture Notes in
	Mathematics - Vol. 269, 270, 305, Springer-Verlag, New York, 1972.
	
	\bibitem[BK86]{blochkato}
	S.~Bloch and K.~Kato, \emph{$p$-adic etale cohomology}, Publ. Math. Inst.
	Hautes \'Etudes Sci. \textbf{63} (1986), 107--152.
	
	\bibitem[Bry83]{brylinski}
	J.-L. Brylinski, \emph{Th{\'e}orie du corps de classes de {K}ato et
		r{\^e}vetements ab{\'e}liens de surfaces}, Annales de l'institut {F}ourier
	\textbf{33} (1983), no.~3, 23--38.
	
	\bibitem[CTSS83]{colliottorsion}
	J.~Colliot-Th{\'e}l{\`e}ne, J.~Sansuc, and C.~Soul{\'e}, \emph{Torsion dans le
		groupe de {C}how de codimension deux}, Duke Math. J \textbf{50} (1983),
	no.~3, 763--801.
	
	\bibitem[Del77]{SGA41/2}
	P.~Deligne, \emph{Cohomologie {\'e}tale ({SGA} 4$\frac{1}{2}$), {A}vec la
		collaboration de {J}.-{F}. {B}outot, {A}. {G}rothendieck, {L}. {I}llusie et
		{J}.-{L}. {V}erdier}, Lecture {N}otes in {M}athematics-{V}ol. 569, Springer,
	New York, 1977.
	
	\bibitem[Fuj02]{fujiwara}
	K.~Fujiwara, \emph{A proof of the absolute purity conjecture (after {G}abber)},
	Adv. Stud. Pure Math \textbf{36} (2002), 153--183.
	
	\bibitem[GH67]{hartshornelocal}
	A.~Grothendieck and R.~Hartshorne, \emph{Local cohomology: a seminar}, Lecture
	Notes in Mathematics-Vol. 41, Springer-Verlag, New York, 1967.
	
	\bibitem[GH06]{geisserhe}
	T.~Geisser and L.~Hesselholt, \emph{The de {R}ham-{W}itt complex and $p$-adic
		vanishing cycles}, Journal of the American Mathematical Society \textbf{19}
	(2006), no.~1, 1--36.
	
	\bibitem[Gro57]{grothendiecktohoku}
	A.~Grothendieck, \emph{Sur quelques points d'alg{\`e}bre homologique}, Tohoku
	Mathematical Journal, Second Series \textbf{9} (1957), no.~2, 119--183.
	
	\bibitem[Gro85]{groschernclass}
	M.~Gros, \emph{Classes de {C}hern et classes de cycles en cohomologie de
		{H}odge-{W}itt logarithmique}, M{\'e}moires de la Soci{\'e}t{\'e}
	Math{\'e}matique de France \textbf{21} (1985), 1--87.
	
	\bibitem[GS88a]{grossuwa2}
	M.~Gros and N.~Suwa, \emph{Application d’{A}bel-{J}acobi $p$-adique et cycles
		alg{\'e}briques}, Duke Math. J \textbf{57} (1988), no.~2, 579--613.
	
	\bibitem[GS88b]{grossuwa}
	\bysame, \emph{La conjecture de {G}ersten pour les faisceaux de {H}odge-{W}itt
		logarithmique}, Duke Math. J \textbf{57} (1988), no.~2, 615--628.
	
	\bibitem[Har66]{hartshorneduality}
	R.~Hartshorne, \emph{Residues and duality. (with an appendix by {P}.
		{D}eligne)}, Lecture Notes in Mathematics-Vol. 20, Springer-Verlag, New York,
	1966.
	
	\bibitem[Ill79]{illusiederham}
	L.~Illusie, \emph{Complexe de de {R}ham-{W}itt et cohomologie cristalline},
	Ann. Sci. \'Ecole Norm. Sup. \textbf{12} (1979), 501--661.
	
	\bibitem[Ill96]{illusiefrobenius}
	\bysame, \emph{Frobenius and {H}odge degeneration}, Introduction to Hodge
	theoy, SMF/AMS Texts and Monographs \textbf{8} (1996), 99--150.
	
	\bibitem[IR83]{illusieraynaud}
	L.~Illusie and M.~Raynaud, \emph{Les suites spectrales associ{\'e}es au
		complexe de de {R}ham-{W}itt}, Publ. Math. Inst. Hautes \'Etudes Sci.
	\textbf{57} (1983), no.~1, 73--212.
	
	\bibitem[JSS14]{jssduality}
	U.~Jannsen, S.~Saito, and K.~Sato, \emph{{\'E}tale duality for constructible
		sheaves on arithmetic schemes}, J. Reine Angew. Math. \textbf{688} (2014),
	1--65.
	
	\bibitem[JSZ18]{jszduality}
	U.~Jannsen, S.~Saito, and Y.~Zhao, \emph{Duality for relative logarithmic de
		{R}ham-{W}itt sheaves and wildly ramified class field theory over finite
		fields}, Compositio Math. \textbf{154} (2018), no.~6, 1306--1331.
	
	\bibitem[Kat70]{katz}
	N.~Katz, \emph{Nipotent connections and the monodromy theorem: applications of
		a result of {T}urritin}, Publ. Math. Inst. Hautes \'Etudes Sci. \textbf{39}
	(1970), 175--232.
	
	\bibitem[Kat82]{katogalois}
	K.~Kato, \emph{Galois cohomology of complete discrete valuation fields},
	Algebraic K-Theory (R.Keith Dennis, ed.), Lecture Notes in Mathematics-Vol.
	967, Springer Berlin Heidelberg, 1982, pp.~215--238.
	
	\bibitem[Kat85]{katoI}
	\bysame, \emph{Duality theories for the $p$-primary \'etale cohomology {I}},
	Algebraic and Topological Theories (1985), 127--148.
	
	\bibitem[Kat86]{katoresiduemap}
	\bysame, \emph{A {H}asse principle for two dimensional global fields.(with an
		appendix by {C}olliot-{T}h\'el\`{e}ne, {J}.-{L}.)}, J. Reine Angew. Math.
	\textbf{366} (1986), 142--180.
	
	\bibitem[Kat89]{katoswanconductor}
	\bysame, \emph{Swan conductors for characters of degree one in the imperfect
		residue field case}, Contemp. Math \textbf{83} (1989), 101--131.
	
	\bibitem[Ker09]{kerzgersten}
	M.~Kerz, \emph{The {G}ersten conjecture for {M}ilnor {K}-theory}, Inventiones
	mathematicae \textbf{175} (2009), no.~1, 1--33.
	
	\bibitem[Ker10]{kerzmilnorfinite}
	\bysame, \emph{Milnor {K}-theory of local rings with finite residue fields}, J.
	Algebraic Geom. \textbf{19} (2010), 173--191.
	
	\bibitem[KN82]{kimuranittsuma}
	T.~Kimura and H.~Niitsuma, \emph{On {K}unz's conjecture}, J. Math. Soc. Japan
	\textbf{34} (1982), no.~2, 371--378.
	
	\bibitem[KS14]{kerzsaito}
	M.~Kerz and S.~Saito, \emph{Lefschetz theorem for abelian fundamental group
		with modulus}, Algebra \& Number Theory \textbf{8} (2014), no.~3, 689--701.
	
	\bibitem[KS16]{kerzsaitochowgroup}
	\bysame, \emph{Chow group of $0$-cycles with modulus and higher dimensional
		class field theory}, Duke Math. Journal \textbf{165} (2016), no.~15,
	2811--2897.
	
	\bibitem[Kun69]{kunzregularinp}
	E.~Kunz, \emph{Characterizations of regular local rings of characteristic $p$},
	Amer. J. of Math. \textbf{91} (1969), 772--784.
	
	\bibitem[Mat97]{matsuda}
	S.~Matsuda, \emph{On the {S}wan conductor in positive characteristic}, American
	journal of mathematics \textbf{119} (1997), no.~4, 705--739.
	
	\bibitem[Mil76]{milnesurface}
	J.~S. Milne, \emph{Duality in the flat cohomology of a surface}, Ann. Sci.
	\'Ecole Norm. Sup. (4) \textbf{9} (1976), no.~2, 171--201. \MR{0460331}
	
	\bibitem[Mil80]{milneetale}
	\bysame, \emph{{\'E}tale cohomology}, Princeton Mathematical Series-Vol.33,
	Princeton Univ. Press, Princeton, 1980.
	
	\bibitem[Mil86]{milneduality}
	\bysame, \emph{Values of zeta functions of varieties over finite fields}, Amer.
	J. Math. \textbf{108} (1986), 297--360.
	
	\bibitem[Mos99]{moser}
	T.~Moser, \emph{A duality theorem for \'etale $p$-torsion sheaves on complete
		varieties over a finite field}, Compositio Mathematica \textbf{117} (1999),
	123--152.
	
	\bibitem[MR89]{matsumura}
	H.~Matsumura and M.~Reid, \emph{Commutative ring theory}, Cambridge University
	Press, 1989.
	
	\bibitem[RS18]{ruellingsaito}
	K.~R\"ulling and S.~Saito, \emph{Higher {C}how groups with modulus and relative
		{M}ilnor {K}-theory}, Tran. AMS \textbf{370} (2018), 987--1043.
	
	\bibitem[Sai89]{saito1989}
	S.~Saito, \emph{A global duality theorem for varieties over global fields},
	Algebraic K-Theory: Connections with Geometry and Topology, Springer, 1989,
	pp.~425--444.
	
	\bibitem[Sat07a]{satoncv}
	K.~Sato, \emph{Logarithmic {H}odge-{W}itt sheaves on normal crossing
		varieties}, Math. Z. \textbf{257} (2007), 707--743.
	
	\bibitem[Sat07b]{satotatetwist}
	\bysame, \emph{$p$-adic étale {T}ate twists and arithmetic duality}, Ann. Sci.
	Ecole Norm. Sup. \textbf{40} (2007), no.~4, 519--588 (eng).
	
	\bibitem[Ser79]{serrelocalfield}
	J.-P. Serre, \emph{Local fields}, Graduate Texts in Mathematics - Vol. 67,
	Springer-Verlag, New York, 1979.
	
	\bibitem[Shi07]{shihopurity}
	A.~Shiho, \emph{On logarithmic {H}odge-{W}itt cohomology of regular schemes},
	J. Math. Sci. Univ. Tokyo \textbf{14} (2007), 567--635.
	
	\bibitem[{Sta}18]{stacks-project}
	The {Stacks Project Authors}, \emph{Stacks project},
	http://stacks.math.columbia.edu, 2018.
	
	\bibitem[Suw95]{suwa}
	N.~Suwa, \emph{A note on {G}ersten's conjecture for logarithmic {H}odge-{W}itt
		sheaves}, K-theory \textbf{9} (1995), no.~3, 245--271.
	
	\bibitem[Swa98]{swanneron}
	R.~Swan, \emph{N{\'e}ron-{P}opescu desingularization}, Algebra and Geometry
	(Taipei, 1995), Lect. Alg. Geom. \textbf{2} (1998), 135--192.
	
	\bibitem[Tho05]{thomas}
	L.~Thomas, \emph{Ramification groups in {A}rtin-{S}chreier-{W}itt extensions},
	Journal de th{\'e}orie des nombres de {B}ordeaux \textbf{17} (2005), no.~2,
	689--720.
	
	\bibitem[Uzu16]{uzun}
	M. Uzun, \emph{Motivic homology and class field theory over p-adic
		fields}, Journal of Number Theory \textbf{160} (2016), 566--585.
	
\end{thebibliography}

\end{document}